\documentclass{article}
\usepackage[top=1truein,bottom=1truein,left=1truein,right=1truein]{geometry}
\usepackage{setspace}\setstretch{1.2}
\usepackage{authblk}

\usepackage{amsmath,amssymb,amsthm}
\usepackage{pxfonts,graphicx}
\usepackage{enumerate,mathrsfs,latexsym}

\usepackage{here}
\usepackage{color}
\usepackage{ctable,dcolumn}
\usepackage{mathrsfs}
\usepackage{multirow}

\newtheorem{thm}{Theorem}[section]
\newtheorem{prop}[thm]{Proposition}
\newtheorem{lem}[thm]{Lemma}
\newtheorem{cor}[thm]{Corollary}
\newtheorem{defi}[thm]{Definition}
\newtheorem{rem}[thm]{Remark}

\newtheorem{assump}[thm]{Assumption}
\newtheorem{step}{Step}

\newcommand{\argmin}{\mathop{\rm arg~min}\limits}
\newcommand{\esssup}{\mathop{\rm ess~sup}\limits}

\providecommand{\keywords}[1]{\textbf{Keywords}\quad #1}

\title{Is Volatility Rough ?\footnote{This work was supported
		by JSPS KAKENHI Grant Number JP17J04605.}}
\author[1]{Masaaki Fukasawa\footnote{fukasawa@sigmath.es.osaka-u.ac.jp}}
\author[2]{Tetsuya Takabatake\footnote{takabatake@sigmath.es.osaka-u.ac.jp. Current address: Graduate School of Social Sciences, Hiroshima University, \\1-2-1 Kagamiyama, Higashi-Hiroshima, Hiroshima, Japan, tkbtk@hiroshima-u.ac.jp}}
\author[3]{Rebecca Westphal\footnote{rwestphal@ethz.ch}}
\affil[1,2]{Graduate School of Engineering Science, Osaka University\footnote{1-3 Machikaneyama, Toyonaka, Osaka, Japan}}
\affil[3]{Department of Management, Technology, and Economics, ETH Z\"urich\footnote{Scheuchzerstrasse 7, 8092 Z\"urich, Switzerland}}
\date{}


\begin{document}
\maketitle
\begin{abstract}
	Rough volatility models are continuous time stochastic
 volatility models where the volatility process is driven by a
 fractional Brownian motion with the Hurst parameter smaller than half,
 and have attracted much attention since a seminal paper titled
 ``Volatility is rough'' was posted on SSRN in 2014 showing that
the log realized volatility time series of major stock indices have the
 same scaling property as 
 such a rough fractional Brownian motion has.
We however find by simulations that
the impressive approach tends to suggest the same roughness  irrespectively
whether the 
volatility is actually rough or not; an overlooked estimation error of
 latent volatility often results in an illusive scaling property.
Motivated by this preliminary
 finding,
here we develop a statistical theory for a continuous time fractional
 stochastic volatility model to examine 
whether the Hurst parameter 
is indeed estimated smaller than half, that is, whether the volatility is really rough.
We construct a
 quasi-likelihood estimator 
 and apply it to realized volatility time series.
Our quasi-likelihood is based on the error distribution of the realized
volatility
 and a
 Whittle-type approximation to the auto-covariance of the log-volatility
 process. We prove the consistency of our estimator under high frequency
 asymptotics, and examine by simulations its finite sample performance.
Our empirical study suggests that the volatility is
 indeed rough; actually it is even rougher than considered in the
 literature.
\end{abstract}
\vspace{0.2cm}
\keywords{Rough volatility, Stochastic volatility, Fractional Brownian motion, Realized variance, Whittle estimator, High frequency data analysis}

\section{Introduction}
Nowadays it is widely recognized that the volatility of an asset price is not a constant but a stochastic process. The property of the process is, however, not very clear because it is not a directly observable process.
Even in a simple continuous framework where 
the volatility process $\sigma$ defined through
\begin{equation}\label{M1}
\mathrm{d}S_u = \sigma_u S_u \mathrm{d}B_u
\end{equation}
with an asset price process $S$ and a Brownian motion $B$,
one can only examine indirectly its properties via a statistic like
the realized variance
\begin{equation*}
\hat{\sigma}_{\delta, t}^2 : = 
\sum_{(t-1)\delta \leq u \leq t\delta}\left| \Delta \log \bar{S}_u \right|^2,
\end{equation*}
where $\bar{S}$ is a piecewise constant process which jumps at every sampling time of $S$ to the observed value of $S$ at the time.
In a hypothetical situation where sampling frequency goes to infinity without any measurement error, 
\begin{equation}\label{L1}
\hat{\sigma}^2_{\delta,t} \to \int_{(t-1)\delta}^{t\delta} \sigma_u^2\, \mathrm{d}u
\end{equation}
in probability; one therefore expects $\hat{\sigma}^2_{\delta,t}$ to
work as a proxy of the unobservable quantity. Since the high frequency
asymptotics does not require any ergodicity or stationarity assumption,
it particularly fits the analysis of recent financial market data, where
the sampling frequency is really high.  
The two remarkable empirical properties of daily realized variance time series
that were already documented in the earliest work by 
Andersen et al.~\cite{ABDL} are 
that their unconditional distributions are approximately log Gaussian, and
that their auto-covariances decay slowly.
Various modifications of the realized variance taking into account
market microstructure noise and asset price jumps have been proposed and
associated limit theorems have been proven in the literature; see
A\"it-Sahalia and Jacod~\cite{AJ} for an overview.

In 2014, an interesting paper,  Gatheral et al.~\cite{GJR}
titled ``Volatility is rough'' was posted on SSRN.
Since then, it has been so  influential  in the community of Mathematical Finance that a number of papers\footnote{
	Antoine Jacquier established and has maintained a website
	
	https://sites.google.com/site/roughvol/home
	
	as a reference point for the fast growing literature of rough volatility.
} 
have already appeared  dealing with the so-called rough volatility models. 
In that paper, the authors
looked at the historical volatility proxy data including those from the Oxford-Man realized library\footnote{
	The Oxford-Man Institute provides daily nonparametric volatility estimates at
	
	https://realized.oxford-man.ox.ac.uk
}.
Let $\hat{\sigma}_t$ be such a volatility proxy as the realized volatility 
\begin{equation*}
\hat{\sigma}_t\equiv\hat{\sigma}_{\delta,t}:=\sqrt{\hat{\sigma}^2_{\delta, t}}
\end{equation*}
for a day $t$  computed from intraday asset price data,
where $\delta$ corresponds to the length of one day.
They did a linear regression to find an impressive fit
\begin{equation}\label{LR}
\log  \frac{1}{n}\sum_{t =1}^n |\log \hat{\sigma}_{t+\Delta} - \log \hat{\sigma}_t|^q
\approx \zeta_q \log \Delta + \eta_q
\end{equation}
for various values of $q$; see Figure~\ref{F1} (left).
Then, for the regression coefficients $\zeta_q$, 
they did another linear regression to find another impressive fit $\zeta_q \approx  Hq$ with $H \approx 0.1$;
see Figure~\ref{F1} (right).
\begin{figure}
	\centering
	\includegraphics[width=6.2cm]{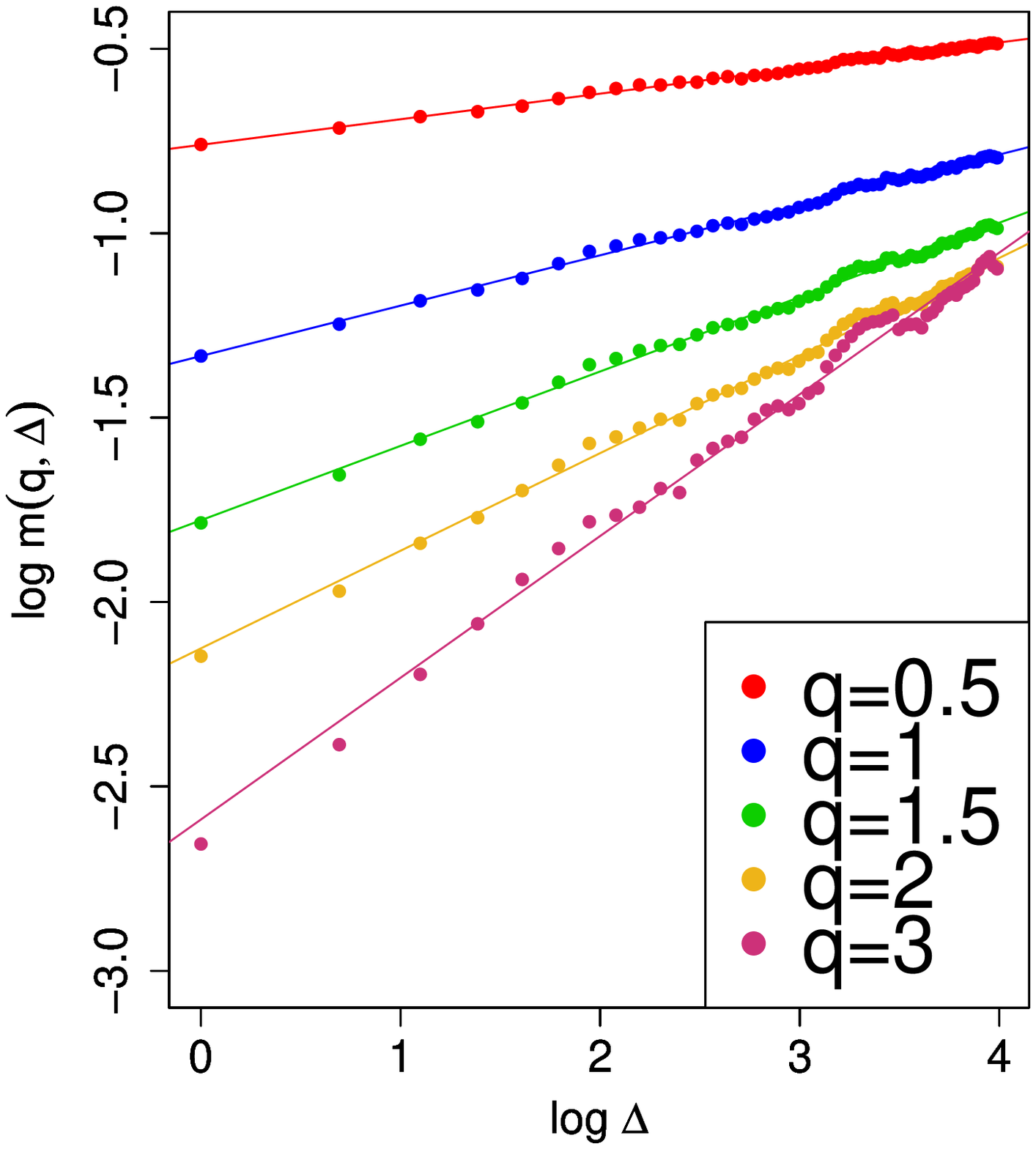} 
	\includegraphics[width=6.2cm]{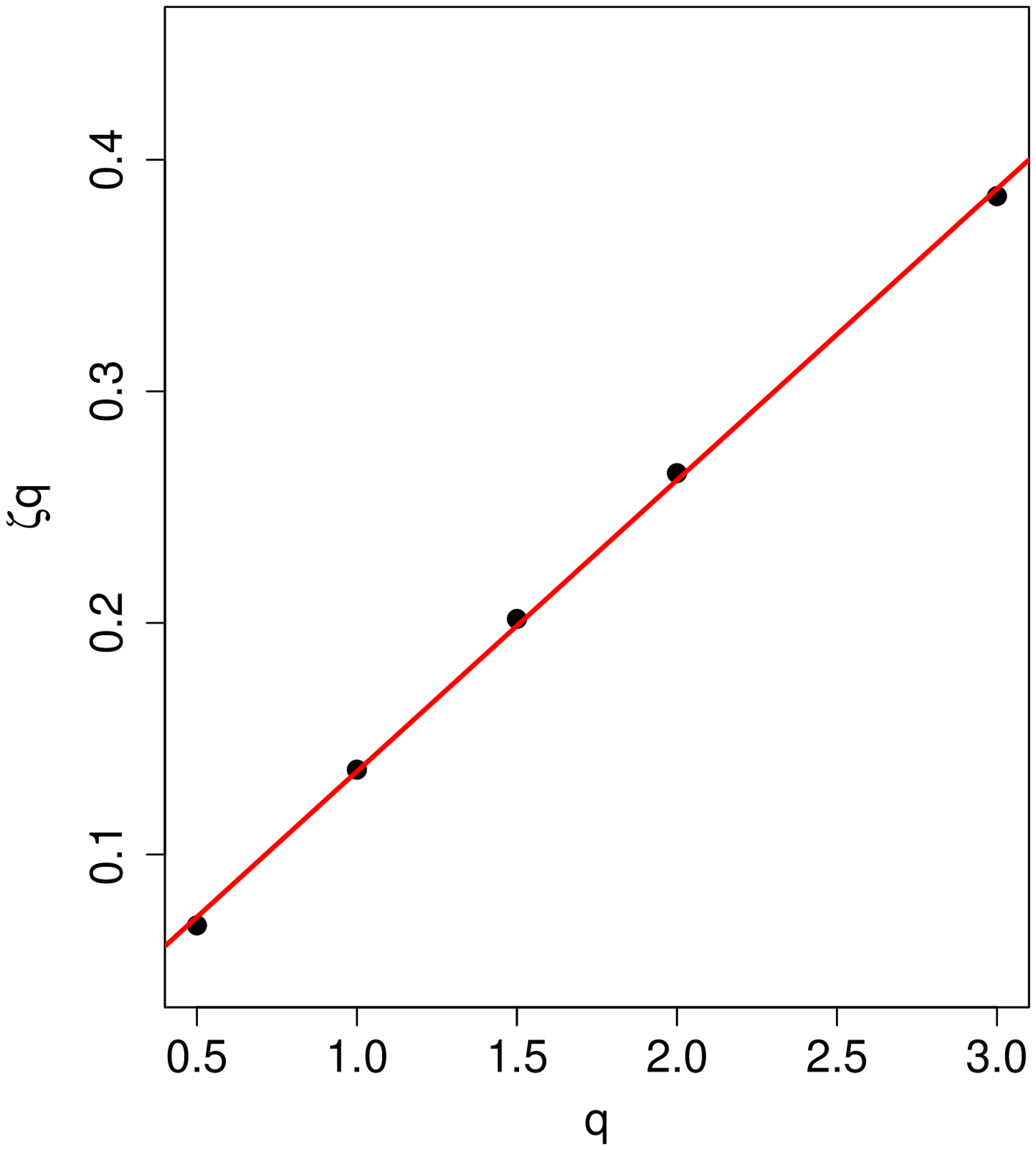}
	\caption{A reproduction of the linear regressions in Gatheral et al.~\cite{GJR} using SPX 5-minute realized volatility from the Oxford-Man Institute's Realized Library.
		Period: 03/01/2000 - 13/07/2018. 
		The regression coefficient (Right): $H =0.1258$.
	}\label{F1}
\end{figure}
Naively, this scaling property together with the above mentioned
stylized fact that the realized variance is approximately log Gaussian suggests a simple dynamics
\begin{equation}\label{M2}
\mathrm{d}\log \sigma^2_u = \eta \mathrm{d}W^H_u, 
\end{equation} 
where $\eta$ is a constant and 
$W^H$ is a fractional Brownian motion\footnote{A fractional Brownian motion $W^H$ is characterized as a continuous centered Gaussian process with $W_0^H=0$ a.s., stationary increments and $E[|W^H_{t +\Delta}-W^H_t|^q] = C_q\Delta^{Hq}$ for any $q, \Delta>0$, where $C_q$ is the absolute $q$th moment of the standard normal distribution. See Mishura~\cite{Mishura} for further detail.
	The fractional Brownian motion in volatility dynamics does not imply an arbitrage opportunity because the asset price process (1) is a local martingale.} with the Hurst parameter $H$.
Note that the estimate $H \approx 0.1$ is not consistent to 
a widespread belief that the volatility is a process of long memory.
Gatheral et al.~\cite{GJR} showed by some simulations that such a ``short memory'' process pretends to be  of long memory.  
They demonstrated also a good prediction performance of this simple model.
The analysis is extended by Bennedsen et al.~\cite{BLP-Decoupling} to a wider set of assets.
The estimate $H \approx 0.1$  means that  the volatility path is rougher than semimartingales,
and is consistent to a power law for the term structure of  implied
volatility skew empirically observed in option markets; see~\cite{ALV,
F11, BFG, F17, FZ, GS, EFGR}. 
A market microstructural foundation of a rough volatility  model is
given by El Euch et al.~\cite{EFR}.

The statement $H\approx 0.1$ by Gatheral et
al.~\cite{GJR} should be, however, understood not as a statistical
estimate but as the proposal of a model which is consistent to a number
of empirical evidences.
In fact, as noted in that paper itself,
what they ``show here is that we cannot find any evidence against
the RFSV~\footnote{Rough Fractional Stochastic Volatility. It
	is a special case of our model (\ref{M3}) below. } model''. 
 Our numerical experiments show that
when using 5-minute realized volatility,
the linear regression methods in  Gatheral et al.~\cite{GJR} or in
Bennedsen et al.~\cite{BLP-Decoupling} often  give almost perfect fit with $H\approx 0.1$ 
irrespectively to the true value of $H$ used to simulate paths;
see Figure~\ref{F2} just for one example, and 
see Westphal~\cite{Westphal} for more  extensive simulation results.
\begin{figure}[t]
	\centering
	\includegraphics[width=6.2cm]{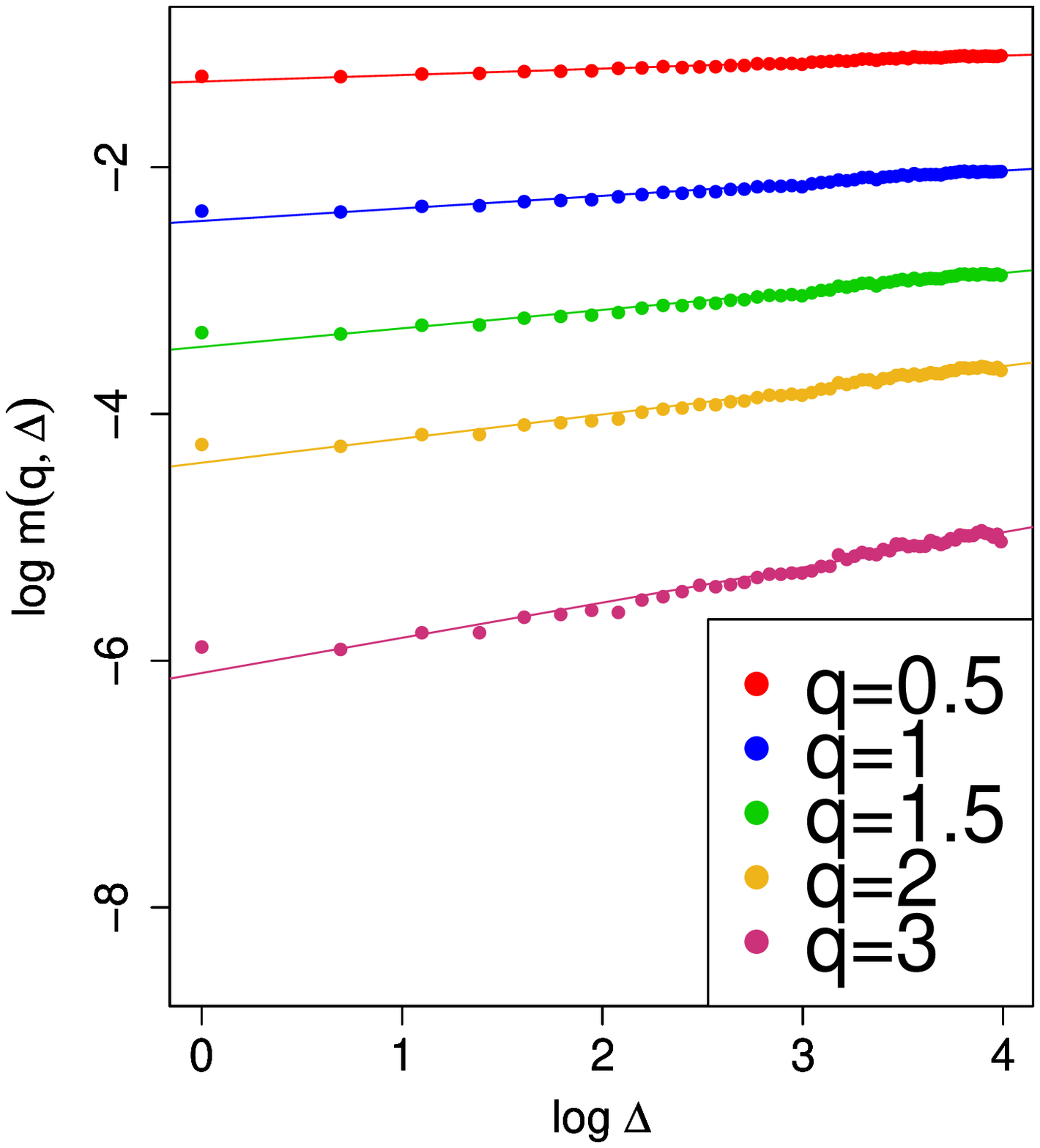} 
	\includegraphics[width=6.2cm]{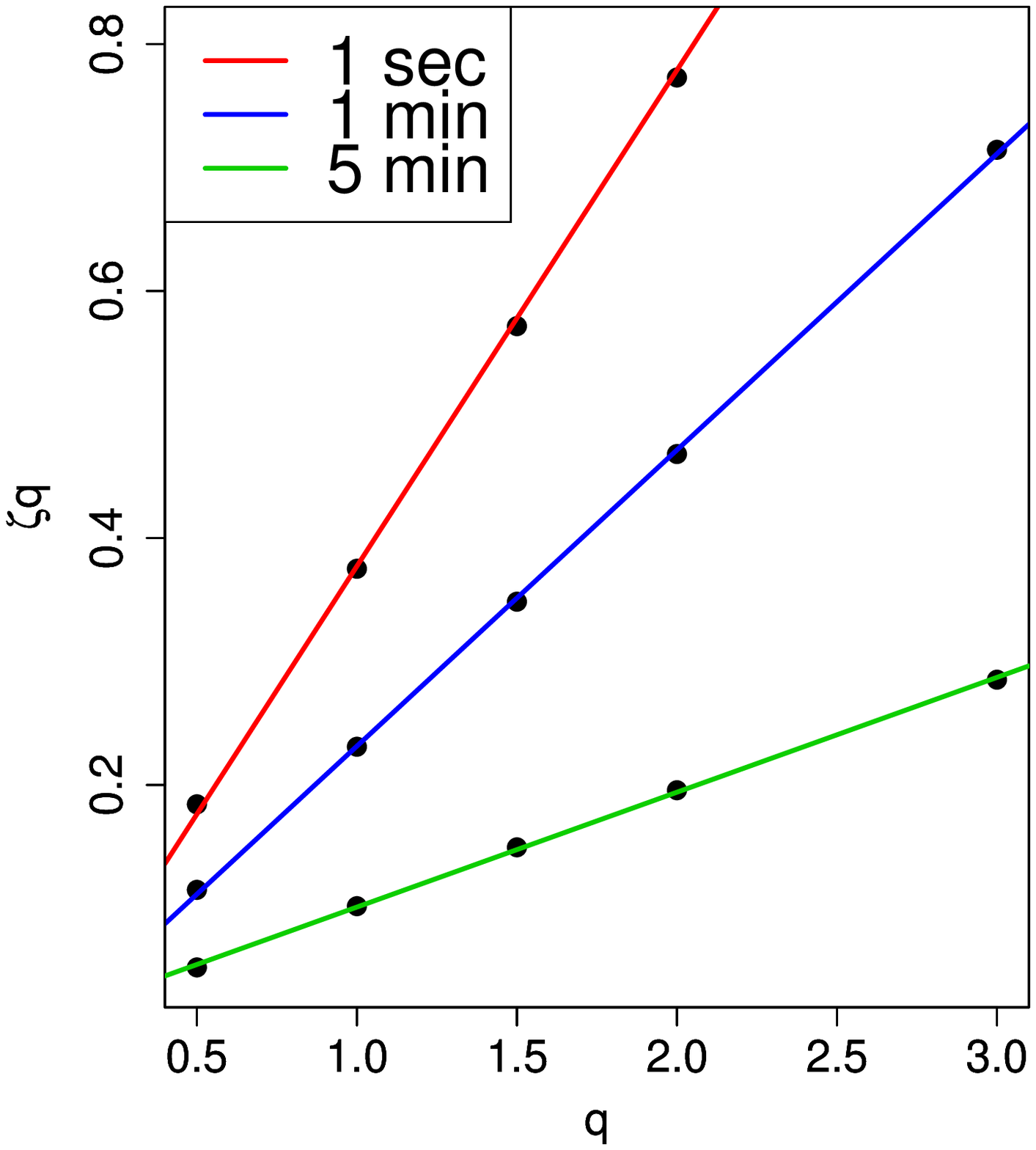} 
	\caption{
		(Left) The linear regression (\ref{LR}) using 5-minute realized volatility of 
		a simulated price path from the model (\ref{M1}) with volatility dynamics
		$ \mathrm{d} \log\sigma_u^2 = 10 (-3.2 -\log \sigma_u^2)\mathrm{d}u + 0.8 \mathrm{d}W_u$, where $W$ is a standard Brownian motion independent of $B$. 
		(Right) For the same simulated path, using realized volatility with different sampling frequencies,
		the regression coefficients $\zeta_q$ of (\ref{LR}) are plotted and regressed on $q$.
		The regression coefficients are $H = 0.4016$ for 1 second, $H = 0.2398$ for 1 minute and
		$H =0.0930 \approx 0.1$ for 5 minutes.
	}\label{F2}
\end{figure}
Figure~\ref{F2} indicates also that this striking phenomenon is due to
the use of a volatility 
proxy; the approximation error of  $\hat{\sigma}_t$ to $\sigma_t$
results in an illusive scaling property. 
These observations from simple numerical experiments bring us a 
question whether the
volatility is really rough, which
the present paper aims at providing the first step to answer.

This is a question about the smoothness of a hidden process. Therefore any nonparametric spot volatility estimation method or filtering approach in the literature is not helpful here; such an estimator is not meant to preserve the regularity of the hidden path. Further, notice that for continuous time models like (\ref{M1}), most of theoretical studies in the high frequency data analysis so far have assumed that the volatility process $\sigma$ is an It\^o semimartingale. This is an indispensable assumption because the analysis is  typically  based on a piecewise constant approximation of $\sigma$, and the path regularity of $\sigma$ determines the convergence rate of the approximation error; we refer again to A\"it-Sahalia and Jacod~\cite{AJ}. There is a work by Rosenbaum~\cite{R08} about fractional volatility models including (\ref{M2}); this however assumes $H \geq 1/2$ a priori  and so, is not helpful here to study whether $H < 1/2$ (rough) or not. 
Other results from high frequency statistics for the model (\ref{M1}) that do not require  $\sigma$ to be an It\^o semimartingale include the most primitive convergence (\ref{L1}), associated central limit theorems by Jacod and Protter~\cite{JP} and Fukasawa~\cite{F10-SPA,F11-AAP}, and some limit theorems for the so-called two-scales, or multi-scales realized volatility that takes the market microstructure noise into account; see A\"it-Sahalia and Jacod~\cite{AJ}.

This paper proposes a novel estimator of the Hurst and diffusion
parameters under a fractional volatility model extending
(\ref{M2}). Taking the difference between $\sigma_t$ and its proxy
$\hat{\sigma}_t$ into account, we derive an estimation function
combining the three ideas: (1)~a normal approximation of the
log-realized volatility estimation error based on the above mentioned
central limit theorem, (2)~a local Gaussian approximation of the
log-realized variance time series, and (3)~a Whittle-type estimation for
high frequency self-similar Gaussian models developed in Fukasawa and
Takabatake~\cite{FT19-BEJ}.
The asymptotic results in this previous work are, however, not directly applicable here because the
observed sequence is not Gaussian but only "locally Gaussian".
The local Gaussian approximation error causes 
several technical difficulties in 
proving the consistency of our estimator.
The consistency result in this paper is the first to show that a
Whittle-type estimation function is effective to a non-Gaussian model 
under high frequency asymptotics.

Our empirical study for major stock indices indicates that $H$ is even smaller than $0.1$; so
our tentative answer to the question is affirmative. 
It is tentative because constructing statistical tests still remains for
future research.
We remark that a model with $H = 0$ formally corresponds to a Gaussian
multiplicative chaos~\cite{Kahane}, or a multifractal process~\cite{BaMu,Bou}.
A framework including those is also a topic for future
research.

The paper is organized as follows.
We propose a model, construct an estimator and state a consistency theorem in Section~\ref{Section.Model.Estimator}, 
examine the finite sample performance of our estimator 
by simulations in Section~\ref{Section.Simulation}, 
and then apply it to the Oxford-Man realized library data to get an estimate of $H$ in Section~\ref{Section.Data.Analysis}.
The proofs are deferred to Appendix.


\section{Model, Quasi-likelihood Estimator, and its Consistency}\label{Section.Model.Estimator}
\subsection{Model}\label{Subsection.Model}
Denote by $(\Omega,\mathcal{F},P,\mathbb{F})$,
$\mathbb{F}=\{\mathcal{F}_u\}_{u\in[0,\infty)}$, a filtered probability
space satisfying the usual condition on which an asset price process $S$
and its volatility process $\sigma$ are defined. Extending the simplest
rough volatility  model (\ref{M2}) and a fractional volatility model of
Comte and Renault~\cite{CR}, we assume the volatility process to satisfy
\begin{equation} \label{M3}
\mathrm{d} \log \sigma^2_u = \kappa_u \mathrm{d}u + \eta \mathrm{d}W^H_u,
\end{equation}
where $\kappa$ is an unknown $\mathbb{F}$-adapted c\`adl\`ag (or c\`agl\`ad) process and $W^H$ is a fractional Brownian motion which is also $\mathbb{F}$-adapted. The parameter to be estimated is $(H, \eta) \in \Theta$, where
$\Theta := \Theta_H \times \Theta_\eta$ is a compact set of the form
$\Theta_H := [H_-,H_+] \subset (0,1]$ and
$\Theta_\eta := [\eta_-,\eta_+] \subset (0,\infty)$.
The true value is denoted by $\vartheta_0 = (H_0,\eta_0)$ and assumed to be an interior point of $\Theta$.
The process $\sigma^2$ is not directly observed and so needs to be estimated from discrete observations of the asset price $S$.
As a proxy of the unobservable $\sigma^2_t$,
we adopt the realized variance  with $m$ equidistant sampling
\begin{equation*} 
\hat{\sigma}^2_t\equiv\hat{\sigma}^2_t(m,\delta):= 
\sum_{j=0}^{m-1} \left|\log S_{(t-1+(j+1)/m)\delta} - \log S_{(t-1+j/m)\delta}\right|^2,\ \ m\in\mathbb{N},\ \ \delta>0,
\end{equation*}
and model directly the law of the proxy error for the log realized variance 
\begin{equation}\label{M4}
\epsilon_t:=\log  \hat{\sigma}^2_t - \log \int_{(t-1)\delta}^{t\delta} \sigma_u^2\, \mathrm{d}u, \ \ t =1, 2, \dots, n+1,
\end{equation} 
as an i.i.d. sequence independent of $W^H$ and normally distributed with mean $0$ and variance $2/m$, where $m$ is the size of intraday price data used to compute $\hat{\sigma}^2_t$. This specification of the law of $\{\epsilon_t\}$ is motivated by the following limit theorem.
\begin{thm}\label{LogRV.Stable.Conv}
	Consider a positive sequence $\{\delta_n\}_{n\in\mathbb{N}}$ and a sequence of natural numbers $\{m_n\}_{n\in\mathbb{N}}$ satisfying $\delta_n\to 0$ and $m_n\to\infty$ as $n\to\infty$, and $\hat\sigma^2_t\equiv\hat\sigma^2_t(m_n,\delta_n)$. Assume that a log-asset price process $\log{S}=A+M$ given by
	\begin{equation}\label{Assumption.logS}
	\mathrm{d} M_u=\sigma_u\, \mathrm{d} B_u,\ \ \mathrm{d} A_u=\psi_u\, \mathrm{d}u,
	\end{equation}
	where $\psi=\{\psi_s\}_{s\in[0,\infty)}$ is a $\mathbb{F}$-adapted locally bounded left-continuous process, $B$ is a standard $\mathbb{F}$-Brownian motion and the volatility process $\sigma^2$ is given in (\ref{M3}). Then we have
	\begin{equation*}
	\left\{
	\sqrt{m_n} \left(\log \hat\sigma^2_t - 
	\log \int_{(t-1)\delta_n}^{t\delta_n} \sigma_u^2\, \mathrm{d}u
	\right)\right\}_{t\in\mathbb{N}}\stackrel{n\to\infty}{\rightarrow} \{\sqrt{2}\xi_t\}_{t\in\mathbb{N}}\ \ \mbox{in law},
	\end{equation*}
	where $\{\xi_t\}_{t\in\mathbb{N}}$ is an i.i.d. standard Gaussian sequence independent of $\mathcal{F}$. 
\end{thm}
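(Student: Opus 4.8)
The plan is to reduce the statement to a stable central limit theorem for the realized variance and then pass to the logarithm by a delta-method step, the key point being that the double scaling $\delta_n\to0$, $m_n\to\infty$ forces the a priori random limiting variance to collapse to the deterministic constant $2$. First I would record that, by (\ref{M3}), $\log\sigma^2_u=\log\sigma^2_0+\int_0^u\kappa_s\,\mathrm{d}s+\eta W^H_u$ has continuous, strictly positive paths, so $\int_{(t-1)\delta_n}^{t\delta_n}\sigma^2_u\,\mathrm{d}u>0$ a.s.\ and the logarithm is well defined; moreover the fractional Brownian part gives $\sigma$ an a.s.\ $\gamma$-H\"older modulus of continuity for every $\gamma<H_0$, which I will use to quantify approximation errors. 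Writing $I_t:=[(t-1)\delta_n,t\delta_n]$, $I_{t,j}:=[(t-1+j/m_n)\delta_n,(t-1+(j+1)/m_n)\delta_n]$ and $\Delta_jM$ for the increment of $M$ over $I_{t,j}$, I would first discard the drift: since $\psi$ is locally bounded, $(\Delta_jA)^2$ and $\Delta_jA\,\Delta_jM$ contribute $O_P(\delta_n^{1/2})$ after the normalisation used below, so $\hat\sigma^2_t$ may be replaced by $\sum_{j=0}^{m_n-1}(\Delta_jM)^2$ throughout.

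The core step is a stable CLT for the normalised error
\[
V^n_t:=\sqrt{m_n}\,\Bigl(\hat\sigma^2_t-\textstyle\int_{I_t}\sigma^2_u\,\mathrm{d}u\Bigr)\Big/\int_{I_t}\sigma^2_u\,\mathrm{d}u,
\]
taken jointly over any finite collection of indices $t$. I would decompose $\hat\sigma^2_t-\int_{I_t}\sigma^2=\sum_j\chi_{t,j}$ with $\chi_{t,j}:=(\Delta_jM)^2-\int_{I_{t,j}}\sigma^2_u\,\mathrm{d}u$, turn $\{\chi_{t,j}\}$ into a martingale-difference array for the filtration sampled at the grid times (replacing $\int_{I_{t,j}}\sigma^2$ by its conditional expectation produces only a negligible remainder, controlled by the H\"older modulus of $\sigma$), and invoke a triangular-array stable CLT of the Jacod--Protter \cite{JP} / Fukasawa \cite{F10-SPA,F11-AAP} type. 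This amounts to checking (i) convergence of the conditional variance, (ii) a conditional fourth-moment (Lindeberg) bound, and (iii) asymptotic orthogonality of $\sum_j\chi_{t,j}$ to $B$ and to every bounded martingale orthogonal to $B$; condition (iii) is what makes the limit conditionally Gaussian and, together with (i), independent of $\mathcal F$, while the disjointness of the intervals $I_t$ for distinct $t$ yields independence across $t$.

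The decisive computation is the limit of the conditional variance. For a conditionally Gaussian increment one has $\mathrm{Var}(\chi_{t,j}\mid\cdot)\approx 2(\int_{I_{t,j}}\sigma^2)^2$, hence
\[
\mathrm{Var}(V^n_t\mid\cdot)\;\approx\;\frac{2\,\delta_n\int_{I_t}\sigma^4_u\,\mathrm{d}u}{\bigl(\int_{I_t}\sigma^2_u\,\mathrm{d}u\bigr)^2}.
\]
Here the double limit is essential: as $\delta_n\to0$, path-continuity gives $\int_{I_t}\sigma^{2p}_u\,\mathrm{d}u=\delta_n\bigl(\sigma^{2p}_0+o_P(1)\bigr)$, so the random volatility level cancels and $\mathrm{Var}(V^n_t\mid\cdot)\to 2$ deterministically. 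By the stable CLT this gives $V^n_t\to\sqrt{2}\,\xi_t$ in law with $\{\xi_t\}$ i.i.d.\ standard Gaussian independent of $\mathcal F$. Finally the logarithm: since $\hat\sigma^2_t/\int_{I_t}\sigma^2-1=V^n_t/\sqrt{m_n}=O_P(m_n^{-1/2})$, a Taylor expansion gives $\sqrt{m_n}(\log\hat\sigma^2_t-\log\int_{I_t}\sigma^2)=V^n_t-\tfrac{1}{2\sqrt{m_n}}(V^n_t)^2+\cdots=V^n_t+o_P(1)$, and Slutsky's lemma yields the claimed convergence to $\sqrt{2}\xi_t$.

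I expect the main obstacle to be precisely the simultaneous scaling $\delta_n\to0$, $m_n\to\infty$: one cannot apply the fixed-$\delta$ realized-variance CLT and afterwards send $\delta\to0$ inside a random limit, so a genuine triangular-array argument is needed in which the conditional-variance convergence and the conditional Lindeberg condition must be verified uniformly over the $m_n$ increments while the interval $I_t$ itself shrinks. The technically delicate part is controlling the accumulated approximation errors from freezing $\sigma$ over each $I_{t,j}$ and from the conditional fourth-moment estimate, and checking that their total size is $o_P(m_n^{-1/2})$; here the quantitative H\"older regularity of the fBm-driven $\log\sigma^2$ provides the needed rate.
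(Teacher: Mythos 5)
Your proposal is correct, but it takes a genuinely different technical route from the paper's. The paper (Appendix H) proves a stronger, functional version of the theorem: it embeds the normalized realized-variance error into a continuous-time process, uses It\^o's formula to write it as a continuous local martingale $Z^n_s=2\sqrt{m_n}\sum_j\int_{\tau_j^n\wedge s}^{\tau_{j+1}^n\wedge s}(M^n_u-M^n_{\tau_j^n\wedge s})\,\mathrm{d}M^n_u$, shows that the quadratic variation of the volatility-normalized version converges in probability to the deterministic function $2s$, and then applies a martingale functional CLT obtained from the Dambis--Dubins--Schwarz time change; the limit process $\sqrt{2}(\acute{B}_{s+1}-\acute{B}_s)$ then yields Gaussianity, the variance $2$, and independence across $t$ all at once, with no Lindeberg-type condition to verify (for continuous martingales the bracket convergence suffices). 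You instead work with the discrete triangular array $\chi_{t,j}$ and a Jacod-type stable CLT, checking conditional variance convergence, a conditional Lindeberg bound, and orthogonality to $B$ and to bounded martingales orthogonal to $B$. Both arguments pivot on exactly the cancellation you single out as decisive: since the whole window $I_t$ shrinks as $\delta_n\to0$, path continuity makes the random volatility level cancel and the limiting (conditional) variance collapse to the constant $2$; and both remove the drift and pass to the logarithm the same way. What your route buys is that stable convergence directly substantiates the claim that the limit is independent of $\mathcal{F}$ (joint convergence with any $\mathcal{F}$-measurable variable), and it proves precisely the sequence-level statement asked for; what the paper's route buys is the stronger process-level convergence and an economy of hypotheses, one quadratic-variation computation replacing the Lindeberg and orthogonality checks. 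One simplification you could make: $(\Delta_jM)^2-\int_{I_{t,j}}\sigma_u^2\,\mathrm{d}u$ is already a martingale difference after localization, because $M^2-\langle M\rangle$ is a local martingale, so the conditional-expectation replacement step you describe is unnecessary.
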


The proof of Theorem \ref{LogRV.Stable.Conv} is given in Appendix~\ref{Section.LogRV.Stable.Conv}. Here we give some remarks in order.
\begin{rem}\rm
	We model the volatility dynamics (\ref{M3}) in the business time scale, which means that the time variable $u$ in the model evolves only when a market of the asset is open. Therefore, the volatility is freezing when markets are closed. Including volatility jumps remains for future research.
\end{rem}

\begin{rem}\rm\label{Remark2.Stable.Conv}
	In Theorem~\ref{LogRV.Stable.Conv} and in the sequel, we consider the double high frequency limits $(\delta,1/m)\to(0,0)$.
	For example,  $\delta= 0.04 = 1/250$ corresponding to the 1 day length in a year consisting of 250 business days.
	For the 5-minute realized variance of market data with 6 opening hours, $m = 6\times 60/5 = 72$.
\end{rem}

\begin{rem}\rm
	Daily volatility proxy data including the 5-minute realized variance are readily available thanks to the Oxford-Man Institute, while
	high frequency price data (tick data) are not easily accessible.
	This motivates us to include a proxy as a model element.
	Among many volatility proxies, we adopt the realized variance by the following 4 reasons: i) As mentioned in Introduction, the high frequency limit theorems for the realized variance are valid without assuming the volatility $\sigma$ to be an It\^o semimartingale while those for others are not in general. ii) The asymptotic theory of two-scales and multi-scales realized volatilities assumes the market microstructure noise to be independent of the price and the volatility processes. 
	While this is a popular assumption in the literature, the
	authors do not consider it enough realistic. iii)
	The realized variance with modest frequency like 5 minutes, for which the market microstructure noises are negligible but still high frequency limit theorems are valid, is easy to compute from modest frequency price data that are nowadays easily obtained online for free. 
	iv) As shown in Theorem~\ref{LogRV.Stable.Conv} above, the realized variance with equidistant sampling admits a particularly simple limit law.
	Note that the limit law is different for a different proxy and
	even so  for the realized variance with a different sampling scheme;
	see~\cite{F10-SPA,F11-AAP}.
	It is remarkable that the limit law in Theorem~\ref{LogRV.Stable.Conv} does not depend on $\sigma$, which enables us to quantify the size of the approximation error to $\sigma$ without knowing the exact value of $\sigma$.
\end{rem}

\begin{rem}\rm
	In view of Theorem~\ref{LogRV.Stable.Conv}, for our model (\ref{M4}), more plausible would be  a weaker assumption that the law of $\{\epsilon_t\}_{t\in\mathbb{N}}$ is not exactly but only asymptotically i.i.d. Gaussian with mean $0$ and variance $2/m$.
	We believe that the same quasi-likelihood estimator given below enjoys the same consistency property also under this weaker assumption plus a suitable uniform integrability condition;  we however refrain from increasing the complexity of this already technical and lengthy paper.
\end{rem}

\subsection{Construction of Adapted Whittle Estimator}\label{SubSection.Construction.Estimator}
Here, for a sequence of integers $m_n$ and a positive sequence $\delta_n$, we define a quasi-likelihood estimator of the unknown parameter $\vartheta=(H,\eta)$ based on the log-realized variance increments
\begin{equation*}
Y_t^n := \log \hat{\sigma}^2_{t+1}(m_n,\delta_n) -
\log \hat{\sigma}^2_t(m_n,\delta_n), \ \ t=1,2,\dots, n. 
\end{equation*}
Firstly, we define an estimator of a reparametrized parameter $(H,\nu)$, where $\nu:=\eta\delta_n^{H}\in\Theta_\nu^n:=[\eta_-\delta_n^{H_+},\eta_+\delta_n^{H_-}]$, by 
\begin{align}
&(\widehat{H}_n,\widehat\nu_n) :=\argmin_{(H,\nu)\in\Theta_H\times\Theta_\nu^n} U_n(H,\nu), \nonumber\\
&U_n(H,\nu) := \frac{1}{4\pi}\int_{-\pi}^\pi\left(\log{g_{H,\nu}^n(\lambda)} +\frac{I_n\left(\lambda,\mathbf{Y}_n\right)}{g_{H,\nu}^n(\lambda)}\right)\, \mathrm{d}\lambda, \label{Def.Estimation.Function}
\end{align}
where $\mathbf{Y}_n:=(Y_1^n,\cdots,Y_n^n)$, and $I_n(\cdot,\mathbf{y})$ and $g_{H,\nu}^n$ are a periodogram of $\mathbf{y}\in\mathbb{C}^n$ and an approximate spectral density of $\mathbf{Y}_n$ with respect to the reparametrized parameter $(H,\nu)$ respectively given by
\begin{align}
&I_n(\lambda,\mathbf{y}) :=\frac{1}{2\pi n}\left|\sum_{t=1}^n y_t\exp\left(\sqrt{-1}t\lambda\right)\right|^2,\ \ \mathbf{y}\equiv(y_1,\cdots,y_n)\in\mathbb{C}^n, \label{Def.Periodogram} \\
&g_{H,\nu}^n(\lambda) := \nu^2f_H(\lambda) + \frac{2}{m_n}\ell(\lambda),\hspace{0.2cm}\lambda\in[-\pi,\pi], \nonumber 
\end{align}
where $f_H$ and $\ell$ are given in Appendix~\ref{Appendix.SPD}.
Note that, for each $n\in\mathbb{N}$, the estimator $(\widehat{H}_n,\widehat\nu_n)$ always exists  because $\Theta_H\times\Theta_\nu^n$ is compact. 
Then we define an estimator of the parameter $\eta$ by substituting $\widehat{H}_n$ and $\widehat{\nu}_n$ into the relation $\nu=\eta\delta_n^H$, i.e. an estimator of the original unknown parameter $\vartheta$ is defined by
\begin{equation}\label{Def.Adapted.Whittle.Estimator}
\widehat\vartheta_n := \left(\widehat{H}_n, \widehat\eta_n\right)\hspace{0.2cm}\mbox{with}\hspace{0.2cm}\widehat\eta_n := \delta_n^{-\widehat{H}_n}\widehat\nu_n,\hspace{0.2cm}n\in\mathbb{N}.
\end{equation}
We call the estimator $\widehat\vartheta_n$ as the adapted Whittle
estimator through this paper. 

Now the idea for (\ref{Def.Estimation.Function}) is summarized in the following remark.
\begin{rem}\rm\label{Remark.Idea.QMLE}
	Our idea to derive the approximate likelihood function (\ref{Def.Estimation.Function}) is based on a local approximation of $\mathbf{Y}_n$ by a certain Gaussian vector and the Whittle likelihood of a sequence of the approximate Gaussian vectors. 
	Indeed, the Taylor and the Euler-Maruyama approximations of $\mathbf{Y}_n$ yield
	\begin{equation}\label{Local.Gaussian.Approximation}
	Y_t^n\approx \frac{1}{\delta_n}\int_{t\delta_n}^{(t+1)\delta_n}\eta(W_u^H-W_{u-\delta_n}^H)\, \mathrm{d}u + (\epsilon_{t+1}^n-\epsilon_t^n) =:G_t^n,\ \ t=1,\cdots,n,
	\end{equation}
	as $\delta_n\to 0$, where $\{\epsilon_t^n\}_{t\in\mathbb{Z}}$ is an i.i.d. sequence independent of $W^H$ and normally distributed with mean $0$ and variance $2/m_n$. See Appendix~\ref{Appendix.Approximation.Data} for a precise statement of the above approximation. 
	Furthermore, we can show that a covariance function of the approximate Gaussian vector $\mathbf{G}_n:=(G_1^n,\cdots,G_n^n)$ is characterized by a spectral density $f_{H,\eta}^n$ given by
	\begin{equation*}
	f_{H,\eta}^n(\lambda):= \eta^2\delta_n^{2H}f _H(\lambda) + \frac{2}{m_n}\ell(\lambda),\ \ \lambda\in[-\pi,\pi]. 
	\end{equation*}
	See Appendix~\ref{Appendix.SPD} for more detail. 
	Finally, we adopt the Whittle likelihood of the Gaussian vector $\mathbf{G}_n$, which was investigated in Fukasawa and Takabatake~\cite{FT19-BEJ} under high frequency observations without the noise $\{\epsilon_t^n\}_{t\in\mathbb{N}}$, as an approximate likelihood of $\mathbf{Y}_n$.
\end{rem}
\begin{rem}[\textit{Why we need to reparametrize ?}]\rm
	Under high frequency observations, due to a self-similarity
	property of fractional Gaussian noises, the effects of $\eta$ and $H$
	fuse in the limit and the asymptotic Fisher information matrix becomes
	singular. As a result, it is necessary to reparametrize the parameter
	$\eta$ in order to obtain a limit theorem of estimator. See
	Brouste and Fukasawa~\cite{BF} and Fukasawa and Takabatake
	~\cite{FT19-BEJ} for more details.
\end{rem}

\subsection{Main Theorem}
We state our main theorem in this paper. 
\begin{thm}\label{Consistency.Estimator}
	Assume the true value $\vartheta_0$ is an interior point of $\Theta$ and the following three conditions $(H.\ref{Assumption.HighFrequency})-(H.\ref{Assumption.mn})$ hold:
	\begin{enumerate}[(H.$1$)]
		\item\label{Assumption.HighFrequency}
		$\lim_{n\to\infty}\delta_n=0$ and $\lim_{n\to\infty}m_n=\infty$.
		\item\label{Assumption.Tn} 
		$0<\varliminf_{n\to\infty}T_n\leq \varlimsup_{n\to\infty}T_n <\infty$, where $T_n:=n\delta_n$.
		\item\label{Assumption.mn}
		$\varliminf_{n\to\infty}\inf_{H\in\Theta_H}m_n\delta_n^{2H}=\varliminf_{n\to\infty}m_n\delta_n^{2H_+}>0$.
	\end{enumerate}
	Then a sequence of estimators $\{\widehat\vartheta_n\}_{n\in\mathbb{N}}$ is weakly consistent, i.e. $\widehat\vartheta_n\to\vartheta_0$ in probability as $n\to\infty$.
\end{thm}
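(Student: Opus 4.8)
The plan is to treat $\widehat\vartheta_n$ as an M-estimator and run the classical argmin-consistency argument in the reparametrized coordinates $(H,\nu)$: first establish a uniform law of large numbers showing that the random contrast $U_n(H,\nu)$ stays uniformly close to a deterministic population contrast, then an identifiability estimate showing that the population contrast separates the true parameter $(H_0,\nu_0^n)$, where $\nu_0^n:=\eta_0\delta_n^{H_0}$. The two nonstandard features forcing extra care are that $\mathbf{Y}_n$ is only \emph{locally} Gaussian and that both the model spectral density $g_{H,\nu}^n$ and the parameter region $\Theta_\nu^n$ drift with $n$ under the high-frequency scaling, so every estimate must be kept uniform in $n$.

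The first step is to reduce to the Gaussian case. Using the local Gaussian approximation (\ref{Local.Gaussian.Approximation}) together with Theorem~\ref{LogRV.Stable.Conv}, I would show that substituting the periodogram $I_n(\cdot,\mathbf{G}_n)$ of the approximating Gaussian vector for $I_n(\cdot,\mathbf{Y}_n)$ perturbs $U_n(H,\nu)$ by a quantity tending to $0$ in probability, uniformly over $(H,\nu)\in\Theta_H\times\Theta_\nu^n$; since the $\log g_{H,\nu}^n$ term cancels, this difference equals $\frac{1}{4\pi}\int_{-\pi}^{\pi}(I_n(\lambda,\mathbf{Y}_n)-I_n(\lambda,\mathbf{G}_n))/g_{H,\nu}^n(\lambda)\,\mathrm{d}\lambda$. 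The numerator is handled by a Cauchy--Schwarz bound in the frequency domain controlled by the $L^2$ size of $\mathbf{Y}_n-\mathbf{G}_n$, while the denominator is bounded below by combining $g_{H,\nu}^n\geq\frac{2}{m_n}\ell$ with $g_{H,\nu}^n\geq\nu^2 f_H$, the latter controlling the region near $\lambda=0$ where $\ell$ vanishes, and condition (H.\ref{Assumption.mn}) keeping these two lower bounds comparable so that the infimum does not collapse as $n\to\infty$.

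Next, for the genuinely Gaussian vector $\mathbf{G}_n$ I would invoke and adapt the Whittle machinery of Fukasawa and Takabatake~\cite{FT19-BEJ}. Setting $\bar U_n(H,\nu):=\frac{1}{4\pi}\int_{-\pi}^{\pi}\bigl(\log g_{H,\nu}^n(\lambda)+f_{H_0,\eta_0}^n(\lambda)/g_{H,\nu}^n(\lambda)\bigr)\,\mathrm{d}\lambda$, obtained by replacing the periodogram with the true spectral density $f_{H_0,\eta_0}^n=g_{H_0,\nu_0^n}^n$, I would prove that the contrast evaluated with $I_n(\cdot,\mathbf{G}_n)$ satisfies $\sup_{(H,\nu)}|U_n(H,\nu;\mathbf{G}_n)-\bar U_n(H,\nu)|\to 0$ in probability; this rests on $E[I_n(\lambda,\mathbf{G}_n)]$ being close to $f_{H_0,\eta_0}^n(\lambda)$ and on concentration of the integrated periodogram, after verifying that the relevant constants stay controlled uniformly in $n$ despite the drift of the spectral density and of the covariance of $\mathbf{G}_n$. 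For identifiability I would apply the elementary bound $\log x+1/x-1\geq 0$ (equality iff $x=1$) pointwise to $x=g_{H,\nu}^n/f_{H_0,\eta_0}^n$, giving $\bar U_n(H,\nu)\geq\bar U_n(H_0,\nu_0^n)$ with equality iff $g_{H,\nu}^n\equiv f_{H_0,\eta_0}^n$; the quantitative separation $\varliminf_n\inf_{|(H,\nu)-(H_0,\nu_0^n)|>\rho}\{\bar U_n-\bar U_n(H_0,\nu_0^n)\}>0$ is then read off from the low-frequency behaviour $f_H(\lambda)\sim c\,|\lambda|^{1-2H}$ (which separates distinct $H$) and the overall scale (which separates distinct $\nu$), with (H.\ref{Assumption.Tn}) and (H.\ref{Assumption.mn}) ensuring the signal is not drowned by the noise floor so that the separation does not degenerate.

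Combining the uniform convergence with the separation yields $(\widehat H_n,\widehat\nu_n)-(H_0,\nu_0^n)\to 0$ in probability, whence $\widehat H_n\to H_0$. The passage to $\widehat\eta_n\to\eta_0$ is the delicate endgame: since $\widehat\eta_n/\eta_0=(\widehat\nu_n/\nu_0^n)\,\delta_n^{H_0-\widehat H_n}$ and $\delta_n\to 0$, mere consistency of $\widehat H_n$ is insufficient and one must upgrade the separation to force $\widehat H_n-H_0=o_P(1/\log(1/\delta_n))$ together with the relative convergence $\widehat\nu_n/\nu_0^n\to 1$, tracking the logarithmic weighting of the contrast in the $H$-direction. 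The step I expect to be the main obstacle, echoing the paper's own remark, is the uniform reduction to the Gaussian case: the approximation error $\mathbf{Y}_n-\mathbf{G}_n$ must be controlled not merely in mean square but so as to survive division by $g_{H,\nu}^n$, whose infimum over the moving region $\Theta_\nu^n$ degenerates near $\lambda=0$, and this requires carefully balancing the rate in Theorem~\ref{LogRV.Stable.Conv}, the Taylor and Euler--Maruyama errors in (\ref{Local.Gaussian.Approximation}), and the interplay between $m_n$ and $\delta_n$ encoded in (H.\ref{Assumption.mn}).
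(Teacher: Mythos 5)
Your overall skeleton (reduce to the Gaussian vector $\mathbf{G}_n$, run a Whittle uniform-law-plus-identifiability argument in the reparametrized coordinates, then upgrade to a logarithmic rate for $\widehat{H}_n$ so that $\widehat\eta_n\to\eta_0$ follows) matches the paper's strategy, but two of your steps contain genuine gaps.

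First, the reduction to the Gaussian case cannot be closed by ``a Cauchy--Schwarz bound controlled by the $L^2$ size of $\mathbf{Y}_n-\mathbf{G}_n$''. Write $\mathbf{R}_n:=\mathbf{Y}_n-\mathbf{G}_n$ with components $R_t^n$; by Proposition~\ref{Expansion.Formula.Y} its dominant part is the quadratic Taylor term, of pathwise size $O_P(\delta_n^{2H_0-\psi})$, and this size is all that a norm-only bound retains. Since $1/g_{H,\nu}^n\asymp\nu^{-2}|\lambda|^{2H-1}$ near $\lambda=0$, Lemma~\ref{Extension1.FT86} gives $|\widehat{(1/g_{H,\nu}^n)}(\tau)|=O(\nu^{-2}|\tau|^{-2H})$, so a norm-only estimate of the weighted quadratic form yields
\begin{equation*}
Q_n\left(\mathbf{R}_n,1/g_{H,\nu}^n\right)\leq\frac{1}{2\pi}\max_{t}|R_t^n|^2\sum_{|\tau|<n}\left|\widehat{(1/g_{H,\nu}^n)}(\tau)\right| = O_P\left(\delta_n^{4H_0-2\psi}\cdot\nu^{-2}n^{1-2H}\right)=O_P\left(\delta_n^{4H_0-1-2\psi}\right)
\end{equation*}
at $H=H_0$, using $\nu^2\asymp\delta_n^{2H}$ and $n\asymp\delta_n^{-1}$ from (H.\ref{Assumption.Tn}); the cross term, by Cauchy--Schwarz against $Q_n(\mathbf{G}_n,1/g^n)=O_P(1)$, is then $O_P(\delta_n^{2H_0-1/2-\psi})$. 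These bounds diverge whenever $H_0<1/4$ --- exactly the rough regime the paper targets (its empirical estimates are $\widehat{H}\approx 0.02$--$0.06$). The missing idea is that one must exploit the covariance structure, not the size, of the remainder: the paper keeps the remainder in its explicit form $\Delta W_t^{n,\mathbf{p}}$, proves by a Wick-formula/filter argument that these differenced stationary fBm functionals have autocovariance $O(\delta_n^{(|\mathbf{p}|+|\mathbf{q}|)H_0}|\tau|^{2H_0-4})$ (Proposition~\ref{Key.Lemma.Result.Cor}), and then bounds the \emph{expectation} of the quadratic form by $\delta_n^{2|\mathbf{p}|H_0}\sum_{\tau}|\tau|^{-2H+2H_0-4}<\infty$, so that no factor $n^{1-2H}$ is lost (result (R.1) in the proof of Proposition~\ref{Convergence.QuadraticForm.Remainder}).

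Second, the estimation-theoretic part has two structural problems. (i) Your uniform law of large numbers cannot hold ``uniformly over $\Theta_H\times\Theta_\nu^n$'': whenever $\Theta_H$ contains points with $H\leq H_0-1/2$ (allowed, e.g.\ $H_0=0.7$, $H_-=0.1$), the population contrast is $+\infty$ there, because $f_{H_0,\eta_0}^n/g_{H,\nu}^n\asymp|\lambda|^{2(H-H_0)}$ is not integrable near the origin while $U_n$ is a.s.\ finite. One must split the parameter space as in Velasco and Robinson~\cite{VR} and show the contrast explodes in probability on the bad regions, which is Step~\ref{Step2} of the proof of Proposition~\ref{Consistency.ThetaTilde}; similarly, in relative coordinates the $\nu$-space expands to $(0,\infty)$, so escape of $\widehat\nu_n/(\eta_0\delta_n^{H_0})$ to $0$ or $\infty$ must be ruled out by separate explosion lemmas. (ii) You correctly identify that $\widehat\eta_n\to\eta_0$ requires $\widehat{H}_n-H_0=o_P(1/|\log\delta_n|)$, but ``upgrading the separation'' is not a mechanism: a uniform law plus identifiability yields only fixed-$\rho$ consistency, never a rate. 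The paper obtains the rate from a score/Hessian argument: the score vanishes at the interior minimizer; the score at the truth is $o_P(\delta_n^{\psi})$ (Proposition~\ref{Scaled.Score.Negligibility}, whose proof needs a deterministic bias cancellation of Fox--Taqqu type, the analogue of Theorem~2 in~\cite{FT86}, showing the non-random part of the score is $O(1/\sqrt{n})$ rather than merely $o(1)$); and the Hessian converges uniformly to an invertible limit (second claim of Proposition~\ref{Uniform.Convergence.Contrast}). A Taylor expansion of the score then gives $(\log\delta_n)(\widetilde\vartheta_n-\vartheta_0)=o_P(1)$, since any positive power of $\delta_n$ beats $\log\delta_n$. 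Without this apparatus your final step does not go through.
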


The proof of Theorem~\ref{Consistency.Estimator} is deferred to Appendix. Here we make comments on technical difficulties for the proof in the following remark.
\begin{rem}\rm
	One of the difficulties is that the parameter space $\Theta_H\times\Theta_\nu^n$ where the estimation function $U_n(H,\nu)$ is minimized depends on the asymptotic parameter $n\in\mathbb{N}$ and $\lim_{n\to\infty}\Theta_\nu^n=\emptyset$. 
	As a result, $U_n(H,\nu)$ fails to satisfy the identifiability condition of the parameter $(H,\nu)$ in the limit as $n\to\infty$. 
	In order to circumvent this difficulty, we appropriately rescale the estimator $(\widehat{H}_n,\widehat\nu_n)$ and its estimation function $U_n(H,\nu)$, and attempt to find a function which can identify a rescaled parameter in the limit. 
	Actually, we can find a function $U_{n,0}(H,\widetilde\nu)$, where $\widetilde\nu:=\nu\delta_n^{-H_0}\in\Theta_{\widetilde\nu}^n:=[\eta_- \delta_n^{H_+-H_0},\eta_+ \delta_n^{H_- - H_0}]$, which satisfies 
	\begin{equation}\label{Minimization.Rescaled.EF}
	(\widehat{H}_n,\delta_n^{-H_0}\widehat\nu_n) = \argmin_{(H,\widetilde\nu)\in\Theta_H\times\Theta_{\widetilde\nu}^n}U_{n,0}(H,\widetilde\nu),
	\end{equation}
	where 
	\begin{align}
	&U_{n,0}(H,\widetilde\nu) := \frac{1}{4\pi}\int_{-\pi}^\pi\left(\log{h_{H,\widetilde\nu}^n(\lambda)} +\frac{I_n\left(\lambda,\widetilde{\mathbf{Y}}_n\right)}{h_{H,\widetilde\nu}^n(\lambda)}\right)\, \mathrm{d}\lambda, \label{Shadow.Estimation.Function}\\
	&h_{H,\widetilde\nu}^n(\lambda) := \widetilde\nu^2f_H(\lambda) + \frac{2}{m_n\delta_n^{2H_0}}\ell(\lambda) \label{Rescaled.SPD}
	\end{align}
	with $\widetilde{\mathbf{Y}}_n:=\delta_n^{-H_0}\mathbf{Y}_n$. 
	Indeed, $U_n(H,\nu)$ and $U_{n,0}(H,\widetilde\nu)$ are connected by the relation $U_n(H,\nu)=H_0\log\delta_n+U_{n,0}(H,\nu/\delta_n^{H_0})$ so that the estimator $(\widehat{H}_n,\widehat\nu_n)$ also minimizes 
	\begin{equation*}
	(\widehat{H}_n,\widehat\nu_n) = \argmin_{(H,\nu)\in\Theta_H\times\Theta_\nu^n}U_{n,0}(H,\nu/\delta_n^{H_0}).
	\end{equation*}
	As a result, (\ref{Minimization.Rescaled.EF}) follows from the one-to-one correspondence between $\nu$ and $\widetilde\nu$. 
	Then we can show that $U_{n,0}(H,\widetilde\nu)$ and $(\widehat{H}_n,\delta_n^{-H_0}\widehat\nu_n)$ converge to a certain function which can identify the rescaled parameter $(H,\widetilde\nu)$ and to the true value $(H_0,\widetilde\nu_0)=(H_0,\eta_0)$, where denote $\nu_0:=\eta_0\delta_n^{H_0}$ and $\widetilde\nu_0:=\nu_0\delta_n^{-H_0}$, respectively when, at least, the proxy error rapidly vanishes in the sense of (H.\ref{Assumption.mn}). 
	Furthermore, we can also show that the estimator $\widehat\vartheta_n$ converges to $(H_0,\eta_0)$ by using the convergence $(\widehat{H}_n,\delta_n^{-H_0}\widehat\nu_n)$ to $(H_0,\eta_0)$.
	Note that $U_{n,0}(H,\widetilde\nu)$ is not a true estimation function because the true value $H_0$ is used in its definition. It plays, however, the similar role to the usual estimation function due to (\ref{Minimization.Rescaled.EF}). 
	The final remark is that a sequence of rescaled parameter spaces $\{\Theta_{\widetilde\nu}^n\}_{n\in\mathbb{N}}$ converges to the unbounded set $(0,\infty)$ so that several additional cares in the proof are necessary.
\end{rem}
\section{Numerical Study}\label{Section.Simulation}
In this section, we examine the finite sample performance of the adapted Whittle estimator proposed in Section~\ref{SubSection.Construction.Estimator} when the log-volatility dynamics is given by a fractional Ornstein-Uhlenbeck process with mean-reverting property. 
We explain how to simulate a sample path of an asset price process following the fractional volatility model in Section~\ref{Section.Model.Parameters} and how to implement the adapted Whittle estimation in Section~\ref{Section.Implementation.AWE}. We summarize several numerical results in Section~\ref{Section.Numerical.Experiments}.
\subsection{Simulation Method for Asset Price Process}\label{Section.Model.Parameters}
In our numerical studies, we simulate an asset price process whose log-volatility process is given by the fractional Ornstein-Uhlenbeck process, i.e.
\begin{equation}\label{Simulated.Model.Def}
\mathrm{d}\log S_u = \sigma_u\, \mathrm{d}B_u,\ \ \mathrm{d}\log\sigma^2_u = \alpha(c-\log\sigma^2_u)\,\mathrm{d}u + \eta_0\,\mathrm{d}W^{H_0}_u,
\end{equation}
by using the Euler-Maruyama approximation, where $B$ is a Brownian motion independent of $W^{H_0}$. Here we generate the fractional Brownian motion $W^{H_0}$ by using the R-function "SimulateFGN" given in the R-package "FGN". 
We consider the case of $\delta=1/250$ and $T:=n\delta=10$. For the size of the price data $m$ used to compute the realized volatility, we consider three cases: $m=80$ and $80\times 5$ of which the values are corresponding to those of 5-minute and 1-minute realized volatilities respectively.
Moreover, our model parameters are given by $H_0=0.01,0.05,0.1,0.3,0.5,0.7$, $\eta_0=1,2,3$, $\alpha=0.001$, $c=\log\sigma_0^2=-3.2$ and $S_0=100$. 
We generate 100 paths to have 100 samples of the estimator.
\subsection{Implementation of Adapted Whittle Estimator}\label{Section.Implementation.AWE}
Denote by $m\equiv m_n$ and $g_{H,\nu}\equiv g_{H,\nu}^n$ for notational simplicity. 
In order to implement the adapted Whittle estimator, we evaluate the estimation function $U_n(H,\nu)$ by
\begin{equation}\label{Approximate.Estimation.Function}
U_n(H,\nu)\approx\frac{1}{2\pi}\int_\psi^\pi\left(\log{g_{H,\nu}(\lambda)} +\frac{I_n(\lambda,\mathbf{Y}_n)}{g_{H,\nu}(\lambda)}\right)\, \mathrm{d}\lambda +A_{H,\nu}^1\left(\psi\right) +A_{H,\nu}^2\left(\psi\right)
\end{equation}
for sufficiently small $\psi>0$, where the above integral is calculated using the R-function "integrate" and additional correction terms $A_{H,\nu}^1(\psi)$ and $A_{H,\nu}^2(\psi)$ are respectively given by
\begin{align*}
&A_{H,\nu}^1(\psi):=\frac{1}{2\pi}\left(\psi\log(\nu^2C_H) +\psi(\log\psi-1)(1-2H) +\frac{\psi^{2+2H}}{\nu^2C_Hm\pi(2+2H)}\right), \\
&A_{H,\nu}^2(\psi):=\frac{1}{2\pi}\left(a_{H,\nu}(0,\psi)\widehat\gamma_n(0)+2\sum_{\tau=1}^{n-1}a_{H,\nu}(\tau,\psi)\widehat\gamma_n(\tau)\right),\ \ \psi\in(0,\pi],
\end{align*}
with $\widehat\gamma_n(\tau):=\frac{1}{n}\sum_{t=1}^{n-|\tau|}Y_t^nY_{t+|\tau|}^n$ and $a_{H,\nu}(\tau,\psi)\equiv a_{H,\nu}(\tau,\psi,J)$ given by
\begin{equation*}
a_{H,\nu}(\tau,\psi,J):=\frac{1}{2\pi}\sum_{j=0}^J\frac{(-1)^j\tau^{2j}}{(2j)!}\frac{1}{\nu^2C_H}\left(\frac{\psi^{2j+2H}}{2j+2H} -\frac{\psi^{1+2j+4H}}{\nu^2C_Hm\pi(1+2j+4H)}\right)
\end{equation*}
for a sufficiently large $J\in\mathbb{N}$ and each $\tau\in\{0,1,\cdots,n-1\}$. The derivation (\ref{Approximate.Estimation.Function}) is given in Appendix~\ref{Appendix.Numerical.Experiments}. Note that the auto-covariance function $\widehat\gamma$ can be effectively computed using the fast Fourier transform algorithm. 
Moreover, we adopt the Paxson approximation of spectral densities for the spectral density $g_{H,\nu}$ used in (\ref{Approximate.Estimation.Function}), i.e. $g_{H,\nu}$ is approximated by
\begin{align}\label{Paxson.Approximation}
g_{H,\nu}(\lambda)
\approx\nu^2C_H\{2(1-\cos\lambda)\}^2\left\{|\lambda|^{-3-2H}+\sum_{k=1}^Kd_H^1(k,\lambda) +\frac{1}{2}\left(d_H^2(K,\lambda)+d_H^2(K+1,\lambda)\right)\right\} +\frac{2}{m}\ell(\lambda) \nonumber
\end{align}
with a sufficiently large $K\in\mathbb{N}$, where 
\begin{align*}
&d_H^1(x,\lambda):=(2\pi x+\lambda)^{-3-2H}+(2\pi x-\lambda)^{-3-2H}, \\ 
&d_H^2(x,\lambda):=\frac{1}{2\pi(2+2H)}\left\{(2\pi x+\lambda)^{-2-2H}+(2\pi x-\lambda)^{-2-2H}\right\}
\end{align*}
for $x\in(1/2,\infty)$ and $\lambda\in[-\pi,\pi]$; see Fukasawa and Takabatake~\cite{FT19-BEJ-Supplement} for more detail. 
We fix $\psi=10^{-5}$, $K=500$ and $J=20$ in our numerical studies.

Finally, we briefly explain how to numerically evaluate the minimizer $(\widehat{H}_n,\widehat\nu_n)$ of the estimation function $U_n(H,\nu)$. In our numerical studies, we use the R-function "optim" in order to obtain the minimizer and select the option "L-BFGS-B" as the optimization method of $U_n(H,\nu)$. Then we consider the parameter space $\Theta=\Theta_H\times\Theta_\eta=[0.001,0.99]\times[0.1,10]$ and take the true value $(H_0,\nu_0)$, where $\nu_0=\eta_0(1/250)^{H_0}$, as the initial value of the optimization for $U_n(H,\nu)$.
\subsection{The numerical results}\label{Section.Numerical.Experiments}
In Table~\ref{Table.Mean.H} and Table~\ref{Table.Mean.eta}, we give
the mean and variance of $\widehat{H}_n$ and $\widehat\eta_n$
respectively. The tables show that when the Hurst parameter is greater than $0.05$,
both of the Hurst and diffusion parameters are estimated 
reasonably well even with 5-minute realized volatility.
In the case of
$H_0=0.01$, positive and
negative estimation biases are observed for the estimator $\widehat{H}_n$ and
$\widehat\eta_n$ respectively.  
There would be, however, no problem in examining whether the volatility is rough ($H_0< 0.5$ or not) because there are few estimation biases in the case of $H_0\geq 0.05$ and the size of them in the case of $H_0< 0.05$ would  not be too large.
Thus, we conclude that the adapted Whittle estimator gives a reliable answer to our question with data analysis using 5-minute realized volatility.
\ctable[botcap,caption={The mean and variance of the adapted Whittle estimator of the Hurst parameter.},label=Table.Mean.H,pos=H,]{lrrcrrcrr}{}{\FL
	\multicolumn{1}{l}{\bfseries }&\multicolumn{2}{c}{\bfseries $\eta_0$= 1}&\multicolumn{1}{c}{\bfseries }&\multicolumn{2}{c}{\bfseries $\eta_0$= 2}&\multicolumn{1}{c}{\bfseries }&\multicolumn{2}{c}{\bfseries $\eta_0$= 3}\NN
	\cline{2-3} \cline{5-6} \cline{8-9}
	\multicolumn{1}{l}{}&\multicolumn{1}{c}{Mean}&\multicolumn{1}{c}{Variance}&\multicolumn{1}{c}{}&\multicolumn{1}{c}{Mean}&\multicolumn{1}{c}{Variance}&\multicolumn{1}{c}{}&\multicolumn{1}{c}{Mean}&\multicolumn{1}{c}{Variance}\ML
	{\bfseries $H_0$=0.01}&&&&&&&&\NN
	~~1 min &$0.03189$&$0.0006475$&&$0.02437$&$0.0004847$&&$0.02348$&$0.0004543$\NN
	~~5 min&$0.04543$&$0.0011467$&&$0.02659$&$0.0005737$&&$0.02179$&$0.0004091$\ML
	{\bfseries $H_0$=0.05}&&&&&&&&\NN
	~~1 min &$0.06662$&$0.0003567$&&$0.06300$&$0.0002407$&&$0.05853$&$0.0003239$\NN
	~~5 min&$0.06606$&$0.0004830$&&$0.05850$&$0.0002776$&&$0.05405$&$0.0003375$\ML
	{\bfseries $H_0$=0.1}&&&&&&&&\NN
	~~1 min &$0.10717$&$0.0001890$&&$0.10267$&$0.0002609$&&$0.09905$&$0.0002079$\NN
	~~5 min&$0.10527$&$0.0003103$&&$0.09709$&$0.0002675$&&$0.09303$&$0.0002458$\ML
	{\bfseries $H_0$=0.3}&&&&&&&&\NN
	~~1 min &$0.30185$&$0.0002681$&&$0.30102$&$0.0002316$&&$0.29975$&$0.0002678$\NN
	~~5 min&$0.30029$&$0.0005437$&&$0.29672$&$0.0003572$&&$0.29557$&$0.0003542$\ML
	{\bfseries $H_0$=0.5}&&&&&&&&\NN
	~~1 min &$0.50131$&$0.0007236$&&$0.50107$&$0.0003558$&&$0.49946$&$0.0002952$\NN
	~~5 min&$0.49196$&$0.0016275$&&$0.49921$&$0.0006041$&&$0.49863$&$0.0004874$\ML
	{\bfseries $H_0$=0.7}&&&&&&&&\NN
	~~1 min &$0.70793$&$0.0017105$&&$0.70928$&$0.0008751$&&$0.70514$&$0.0005364$\NN
	~~5 min&$0.70212$&$0.0033792$&&$0.71401$&$0.0014436$&&$0.70388$&$0.0014050$\LL
}
\ctable[botcap,caption={The mean and variance of the adapted Whittle estimator of the diffusion parameter.},label=Table.Mean.eta,pos=H,]{lrrcrrcrr}{}{\FL
	\multicolumn{1}{l}{\bfseries }&\multicolumn{2}{c}{\bfseries $\eta_0$= 1}&\multicolumn{1}{c}{\bfseries }&\multicolumn{2}{c}{\bfseries $\eta_0$= 2}&\multicolumn{1}{c}{\bfseries }&\multicolumn{2}{c}{\bfseries $\eta_0$= 3}\NN
	\cline{2-3} \cline{5-6} \cline{8-9}
	\multicolumn{1}{l}{}&\multicolumn{1}{c}{Mean}&\multicolumn{1}{c}{Variance}&\multicolumn{1}{c}{}&\multicolumn{1}{c}{Mean}&\multicolumn{1}{c}{Variance}&\multicolumn{1}{c}{}&\multicolumn{1}{c}{Mean}&\multicolumn{1}{c}{Variance}\ML
	{\bfseries $H_0$=0.01}&&&&&&&&\NN
	~~1 min &$0.8014$&$0.0438057$&&$1.741$&$0.140862$&&$2.687$&$0.296818$\NN
	~~5 min&$0.7293$&$0.0459881$&&$1.728$&$0.147558$&&$2.757$&$0.250566$\ML
	{\bfseries $H_0$=0.05}&&&&&&&&\NN
	~~1 min &$0.9895$&$0.0021232$&&$2.004$&$0.004104$&&$3.114$&$0.038000$\NN
	~~5 min&$1.0111$&$0.0049482$&&$2.067$&$0.010018$&&$3.223$&$0.057503$\ML
	{\bfseries $H_0$=0.1}&&&&&&&&\NN
	~~1 min &$1.0217$&$0.0006927$&&$2.047$&$0.002700$&&$3.091$&$0.004235$\NN
	~~5 min&$1.0341$&$0.0007719$&&$2.063$&$0.002196$&&$3.124$&$0.004053$\ML
	{\bfseries $H_0$=0.3}&&&&&&&&\NN
	~~1 min &$1.0101$&$0.0047328$&&$2.012$&$0.016563$&&$3.026$&$0.046931$\NN
	~~5 min&$1.0117$&$0.0064975$&&$1.993$&$0.020801$&&$2.996$&$0.052943$\ML
	{\bfseries $H_0$=0.5}&&&&&&&&\NN
	~~1 min &$1.0175$&$0.0120100$&&$2.016$&$0.032845$&&$3.008$&$0.068443$\NN
	~~5 min&$0.9987$&$0.0181695$&&$2.009$&$0.040863$&&$3.009$&$0.085446$\ML
	{\bfseries $H_0$=0.7}&&&&&&&&\NN
	~~1 min &$1.0584$&$0.0338098$&&$2.114$&$0.090472$&&$3.109$&$0.123645$\NN
	~~5 min&$1.0582$&$0.0504322$&&$2.160$&$0.136346$&&$3.120$&$0.231304$\LL
}
\section{Application to Daily Realized Volatility Data of Stock Indices}\label{Section.Data.Analysis}
In this section, we apply the adapted Whittle estimator to the 5-minute
daily realized volatility data for  several major
stock indices provided by the Oxford-Man realized library. 
We give the estimated values 
in Section~\ref{Section.Summary.Empirical.Study} and give
an additional discussion in Section~\ref{Section.Additional.Discussion}.
\subsection{Estimation Results}\label{Section.Summary.Empirical.Study}
First of all, we make several remarks on the optimization of the estimation function. 
In our data analysis, we use the same implementation and optimization methods of the estimation function $U_n(H,\nu)$ and the same parameter space $\Theta=\Theta_H\times\Theta_\eta$ mentioned in Section~\ref{Section.Implementation.AWE}.
Then we calculate the optimal value in the candidates of the estimated values each of which is obtained from the optimization method starting at each initial value $(H_{ini},\nu_{ini})$ with $H_{ini}\in\{0.01,0.05,1,2,3,4,5,6,7,8,9\}$ and $\nu_{ini}\in\{0.5,1.5,2.5,3.5\}$.

Next, we briefly explain how to compute the value of $m$ which is the size of price data used to compute the 5-minute daily realized volatility for each stock index. 
In our data analysis below, we consider the following 5 stock indices: S\&P 500, FTSE 100, Nikkei 225, DAX, Russell 3000.
Then we can easily calculate the value of $m$ for each stock index since we know the opening hours of the markets are from 9:30 to 16:00 for S\&P 500, from 8:00 to 16:30 for FTSE 100, from 9:00 to 11:30 and from 12:30 to 15:00 for Nikkei 225, from 9:00 to 17:40 for DAX, from 9:30 to 16:00 for Russell 3000, see Remark~\ref{Remark2.Stable.Conv} for an example of computation of $m$. 
In the first row of Table~\ref{Table.Data}, we summarize the values of $m$ for the stock indices. 
We give the estimated values  $(\widehat{H}_n,\widehat\eta_n)$
in Table~\ref{Table.Data}. 
For all indices, 
the Hurst parameter is estimated between 0.02 and
0.06. Our data analysis suggests $H < 0.5$, that is, 
the volatility is indeed rough; 
it is even
rougher than claimed in 
Gatheral et al.~\cite{GJR} and Bennedsen et al.~\cite{BLP-Decoupling}.
It is noteworthy that the estimate $H < 0.1$ is consistent to 
the calibrated parameters from the option market data in
Bayer et al.~\cite{BFG}.
\ctable[botcap,caption={Estimated value of the adapted Whittle estimator $(\widehat{H}_n,\widehat\eta_n)$ of major stock indices for the period: 02/01/2008-29/12/2017.},label=Table.Data,pos=H,]{lcccccrrcrrcrr}{}{\FL
	\multicolumn{1}{l}{}&\multicolumn{1}{c}{SPX 500}&\multicolumn{1}{c}{FTSE 1000}&\multicolumn{1}{c}{Nikkei 225}&\multicolumn{1}{c}{DAX}&\multicolumn{1}{c}{Russell 3000}\ML
	$m$&$78$&$102$&$60$&$105$&$78$\NN
	$\widehat{H}$&$ 0.04272$&$  0.02255$&$ 0.05928$&$  0.03551$&$ 0.03926$\NN
	$\widehat\eta$&$ 2.53112$&$  2.89098$&$ 2.01702$&$  2.21585$&$ 2.39789$\LL
}
\subsection{Additional Discussion}\label{Section.Additional.Discussion}
In this subsection, we check whether the estimated values given 
in Section~\ref{Section.Summary.Empirical.Study} are adequate from
another aspect. 
More specifically, we apply the linear regression method of Gatheral et
al.~\cite{GJR} 
to simulated 5-minute realized volatility data for which 
the Hurst and diffusion parameters are chosen to be
the estimated values.

In Figure~\ref{Additional.Check1}, we
compare the linear regression results of
SPX and simulated data for the same period of the estimation results given in Table~\ref{Table.Data}.
Taking into account the estimation bias of the adapted Whittle estimator
mentioned in Section~\ref{Section.Numerical.Experiments}, we used
slightly smaller value of the Hurst parameter than its estimated value given in Section
\ref{Section.Summary.Empirical.Study}. 
We obtained similar figures and linear regression coefficients of
$\zeta_q$ against $q$ from the SPX and simulated 5-minute realized
volatilities. 
In particular, we confirm that the linear regression method of Gatheral
et al.~\cite{GJR} does not give a proper estimate of $H$ and 
our estimated value of $(H,\eta)$ does not contradict
the analysis in Gatheral
et al.~\cite{GJR}.
\section{Conclusion}
We have questioned whether the volatility is really rough, that is,
whether the Hurst index of the fractional Brownian motion 
driving the volatility process is smaller than 0.5
or not.
We have proposed an estimator for the Hurst and diffusion
parameters under a fractional stochastic volatility model (\ref{M3}).
We have proved its consistency under high frequency asymptotics.
We have also confirmed by numerical simulations its reasonably good performance
with finite samples. The estimated 
Hurst parameters for various stock
indices and periods are all smaller than 0.06,
indicating that the volatility is rough.
This is however 
a tentative answer to our question; in particular constructing
statistical tests remains for future research.
\begin{figure}[H]
	\begin{tabular}{cc}
		\begin{minipage}[t]{0.45\hsize}
			\centering
			\includegraphics[width=6.2cm]{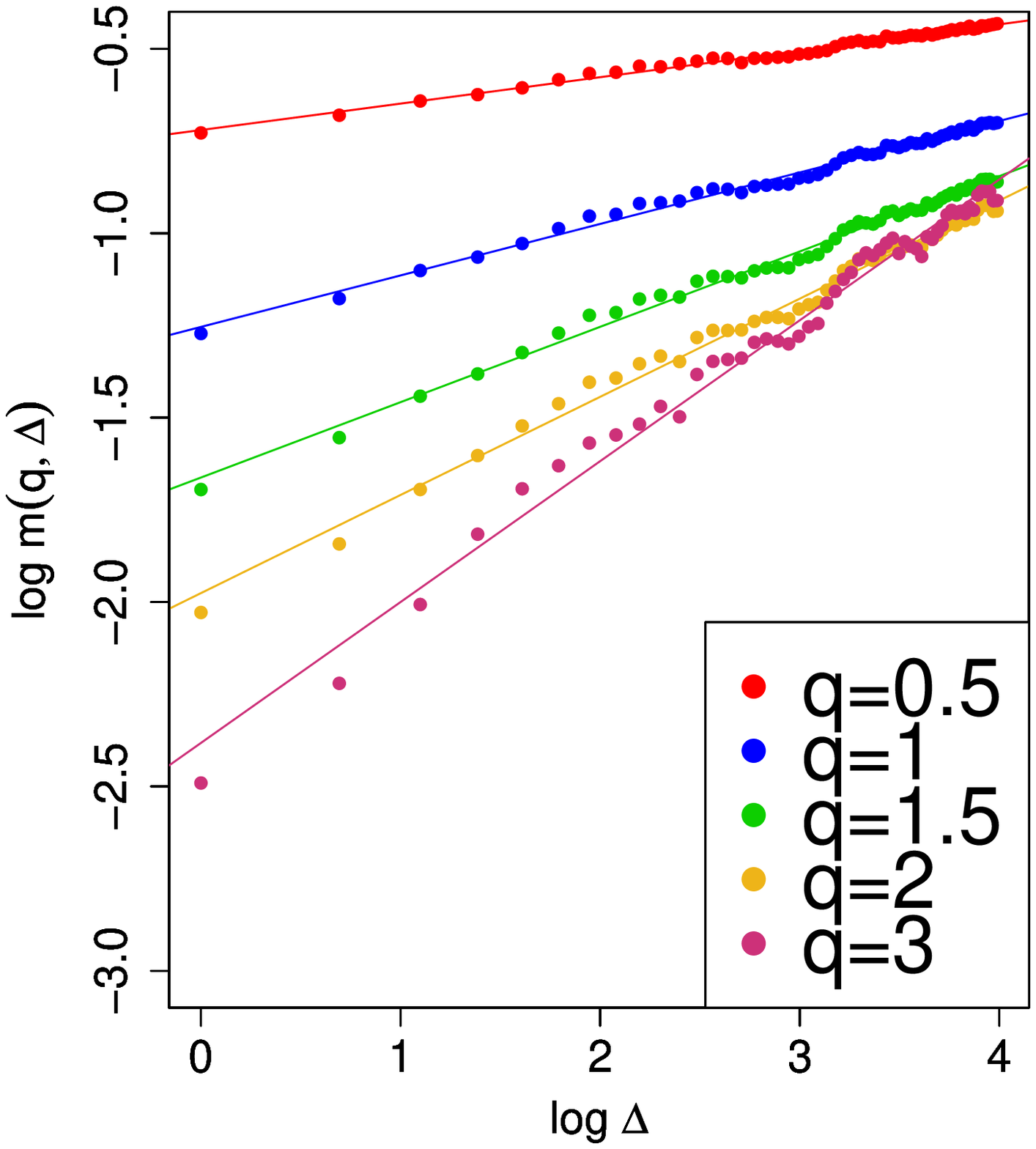}
		\end{minipage} &
		
		\begin{minipage}[t]{0.45\hsize}
			\centering
			\includegraphics[width=6.2cm]{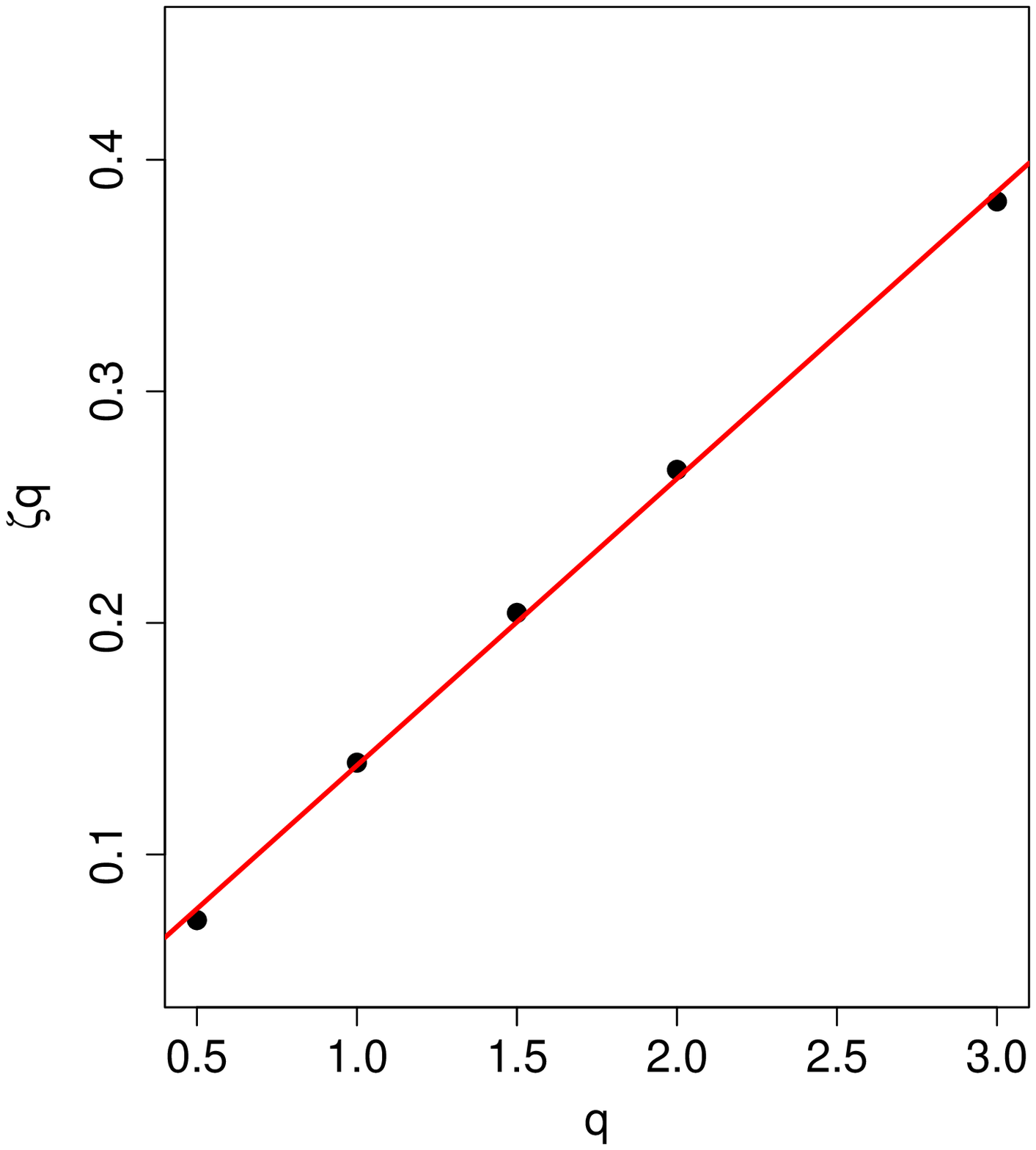}
		\end{minipage} \\
		
		\begin{minipage}[t]{0.45\hsize}
			\centering
			\includegraphics[width=6.2cm]{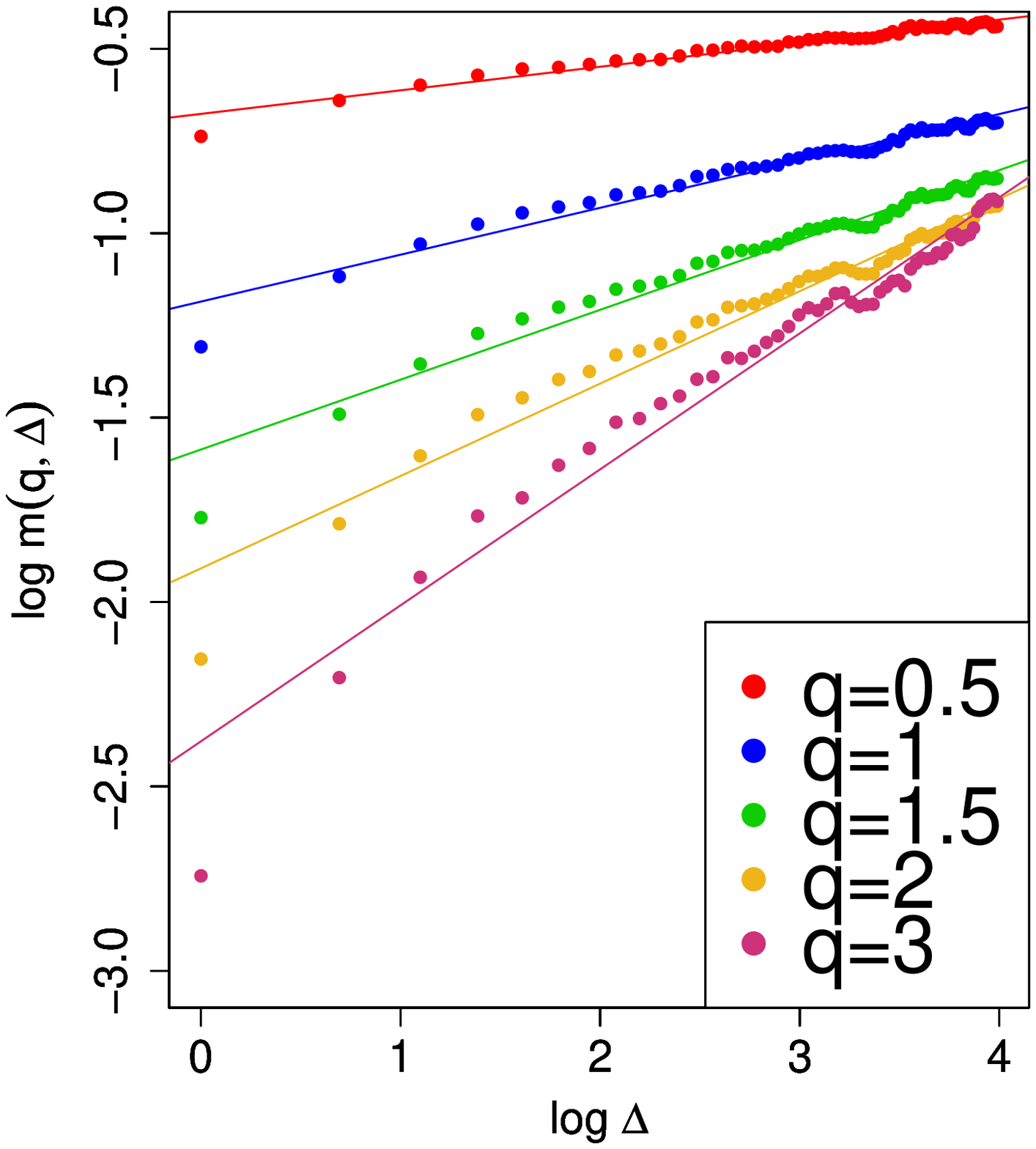}
		\end{minipage} &
		
		\begin{minipage}[t]{0.45\hsize}
			\centering
			\includegraphics[width=6.2cm]{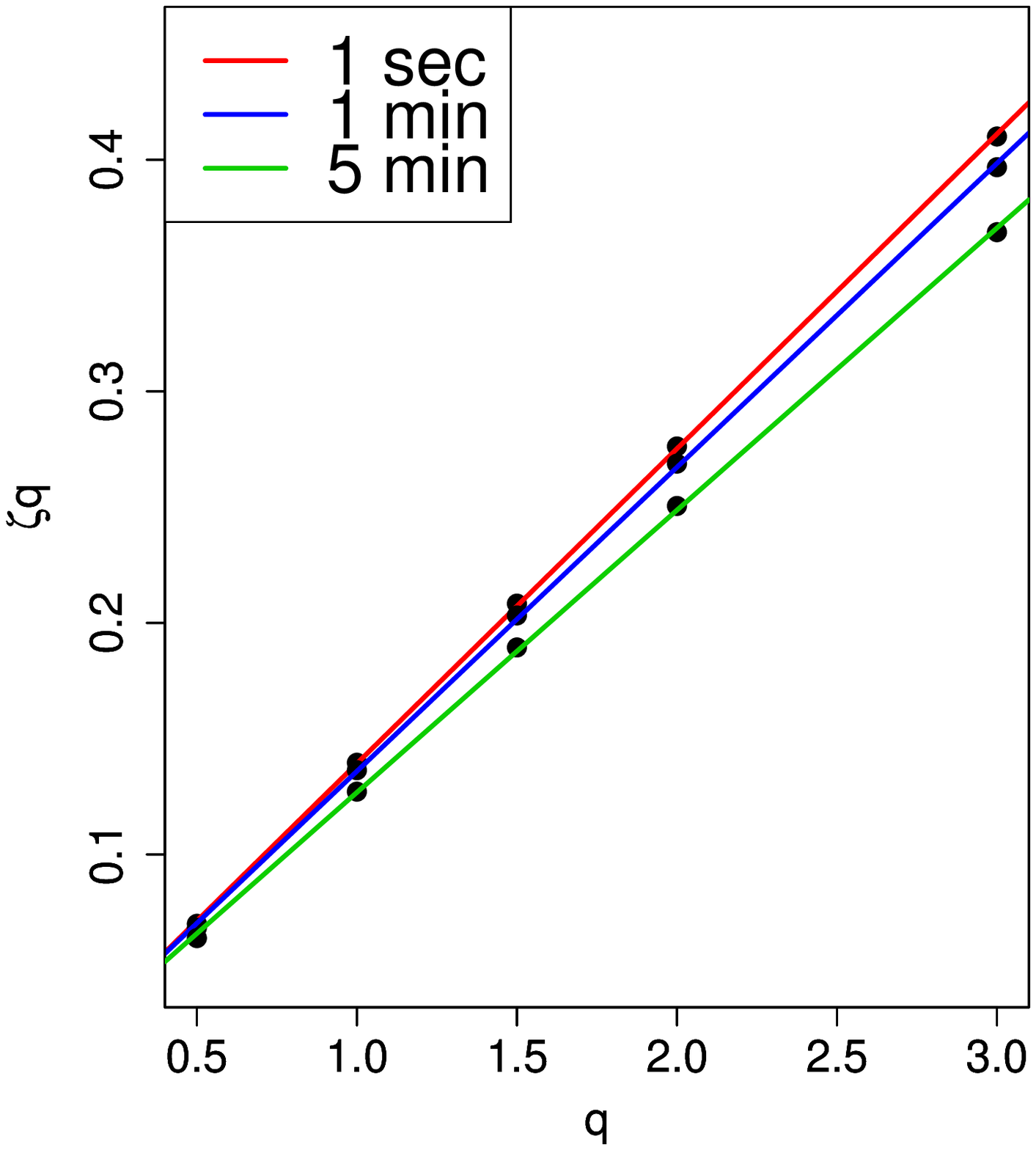}
		\end{minipage}
	\end{tabular}
	\caption{The upper figure is a reproduction of the linear regressions in Gatheral et al.~\cite{GJR} using SPX 5-minute realized volatility from the Oxford-Man Institute's Realized Library. Period: 02/01/2008 - 29/12/2017. The regression coefficient is $H =0.1215$. The lower left figure is the linear regression (\ref{LR}) using 5-minute realized volatility of a simulated price path from the model (\ref{Simulated.Model.Def}) whose parameters are $H=0.03$, $\eta=2.5$, $\alpha=0.005$, $c=-3.2$. For the same simulated path, using realized volatility with different sampling frequencies, the regression coefficients $\zeta_q$ of (\ref{LR}) are plotted and regressed on $q$ in the lower right figure. 
		The regression coefficients are $H = 0.1359$ for 1 second, $H = 0.1313$ for 1 minute and $H =0.1219$ for 5 minutes.
	}
	\label{Additional.Check1}
\end{figure}

\appendix
\section{Notation}
In this section, we summarize notation used throughout the appendix in this paper. 
\subsection{Notation of Bilinear Form}
Denote by $L^1[-\pi,\pi]$ the set of the Lebesgue integrable functions on $[-\pi,\pi]$. Let $\mathbf{x}, \mathbf{y}\in\mathbb{R}^n$ and $k\in L^1[-\pi,\pi]$ be an even function. We define a symmetric bilinear form of $\mathbf{x}$ and $\mathbf{y}$ with a certain symmetric $n\times n$-matrix $\Sigma_n(k)$ by
\begin{equation*}
B_n(\mathbf{x}, \mathbf{y}, k) := \frac{1}{2\pi n}\mathbf{x}^{\mathrm{T}}\Sigma_n(k)\mathbf{y},
\end{equation*}
where $\mathbf{x}^\mathrm{T}$ denotes the transpose of vector $\mathbf{x}$ and $\Sigma_n(k)$ denotes a symmetric matrix whose $(i,j)$-element is given by the $(i-j)$th Fourier coefficient of $k$, denoted by $\widehat{k}(i-j)$, for each $i,j\in\{1,\cdots,n\}$, i.e. 
\begin{equation*}
\widehat{k}(\tau):=\int_{-\pi}^\pi e^{\sqrt{-1}\tau\lambda}k(\lambda)\, \mathrm{d}\lambda,\hspace{0.2cm}\tau\in\mathbb{Z}.
\end{equation*}
In particular, we denote by $Q_n(\mathbf{x}, k):=B_n(\mathbf{x}, \mathbf{x}, k)$. Note that
\begin{equation*}
Q_n(\mathbf{x}, k)=\int_{-\pi}^\pi I_n(\lambda,\mathbf{x})k(\lambda)\, \mathrm{d}\lambda
\end{equation*}
holds, where the periodogram $I_n(\lambda,\mathbf{x})$ is defined in (\ref{Def.Periodogram}).
\subsection{Notation of Stochastic Sequences}
Set $\Lambda_n:=\{1,2,\cdots,n\}$. Denote by $(\Omega,\mathcal{F},P)$ a probability space on which the sequence of observations $\{\mathbf{Y}_n\}_{n\in\mathbb{N}}$ is defined and by $\|\cdot\|_p$ , $p\in[1,\infty]$, the $L^p$-norm on the probability space. Furthermore, we denote $\Delta X_t:=X_{t+1}-X_t$ for a discrete-time stochastic process $X=\{X_t\}_{t\in\mathbb{N}}$ and $t\in\mathbb{N}$, and
\begin{align*}
	&\mathbf{Y}_n^\dagger:=(Y_1^{n,\dagger},Y_2^{n,\dagger},\cdots,Y_n^{n,\dagger}) \hspace{0.2cm}\mbox{with}\hspace{0.2cm}Y_t^{n,\dagger}\equiv Y_t^{n,\dagger,\vartheta}:=\Delta\left(\log\int_{(t-1)\delta_n}^{t\delta_n}\sigma^2_u\, \mathrm{d}u\right), \\
	&\mathbf{G}_n^\dagger:=(G_1^{n,\dagger},G_2^{n,\dagger},\cdots,G_n^{n,\dagger}) \hspace{0.2cm}\mbox{with}\hspace{0.2cm}G_t^{n,\dagger}\equiv G_t^{n,\dagger,\vartheta}:=\Delta\left(\frac{1}{\delta_n}\int_{(t-1)\delta_n}^{t\delta_n}\eta W_u^H\, \mathrm{d}u\right),\\
	&\mathbf{V}_n:= (V_1^n,V_2^n,\cdots,V_n^n) \hspace{0.2cm}\mbox{with}\hspace{0.2cm}V_t^n\equiv V_t^{n,\vartheta}:=\Delta\left(\frac{1}{\delta_n}\int_{(t-1)\delta_n}^{t\delta_n}\log\sigma^2_u\, \mathrm{d}u\right), 
\end{align*}
Recall that
\begin{align*}
&\mathbf{Y}_n:=\left(Y_1^n, Y_2^n, \cdots, Y_n^n\right)\hspace{0.2cm}\mbox{with}\hspace{0.2cm}Y_t^n:=\Delta\left(\log\int_{(t-1)\delta_n}^{t\delta_n}\sigma^2_u\, \mathrm{d}u\right)+\Delta\epsilon_t^n, \\
&\mathbf{G}_n:= \left(G_1^n, G_2^n, \cdots, G_n^n\right)\hspace{0.2cm}\mbox{with}\hspace{0.2cm}G_t^n:=G_t^{n,\dagger}+\Delta\epsilon_t^n,
\end{align*}
where $\{\epsilon_t^n\}_{t\in\mathbb{Z}}$ is an i.i.d. sequence independent of $W^H$ and normally distributed with mean $0$ and variance $2/m_n$. 
For each $K\in\mathbb{N}$ and $\mathbf{p}\equiv(p_1,\cdots,p_K)\in\mathbb{N}^K$, denote by $\mathbf{p}!:=\prod_{k=1}^K p_k!$, $|\mathbf{p}|:=p_1+\cdots+p_K$ and
\begin{equation*}
	\mathbf{Z}_n^\mathbf{p}:=\left(Z_1^{n,\mathbf{p}}, Z_2^{n,\mathbf{p}}, \cdots, Z_{n+1}^{n,\mathbf{p}}\right),\ \ \mathbf{W}_n^\mathbf{p}:=\left(W_1^{n,\mathbf{p}}, W_2^{n,\mathbf{p}}, \cdots, W_{n+1}^{n,\mathbf{p}}\right),
\end{equation*}
where
\begin{align*}
	&Z_t^{n,p_k}:=\frac{1}{\delta_n}\int_{(t-1)\delta_n}^{t\delta_n}\left(\log\sigma^2_u - \log\sigma^2_{(t-1)\delta_n}\right)^{p_k}\, \mathrm{d}u,\ \ Z_t^{n,\mathbf{p}} := \prod_{k=1}^K Z_t^{n,p_k},\\
	&W_t^{n,p_k}:=\frac{1}{\delta_n}\int_{(t-1)\delta_n}^{t\delta_n}\eta^{p_k}\left(W_u^H - W_{(t-1)\delta_n}^H\right)^{p_k}\, \mathrm{d}u,\ \ W_t^{n,\mathbf{p}} := \prod_{k=1}^K W_t^{n,p_k}.
\end{align*} 
Note that for each $t\in\mathbb{Z}$, $P$-a.s. $\omega\in\Omega$,
\begin{align}
	W_t^{n,\mathbf{p}}(\omega) =&\prod_{k=1}^K\int_0^1\eta^{p_k}\left(W_{(u+t-1)\delta_n}^H(\omega) -W_{(t-1)\delta_n}^H(\omega)\right)^{p_k}\, \mathrm{d}u \nonumber \\
	=&\eta^{|\mathbf{p}|}\int_{[0,1]^K}\prod_{k=1}^K\left(W_{(u_k+t-1)\delta_n}^H(\omega) - W_{(t-1)\delta_n}^H(\omega)\right)^{p_k}\, \mathrm{d}\mathbf{u},\hspace{0.2cm}\mathbf{u}\equiv(u_1,\cdots,u_K), \label{rem.change.var}
\end{align}
follows from the change of variables and Fubini's theorem. Moreover, the H\"older continuity of the fractional Brownian motion and $\log\sigma^2$ yield that for any $\psi>0$,
\begin{equation}\label{Asymptotic.Order.Z.W}
	\max_{t\in\Lambda_{n+1}}\left\|W_t^{n,\mathbf{p}}\right\|_\infty =o\left(\delta_n^{|\mathbf{p}|H-\psi}\right),\ \ \max_{t\in\Lambda_{n+1}}\left\|Z_t^{n,\mathbf{p}}\right\|_\infty =o\left(\delta_n^{|\mathbf{p}|H-\psi}\right)\ \ \mbox{as}\ \ n\to\infty.
\end{equation}
\section{Approximation of Data}\label{Appendix.Approximation.Data}
The following proposition gives a precise statement of the approximation (\ref{Local.Gaussian.Approximation}) in Remark~\ref{Remark.Idea.QMLE}, which follows from a Taylor expansion of $\mathbf{Y}_n$ around the Gaussian vector $\mathbf{G}_n$ under high frequency observations, i.e. $\delta_n\to 0$.
\begin{prop}\label{Expansion.Formula.Y}
	For any $\psi\in(0,H)$ and $J\in\mathbb{N}$, there exists a positive random variable $M\equiv M(\psi,J,T,\vartheta)$, which is independent of the asymptotic parameter $n\in\mathbb{N}$, such that 
	\begin{equation}\label{Approximation.Diff.Y}
	\max_{t\in\Lambda_n}\left|Y_t^n -G_t^n -\sum_{j=2}^J\sum_{k=1}^j\frac{(-1)^{k-1}}{k}\sum_{\mathbf{p}\in\mathbb{N}^k, |\mathbf{p}|=j} \frac{1}{\mathbf{p}!}\Delta W_t^{n,\mathbf{p}}\right| \leq M\cdot\delta_n^{\min\{1,(J+1)H-\psi\}}
	\end{equation}
	holds $P$-a.s. for sufficiently small $\delta_n$.
\end{prop}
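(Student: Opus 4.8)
The plan is to work pathwise and first exploit a cancellation. Since $\Delta\epsilon_t^n$ enters $Y_t^n$ and $G_t^n$ identically, it cancels, and the whole statement concerns $Y_t^n-G_t^n=\Delta\bigl(\log\int_{(t-1)\delta_n}^{t\delta_n}\sigma^2_u\,\mathrm{d}u\bigr)-G_t^{n,\dagger}$. Writing $\xi_u:=\log\sigma^2_u-\log\sigma^2_{(t-1)\delta_n}$, which vanishes at $u=(t-1)\delta_n$, I factor $\int_{(t-1)\delta_n}^{t\delta_n}\sigma^2_u\,\mathrm{d}u=\sigma^2_{(t-1)\delta_n}\,\delta_n\,E_t^n$ with $E_t^n:=\delta_n^{-1}\int_{(t-1)\delta_n}^{t\delta_n}e^{\xi_u}\,\mathrm{d}u$. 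The additive constants $\log\sigma^2_{(t-1)\delta_n}$ and $\log\delta_n$ survive under $\Delta$ only through a single log-volatility increment, so $\Delta\log\int\sigma^2=\bigl(\log\sigma^2_{t\delta_n}-\log\sigma^2_{(t-1)\delta_n}\bigr)+\Delta\log E_t^n$.

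Next I expand $\log E_t^n$. Expanding $e^{\xi_u}$ termwise gives $E_t^n=1+\sum_{j\ge1}Z_t^{n,j}/j!$, and then $\log(1+\cdot)=\sum_{k\ge1}\frac{(-1)^{k-1}}{k}(\cdot)^k$ with the multinomial theorem yields, after collecting by total degree $j=|\mathbf{p}|$, the identity $\log E_t^n=\sum_{j\ge1}T_j^Z$ with $T_j^Z:=\sum_{k=1}^{j}\frac{(-1)^{k-1}}{k}\sum_{\mathbf{p}\in\mathbb{N}^k,|\mathbf{p}|=j}\frac{1}{\mathbf{p}!}Z_t^{n,\mathbf{p}}$; let $T_j^W$ denote the same block with $Z$ replaced by $W$. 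A shift change of variables $u\mapsto u-\delta_n$ gives $G_t^{n,\dagger}=\Delta W_t^{n,1}+\eta\bigl(W^H_{t\delta_n}-W^H_{(t-1)\delta_n}\bigr)$, while the model supplies $\log\sigma^2_{t\delta_n}-\log\sigma^2_{(t-1)\delta_n}=\int_{(t-1)\delta_n}^{t\delta_n}\kappa_s\,\mathrm{d}s+\eta(W^H_{t\delta_n}-W^H_{(t-1)\delta_n})$, so the fractional-increment parts cancel exactly. Subtracting $G_t^{n,\dagger}$ and the claimed sum $\sum_{j=2}^J\Delta T_j^W$ therefore leaves
\[
\int_{(t-1)\delta_n}^{t\delta_n}\kappa_s\,\mathrm{d}s+\Delta\bigl(Z_t^{n,1}-W_t^{n,1}\bigr)+\sum_{j=2}^J\Delta\bigl(T_j^Z-T_j^W\bigr)+\sum_{j\ge J+1}\Delta T_j^Z.
\]

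The estimates then proceed pathwise. I fix $\omega$ off a null set on which $W^H$ is $(H-\psi')$-Hölder on $[0,T]$ with $\psi':=\psi/(J+1)$ and $\kappa$, $\log\sigma^2$ are bounded on $[0,T]$; writing $\xi_u=D_u+\beta_u$ with $D_u:=\int_{(t-1)\delta_n}^u\kappa_s\,\mathrm{d}s$ and $\beta_u:=\eta(W^H_u-W^H_{(t-1)\delta_n})$, I have $\sup_u|D_u|\le C(\omega)\delta_n$ and $\sup_u|\beta_u|\le C(\omega)\delta_n^{H-\psi'}$ uniformly in $t\in\Lambda_n$. The first two blocks are $O(\delta_n)$, the drift directly and $Z_t^{n,1}-W_t^{n,1}=\delta_n^{-1}\int D_u\,\mathrm{d}u$. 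For $2\le j\le J$, the binomial expansion of $\xi_u^p-\beta_u^p$ and factorwise telescoping of $Z_t^{n,\mathbf{p}}-W_t^{n,\mathbf{p}}$ give $|Z_t^{n,\mathbf{p}}-W_t^{n,\mathbf{p}}|\le C(\omega)\delta_n^{1+(j-1)(H-\psi')}$, hence $o(\delta_n)$, absorbed into the $\delta_n$-term. The remaining and main difficulty is the tail $\sum_{j\ge J+1}\Delta T_j^Z$: here I view $\log E_t^n$ as $f(1)$ for $f(s):=\log\bigl(\delta_n^{-1}\int e^{s\xi_u}\,\mathrm{d}u\bigr)$, whose degree-$j$ Taylor coefficient at $s=0$ is exactly $T_j^Z$, and bound the order-$(J+1)$ remainder. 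Since $\sup_u|\xi_u|\le C(\omega)\delta_n^{H-\psi'}\to0$, the ratios $\mu_l(s):=\int\xi_u^l e^{s\xi_u}\,\mathrm{d}u/\int e^{s\xi_u}\,\mathrm{d}u$ satisfy $|\mu_l(s)|\le(C(\omega)\delta_n^{H-\psi'})^l$, and $f^{(J+1)}(s)$ is a universal polynomial in $\mu_1,\dots,\mu_{J+1}$ of homogeneous weight $J+1$, so $\sup_{s\in[0,1]}|f^{(J+1)}(s)|\le C_J(\omega)\delta_n^{(J+1)(H-\psi')}=C_J(\omega)\delta_n^{(J+1)H-\psi}$. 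Collecting the four blocks, the drift-type contributions supply the exponent $1$ and the truncation tail supplies $(J+1)H-\psi$, so each is dominated by $\delta_n^{\min\{1,(J+1)H-\psi\}}$ for small $\delta_n$, uniformly in $t\in\Lambda_n$; taking $M\equiv M(\omega)$ to be the sum of the finitely many $\omega$-dependent, $n$-independent constants completes the proof. The hard part is precisely this tail: a naive termwise bound would lose one factor $\delta_n^{-\psi'}$ per degree, and it is the bookkeeping-parameter/cumulant estimate together with the calibrated Hölder loss $\psi'=\psi/(J+1)$ that turns $\sup_u|\xi_u|=O(\delta_n^{H-\psi'})$ into the clean exponent $(J+1)H-\psi$.
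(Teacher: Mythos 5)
Your proposal is correct, and its skeleton coincides with the paper's proof of this proposition: the noise $\Delta\epsilon_t^n$ cancels between $Y_t^n$ and $G_t^n$; the drift contributions (your $\int\kappa$ and $\Delta(Z_t^{n,1}-W_t^{n,1})$, which regroup the paper's term $V_t^n-G_t^{n,\dagger}$ of Lemma~\ref{Taylor.Expansion.AddNoise}) are bounded by $C(\omega)\delta_n$; the block differences $Z_t^{n,\mathbf{p}}-W_t^{n,\mathbf{p}}$ are bounded by $C(\omega)\delta_n^{1+(|\mathbf{p}|-1)(H-\psi')}$ exactly as in the paper's Lemma C (binomial expansion plus factorwise telescoping); and the truncation tail carries the exponent $(J+1)H-\psi$. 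Where you genuinely depart from the paper is in the hardest step, the tail. The paper (Lemma~\ref{Taylor.Approximation}) nests two Taylor expansions --- of the exponential inside the integral and then of $\log(1+\cdot)$ --- and controls the leftover terms by appealing to the boundedness of abstract Taylor remainders on compact balls, i.e.\ display (\ref{Taylor.Remainder.Bounded}), which requires tracking $o(\delta_n^{(J+1)H-\psi})$ errors through a composition of truncated series. You instead introduce the one-parameter family $f(s)=\log\bigl(\delta_n^{-1}\int e^{s\xi_u}\,\mathrm{d}u\bigr)$, i.e.\ the cumulant generating function of $\xi_U$ under the uniform law of $U$ on the block, observe that its $j$-th Taylor coefficient at $s=0$ is exactly the degree-$j$ block $T_j^Z$ (multinomial expansion of $\log(1+\cdot)$), and bound the single integral remainder of order $J+1$ via Fa\`a di Bruno: $f^{(J+1)}$ is a universal polynomial of homogeneous weight $J+1$ in the tilted moment ratios $\mu_l(s)$, each satisfying $|\mu_l(s)|\le(\sup_u|\xi_u|)^l$. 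This buys a cleaner and more self-contained estimate --- one derivative bound replaces the bookkeeping of nested truncations, and the calibration $\psi'=\psi/(J+1)$ makes the exponent arithmetic transparent --- at the cost of invoking the CGF/Fa\`a di Bruno structure, which the paper avoids. Both arguments yield the same constant structure ($M$ depending only on the H\"older constant of $W^H$, $\sup_{[0,T]}|\kappa|$, $T$, $\vartheta$, $J$, $\psi$, not on $n$), so the uniformity in $t\in\Lambda_n$ and $n$ is equally sound in both.
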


Note that the lhs of the inequality in Proposition B.1 is dominated as follows:
\begin{align}
	&\max_{t\in\Lambda_n}\left|Y_t^n -G_t^n -\sum_{j=2}^J\sum_{k=1}^j\frac{(-1)^{k-1}}{k}\sum_{\mathbf{p}\in\mathbb{N}^k, |\mathbf{p}|=j} \frac{1}{\mathbf{p}!}\Delta W_t^{n,\mathbf{p}}\right| \label{Approximation.Three.Terms} \\
	\leq&\max_{t\in\Lambda_n}\left|Y_t^{n,\dagger} -V_t^n -\sum_{j=2}^J\sum_{k=1}^j\frac{(-1)^{k-1}}{k}\sum_{\mathbf{p}\in\mathbb{N}^k, |\mathbf{p}|=j} \frac{1}{\mathbf{p}!}\Delta Z_t^{n,\mathbf{p}}\right| \nonumber \\
	&+\max_{t\in\Lambda_n}\left|V_t^n -G_t^{n,\dagger}\right| +\max_{t\in\Lambda_n}\left|\sum_{j=2}^J\sum_{k=1}^j\frac{(-1)^{k-1}}{k}\sum_{\mathbf{p}\in\mathbb{N}^k, |\mathbf{p}|=j} \frac{1}{\mathbf{p}!}\Delta\left(Z_t^{n,\mathbf{p}} -W_t^{n,\mathbf{p}}\right)\right|. \nonumber
\end{align}
In the rest of this section, we evaluate the asymptotic order of the three terms in the rhs of (\ref{Approximation.Three.Terms}) when $\delta_n\to 0$. 
At first, we treat the first term in (\ref{Approximation.Three.Terms}) in the following lemma.
\begin{lem}\label{Taylor.Approximation}
	For any $\psi\in(0,H)$ and $J\in\mathbb{N}$, there exists a positive random variable $M\equiv M(\psi,J,T,\vartheta)$, which is independent of the asymptotic parameter $n\in\mathbb{N}$, such that 
	\begin{equation*}
		\max_{t\in\Lambda_n}\left|Y_t^{n,\dagger} -V_t^n -\sum_{j=2}^J\sum_{k=1}^j\frac{(-1)^{k-1}}{k}\sum_{\mathbf{p}\in\mathbb{N}^k, |\mathbf{p}|=j} \frac{1}{\mathbf{p}!}\Delta Z_t^{n,\mathbf{p}}\right|\leq M\cdot\delta_n^{(J+1)H-\psi}
	\end{equation*}
	holds $P$-a.s. for sufficiently small $\delta_n$.
\end{lem}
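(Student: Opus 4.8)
The plan is to obtain the asserted finite expansion as the truncation of a convergent power series for $\log$ of the normalized integral, and then to dominate the remaining tail by a single analytic generating function. Writing $X_u := \log\sigma_u^2$ and fixing $t$ with $a := (t-1)\delta_n$, I would factor out the left-endpoint value to get
\begin{equation*}
\frac{1}{\delta_n}\int_a^{a+\delta_n}e^{X_u}\,\mathrm{d}u = e^{X_a}\left(1 + S_t^n\right),\qquad S_t^n := \sum_{p\ge1}\frac{1}{p!}Z_t^{n,p},
\end{equation*}
where term-by-term integration of the exponential series is licensed by Tonelli once $|X_u-X_a|$ is uniformly small on the interval. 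Taking logarithms, expanding $\log(1+S_t^n)=\sum_{k\ge1}\frac{(-1)^{k-1}}{k}(S_t^n)^k$, and regrouping by total degree $j=|\mathbf{p}|$ through the multinomial identity $(S_t^n)^k=\sum_{\mathbf{p}\in\mathbb{N}^k}\frac{1}{\mathbf{p}!}Z_t^{n,\mathbf{p}}$, the degree-one contribution $Z_t^{n,1}$ combines with $X_a$ into exactly $\frac{1}{\delta_n}\int_a^{a+\delta_n}X_u\,\mathrm{d}u$. Since $\log\int_a^{a+\delta_n}\sigma_u^2\,\mathrm{d}u=\log\delta_n+X_a+\log(1+S_t^n)$ and the additive constant $\log\delta_n$ is killed by $\Delta$, I arrive at the exact identity
\begin{equation*}
Y_t^{n,\dagger}-V_t^n=\sum_{j\ge2}\sum_{k=1}^j\frac{(-1)^{k-1}}{k}\sum_{\mathbf{p}\in\mathbb{N}^k,\,|\mathbf{p}|=j}\frac{1}{\mathbf{p}!}\,\Delta Z_t^{n,\mathbf{p}},
\end{equation*}
so the quantity inside the lemma's maximum is precisely the tail from $j\ge J+1$.

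For the tail estimate I would invoke the H\"older regularity of the log-volatility. On the bounded horizon $[0,T]$ with $T:=\varlimsup_{n\to\infty}T_n<\infty$ (guaranteed by (H.\ref{Assumption.Tn})), the process $X=\log\sigma^2=\log\sigma_0^2+\int_0^\cdot\kappa_v\,\mathrm{d}v+\eta W^H$ is $P$-a.s.\ $(H-\psi_0)$-H\"older continuous for $\psi_0:=\psi/(J+1)\in(0,H)$, with a finite random constant $L$, because the drift is Lipschitz and $W^H$ is almost surely $(H-\psi_0)$-H\"older. Hence $|Z_t^{n,p}|\le L^p\delta_n^{p(H-\psi_0)}$ uniformly in $t\le n+1$, and writing $w:=L\delta_n^{H-\psi_0}$ and $c_j^n(t)$ for the degree-$j$ term, the triangle inequality gives $|c_j^n(t)|\le b_j w^j$, where the majorant coefficients are defined by $\sum_{j\ge1}b_j z^j=-\log(2-e^z)$, obtained by replacing every $Z_t^{n,p}$ in the expansion with $w^p$ and summing $\sum_{k\ge1}\frac{(e^w-1)^k}{k}$.

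Because $-\log(2-e^z)$ is analytic near the origin (radius of convergence $\log 2$), its Taylor tail satisfies $\sum_{j\ge J+1}b_j z^j\le C_J z^{J+1}$ for $z$ below a fixed threshold. For $\delta_n$ small enough that $w<\tfrac12\log 2$ — which holds $P$-a.s.\ eventually since $\delta_n\to0$ by (H.\ref{Assumption.HighFrequency}) — this yields
\begin{equation*}
\sum_{j\ge J+1}|c_j^n(t)|\le C_J w^{J+1}=C_J L^{J+1}\delta_n^{(J+1)H-\psi}
\end{equation*}
uniformly in $t$. Bounding $|\Delta c_j^n(t)|\le|c_j^n(t+1)|+|c_j^n(t)|$ and taking the maximum over $t\in\Lambda_n$ then delivers the claim with $M:=2C_J L^{J+1}$, which depends only on $\psi,J,T,\vartheta$ and is independent of $n$.

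I expect the main obstacle to be the rigorous bookkeeping of the double expansion: justifying the term-by-term integration of the exponential series and the rearrangement into degree classes (via absolute convergence and Fubini, valid once $|X_u-X_a|$ is uniformly small), and, above all, producing the single analytic majorant $-\log(2-e^z)$ that simultaneously controls every degree $j\ge J+1$ with one random constant $L$ uniform in both $t$ and $n$. The per-degree self-similar scaling $\delta_n^{H-\psi_0}$ is exactly what converts an analytic tail in $w$ into the advertised order $\delta_n^{(J+1)H-\psi}$.
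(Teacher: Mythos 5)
Your proof is correct, and it follows the same skeleton as the paper's — factor out $e^{\log\sigma^2_{(t-1)\delta_n}}$, expand the exponential inside the time average, take logarithms, expand again, and regroup by total degree $j=|\mathbf{p}|$, with the pathwise H\"older continuity of $\log\sigma^2$ supplying the per-degree scale $\delta_n^{H-\psi_0}$ — but your control of the error term is genuinely different in implementation. The paper truncates both expansions at order $J$ and works with Taylor remainders in integral form: it relies on the boundedness of the remainder functional (\ref{Taylor.Remainder.Bounded}) and on the scaling (\ref{Asymptotic.Order.Z.W}) to absorb every cross term of degree exceeding $J$ into $o(\delta_n^{(J+1)H-\psi})$, so its expansion is an approximation at every stage. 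You instead keep both series intact, which turns the quantity inside the lemma's maximum into an \emph{exact} object — the tail $\sum_{j\geq J+1}\Delta c_j^n(t)$ of a convergent series — and then dominate all degrees at once by the single non-negative majorant $\sum_{j\ge1}b_jz^j=-\log(2-e^z)$, whose radius of convergence $\log 2$ gives the uniform tail bound $C_Jw^{J+1}$. Your route buys an exact error representation and an explicit constant $M=2C_JL^{J+1}$; the paper's route is more elementary in that it never manipulates infinite series and only needs smoothness of $e^x$ and $\log(1+x)$ on a closed ball, though in both arguments the a.s.\ requirement that $\delta_n$ be sufficiently small (with an $\omega$-dependent threshold through the H\"older constant) is unavoidable. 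Two cosmetic repairs: the interchange of sum and integral is Fubini--Lebesgue rather than Tonelli, since $(X_u-X_a)^p$ changes sign for odd $p$ (apply Tonelli to the absolute series $\sum_p|X_u-X_a|^p/p!=e^{|X_u-X_a|}$ to get integrability, then interchange the signed series); and the fixed horizon carrying the H\"older constant should be $\sup_n(T_n+\delta_n)<\infty$ rather than $\varlimsup_nT_n$, because the window for $t=n$ reaches time $(n+1)\delta_n$.
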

\begin{proof}
	At first, Taylor's theorem yields that any infinitely differentiable function $f$ on an $\epsilon$-open ball $B_\epsilon(a)$ at the point of $a\in\mathbb{R}$ is expanded by
	\begin{equation*}
		f(x)=f(a)+\sum_{j=1}^J\frac{f^{(j)}(a)}{j!}x^j +(x-a)^{J+1}\int_0^1\frac{(1-z)^J}{J!} f^{(J+1)}\left(a+z(x-a)\right)\, \mathrm{d}z
	\end{equation*}
	for each $x\in B_\epsilon(a)$ and $J\in\mathbb{N}$. Moreover, if the function $f$ and its derivatives of any order are also continuous on $\overline{B_\epsilon(a)}$, then it holds that
	\begin{equation}\label{Taylor.Remainder.Bounded}
		\sup_{x\in\overline{B_\epsilon(a)}}\left|\int_0^1\frac{(1-z)^J}{J!} f^{(J+1)}\left(a+z(x-a)\right)\, \mathrm{d}z\right|<\infty.
	\end{equation}
	Therefore, using the H\"older continuity of $\log\sigma^2$, we can derive the following Taylor's expansion:
	\begin{align}
		\log\left[\frac{1}{\delta_n}\int_{(t-1)\delta_n}^{t\delta_n}\sigma^2_u\,\mathrm{d}u\right] 
		&= \log\sigma^2_{(t-1)\delta_n} +\log\left[\frac{1}{\delta_n}\int_{(t-1)\delta_n}^{t\delta_n}e^{\log\sigma^2_u-\log\sigma^2_{(t-1)\delta_n}}\,\mathrm{d}u\right] \nonumber \\
		&= \log\sigma^2_{(t-1)\delta_n} +\log\left[1+\sum_{p=1}^J\frac{1}{p!} Z_t^{n,p} +\mathit{o}\left(\delta_n^{(J+1)H-\psi}\right)\right] \nonumber \\
		&= \log\sigma^2_{(t-1)\delta_n} +\frac{1}{\delta_n}\int_{(t-1)\delta_n}^{t\delta_n}\left(\log\sigma^2_u -\log\sigma^2_{(t-1)\delta_n}\right)\, \mathrm{d}u + \sum_{p=2}^J\frac{1}{p!}Z_t^{n,p} \nonumber \\
		&\quad+\sum_{j=2}^J\frac{(-1)^{j-1}}{j}\left\{\sum_{p=1}^J \frac{1}{p!}Z_t^{n,p} +\mathit{o}\left(\delta_n^{(J+1)H-\psi}\right)\right\}^j +\mathit{o}\left(\delta_n^{(J+1)H-\psi}\right) \nonumber \\
		&= \frac{1}{\delta_n}\int_{(t-1)\delta_n}^{t\delta_n}\log\sigma^2_u\, \mathrm{d}u + \sum_{j=2}^J \sum_{k=1}^j \frac{(-1)^{k-1}}{k} \sum_{\mathbf{p}\in\mathbb{N}^k,|\mathbf{p}|=j} \frac{1}{\mathbf{p}!}Z_t^{n,\mathbf{p}} +\mathit{o}\left(\delta_n^{(J+1)H-\psi}\right). \label{Taylor.Calculation.End}
	\end{align}
	Note that the H\"older continuity property also implies that all reminder terms in the above equality are independent of $t\in\Lambda_n$ and $\omega\in\Omega$ if $\delta_n$ is sufficiently small, see also (\ref{Taylor.Remainder.Bounded}). 
	Therefore, the conclusion follows from taking a difference of both sides of (\ref{Taylor.Calculation.End}).
\end{proof}
The second term is also negligible because the following inequality holds.
\begin{lem}\label{Taylor.Expansion.AddNoise}
	The following inequality holds:
	\begin{equation*}
		\max_{t\in\Lambda_n}\left|V_t^n -G_t^{n,\dagger}\right|\leq \left(\sup_{u\in[0,T]}|\kappa_u|\right)\cdot\delta_n.
	\end{equation*}
\end{lem}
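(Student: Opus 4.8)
The plan is to exploit the linearity of both the windowed-average operator $\frac{1}{\delta_n}\int_{(t-1)\delta_n}^{t\delta_n}(\cdot)\,\mathrm{d}u$ and the forward difference $\Delta$ in order to cancel the fractional-Brownian-motion part of $\log\sigma^2$ and reduce $V_t^n - G_t^{n,\dagger}$ to an expression driven solely by the drift $\kappa$. The starting observation is that, by the model (\ref{M3}) together with the normalization $W_0^H=0$,
\[
\log\sigma_u^2 - \eta W_u^H = \log\sigma_0^2 + \int_0^u \kappa_s\,\mathrm{d}s =: K_u,
\]
so that $K$ is absolutely continuous with density $\kappa$. Since $V_t^n$ and $G_t^{n,\dagger}$ are obtained by applying the same operator $\Delta\bigl(\frac{1}{\delta_n}\int_{(t-1)\delta_n}^{t\delta_n}(\cdot)\,\mathrm{d}u\bigr)$ to $\log\sigma_u^2$ and to $\eta W_u^H$ respectively, linearity immediately gives
\[
V_t^n - G_t^{n,\dagger} = \Delta\left(\frac{1}{\delta_n}\int_{(t-1)\delta_n}^{t\delta_n} K_u\,\mathrm{d}u\right).
\]

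Next I would write the forward difference out and shift the integration window by $\delta_n$ to fold the two windowed averages into a single integral. Writing $\bar{K}_t := \frac{1}{\delta_n}\int_{(t-1)\delta_n}^{t\delta_n} K_u\,\mathrm{d}u$ and changing variables in the $(t+1)$-st window,
\[
\bar{K}_{t+1} - \bar{K}_t = \frac{1}{\delta_n}\int_{(t-1)\delta_n}^{t\delta_n}\left(K_{u+\delta_n} - K_u\right)\mathrm{d}u = \frac{1}{\delta_n}\int_{(t-1)\delta_n}^{t\delta_n}\left(\int_u^{u+\delta_n}\kappa_s\,\mathrm{d}s\right)\mathrm{d}u,
\]
where the second equality uses $K_{u+\delta_n}-K_u=\int_u^{u+\delta_n}\kappa_s\,\mathrm{d}s$. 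Finally, bounding the inner integral by $\delta_n\sup_{s\in[0,T]}|\kappa_s|$ and then integrating the outer one over a window of length $\delta_n$ yields
\[
\left|V_t^n - G_t^{n,\dagger}\right| \leq \frac{1}{\delta_n}\int_{(t-1)\delta_n}^{t\delta_n}\left|\int_u^{u+\delta_n}\kappa_s\,\mathrm{d}s\right|\mathrm{d}u \leq \delta_n\sup_{s\in[0,T]}|\kappa_s|,
\]
uniformly in $t\in\Lambda_n$; taking the maximum over $t$ gives the claim.

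I do not expect a genuine obstacle here, as the argument is elementary once the drift-only reduction is made; the two points that require a little care are (i)~the exact cancellation of the $\eta W_u^H$ term, which relies on the purely additive structure of (\ref{M3}) and on $W_0^H=0$, and (ii)~the range of the supremum, since for $t=n$ the shifted window $[t\delta_n,(t+1)\delta_n]$ reaches $(n+1)\delta_n$, slightly beyond $T=n\delta_n$. This is harmless because $\kappa$ is assumed $\mathbb{F}$-adapted and c\`adl\`ag (or c\`agl\`ad), hence locally bounded, so that the supremum over the full observation window is finite almost surely and dominates every increment.
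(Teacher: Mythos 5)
Your proof is correct. Note that the paper offers no proof at all for Lemma~\ref{Taylor.Expansion.AddNoise} --- it is the one statement in Appendix~B presented as immediate --- and your argument (cancel the $\eta W^H$ part by linearity of the windowed average and of $\Delta$, shift the $(t+1)$-st window by $\delta_n$, and bound $\int_u^{u+\delta_n}\kappa_s\,\mathrm{d}s$ by $\delta_n\sup|\kappa|$) is exactly the elementary computation the authors leave implicit, so there is nothing to contrast. Your point~(ii) does flag a genuine, if cosmetic, imprecision in the paper's own statement rather than in your proof: for $t=n$ the shifted window reaches $(n+1)\delta_n>n\delta_n=T_n$, so the bound only holds verbatim if $T$ is read as covering the whole observation horizon $[0,(n+1)\delta_n]$; appealing to local boundedness of $\kappa$ gives finiteness of the supremum but not literally the stated constant, so the cleaner fix is simply to take the supremum over the full window.
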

Finally, we show the negligibility of the third term. In order to achieve this purpose, it suffices to prove that the error between $\mathbf{Z}_n^\mathbf{p}$ and $\mathbf{W}_n^\mathbf{p}$ is negligible for each $\mathbf{p}\in\mathbb{N}^K$ by using the triangle inequality of $\|\cdot\|_\infty$. Therefore, we show the following result.
\begin{lem}
	For each $\mathbf{p}\equiv(p_1,p_2,\cdots,p_K)\in\mathbb{N}^K$ with $|\mathbf{p}|\geq 2$, the following relation holds for any $\psi\in(0,H)$,
	\begin{equation*}
		\max_{t\in\Lambda_{n+1}}\left|Z_t^{n,\mathbf{p}} -W_t^{n,\mathbf{p}}\right|\leq \left(2^{|\mathbf{p}|}-1\right)\left(A_{H-\psi,\eta}\vee 1\right)^{(|\mathbf{p}|-1)}\left(\sup_{u\in[0,T]}|\kappa_u|\vee 1\right)^{|\mathbf{p}|}\cdot\delta_n^{1+(|\mathbf{p}|-1)(H-\psi)},
	\end{equation*}
	where $A_{H-\psi,\eta}\equiv A_{H-\psi,\eta}(T,\vartheta)$ given by
	\begin{equation*}
		A_{H-\psi,\eta}:= \sup_{s,u\in[0,T]}\frac{\eta|W_u^H-W_s^H|}{|u-s|^{H-\psi}}<\infty.
	\end{equation*}
\end{lem}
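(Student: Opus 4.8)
The plan is to reduce the product estimate to a term-by-term comparison obtained from fully expanding the factors, exploiting the fact that the drift part of $\log\sigma^2$ is an order-$\delta_n$ perturbation while the fractional Brownian part carries the rough order $\delta_n^{H-\psi}$. First I would fix $t\in\Lambda_{n+1}$ and decompose, on the interval $[(t-1)\delta_n,t\delta_n]$, the increment of the log-volatility according to the model (\ref{M3}) as
\[
\log\sigma^2_u - \log\sigma^2_{(t-1)\delta_n} = \int_{(t-1)\delta_n}^u \kappa_s\,\mathrm{d}s + \eta\left(W_u^H - W_{(t-1)\delta_n}^H\right) =: \Delta K_u + \Delta W_u.
\]
The two building blocks admit the uniform pathwise bounds $|\Delta K_u|\leq (\sup_{u\in[0,T]}|\kappa_u|)\,\delta_n$ and $|\Delta W_u|\leq A_{H-\psi,\eta}\,\delta_n^{H-\psi}$, valid for every $u$ in the interval and every $t\in\Lambda_{n+1}$; the latter is exactly the definition of $A_{H-\psi,\eta}$ together with $|u-(t-1)\delta_n|\leq\delta_n$, and its finiteness is the almost-sure H\"older continuity of the fractional Brownian path of any order below $H$.

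Next, using the change of variables and Fubini exactly as in (\ref{rem.change.var}), I would rewrite both $Z_t^{n,\mathbf{p}}$ and $W_t^{n,\mathbf{p}}$ as integrals over $[0,1]^K$ of the products $\prod_{k=1}^K(\Delta K_k+\Delta W_k)^{p_k}$ and $\prod_{k=1}^K(\Delta W_k)^{p_k}$ respectively, where $\Delta K_k$ and $\Delta W_k$ abbreviate $\Delta K_{(u_k+t-1)\delta_n}$ and $\Delta W_{(u_k+t-1)\delta_n}$. The difference of the two integrands is the difference between a product of $|\mathbf{p}|$ binomial factors $(\Delta K+\Delta W)$ and the single monomial in which every factor selects $\Delta W$. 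Expanding the product completely, the all-$\Delta W$ term cancels, leaving exactly $2^{|\mathbf{p}|}-1$ terms, one for each nonempty subset of the $|\mathbf{p}|$ factor positions, each term carrying at least one factor of $\Delta K$.

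The estimate then follows by bounding a generic surviving term having $\ell\geq 1$ factors of $\Delta K$ and $|\mathbf{p}|-\ell$ factors of $\Delta W$ by $(\sup_{[0,T]}|\kappa|)^\ell\,A_{H-\psi,\eta}^{|\mathbf{p}|-\ell}\,\delta_n^{\ell+(|\mathbf{p}|-\ell)(H-\psi)}$. The one point requiring care, and the conceptual heart of the argument, is the comparison of exponents: since $\ell\geq 1$ and $H-\psi<1$, one has
\[
\ell+(|\mathbf{p}|-\ell)(H-\psi)-\bigl(1+(|\mathbf{p}|-1)(H-\psi)\bigr)=(\ell-1)\bigl(1-(H-\psi)\bigr)\geq 0,
\]
so that for $\delta_n\leq 1$ every surviving term is dominated by $(\sup_{[0,T]}|\kappa|\vee 1)^{|\mathbf{p}|}(A_{H-\psi,\eta}\vee 1)^{|\mathbf{p}|-1}\,\delta_n^{1+(|\mathbf{p}|-1)(H-\psi)}$, the coefficient being controlled because $\ell\leq|\mathbf{p}|$ and $|\mathbf{p}|-\ell\leq|\mathbf{p}|-1$. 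Summing the $2^{|\mathbf{p}|}-1$ terms and integrating over the unit cube $[0,1]^K$, which preserves the bound, yields the claim uniformly in $t\in\Lambda_{n+1}$.

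I expect the main obstacle to be purely bookkeeping rather than analytic: getting the combinatorial count $2^{|\mathbf{p}|}-1$ right and verifying the exponent inequality above. The underlying structural insight, which the proof must make quantitative, is that every error term is \emph{forced} to contain at least one drift factor of order $\delta_n$, thereby upgrading one rough exponent $H-\psi$ to $1$; this is exactly why $Z_t^{n,\mathbf{p}}-W_t^{n,\mathbf{p}}$ is one factor of $\delta_n^{1-(H-\psi)}$ smaller than $W_t^{n,\mathbf{p}}$ itself, consistently with the order recorded in (\ref{Asymptotic.Order.Z.W}).
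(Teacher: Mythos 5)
Your proof is correct, and its core mechanism --- expanding products of (drift $+$ fBm increment) factors and observing that every surviving term is forced to carry at least one drift factor of order $\delta_n$, which upgrades one H\"older exponent $H-\psi$ to $1$ --- is exactly the insight the paper's proof rests on. The organization differs, however. The paper argues in two stages: for $K=1$ it applies the binomial theorem to $(\log\sigma^2_u-\log\sigma^2_{(t-1)\delta_n})^{p}$ \emph{inside} the time integral, isolating the pure fBm term and bounding the remainder terms by H\"older continuity of $W^H$; for $K\geq 2$ it writes $Z_t^{n,\mathbf{p}}=\prod_{k}\{W_t^{n,p_k}+(Z_t^{n,p_k}-W_t^{n,p_k})\}$, expands multinomially over the $K$ positions, and controls each mixed term via the $K=1$ bound together with the sup-norm orders (\ref{Asymptotic.Order.Z.W}). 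You instead pass immediately to a single integral over $[0,1]^K$ via (\ref{rem.change.var}) and expand all $|\mathbf{p}|$ binomial factors of the integrand at once, obtaining the count $2^{|\mathbf{p}|}-1$ and the exponent comparison $(\ell-1)\bigl(1-(H-\psi)\bigr)\geq 0$ in one shot. Your route is somewhat more direct: it delivers the exact constant stated in the lemma without invoking (\ref{Asymptotic.Order.Z.W}), whose $o(\cdot)$ form gives asymptotic orders rather than explicit constants, at the mild price of assuming $\delta_n\leq 1$ for the exponent comparison --- a restriction the paper's proof needs implicitly as well, and harmless since $\delta_n\to 0$.
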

\begin{proof}
	At first, consider the case where $K=1$, i.e. $p\in\mathbb{N}$ with $p\geq 2$. 
	Note that the binomial theorem yields that the integrand of $Z_t^{n,\mathbf{p}}$ is given by
	\begin{equation*}
		\left(\log\sigma^2_u - \log\sigma^2_{(t-1)\delta_n}\right)^p = \sum_{j=0}^p \frac{\Gamma(p+1)}{\Gamma(p-j+1)\Gamma(j+1)} \eta^{p-j}\left(W^H_u - W^H_{(t-1)\delta_n}\right)^{p-j}\left(\int_{(t-1)\delta_n}^u\kappa_s\, \mathrm{d}s\right)^j.
	\end{equation*}
	Then $Z_t^{n,p}$ is represented by
	\begin{equation*}
		Z_t^{n,p} = \frac{1}{\delta_n}\int_{(t-1)\delta_n}^{t\delta_n}\eta^p\left(W_u^H - W_{(t-1)\delta_n}^H\right)^p\, \mathrm{d}u + \sum_{j=1}^p \frac{\Gamma(p+1)}{\Gamma(p-j+1)\Gamma(j+1)}R_{t,n,j},
	\end{equation*}
	where
	\begin{equation*}
		R_{t,n,j}^p:= \frac{1}{\delta_n}\int_{(t-1)\delta_n}^{t\delta_n}\eta^{p-j}\left(W_u^H - W_{(t-1)\delta_n}^H\right)^{p-j}\left(\int_{(t-1)\delta_n}^u \kappa_s\, \mathrm{d}s\right)^j\, \mathrm{d}u.
	\end{equation*}
	Therefore, the conclusion when $K=1$ follows from the H\"older continuity of the fractional Brownian motion $W^H$. Next, we consider the case where $K\geq 2$. 
	Then the multinomial theorem yield that
	\begin{align*}
		\max_{t\in\Lambda_{n+1}}\left|Z_t^{n,\mathbf{p}} - W_t^{n,\mathbf{p}}\right| =&\max_{t\in\Lambda_{n+1}}\left|\prod_{k=1}^K \left\{W_t^{n,p_k}+\left(Z_t^{n,p_k}-W_t^{n,p_k}\right)\right\} -W_t^{n,\mathbf{p}}\right| \\
		\leq&\sum_{j_1,j_2,\cdots,j_K}\max_{t\in\Lambda_{n+1}}\left|\prod_{k=1}^K\left(W_t^{n,p_k}\right)^{j_i}\left(Z_t^{n,p_k}-W_t^{n,p_k}\right)^{1-j_i}\right|,
	\end{align*}
	where the last sum is taken over all $j_1,\cdots,j_K\in\{0,1\}$ satisfying that there exists $i\in\{1,\cdots,K\}$ such that $j_i=0$. As a result, the conclusion when $K\geq 2$ follows from (\ref{Asymptotic.Order.Z.W}) and the conclusion when $K=1$.
\end{proof}
\section{Asymptotic Decay of Covariance Function for Stationary Process Associated with Some Functionals of Fractional Brownian Motion}
In this section, we will show an asymptotic decay of covariance function for the stationary process $\mathbf{W}_n^\mathbf{p}$ appeared in the reminder terms of the Taylor approximation given in Proposition \ref{Expansion.Formula.Y}. This result plays a key role in order to prove that the reminder terms $\mathbf{W}_n^\mathbf{p}$ are asymptotically negligible in the case where the consistency of the adapted Whittle estimator holds. We will state the key result in Section~\ref{Statement.Key.Lemma}, several preliminary results used in its proof are summarized in Section~\ref{Section.Preliminaries.Key.Lemma} and its proof is given in Section~\ref{Section.Proof.Key.Lemma}.
\subsection{Notation and Statement of Key Result}\label{Statement.Key.Lemma}
At first, we prepare notation in order to state a general result for Proposition C.1. Denote by $C_{\mathbb{R}}$ a set of real-valued continuous functions on $\mathbb{R}$ and by $\mathcal{B}(C_{\mathbb{R}})$ a Borel $\sigma$-algebra on $C_{\mathbb{R}}$ generated by a topology associated with the compact convergence. 
Let $\mu_H$ be the distribution of the two-sided standard fractional Brownian motion with the Hurst parameter $H\in(0,1]$ on $(C_{\mathbb{R}},\mathcal{B}(C_{\mathbb{R}}))$, and a continuous shift operator $\theta=\{\theta_u\}_{u\in\mathbb{R}}$ be defined by $\theta_ux_\cdot:=x_{\cdot+u}-x_u$ for $(u,x)\in\mathbb{R}\times C_{\mathbb{R}}$. Note that $\mu_H$ is $\theta$-invariant, i.e. $\mu_H\circ\theta_u^{-1}=\mu_H$ for each $u\in\mathbb{R}$ since the fractional Brownian motion enjoys the stationary increments property. Moreover, $U=\{U_u\}_{u\in\mathbb{R}}$ denotes the canonical process on $(C_{\mathbb{R}},\mathcal{B}(C_{\mathbb{R}}))$, i.e. $U_u(x):=x_u$ for each $(u,x)\in\mathbb{R}\times C_{\mathbb{R}}$. Furthermore, for each $\mathbf{p}=(p_1,\cdots,p_K)\in\mathbb{N}^K$, $K\in\mathbb{R}$, and compact set $A_\mathbf{p}\subset\mathbb{R}^K$, we define a functional $F^\mathbf{p}$ by
\begin{equation*}
	F^\mathbf{p}(x) :=\int_{A_\mathbf{p}}\prod_{k=1}^Kx_{u_k}^{p_k}\, \mathrm{d}u_1\cdots du_K =\int_{A_\mathbf{p}}\prod_{k=1}^K\left\{U_{u_k}(x)\right\}^{p_k}\, \mathrm{d}u_1\cdots du_K,
\end{equation*}
for $x=\{x_u\}_{u\in\mathbb{R}}\in C_{\mathbb{R}}$, and set a stochastic process $G^{\mathbf{p}}_u:=F^\mathbf{p}(\theta_u)$ for $u\in\mathbb{R}$.

Next, let us recall the following definition, e.g. see Tudor~\cite{Tudor}, p.172. 
\begin{defi}
	A filter of length $J\in\mathbb{N}$ and order $r\in\mathbb{N}$ is a $(J+1)$-dimensional vector $\mathbf{a}:=\{a_0,a_1,\cdots,a_J\}$ such that for any $k\in\mathbb{N}\cup\{0\}$ with $k<r$,
	\begin{equation}\label{Property.Increment1}
		\sum_{j=0}^J a_j j^k = 0, 
	\end{equation}
	where we use $0^0:=1$ for convenience, and 
	\begin{equation}\label{Property.Increment2}
		\sum_{j=0}^J a_j j^r \neq 0.
	\end{equation}
	Moreover, we also call $\mathbf{a}=\{a_0,a_1,\cdots,a_J\}$ as a filter of length $J$ and order $0$ if it satisfies (\ref{Property.Increment2}) for $r=0$.
\end{defi}
\begin{rem}\rm
	For any filter $\mathbf{a}=\{a_0,a_1,\cdots,a_J\}$ of order $r\in\mathbb{N}$, the property (\ref{Property.Increment1}) yield that for any $k\in\mathbb{N}\cup\{0\}$ with $k<2r$,
	\begin{equation}\label{Vanish.Double.Sum.Filter}
		\sum_{i,j=0}^J a_ia_j (j-i)^k =0.
	\end{equation}
\end{rem}
For a filter $\mathbf{a}=\{a_0,a_1,\cdots,a_J\}$ and a stochastic process $X=\{X_u\}_{u\in\mathbb{R}}$, we define
\begin{equation}\label{Definition.Filter.Discrete.Process}
	\Delta_\mathbf{a}{X}_u:=\sum_{j=0}^Ja_jX_{u-j},\ \ u\in\mathbb{R}.
\end{equation}
For example, if we set $\mathbf{a}=(a_0,a_1)$ with $a_0=-1$, $a_1=1$, then $\mathbf{a}$ is a filter of length $1$ and order $1$, and $\Delta_\mathbf{a}{X}_\cdot=X_\cdot-X_{\cdot-1}$.

Finally, we will state a main result in this section.
\begin{prop}\label{Asymptotic.Decay.G}
	Let $\mathbf{a}$ be a filter of length $J\in\mathbb{N}$ and order $r\in\mathbb{N}\cup\{0\}$.
	Then for any $\mathbf{p}\in\mathbb{N}^K$ with $K\in\mathbb{N}$, the stochastic process $\{\Delta_\mathbf{a} G_u^{\mathbf{p}}\}_{u\in\mathbb{R}}$ is stationary and for any $\mathbf{p}\in\mathbb{N}^K$, $\mathbf{q}\in\mathbb{N}^L$ with $K,L\in\mathbb{N}$ and $u\in\mathbb{R}$,
	\begin{equation}
		\mathrm{Cov}_{\mu_H}\left[\Delta_\mathbf{a} G_u^{\mathbf{p}},\Delta_\mathbf{a} G_{u+\tau}^{\mathbf{q}}\right]= O\left(|{\tau}|^{2H-2-2r}\right)\hspace{0.2cm}\mbox{as $|\tau|\rightarrow \infty$}. \label{Main.Theorem.Decay}
	\end{equation}
\end{prop}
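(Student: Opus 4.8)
The plan is to settle stationarity by exploiting the cocycle structure of the shift $\theta$, then to reduce the filtered covariance to a one–dimensional problem about the decay of the $\tau$-derivatives of the covariance function of the \emph{unfiltered} process, and finally to extract the extra factor $|\tau|^{-2r}$ from the vanishing–moment property (\ref{Vanish.Double.Sum.Filter}) of the filter.

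\noindent\emph{Stationarity.} A direct computation from $\theta_u x_\cdot = x_{\cdot+u}-x_u$ gives the group law $\theta_u\circ\theta_v=\theta_{u+v}$, whence, writing $G_u^{\mathbf p}(x)=F^{\mathbf p}(\theta_u x)$, one obtains $G_{u+v}^{\mathbf p}=G_u^{\mathbf p}\circ\theta_v$. Since $\mu_H$ is $\theta_v$-invariant, the finite–dimensional laws of $\{G_{u+v}^{\mathbf p}\}_u$ and $\{G_u^{\mathbf p}\}_u$ coincide, so $\{G_u^{\mathbf p}\}_u$ is strictly stationary and hence so is the finite linear combination $\{\Delta_{\mathbf a}G_u^{\mathbf p}\}_u$ of (\ref{Definition.Filter.Discrete.Process}). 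Setting $c(\tau):=\mathrm{Cov}_{\mu_H}[G_0^{\mathbf p},G_\tau^{\mathbf q}]$, bilinearity and stationarity yield, independently of $u$,
\begin{equation*}
\mathrm{Cov}_{\mu_H}\!\left[\Delta_{\mathbf a}G_u^{\mathbf p},\Delta_{\mathbf a}G_{u+\tau}^{\mathbf q}\right]=\sum_{i,j=0}^J a_i a_j\, c(\tau+j-i).
\end{equation*}

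\noindent\emph{Decay of $c$ and its derivatives.} Fix $(\mathbf u,\mathbf v)\in A_{\mathbf p}\times A_{\mathbf q}$ and regard the integrands of $G_0^{\mathbf p}$ and $G_\tau^{\mathbf q}$ as polynomials of degrees $|\mathbf p|,|\mathbf q|$ in the centred jointly Gaussian increments $\xi_k:=U_{u_k}-U_0$ and $\eta_l:=U_{\tau+v_l}-U_\tau$. By the diagram (Wick--Isserlis) formula their covariance is the sum over pairings of the Gaussian factors having at least one cross-edge linking a $\xi$ to an $\eta$, each edge contributing the covariance of its endpoints. The within-block covariances are bounded and, by stationarity of fBm increments, free of $\tau$, whereas a cross-edge contributes
\begin{equation*}
\mathrm{Cov}[\xi_k,\eta_l]=\tfrac12\bigl(|\tau+v_l|^{2H}-|\tau+v_l-u_k|^{2H}-|\tau|^{2H}+|\tau-u_k|^{2H}\bigr),
\end{equation*}
a second-order difference of $\phi(t)=|t|^{2H}$ that is smooth for $|\tau|$ large and satisfies $\partial_\tau^{\,k}\mathrm{Cov}[\xi_k,\eta_l]=O(|\tau|^{2H-2-k})$ uniformly in $(\mathbf u,\mathbf v)$, since $\partial_\tau^{\,k}$ only replaces $\phi$ by $\phi^{(k)}$ in the same second difference and $\phi^{(k+2)}(\tau)=O(|\tau|^{2H-2-k})$. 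Hence a diagram with $m\ge1$ cross-edges is $O(|\tau|^{m(2H-2)-k})$ after applying $\partial_\tau^{\,k}$; as $2H-2\le0$ the single-cross-edge diagrams dominate. Summing the finitely many diagrams and integrating over the compact set $A_{\mathbf p}\times A_{\mathbf q}$ (differentiation under the integral being licensed by the uniform bounds) gives $c^{(k)}(\tau)=O(|\tau|^{2H-2-k})$ for every $k\in\mathbb N\cup\{0\}$ as $|\tau|\to\infty$.

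\noindent\emph{Filter cancellation and conclusion.} Expanding $c(\tau+j-i)$ about $\tau$ by Taylor's theorem with integral remainder to order $2r$ and summing against $a_ia_j$, the polynomial part $\sum_{k=0}^{2r-1}\frac{c^{(k)}(\tau)}{k!}\sum_{i,j}a_ia_j(j-i)^k$ vanishes identically by (\ref{Vanish.Double.Sum.Filter}); the surviving remainder involves $c^{(2r)}$ evaluated between $\tau$ and $\tau+(j-i)$ with $|j-i|\le J$ bounded, so by the decay $c^{(2r)}(\cdot)=O(|\cdot|^{2H-2-2r})$ the double sum is $O(|\tau|^{2H-2-2r})$, which is exactly (\ref{Main.Theorem.Decay}); the case $r=0$ reduces directly to $c(\tau)=O(|\tau|^{2H-2})$. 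I expect the main obstacle to be the derivative estimate $c^{(k)}(\tau)=O(|\tau|^{2H-2-k})$: one must check, through the diagram formula and Leibniz's rule, that each differentiation in $\tau$ costs exactly one power of $\tau$ uniformly over the compact integration region and across all diagrams, and that multi-cross-edge diagrams stay subdominant. The stationarity and the filter-cancellation steps are by comparison routine.
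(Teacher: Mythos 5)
Your proof is correct and follows essentially the same route as the paper's: stationarity from the shift-invariance of $\mu_H$ (Corollary~\ref{Stationarity.Functional}), the Wick formula restricted to pairings containing a cross-edge (Lemma~\ref{Result2.Diagram.Formula}), uniform derivative bounds $O(|\tau|^{2H-2-k})$ on the increment cross-covariances (Lemma~\ref{Property.gamma.s.u}), and a Taylor expansion played against the vanishing moments (\ref{Vanish.Double.Sum.Filter}) of the filter (Lemma~\ref{Key.Lemma}). The only deviations are cosmetic: you Taylor-expand the integrated covariance $c(\tau)$ once, after differentiating under the integral, where the paper expands each factor $\gamma_{s_w,u_w}$ and invokes the multinomial theorem, and you derive the derivative bounds from the second-difference integral representation of $\mathrm{Cov}[\xi_k,\eta_l]$ rather than the paper's explicit expansion in powers of $1/\tau$.
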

As a corollary of Proposition \ref{Asymptotic.Decay.G}, we can obtain the following result from the self-similarity property of the fractional Brownian motion.
\begin{prop}\label{Key.Lemma.Result.Cor}
	For any $\mathbf{p}\in\mathbb{N}^K$, $\mathbf{q}\in\mathbb{N}^L$ with $K,L\in\mathbb{N}$, the stochastic process $\{W_t^{n,\mathbf{p}}\}_{t\in\mathbb{Z}}$ is stationary for each $n\in\mathbb{N}$ and the following relation holds for any $t\in\mathbb{R}$:
	\begin{equation*}
		\sup_{n\in\mathbb{N}}\left|\delta_n^{-(|\mathbf{p}|+|\mathbf{q}|)H}\mathrm{Cov}\left[\Delta W_t^{n,\mathbf{p}},\Delta W_{t+\tau}^{n,\mathbf{q}}\right]\right|=O\left(|{\tau}|^{2H-4}\right)\hspace{0.2cm}\mbox{as $|\tau|\to\infty$}.
	\end{equation*}
\end{prop}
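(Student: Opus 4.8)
The plan is to reduce the statement to Proposition~\ref{Asymptotic.Decay.G} by exploiting the self-similarity of the fractional Brownian motion, which is precisely what makes the scaling prefactor $\delta_n^{-(|\mathbf{p}|+|\mathbf{q}|)H}$ cancel and makes the supremum over $n$ trivial. First I would recall the representation~(\ref{rem.change.var}),
\[
W_t^{n,\mathbf{p}}=\eta^{|\mathbf{p}|}\int_{[0,1]^K}\prod_{k=1}^K\bigl(W^H_{(u_k+t-1)\delta_n}-W^H_{(t-1)\delta_n}\bigr)^{p_k}\,\mathrm{d}\mathbf{u},
\]
and observe that, up to the factor $\eta^{|\mathbf{p}|}$, this is exactly the functional $F^{\mathbf{p}}$ of Section~\ref{Statement.Key.Lemma} (with $A_{\mathbf{p}}=[0,1]^K$) applied to the time-rescaled path $\{W^H_{s\delta_n}\}_{s\in\mathbb{R}}$ and then shifted by $t-1$.

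The key step is the self-similarity identity $\{W^H_{s\delta_n}\}_{s\in\mathbb{R}}\stackrel{d}{=}\{\delta_n^H W^H_s\}_{s\in\mathbb{R}}$ as processes. Applying the same measurable functional to two processes with identical law gives equality in law of the outputs, jointly over all $t$, so that
\[
\{W_t^{n,\mathbf{p}}\}_{t}\stackrel{d}{=}\bigl\{\eta^{|\mathbf{p}|}\delta_n^{|\mathbf{p}|H}\,G^{\mathbf{p}}_{t-1}\bigr\}_{t},
\]
where $G^{\mathbf{p}}_u=F^{\mathbf{p}}(\theta_u)$ is built from the canonical fractional Brownian motion under $\mu_H$. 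Stationarity of $\{W_t^{n,\mathbf{p}}\}_t$ for each fixed $n$ then follows from the $\theta$-invariance of $\mu_H$ recorded in Section~\ref{Statement.Key.Lemma} (equivalently, from the stationary increments of $W^H$), since $\theta$ is a flow and hence $\{G^{\mathbf{p}}_u\}_u$ is stationary.

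Passing to first differences, write $\Delta W_t^{n,\mathbf{p}}=W_{t+1}^{n,\mathbf{p}}-W_t^{n,\mathbf{p}}$; the joint identity above yields $\Delta W_t^{n,\mathbf{p}}\stackrel{d}{=}\eta^{|\mathbf{p}|}\delta_n^{|\mathbf{p}|H}(G^{\mathbf{p}}_t-G^{\mathbf{p}}_{t-1})=\eta^{|\mathbf{p}|}\delta_n^{|\mathbf{p}|H}\,\Delta_{\mathbf{a}}G^{\mathbf{p}}_t$, where $\mathbf{a}=(1,-1)$ is the filter of length $1$ and order $r=1$ corresponding to a single difference. Since a covariance depends only on the joint law, applying the same reasoning simultaneously to the $\mathbf{p}$- and $\mathbf{q}$-functionals (both being functionals of the one path $\{W^H_{s\delta_n}\}_s$) gives the exact identity
\[
\delta_n^{-(|\mathbf{p}|+|\mathbf{q}|)H}\,\mathrm{Cov}\!\left[\Delta W_t^{n,\mathbf{p}},\Delta W_{t+\tau}^{n,\mathbf{q}}\right]=\eta^{|\mathbf{p}|+|\mathbf{q}|}\,\mathrm{Cov}_{\mu_H}\!\left[\Delta_{\mathbf{a}}G^{\mathbf{p}}_t,\Delta_{\mathbf{a}}G^{\mathbf{q}}_{t+\tau}\right].
\]
The right-hand side is independent of $n$, so the supremum over $n\in\mathbb{N}$ is attained trivially, and by the stationarity of $\{\Delta_{\mathbf{a}}G^{\mathbf{p}}_u\}_u$ it depends only on $\tau$, settling the ``for any $t\in\mathbb{R}$'' claim. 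Invoking Proposition~\ref{Asymptotic.Decay.G} with $r=1$ then bounds it by $O(|\tau|^{2H-2-2r})=O(|\tau|^{2H-4})$ as $|\tau|\to\infty$, which is the assertion.

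The only genuinely delicate point is making the self-similarity transfer rigorous: I must check that $W_t^{n,\mathbf{p}}$ is a $\mathcal{B}(C_{\mathbb{R}})$-measurable functional of the rescaled path and that equality in law propagates through it jointly in $t$ and $\tau$, so that second moments — and hence covariances — are preserved. This needs $F^{\mathbf{p}}$ to be well defined and square-integrable under $\mu_H$, which holds because the integration runs over the compact cube $[0,1]^K$ and $W^H$ has all moments, while path continuity guarantees measurability. Once this is granted, the remainder is only bookkeeping of the scaling exponents.
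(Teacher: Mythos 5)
Your proposal is correct and follows exactly the route the paper intends: the paper states Proposition~\ref{Key.Lemma.Result.Cor} as a direct corollary of Proposition~\ref{Asymptotic.Decay.G} via the self-similarity of fractional Brownian motion, which is precisely your reduction $W_t^{n,\mathbf{p}}\stackrel{d}{=}\eta^{|\mathbf{p}|}\delta_n^{|\mathbf{p}|H}G^{\mathbf{p}}_{t-1}$ (jointly in $t$ and across $\mathbf{p},\mathbf{q}$), followed by applying the asymptotic decay bound with the first-difference filter of order $r=1$ to get $O(|\tau|^{2H-2-2r})=O(|\tau|^{2H-4})$. Your handling of the side issues (measurability and square-integrability of $F^{\mathbf{p}}$, stationarity via the flow property of $\theta$ and $\theta$-invariance of $\mu_H$) matches the paper's Proposition~\ref{Measurability.Functional} and Corollary~\ref{Stationarity.Functional}, so nothing is missing.
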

\subsection{Preliminary Results}\label{Section.Preliminaries.Key.Lemma}
We summarize several preliminary results used in the proof of Proposition~\ref{Asymptotic.Decay.G} in this subsection. The first result is proven in the similar way to that in Billingsley~\cite{Billingsley}, p.230-231.
\begin{prop}\label{Measurability.Functional}
	Let $A\in\mathcal{B}(\mathbb{R}^K)$ with $K\in\mathbb{N}$ and $f$ be a measurable function on $\mathbb{R}^K$. For $x=\{x_u\}_{u\in\mathbb{R}}\in C_{\mathbb{R}}$, we define a functional $F$ by
	\begin{equation}\label{Functional.Form1}
		F(x):=\int_A f(x_{u_1},\cdots,x_{u_K})\, \mathrm{d}u_1\cdots du_K.
	\end{equation}
	Then the functional $F$ is $\mathcal{B}(C_{\mathbb{R}})$-measurable if $(u_1,\cdots,u_K)\mapsto f(x_{u_1},\cdots,x_{u_K})$ is integrable on $A$. 
	Furthermore, if $A$ is compact and $f$ is continuous, then $x\mapsto F(x)$ is also continuous.
\end{prop}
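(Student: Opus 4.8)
The plan is to reduce both assertions to the single elementary fact that coordinate evaluation on $C_{\mathbb{R}}$ is (jointly) continuous, and then to treat measurability by Fubini--Tonelli and continuity by a uniform-convergence estimate. First I would record the joint continuity of the evaluation map $\Phi:C_{\mathbb{R}}\times\mathbb{R}^K\to\mathbb{R}^K$, $\Phi(x,u_1,\dots,u_K):=(U_{u_1}(x),\dots,U_{u_K}(x))=(x_{u_1},\dots,x_{u_K})$. If $x^{(n)}\to x$ in the compact convergence topology and $\mathbf{u}^{(n)}\to\mathbf{u}$ in $\mathbb{R}^K$, then for each coordinate $|x^{(n)}_{u_j^{(n)}}-x_{u_j}|\le|x^{(n)}_{u_j^{(n)}}-x_{u_j^{(n)}}|+|x_{u_j^{(n)}}-x_{u_j}|$, where the first term tends to $0$ by uniform convergence on a compact interval containing all the $u_j^{(n)}$ and the second by continuity of $x$. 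Hence $\Phi$ is continuous, thus Borel measurable; since $C_{\mathbb{R}}$ and $\mathbb{R}^K$ are separable metrizable, the Borel $\sigma$-algebra of the product equals $\mathcal{B}(C_{\mathbb{R}})\otimes\mathcal{B}(\mathbb{R}^K)$, so $\Phi$ is $\mathcal{B}(C_{\mathbb{R}})\otimes\mathcal{B}(\mathbb{R}^K)$-measurable.

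For the measurability of $F$ I would then compose and integrate. The map $(x,\mathbf{u})\mapsto f(x_{u_1},\dots,x_{u_K})=f\circ\Phi$ is $\mathcal{B}(C_{\mathbb{R}})\otimes\mathcal{B}(\mathbb{R}^K)$-measurable because $f$ is Borel. Writing $F(x)=\int_{\mathbb{R}^K}\mathbf{1}_A(\mathbf{u})\,(f\circ\Phi)(x,\mathbf{u})\,\mathrm{d}u_1\cdots du_K$ and splitting the integrand into positive and negative parts, Tonelli's theorem (with Lebesgue measure on $\mathbb{R}^K$, which is $\sigma$-finite) gives that each partial integral is $\mathcal{B}(C_{\mathbb{R}})$-measurable; the sectionwise integrability hypothesis guarantees both are finite for every $x$, so their difference $F$ is $\mathcal{B}(C_{\mathbb{R}})$-measurable. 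This is the step that most closely follows Billingsley~\cite{Billingsley}, and it is also where I expect the only genuine care to be needed: for a general Borel set $A$ one cannot use the Riemann-sum approximation that suffices for continuous integrands, so one must invoke the product-$\sigma$-algebra identification and Fubini--Tonelli under merely sectionwise integrability rather than joint integrability. Getting these two measure-theoretic points right is the main obstacle; everything else is soft.

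For the continuity claim, suppose $A$ is compact and $f$ continuous, and let $x^{(n)}\to x$ in $C_{\mathbb{R}}$. Each projection $\pi_j(A)$ is compact, so there is $R>0$ with $\bigcup_j\pi_j(A)\subset[-R,R]$, and $x^{(n)}\to x$ uniformly on $[-R,R]$. Consequently, for $n$ large, all the tuples $(x^{(n)}_{u_1},\dots,x^{(n)}_{u_K})$ and $(x_{u_1},\dots,x_{u_K})$ with $\mathbf{u}\in A$ lie in a fixed compact subset of $\mathbb{R}^K$ (a bounded closed neighborhood of the compact set $x([-R,R])^K$), on which $f$ is uniformly continuous; combined with $\sup_{\mathbf{u}\in A}\max_j|x^{(n)}_{u_j}-x_{u_j}|\le\sup_{[-R,R]}|x^{(n)}-x|\to0$, this yields $\sup_{\mathbf{u}\in A}|f(x^{(n)}_{u_1},\dots,x^{(n)}_{u_K})-f(x_{u_1},\dots,x_{u_K})|\to0$. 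Bounding $|F(x^{(n)})-F(x)|$ by the Lebesgue measure of $A$ times this supremum gives $F(x^{(n)})\to F(x)$, so $F$ is continuous. This part is a routine uniform-continuity argument once the joint continuity of $\Phi$ established at the outset is available.
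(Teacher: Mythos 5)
Your proof is correct and complete, but it follows a different route from the paper, which in fact gives no proof at all for this proposition: it is dispatched with a one-line reference to Billingsley, p.~230--231, whose classical method establishes measurability of integral functionals on a function space by approximating the integral with finite sums of coordinate evaluations (each measurable because coordinate projections are measurable) and passing to the pointwise limit. Your route instead proves joint continuity of the evaluation map $\Phi(x,\mathbf{u})=(x_{u_1},\dots,x_{u_K})$, identifies $\mathcal{B}(C_{\mathbb{R}}\times\mathbb{R}^K)$ with $\mathcal{B}(C_{\mathbb{R}})\otimes\mathcal{B}(\mathbb{R}^K)$ (legitimate since both factors are separable metrizable), and then extracts measurability of $x\mapsto\int_A (f\circ\Phi)^{\pm}\,\mathrm{d}\mathbf{u}$ from the Tonelli construction, the sectionwise integrability hypothesis making the difference $F$ well defined everywhere. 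This buys something real: for a merely measurable $f$ and a general Borel set $A$ --- exactly the generality stated in the proposition --- Riemann-sum approximation is unavailable, so a Billingsley-style argument would itself need to be supplemented by a monotone-class or Fubini-type step; your proof handles the stated hypotheses directly, and you correctly flag the two measure-theoretic points (product Borel $\sigma$-algebra identification, and Tonelli under sectionwise rather than joint integrability) where care is required. The price is reliance on those product-space facts, which the classical finite-dimensional argument avoids when $f$ is continuous. Your continuity argument --- uniform convergence of $x^{(n)}$ on a compact window containing all projections of $A$, uniform continuity of $f$ on a compact neighborhood of the relevant ranges, and the bound $|F(x^{(n)})-F(x)|\le \mathrm{Leb}(A)\,\sup_{\mathbf{u}\in A}\bigl|f(x^{(n)}_{u_1},\dots,x^{(n)}_{u_K})-f(x_{u_1},\dots,x_{u_K})\bigr|$ --- is the argument the cited reference and the paper implicitly intend, and it is carried out correctly.
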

Since the shift operator $\theta$ is continuous and $\mu_H$ is $\theta$-invariant, we can obtain the following result using Proposition~\ref{Measurability.Functional}. 
\begin{cor}\label{Stationarity.Functional}
	Let us consider a functional $F$ of the form (\ref{Functional.Form1}) with a continuous function $f$ and a compact set $A\in\mathcal{B}(\mathbb{R}^K)$. Then a stochastic process $G=\{G_u\}_{u\in\mathbb{R}}$ defined by $G_u(x):=F(\theta_ux)$ for $(u,x)\in\mathbb{R}\times C_{\mathbb{R}}$ is continuous and strong stationary on the probability space $(C_{\mathbb{R}},\mathcal{B}(C_{\mathbb{R}}),\mu_H)$.
\end{cor}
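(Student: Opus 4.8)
The plan is to deduce strict (``strong'') stationarity of $G$ from the $\theta$-invariance of $\mu_H$ by exploiting the semigroup (cocycle) structure of the shift, while reading off path continuity and measurability directly from Proposition~\ref{Measurability.Functional}. Since $A$ is compact and $f$ is continuous, that proposition gives that $F$ is simultaneously $\mathcal{B}(C_{\mathbb{R}})$-measurable and continuous on $(C_{\mathbb{R}},\mathcal{B}(C_{\mathbb{R}}))$. Each shift $\theta_u\colon C_{\mathbb{R}}\to C_{\mathbb{R}}$ is continuous, hence Borel measurable, so $G_u=F\circ\theta_u$ is $\mathcal{B}(C_{\mathbb{R}})$-measurable and $G=\{G_u\}_{u\in\mathbb{R}}$ is a bona fide process. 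For path continuity I would fix $x\in C_{\mathbb{R}}$ and note that $u\mapsto\theta_u x$ is continuous into $C_{\mathbb{R}}$ for the topology of compact convergence: as $u\to u_0$ we have $x_{\cdot+u}-x_u\to x_{\cdot+u_0}-x_{u_0}$ uniformly on every compact interval, because $x$ is uniformly continuous on compacts; composing with the continuous $F$ yields continuity of $u\mapsto F(\theta_u x)=G_u(x)$.

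The key algebraic step is the cocycle identity $\theta_{u+h}=\theta_u\circ\theta_h$ for all $u,h\in\mathbb{R}$. A direct computation confirms it: writing $y=\theta_h x$, so that $y_s=x_{s+h}-x_h$, one obtains $(\theta_u y)_s=y_{s+u}-y_u=x_{s+u+h}-x_{u+h}=(\theta_{u+h}x)_s$. Consequently
$$G_{u+h}(x)=F(\theta_{u+h}x)=F\bigl(\theta_u(\theta_h x)\bigr)=G_u(\theta_h x),$$
so that a time shift of the process $G$ by $h$ is realised as the pullback by the path-space shift $\theta_h$. This is the bridge that lets the invariance of the driving measure act on the law of $G$.

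Finally, for stationarity I would fix $k\in\mathbb{N}$, times $u_1,\dots,u_k\in\mathbb{R}$, a shift $h\in\mathbb{R}$, and a bounded measurable $\Phi\colon\mathbb{R}^k\to\mathbb{R}$, and combine the previous identity with the invariance $\mu_H\circ\theta_h^{-1}=\mu_H$:
\begin{equation*}
E_{\mu_H}\left[\Phi\left(G_{u_1+h},\dots,G_{u_k+h}\right)\right]
=E_{\mu_H}\left[\left(\Phi\left(G_{u_1},\dots,G_{u_k}\right)\right)\circ\theta_h\right]
=E_{\mu_H}\left[\Phi\left(G_{u_1},\dots,G_{u_k}\right)\right],
\end{equation*}
the last equality being the change-of-variables formula for the $\theta_h$-invariant measure $\mu_H$. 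As $k$, the times, and $\Phi$ are arbitrary, the finite-dimensional distributions of $G$ are shift-invariant, i.e. $G$ is strictly stationary. I do not expect a serious obstacle: once Proposition~\ref{Measurability.Functional} is available the argument is entirely soft, and the only points needing genuine care are the continuity of the shift in the compact-convergence topology and the clean verification of the cocycle identity, which together carry the weight of the proof.
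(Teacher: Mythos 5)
Your proposal is correct and follows essentially the same route as the paper, which disposes of this corollary in one sentence by citing exactly the ingredients you use: Proposition~\ref{Measurability.Functional} for the continuity and measurability of $F$, the continuity of the shift $\theta$, and the $\theta$-invariance of $\mu_H$ (your cocycle identity $\theta_{u+h}=\theta_u\circ\theta_h$ and the change-of-variables step are the implicit details behind that sentence). Your write-up simply makes these steps explicit; there is no gap and no genuinely different idea.
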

The following result is a consequence of the well-know Wick formula which expresses the higher moments of centered multivariate Gaussian vectors in terms of its second moments, e.g. see Nourdin and Peccati~\cite{Book-Nourdin-Peccati}. 
Given a finite set $b$ the number of which is even, we denote by $\mathcal{P}(b)$ the class of all partitions of $b$ such that each block of a partition $\pi$ contains exactly two elements, and recall $\Lambda_M:=\{1,2,\cdots,M\}$.
\begin{lem}\label{Result2.Diagram.Formula}
	For any $K_0, L_0\in\mathbb{N}$ and $(K_0+L_0)$-dimensional centered Gaussian vector $(X_1,\cdots,X_{K_0+L_0})$, 
	\begin{align*}
		\mathrm{Cov}\left[\prod_{k=1}^{K_0}X_k,\prod_{\ell=1}^{L_0}X_{K_0+\ell}\right]
		=
		\begin{cases}
			\sum_{\pi=\{\{k_1,\ell_1\},\cdots,\{k_{M_0},\ell_{M_0}\}\}\in\mathcal{P}_0(\Lambda_{2M_0})}\mathrm{Cov}[X_{k_1},X_{\ell_1}]\cdots\mathrm{Cov}[X_{k_{M_0}},X_{\ell_{M_0}}]& \mbox{if $K_0+L_0$ is even}, \\
			0& \mbox{if $K_0+L_0$ is odd},
		\end{cases}
	\end{align*}
	where $M_0:=(K_0+L_0)/2$ and $\mathcal{P}_0(\Lambda_{2M_0})$ denotes the subset of $\mathcal{P}(\Lambda_{2M_0})$ whose elements are partitions $\pi=\{\{k_1,\ell_1\},\cdots,\{k_{M_0},\ell_{M_0}\}\}\in\mathcal{P}(\Lambda_{2M_0})$ such that there exists $m\in\Lambda_{M_0}$ satisfying $k_m\leq K_0<\ell_m$.
\end{lem}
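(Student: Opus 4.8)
The plan is to deduce this identity from the classical Wick (Isserlis) formula for moments of a centered Gaussian vector, isolating the ``connected'' pairings by subtracting the product of the two marginal expectations. First I would recall Wick's formula: for any centered Gaussian vector $(X_1,\dots,X_M)$ one has $E[\prod_{i=1}^M X_i]=\sum_{\pi\in\mathcal{P}(\Lambda_M)}\prod_{\{i,j\}\in\pi}E[X_iX_j]$ when $M$ is even and $E[\prod_{i=1}^M X_i]=0$ when $M$ is odd; since the vector is centered, $E[X_iX_j]=\mathrm{Cov}[X_i,X_j]$. Writing $A:=\prod_{k=1}^{K_0}X_k$ and $B:=\prod_{\ell=1}^{L_0}X_{K_0+\ell}$, the target is an assertion about $\mathrm{Cov}[A,B]=E[AB]-E[A]E[B]$, so the whole proof reduces to bookkeeping of pair partitions.

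The odd case is immediate: if $K_0+L_0$ is odd then $E[AB]$ vanishes by Wick, while at least one of $K_0,L_0$ is odd, forcing $E[A]=0$ or $E[B]=0$; hence $\mathrm{Cov}[A,B]=0$. For the even case, $M=K_0+L_0=2M_0$, I would apply Wick to $E[AB]$ to obtain a sum over all $\pi\in\mathcal{P}(\Lambda_{2M_0})$. The key step is to split this into the connected partitions $\mathcal{P}_0(\Lambda_{2M_0})$ --- those having at least one pair $\{k_m,\ell_m\}$ with $k_m\le K_0<\ell_m$ --- and the complementary disconnected partitions, in which every pair lies entirely within $\{1,\dots,K_0\}$ or entirely within $\{K_0+1,\dots,K_0+L_0\}$. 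Each disconnected partition is exactly a pair $(\pi_A,\pi_B)$ with $\pi_A\in\mathcal{P}(\Lambda_{K_0})$ and $\pi_B$ a pairing of $\{K_0+1,\dots,K_0+L_0\}$, and along this bijection the product of covariances factors as $\big(\prod_{\{i,j\}\in\pi_A}\mathrm{Cov}[X_i,X_j]\big)\big(\prod_{\{i,j\}\in\pi_B}\mathrm{Cov}[X_i,X_j]\big)$.

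Summing the disconnected contributions over all $(\pi_A,\pi_B)$ and invoking Wick once more, now applied to $A$ and to $B$ separately, identifies that total with $E[A]E[B]$; this simultaneously covers the subcase where $K_0$ (hence $L_0$) is odd, for then there are no disconnected partitions and $E[A]E[B]=0$. Consequently
\[
E[AB]=\sum_{\pi\in\mathcal{P}_0(\Lambda_{2M_0})}\prod_{m=1}^{M_0}\mathrm{Cov}[X_{k_m},X_{\ell_m}]+E[A]E[B],
\]
and subtracting $E[A]E[B]$ gives the asserted formula. I anticipate no genuine analytic difficulty; the only point demanding care is this combinatorial decomposition --- verifying that the disconnected partitions are in bijection with $\mathcal{P}(\Lambda_{K_0})\times\mathcal{P}(\{K_0+1,\dots,K_0+L_0\})$ and that the covariance product factorizes along it, so that the disconnected sum reproduces $E[A]E[B]$ exactly and only the connected diagrams survive.
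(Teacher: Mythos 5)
Your proposal is correct and follows essentially the same route as the paper's proof: apply Wick's formula to the full product, split the pair partitions into those with at least one cross pair ($\mathcal{P}_0$) and the fully disconnected ones, identify the disconnected sum with $E[A]E[B]$ via Wick applied to each block separately, and subtract. The only cosmetic difference is that the paper restricts its displayed computation to the case where $K_0$ and $L_0$ are both even and dismisses the remaining cases as trivial, whereas you spell out the odd cases explicitly.
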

\begin{proof}
	Let us consider only the case that both $K_0$ and $L_0$ are even since the other cases are trivial from the Wick formula. Since $K_0$ and $L_0$ are even, the Wick formula yields that 
	\begin{align*}
		E\left[\prod_{k=1}^{K_0+L_0}X_k\right]
		=&\sum_{\{\{k_1,\ell_1\},\cdots,\{k_{M_0},\ell_{M_0}\}\}\in\mathcal{P}(\Lambda_{2M_0})}\mathrm{Cov}[X_{k_1},X_{\ell_1}]\cdots\mathrm{Cov}[X_{k_{M_0}},X_{\ell_{M_0}}] \\
		=&\left(\sum_{\{\{k_1,\ell_1\},\cdots,\{k_{K_0/2},\ell_{K_0/2}\}\}\in\mathcal{P}(\Lambda_{K_0})}\mathrm{Cov}[X_{k_1},X_{\ell_1}]\cdots\mathrm{Cov}[X_{k_{K_0/2}},X_{\ell_{K_0/2}}]\right)\\
		&\times\left(\sum_{\{\{k_1,\ell_1\},\cdots,\{k_{L_0/2},\ell_{L_0/2}\}\}\in\mathcal{P}(\Lambda_{L_0})}\mathrm{Cov}[X_{K_0+k_1},X_{K_0+\ell_1}]\cdots\mathrm{Cov}[X_{K_0+k_{L_0/2}},X_{K_0+\ell_{L_0/2}}]\right) \\
		&+\sum_{\{\{k_1,\ell_1\},\cdots,\{k_{M_0},\ell_{M_0}\}\}\in\mathcal{P}_0(\Lambda_{2M_0})}\mathrm{Cov}[X_{k_1},X_{\ell_1}]\cdots\mathrm{Cov}[X_{k_{M_0}},X_{\ell_{M_0}}] \\
		=&E\left[\prod_{k=1}^{K_0}X_k\right]E\left[\prod_{\ell=1}^{L_0}X_{K_0+\ell}\right] +\sum_{\{\{k_1,\ell_1\},\cdots,\{k_{M_0},\ell_{M_0}\}\}\in\mathcal{P}_0(\Lambda_{2M_0})}\mathrm{Cov}[X_{k_1},X_{\ell_1}]\cdots\mathrm{Cov}[X_{k_{M_0}},X_{\ell_{M_0}}].
	\end{align*}
	Therefore, the conclusion follows.
\end{proof}
\subsection{Proof of Proposition~\ref{Asymptotic.Decay.G}}\label{Section.Proof.Key.Lemma}
Before proving Proposition~\ref{Asymptotic.Decay.G}, we will show the following two lemmas.
Denote by $\gamma_{s,u}(\tau):=\mathrm{Cov}_{\mu_H}[U_{s}(\theta_0),U_{u}(\theta_{\tau})]$ for $s,u,\tau\in\mathbb{R}$.
\begin{lem}\label{Property.gamma.s.u}
	For each $s,u\in\mathbb{R}$, $\tau\mapsto\gamma_{s,u}(\tau)$ is infinitely differentiable a.e. and, for any $k\in\mathbb{N}\cup\{0\}$ and compact set $A\subset\mathbb{R}$, its $k$th derivative satisfies
	\begin{equation*}
		\sup_{s,u\in A}\left|\frac{\partial^k\gamma_{s,u}}{{\partial\tau^k}}(\tau)\right|=O(|\tau|^{2H-2-k})\ \ \mbox{as $|\tau|\to\infty$}.
	\end{equation*}
\end{lem}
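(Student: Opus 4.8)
The plan is to reduce everything to an explicit elementary computation: I will evaluate $\gamma_{s,u}(\tau)$ in closed form using the covariance structure of fractional Brownian motion, and then recognize $\gamma_{s,u}$ and each of its $\tau$-derivatives as a mixed second-order finite difference of $\phi(x):=|x|^{2H}$, which can be written as a double integral of $\phi^{(k+2)}$ and bounded directly.

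First I would unwind the definitions. Since $\mu_H$ is the law of a two-sided standard fractional Brownian motion, its canonical coordinate vanishes at $0$, so $U_s(\theta_0)=U_s$ while $U_u(\theta_\tau)=U_{u+\tau}-U_\tau$ is the increment over $[\tau,\tau+u]$. Using the fBm covariance $\mathrm{Cov}_{\mu_H}[U_a,U_b]=\tfrac12(|a|^{2H}+|b|^{2H}-|a-b|^{2H})$ and writing $\phi(x):=|x|^{2H}$, a direct computation gives
\[
\gamma_{s,u}(\tau)=\tfrac12\left(\phi(\tau+u)-\phi(\tau)-\phi(\tau+u-s)+\phi(\tau-s)\right).
\]
Because $\phi\in C^\infty(\mathbb{R}\setminus\{0\})$, the map $\tau\mapsto\gamma_{s,u}(\tau)$ is $C^\infty$ away from the finite set $\{0,-u,s,s-u\}$ where one of the four arguments of $\phi$ vanishes; this yields the claimed a.e. differentiability, and for large $|\tau|$ (the only regime relevant to the asymptotics) differentiation under the expression is legitimate, so
\[
\frac{\partial^k\gamma_{s,u}}{\partial\tau^k}(\tau)=\tfrac12\left(\phi^{(k)}(\tau+u)-\phi^{(k)}(\tau)-\phi^{(k)}(\tau+u-s)+\phi^{(k)}(\tau-s)\right),
\]
where $|\phi^{(k)}(x)|=C_{k,H}|x|^{2H-k}$ on $\mathbb{R}\setminus\{0\}$ for explicit constants $C_{k,H}$.

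For the decay bound I would treat the right-hand side as the double difference of $\phi^{(k)}$ in the $u$- and $s$-directions and integrate twice, obtaining
\[
\phi^{(k)}(\tau+u)-\phi^{(k)}(\tau)-\phi^{(k)}(\tau+u-s)+\phi^{(k)}(\tau-s)=\int_0^s\!\!\int_0^u\phi^{(k+2)}(\tau-a+b)\,\mathrm{d}b\,\mathrm{d}a .
\]
Fixing $R$ with $A\subset[-R,R]$, for $|\tau|\ge 4R$ every argument $\tau-a+b$ with $a\in[0,s]$, $b\in[0,u]$ satisfies $|\tau-a+b|\ge|\tau|/2$. Since $2H-k-2\le 0$, the estimate $|\phi^{(k+2)}(\xi)|=C_{k+2,H}|\xi|^{2H-k-2}\le C_{k+2,H}(|\tau|/2)^{2H-k-2}$ holds on that range, and hence $|\partial_\tau^k\gamma_{s,u}(\tau)|\le C'_{k,H}R^2|\tau|^{2H-2-k}$ uniformly over $s,u\in A$, which is exactly the asserted order. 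It is worth noting that the two free directions $s$ and $u$ are precisely what produce the two extra orders of decay $|\tau|^{-2}$ beyond the naive size $|\tau|^{2H}$ of $\phi$.

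The computation is elementary, so the genuine care lies in two places. First, I must keep all four arguments of $\phi$ uniformly bounded away from the origin as $s,u$ range over the compact set $A$; this forces the threshold $|\tau|\ge 4R$ and is what delivers the \emph{uniformity} over $A$ rather than merely a pointwise estimate. Second, I must justify passing to the $k$th derivative across the finitely many singular points, which is harmless since they remain in a bounded region and the asymptotic statement concerns only $|\tau|\to\infty$. Everything else — the closed form for $\gamma_{s,u}$ and the double-integral identity — is routine.
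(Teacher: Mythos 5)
Your proof is correct, and it reaches the same closed form and the same differentiation formula as the paper, but the decisive step — extracting the extra factor $|\tau|^{-2}$ beyond the naive order $|\tau|^{2H-k}$ — is done differently. The paper expands each term $|\tau+c|^{2H-k}=|\tau|^{2H-k}(1+c/\tau)^{2H-k}$ by Taylor's theorem in powers of $1/\tau$ and observes that the coefficient of $\tau^{-1}$ cancels identically across the four terms (the displayed identity there actually carries a sign typo, $+u^{\ell_0}$ in place of $-u^{\ell_0}$, though the intended cancellation is correct), so the leading surviving term has order $|\tau|^{2H-k-2}$. You instead exhibit the four-term combination exactly as the mixed second difference
\begin{equation*}
\phi^{(k)}(\tau+u)-\phi^{(k)}(\tau)-\phi^{(k)}(\tau+u-s)+\phi^{(k)}(\tau-s)=\int_0^s\!\!\int_0^u\phi^{(k+2)}(\tau-a+b)\,\mathrm{d}b\,\mathrm{d}a,
\end{equation*}
and bound the integrand uniformly once $|\tau|\geq 4R$; this encodes the same cancellation in one identity rather than order by order, gives the uniformity over the compact set $A$ with explicit constants $C'_{k,H}R^2$, and avoids the series bookkeeping (and the typo-prone vanishing-coefficient computation) entirely. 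Two small remarks: when $s$ or $u$ is negative the integrals $\int_0^s$, $\int_0^u$ must be read as oriented integrals, which leaves both the identity and the bound $|s||u|\sup|\phi^{(k+2)}|$ intact; and for $H\in\{1/2,1\}$ some constants $C_{k,H}$ vanish, in which case the bound holds trivially. Neither point affects the validity of your argument. Note finally that the paper's Taylor-expansion formulation is reused downstream (in the proof of its Lemma on filtered covariances, where $\gamma_{s,u}(\tau+(j-i))$ is expanded in powers of $j-i$), but that use relies only on the statement of the present lemma, so your proof substitutes without loss.
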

\begin{proof}
	Fix $s,u\in\mathbb{R}$ and a compact set $A\subset\mathbb{R}$. Since $\mu_H$ is a distribution of the two-sided standard fractional Brownian motion with the Hurst parameter $H$, we have 
	\begin{equation*}
		\gamma_{s,u}(\tau)=-\frac{1}{2}\left(|\tau+u-s|^{2H}-|\tau+u|^{2H}-|\tau-s|^{2H}+|\tau|^{2H}\right), \ \ \tau\in\mathbb{R}.
	\end{equation*}
	As a result, the first assertion is obvious and for any $k\in\mathbb{N}$, we obtain
	\begin{equation}\label{Representation.Derivative.Covariance}
		\frac{\partial^k\gamma_{s,u}}{{\partial\tau^k}}(\tau)=-\frac{(\mathrm{sgn}(\tau))^k}{2}\prod_{\ell=0}^{k-1}(2H-\ell)\left(|\tau+u-s|^{2H-k}-|\tau+u|^{2H-k}-|\tau-s|^{2H-k}+|\tau|^{2H-k}\right)
	\end{equation}
	if $|\tau|$ is sufficiently large, where $\mathrm{sgn}(\cdot)$ denotes the sign function defined by
	\begin{equation*}
		\mathrm{sgn}(\tau)=
		\begin{cases}
			1& \text{$\tau\geq 0$}, \\
			-1& \text{$\tau< 0$}.
		\end{cases}
	\end{equation*}
	Then the second assertion follows from (\ref{Representation.Derivative.Covariance}) because Taylor's theorem yields that for any $L\in\mathbb{N}$,
	\begin{align*}
		&|\tau+u-s|^{2H-k}-|\tau+u|^{2H-k}-|\tau-s|^{2H-k}+|\tau|^{2H-k} \\
		=&|\tau|^{2H-k}\left\{\left(1+\frac{u-s}{\tau}\right)^{2H-k}-\left(1+\frac{u}{\tau}\right)^{2H-k}-\left(1+\frac{-s}{\tau}\right)^{2H-k}+1\right\} \\
		=&|\tau|^{2H-k}\sum_{\ell_0=1}^{L}\frac{1}{\ell_0!}\left\{\prod_{\ell=0}^{\ell_0-1}(2H-k-\ell)\right\}\left\{(u-s)^{\ell_0}-(-s)^{\ell_0}+u^{\ell_0}\right\}\tau^{-\ell_0} +o(|\tau|^{2H-k-L})
	\end{align*}
	as $|\tau|\to\infty$ uniformly in $s,u\in A$ and $(u-s)^{\ell_0}-(-s)^{\ell_0}+u^{\ell_0}=0$ for $\ell_0=1$.
\end{proof}
\begin{lem}\label{Key.Lemma}
	Let $\mathbf{a}=(a_0,a_1,\cdots,a_J)$ be a filter of length $J\in\mathbb{N}$ and order $r\in\mathbb{N}\cup\{0\}$. For any compact set $A\subset\mathbb{R}$ and $\mathbf{p}=(p_1,\cdots,p_K)\in\mathbb{N}^K$, $\mathbf{q}=(q_1,\cdots,q_L)\in\mathbb{N}^L$ with $K,L\in\mathbb{N}$,
	\begin{equation*}
		\hspace{-1cm}\sup_{s_1,u_1,\cdots,s_v,u_v\in A}\left|\sum_{i,j=0}^J a_ia_j\mathrm{Cov}_{\mu_H}\left[\prod_{k=1}^{K}\left\{U_{s_k}(\theta_i)\right\}^{p_k}, \prod_{\ell=1}^L\left\{U_{u_\ell}(\theta_{j+\tau})\right\}^{q_\ell}\right]\right|=O\left(|\tau|^{2H-2-2r}\right)\ \ \mbox{as $|\tau|\to\infty$}.
	\end{equation*}
\end{lem}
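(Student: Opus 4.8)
The plan is to combine the diagram formula (Lemma~\ref{Result2.Diagram.Formula}) with the vanishing-moment property of the filter, the $\theta$-invariance of $\mu_H$, and the derivative estimates of Lemma~\ref{Property.gamma.s.u}. \emph{First}, I would apply Lemma~\ref{Result2.Diagram.Formula} to the centered Gaussian family consisting of the $|\mathbf{p}|$ copies $\{U_{s_k}(\theta_i)\}_k$ together with the $|\mathbf{q}|$ copies $\{U_{u_\ell}(\theta_{j+\tau})\}_\ell$, so that the inner covariance becomes a finite sum over connected pairings $\pi\in\mathcal{P}_0$ of products of pairwise covariances, each $\pi$ carrying at least one cross-pair. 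Using the $\theta$-invariance of $\mu_H$, every within-first-group covariance equals some $\gamma_{s_k,s_{k'}}(0)$ and every within-second-group covariance equals some $\gamma_{u_\ell,u_{\ell'}}(0)$, all independent of $i,j,\tau$, whereas every cross covariance equals
\[
\mathrm{Cov}_{\mu_H}[U_{s}(\theta_i),U_{u}(\theta_{j+\tau})]=\gamma_{s,u}(\tau+j-i).
\]
The crucial structural point is that \emph{all} cross factors in a given $\pi$ carry the same scalar argument $\tau+j-i$. Hence the contribution of $\pi$ factorizes as $\Phi_\pi(\tau+j-i)$, where $\Phi_\pi(x):=C_\pi\prod_{\alpha=1}^{c_\pi}\gamma_{s_\alpha,u_\alpha}(x)$, $C_\pi$ collects the constant within-group covariances, and $c_\pi\geq1$ is the number of cross-pairs.

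\emph{Second}, the quantity to be estimated becomes $\sum_{\pi\in\mathcal{P}_0}\sum_{i,j=0}^J a_ia_j\,\Phi_\pi(\tau+(j-i))$. For each fixed $\pi$ I would Taylor-expand $\Phi_\pi$ about $\tau$ to order $2r$:
\[
\Phi_\pi(\tau+(j-i))=\sum_{m=0}^{2r-1}\frac{(j-i)^m}{m!}\Phi_\pi^{(m)}(\tau)+\frac{(j-i)^{2r}}{(2r-1)!}\int_0^1(1-z)^{2r-1}\Phi_\pi^{(2r)}(\tau+z(j-i))\,\mathrm{d}z,
\]
valid once $|\tau|$ is large enough that $\Phi_\pi$ is $C^{\infty}$ on the relevant interval (guaranteed for large $|\tau|$ by Lemma~\ref{Property.gamma.s.u}; when $r=0$ no expansion is needed and one bounds $\Phi_\pi$ directly). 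Summing against $a_ia_j$, every polynomial term of degree $m<2r$ is annihilated by the filter identity~(\ref{Vanish.Double.Sum.Filter}), leaving only the order-$2r$ remainder. This is precisely the mechanism that upgrades the naive decay $|\tau|^{2H-2}$ coming from a single $\gamma$ to the required $|\tau|^{2H-2-2r}$.

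\emph{Third}, I would estimate the remainder. By the general Leibniz rule, $\Phi_\pi^{(2r)}$ is a finite sum of terms $C_\pi\prod_{\alpha}\gamma_{s_\alpha,u_\alpha}^{(k_\alpha)}$ with $\sum_\alpha k_\alpha=2r$; by Lemma~\ref{Property.gamma.s.u} each factor is $O(|x|^{2H-2-k_\alpha})$ uniformly in $s_\alpha,u_\alpha\in A$, so each term is $O(|x|^{c_\pi(2H-2)-2r})$. Since $c_\pi\geq1$ and $2H-2\leq0$, this is $O(|x|^{2H-2-2r})$, and with $x=\tau+z(j-i)$, $z\in[0,1]$, and $|j-i|\leq J$, it is $O(|\tau|^{2H-2-2r})$ uniformly over the finitely many $i,j$. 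The factor $(j-i)^{2r}$ is bounded by $J^{2r}$, the sums over $i,j$ and over the finitely many $\pi\in\mathcal{P}_0$ are harmless, and uniformity in $A$ is inherited from Lemma~\ref{Property.gamma.s.u}; this yields the claimed bound.

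\emph{The main obstacle} is the bookkeeping in the first step: one must verify that after the diagram expansion the entire $\tau$-dependence of each connected partition collapses into the single scalar $\tau+j-i$ shared by all its cross factors, since this is exactly what makes a one-variable Taylor expansion applicable and lets the filter identity~(\ref{Vanish.Double.Sum.Filter}) do its work. The remaining effort—controlling $\Phi_\pi^{(2r)}$ via the Leibniz rule and confirming that $c_\pi\geq1$ forces the worst decay rate at $c_\pi=1$—is routine given Lemma~\ref{Property.gamma.s.u}.
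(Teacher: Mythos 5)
Your proposal is correct and follows essentially the same route as the paper's proof: the diagram-formula reduction via Lemma~\ref{Result2.Diagram.Formula}, stationarity of increments to make within-group covariances constant and every cross factor a function of the single argument $\tau+(j-i)$, a Taylor expansion about $\tau$ whose terms of degree below $2r$ are annihilated by the filter identity (\ref{Vanish.Double.Sum.Filter}), and the derivative decay bounds of Lemma~\ref{Property.gamma.s.u}. The only difference is organizational: you expand the product $\Phi_\pi$ as a single function (integral remainder plus the Leibniz rule), while the paper expands each factor $\gamma_{s_w,u_w}$ separately and combines them via the multinomial theorem --- the two computations yield the same estimate.
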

\begin{proof}
	By using Lemma~\ref{Result2.Diagram.Formula} in the case that $K_0:=|\mathbf{p}|$, $L_0:=|\mathbf{q}|$ and $(K_0+L_0)$-dimensional centered Gaussian vector $\mathbf{X}\equiv(X_1,\cdots,X_{K_0+L_0})$ given by
	\begin{align*}
		\mathbf{X}:=(\underbrace{U_{s_1}(\theta_i),\cdots,U_{s_1}(\theta_i)}_{\mbox{$p_1$ times}},\cdots,\underbrace{U_{s_K}(\theta_i),\cdots,U_{s_K}(\theta_i)}_{\mbox{$p_K$ times}},
		\underbrace{U_{u_1}(\theta_{j+\tau}),\cdots,U_{u_1}(\theta_{j+\tau})}_{\mbox{$q_1$ times}},\cdots,\underbrace{U_{u_L}(\theta_{j+\tau}),\cdots,U_{u_L}(\theta_{j+\tau})}_{\mbox{$q_L$ times}}),
	\end{align*} 
	it suffices to prove that for any compact set $A\subset\mathbb{R}$ and $v\in\mathbb{N}$,
	\begin{equation}\label{Key.Lemma.Final}
		\sup_{s_1,u_1,\cdots,s_v,u_v\in A}\left|\sum_{i,j=0}^J a_ia_j \prod_{w=1}^v\mathrm{Cov}_{\mu_H}\left[U_{s_w}(\theta_i),U_{u_w}(\theta_{j+\tau})\right]\right|=O\left(|\tau|^{2H-2-2r}\right)\ \ \mbox{as $|\tau|\to\infty$}
	\end{equation}
	since the stationary increments property of the fractional Brownian motion implies
	\begin{align*}
		&\mathrm{Cov}_{\mu_H}\left[U_{s_1},U_{s_2}\right]=\mathrm{Cov}_{\mu_H}\left[U_{s_1}(\theta_i),U_{s_2}(\theta_i)\right],\\
		&\mathrm{Cov}_{\mu_H}\left[U_{u_1},U_{u_2}\right]=\mathrm{Cov}_{\mu_H}\left[U_{u_1}(\theta_{j+\tau}),U_{u_2}(\theta_{j+\tau})\right]
	\end{align*}
	for any $s_1,s_2,u_1,u_2\in\mathbb{R}$.
	
	Fix a compact set $A\subset\mathbb{R}$ and recall $\gamma_{s,u}(\tau):=\mathrm{Cov}_{\mu_H}[U_{s}(\theta_0),U_{u}(\theta_{\tau})]$. Since Taylor's theorem and Lemma~\ref{Property.gamma.s.u} yield that for any $K\in\mathbb{N}$,
	\begin{equation}\label{gamma.Taylor}
		\sup_{\substack{s,u\in A\\ i,j=0,\cdots,J}}\left|\gamma_{s,u}(\tau+(j-i)) -\sum_{k=0}^{K}\frac{(j-i)^k}{k!}\frac{\partial^k\gamma_{s,u}}{\partial\tau^k}(\tau)\right|=o\left(|\tau|^{2H-2-K}\right)\ \ \mbox{as}\ \ |\tau|\to\infty,
	\end{equation}
	(\ref{Key.Lemma.Final}) in the case of $v=1$ follows from (\ref{Vanish.Double.Sum.Filter}) if we take $K\in\mathbb{N}$ satisfying $K\geq 2r$. Moreover, the Taylor approximation (\ref{gamma.Taylor}), the multinomial theorem and Lemma~\ref{Property.gamma.s.u} yield that
	\begin{equation}\label{gamma.Exp.Shift}
		\sup_{\substack{s_1,u_1,\cdots,s_v,u_v\in A\\ i,j=0,\cdots,J}}\left|\prod_{w=1}^v\gamma_{s_w,u_w}(\tau+(j-i))-\sum_{k_1,\cdots,k_v=0}^{K}\frac{(j-i)^{k_1+\cdots+k_v}}{k_1!\cdots k_v!}\prod_{w=1}^v\frac{\partial^{k_w}\gamma_{s_w,u_w}}{\partial\tau^{k_w}}(\tau)\right|=o\left(|\tau|^{2H-2-K}\right)
	\end{equation}
	as $|\tau|\to\infty$, and (\ref{Vanish.Double.Sum.Filter}) and Lemma~\ref{Property.gamma.s.u} yield that
	\begin{align}\label{gamma.p.filter}
		&\sup_{s_1,u_1,\cdots,s_v,u_v\in A}\left|\sum_{i,j=0}^J a_ia_j\frac{(j-i)^{k_1+\cdots+k_v}}{k_1!\cdots k_v!}\prod_{w=1}^v\frac{\partial^{k_w}\gamma_{s_w,u_w}}{\partial\tau^{k_w}}(\tau)\right|\\
		&\nonumber
		\begin{cases}
			=0&\mbox{if}\ \ \sum_{w=1}^vk_w< 2r,\\
			=O\left(|\tau|^{\sum_{w=1}^v(2H-2-k_w)}\right)\ \ \mbox{as}\ \ |\tau|\to\infty& \mbox{if}\ \ \sum_{w=1}^vk_w\geq 2r.
		\end{cases}
	\end{align}
	Then (\ref{Key.Lemma.Final}) in the case of $v\geq 2$ follows from (\ref{gamma.Exp.Shift}) and (\ref{gamma.p.filter}) if we take $K\in\mathbb{N}$ satisfying $K\geq 2r$. Therefore, we finish the proof.
\end{proof}
\begin{proof}[Proof of Proposition~\ref{Asymptotic.Decay.G}]
	Since $G^{\mathbf{p}}$ is stationary from Corollary~\ref{Stationarity.Functional}, the bilinearity of covariance functions and Fubini's theorem yield that
	\begin{align*}
		\mathrm{Cov}_{\mu_H}\left[\Delta_\mathbf{a} G_u^{\mathbf{p}},\Delta_\mathbf{a} G_{u+\tau}^{\mathbf{q}}\right] =\int_{A_\mathbf{p}\times A_\mathbf{q}} \sum_{i,j=0}^Ja_ia_j\mathrm{Cov}_{\mu_H}\left[\prod_{k=1}^{K}\left\{U_{s_k}(\theta_i)\right\}^{p_k}, \prod_{\ell=1}^L\left\{U_{u_\ell}(\theta_{j+\tau})\right\}^{q_\ell}\right]\, \mathrm{d}s_1\cdots ds_Kdu_1\cdots du_L.
	\end{align*}
	Therefore, the conclusion follows from the above equality and Lemma~\ref{Key.Lemma}.
\end{proof}
\section{Approximating Spectral Density of Data}\label{Appendix.SPD}
\subsection{Spectral Density of Stationary Gaussian Sequence $\{G_t^{n,\dagger}\}_{t\in\mathbb{Z}}$}
Recall that a spectral density of the stationary Gaussian sequence $\{G_t^{n,\dagger}\}_{t\in\mathbb{Z}}$, which is obtained by~\cite{FT19-BEJ}, is characterized by
\begin{align*}
\mathrm{Cov}\left[G_t^{n,\dagger}, G_{t+\tau}^{n,\dagger}\right] =& \frac{\eta^2\delta_n^{2H}}{2} (|\tau+2|^{2H+2}-4|\tau+1|^{2H+2} \\
& \hspace{1cm} +6|\tau|^{2H+2}-4{|\tau-1|}^{2H+2}+{|\tau-2|}^{2H+2}) \\ 
=&\int_{-\pi}^\pi e^{\sqrt{-1}\tau\lambda}\eta^2\delta_n^{2H}f_H(\lambda)\, \mathrm{d}\lambda, \nonumber
\end{align*}
where 
\begin{equation*}
f_H(\lambda):=C_H\{2(1-\cos{\lambda})\}^2\sum_{j\in\mathbb{Z}}\frac{1}{|\lambda+2\pi j|^{3+2H}}
\end{equation*}
with $C_H:=(2\pi)^{-1}\Gamma(2H+1)\sin(\pi H)$. 
The following Lemma shows that the stationary Gaussian sequence $\{G_t^{n,\dagger}\}_{t\in\mathbb{Z}}$ satisfies Assumption 1 in~\cite{FT19-BEJ}, see Section 4.2 in~\cite{FT19-BEJ}. 
\begin{lem} \label{Lemma.SPD}
	The spectral density $f(\lambda, H)$ satisfies the following relations.
	\begin{enumerate}[$(1)$]
		\item For any $H\in\Theta$, $\lambda\mapsto f(\lambda, H)$, $\lambda\in[-\pi, \pi]/\{0\}$, 
		is a non-negative integrable even function with $2\pi$-periodicity. Moreover, it satisfies that
		\begin{equation*}
		f\in\mathcal{C}^{3, 1}\left(\Theta\times[-\pi, \pi]/\{0\}\right).
		\end{equation*}
		\item\label{SPD.Identifiability.Cond} If $(H_1, \eta_1)$ and $(H_2, \eta_2)$ are distinct elements of $\Theta\times\Sigma$, 
		a set $\{\lambda\in [-\pi,\pi]:\eta_1 f(\lambda, H_1)\neq \eta_2 f(\lambda, H_2)\}$ has a positive Lebesgue measure.
		\item\label{SPD.Asymptotic} Let $\alpha(H):=2H-1$ with $H\in(0, 1)$. There exist constants $c_1,c_2>0$ and for any $\iota>0$, there exists a constant $c_{3, \iota}$, which only depends on $\iota$, 
		such that the following conditions hold for every $(H,\lambda)\in\Theta\times[-\pi, \pi]\backslash\{0\}$. 
		\begin{enumerate}[$(a)$]
			\item $c_1|\lambda|^{-\alpha(H)}\leq f(\lambda, H)\leq c_2|\lambda|^{-\alpha(H)}$.
			\item For any $j\in\{1, 2, 3\}$,
			\begin{equation*}
			\left|\frac{\partial^j}{\partial{H}^j}f(\lambda, H)\right|\leq c_{3, \iota}|\lambda|^{-\alpha(H)-\iota},\hspace{0.2cm}
			\left|\frac{\partial^{j+1}}{\partial\lambda \partial{H^j}}f(\lambda, H)\right|\leq c_{3, \iota}|\lambda|^{-\alpha(H)-1-\iota}.
			\end{equation*}
		\end{enumerate}
	\end{enumerate}
\end{lem}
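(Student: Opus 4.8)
The plan is to exploit the structure of the defining series by isolating its only singular contribution. Writing $2(1-\cos\lambda)=4\sin^2(\lambda/2)$, so that $\{2(1-\cos\lambda)\}^2=16\sin^4(\lambda/2)$, and splitting off the $j=0$ summand, I would decompose
\begin{equation*}
f(\lambda,H)=C_H\,16\sin^4(\lambda/2)\,|\lambda|^{-3-2H}+C_H\,16\sin^4(\lambda/2)\,R_H(\lambda),\qquad R_H(\lambda):=\sum_{j\neq 0}|\lambda+2\pi j|^{-3-2H}.
\end{equation*}
For $\lambda\in[-\pi,\pi]$ and $j\neq 0$ one has $|\lambda+2\pi j|\geq\pi$, so the remainder $R_H$ together with all of its $\lambda$- and $H$-derivatives converges uniformly on $\Theta_H\times[-\pi,\pi]$ by the Weierstrass $M$-test: each term and each of its derivatives is dominated, for large $|j|$, by $C|j|^{-3-2H_-}$ up to harmless $\log|j|$ factors coming from $H$-differentiation, and $3+2H_->1$ is summable. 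Hence $R_H$ is $\mathcal{C}^{\infty}$ on all of $[-\pi,\pi]$ with derivatives bounded uniformly in $H$; since $C_H=(2\pi)^{-1}\Gamma(2H+1)\sin(\pi H)$ is smooth and bounded on $\Theta_H$ and $16\sin^4(\lambda/2)=O(\lambda^4)$ near $0$, the entire second piece and its derivatives are of strictly lower order than the claimed bounds (they carry a $|\lambda|^4$ factor, dominating any $|\lambda|^{-\alpha(H)}$ slack). This reduces every assertion to the explicit leading term $C_H\,16\sin^4(\lambda/2)\,|\lambda|^{-3-2H}$.

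For part $(1)$, non-negativity is immediate since $C_H>0$ on $(0,1)$, $\sin^4\geq0$ and each summand is positive; evenness and $2\pi$-periodicity follow from the corresponding properties of $\cos\lambda$ and of the lattice sum under the reindexings $j\mapsto-j$ and $\lambda\mapsto\lambda+2\pi$; integrability holds because near $0$ the density behaves like $|\lambda|^{1-2H}$, integrable for $H<1$ (and $f\equiv0$ at $H=1$). The joint $\mathcal{C}^{3,1}$-regularity on $\Theta\times([-\pi,\pi]\setminus\{0\})$ follows from smoothness of $C_H$, of $\sin^4(\lambda/2)$, of $|\lambda|^{-3-2H}$ away from $\lambda=0$, and of $R_H$, with the uniform convergence above licensing term-by-term differentiation.

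The analytic heart is part $(3)$. For $(3a)$ I would set $\phi(\lambda,H):=f(\lambda,H)|\lambda|^{2H-1}$ and show it extends to a continuous, strictly positive function on the compact set $[-\pi,\pi]\times\Theta_H$: using $16\sin^4(\lambda/2)|\lambda|^{-4}\to1$ one checks $\phi(\lambda,H)\to C_H>0$ as $\lambda\to0$ uniformly in $H$ (the remainder contributes $O(|\lambda|^{3+2H})$), while away from $0$ positivity and finiteness are clear; compactness then yields $c_1\leq\phi\leq c_2$ with $c_1,c_2>0$, using that $\Theta_H$ is bounded away from $1$ so that $\inf_{\Theta_H}C_H>0$. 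For $(3b)$ I would differentiate the leading term directly: since $\partial_H|\lambda|^{-3-2H}=-2\log|\lambda|\,|\lambda|^{-3-2H}$, applying $\partial_H^{\,j}$ (with a Leibniz expansion that also differentiates the bounded factor $C_H$) produces a factor $(-2\log|\lambda|)^{j}$, and combining with $16\sin^4(\lambda/2)=O(\lambda^4)$ gives a bound of order $|\lambda|^{1-2H}(\log|\lambda|)^{j}=|\lambda|^{-\alpha(H)}(\log|\lambda|)^{j}$; each additional $\partial_\lambda$ (whether it strikes $|\lambda|^{-3-2H}$, $\sin^4(\lambda/2)$, or the log) raises the singular order by exactly one. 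The crucial point is that for any $\iota>0$ one has $(\log|\lambda|)^{j}=O(|\lambda|^{-\iota})$ as $\lambda\to0$, absorbing every logarithm into the admissible $|\lambda|^{-\iota}$ margin and yielding the stated $|\lambda|^{-\alpha(H)-\iota}$ and $|\lambda|^{-\alpha(H)-1-\iota}$ bounds; uniformity in $H$ comes from compactness of $\Theta_H$ and boundedness of the derivatives of $C_H$.

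Finally, part $(2)$ follows from the leading-order asymptotics in $(3a)$: if $\eta_1 f(\lambda,H_1)=\eta_2 f(\lambda,H_2)$ for almost every $\lambda$, then letting $\lambda\to0$ and comparing the exponents $1-2H_1$ and $1-2H_2$ forces $H_1=H_2$, after which $\eta_1=\eta_2$ follows from positivity of $f$; hence distinct parameters cannot agree a.e., which is exactly the assertion. The main obstacle I anticipate is $(3b)$: making the term-by-term differentiation of the aliasing series fully rigorous while tracking the exact singular order, and handling the logarithmic factors uniformly in $H$ so they are cleanly dominated by the $|\lambda|^{-\iota}$ slack. Once the singular/smooth decomposition above is in place, the remaining parts are comparatively routine.
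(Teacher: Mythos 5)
Your proof is correct, but note that the paper itself contains no proof of this lemma: it is stated and immediately deferred to Section 4.2 of Fukasawa and Takabatake~\cite{FT19-BEJ}, where the corresponding spectral density (without the noise term) is analyzed. Your argument therefore supplies what the paper omits, and it follows the standard route for aliased fractional-Gaussian-noise spectral densities: isolate the $j=0$ term of the lattice sum, observe that the remainder $\sum_{j\neq 0}|\lambda+2\pi j|^{-3-2H}$ is $\mathcal{C}^\infty$ with derivatives bounded uniformly on $\Theta_H\times[-\pi,\pi]$ (Weierstrass $M$-test, using $|\lambda+2\pi j|\geq\pi$), reduce every assertion to the explicit term $16\,C_H\sin^4(\lambda/2)\,|\lambda|^{-3-2H}$, and absorb the logarithms produced by each $\partial_H$ into the $|\lambda|^{-\iota}$ slack via $(\log|\lambda|)^j=O(|\lambda|^{-\iota})$; the treatment of $(3a)$ by continuous extension of $f(\lambda,H)|\lambda|^{2H-1}$ to the compact set $[-\pi,\pi]\times\Theta_H$, and of $(2)$ by the $\lambda\to 0$ exponent comparison, are both sound. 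One point deserves emphasis rather than a parenthetical remark: the lower bound in $(3a)$ and the identifiability in $(2)$ require $\inf_{H\in\Theta_H}C_H>0$, i.e. $H_+<1$, since $C_H=\Gamma(2H+1)\sin(\pi H)/(2\pi)$ vanishes at $H=1$ and then $f(\cdot,1)\equiv 0$, making both claims false at that endpoint. This restriction is implicit in the lemma itself (it defines $\alpha(H)$ only for $H\in(0,1)$) even though the paper's parameter space is written as $\Theta_H\subset(0,1]$; you correctly flag the issue, but the hypothesis $H_+<1$ should be stated explicitly at the outset of the proof.
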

\subsection{Spectral Density of Stationary Gaussian Sequence $\{G_t^n\}_{t\in\mathbb{Z}}$}
We derive a spectral density of the stationary sequence $\{G_t^n\}_{t\in\mathbb{Z}}$ in this subsection. Since $\{\epsilon_t^n\}_{t\in\mathbb{Z}}$ is an i.i.d. sequence, $\{\Delta\epsilon_t^n\}_{t\in\mathbb{Z}}$ is a MA(1) process and its auto-covariance function is given by
\begin{equation*}
\gamma_n(\tau):=\mathrm{Cov}\left[\Delta\epsilon_t^n, \Delta\epsilon_{t+\tau}^n\right]=
\begin{cases}
4/m_n & (\tau=0) \\
-2/m_n & (|\tau|=1) \\
0& (\mbox{otherwise})
\end{cases}.
\end{equation*}
Then its spectral density $\ell_n$ is given by the Fourier series
\begin{equation*}
\ell_n(\lambda):=\frac{1}{2\pi}\sum_{\tau\in\mathbb{Z}}\gamma_n(\tau)e^{\sqrt{-1}\tau\lambda}=\frac{2}{m_n}\ell(\lambda),\ \ \mbox{where}\ \ \ell(\lambda):=\frac{1}{\pi}\left(1-\cos\lambda\right), \ \ \lambda\in[-\pi,\pi].
\end{equation*}
Since $\{G_t^{n,\dagger}\}_{t\in\mathbb{Z}}$ and $\{\epsilon_t^n\}_{t\in\mathbb{Z}}$ are independent, the covariance function of $\{G_t^n\}_{t\in\mathbb{Z}}$ is characterized by
\begin{equation*}
\mathrm{Cov}\left[G_t^n, G_{t+\tau}^n\right]=
\mathrm{Cov}\left[G_t^{n,\dagger}, G_{t+\tau}^{n,\dagger}\right] + \mathrm{Cov}\left[\Delta\epsilon_t^n, \Delta\epsilon_{t+\tau}\right] =\int_{-\pi}^\pi e^{\sqrt{-1}\tau\lambda}f_\vartheta^n(\lambda)\, \mathrm{d}\lambda, 
\end{equation*}
where the spectral density $f_\vartheta^n$ is given by
\begin{equation*}
f_\vartheta^n(\lambda)\equiv f(\lambda,H,\eta,n):=\eta^2\delta_n^{2H}f_H(\lambda)+\frac{2}{m_n}\ell(\lambda),\hspace{0.2cm}\lambda\in[-\pi,\pi],\hspace{0.2cm}\vartheta=(H,\eta).
\end{equation*}
\section{Extension of Some Results in Fox and Taqqu~\cite{FT86, FT87}}
We will show several extended lemmas and theorem developed in Fox and Taqqu~\cite{FT86, FT87} in the case where functions appeared in their results depend on the asymptotic parameter $n\in\mathbb{N}$. They can be easily proven in the similar way to the corresponding results in Fox and Taqqu~\cite{FT86, FT87}; we will however give their concise proofs in Section~\ref{SubSec_FT86_Fourier_Coef} and Section~\ref{SubSec_FT87_Trace} for convenience. The following two results are extensions of Lemma 4 and Lemma 5 in~\cite{FT86} which show an asymptotic decay of the Fourier coefficient. 
\begin{lem}[cf.Lemma 4 and Lemma 5 in~\cite{FT86}]\label{Extension1.FT86}
	Let $\beta\in(-1,0)\cup(0,1)$ and $n\in\mathbb{N}$. Suppose a sequence of $2\pi$-periodic functions $k^n:\mathbb{R}\rightarrow[-\infty, \infty]$, $n\in\mathbb{N}$, satisfies the following conditions:
	\begin{enumerate}[$(1)$]
		\item\label{Beta.Less.Than.0} If $\beta\in(0,1)$, $k^n$ is  continuously differentiable on $[-\pi, \pi]\backslash\{0\}$ for each $n\in\mathbb{N}$ and 
		\begin{equation*}
		\sup_{n\in\mathbb{N},\lambda\in[-\pi, \pi]\backslash\{0\}}|\lambda|^{\beta}\left|k^n(\lambda)\right|<\infty,\ \ \sup_{n\in\mathbb{N},\lambda\in[-\pi, \pi]\backslash\{0\}}|\lambda|^{\beta+1}\left|\frac{\partial k^n}{\partial\lambda}(\lambda)\right|<\infty.
		\end{equation*}
		\item\label{Beta.Greater.Than.0} If $\beta\in(-1,0)$, $k^n$ is integrable and twice continuously differentiable on $[-\pi, \pi]\backslash\{0\}$ for each $n\in\mathbb{N}$ and
		\begin{equation*}
		\sup_{n\in\mathbb{N},\lambda\in[-\pi, \pi]\backslash\{0\}}|\lambda|^{\beta+1}\left|\frac{\partial k^n}{\partial\lambda}(\lambda)\right|<\infty,\ \ \sup_{n\in\mathbb{N},\lambda\in[-\pi, \pi]\backslash\{0\}}|\lambda|^{\beta+2}\left|\frac{\partial^2k^n}{\partial\lambda^2}(\lambda)\right|<\infty.
		\end{equation*}
	\end{enumerate}
	Then the sequence of the Fourier coefficients $\widehat{k^n}(\tau)$, $\tau\in\mathbb{Z}$, satisfies 
	\begin{equation*}
	\sup_{n\in\mathbb{N}}\left|\widehat{k^n}(\tau)\right|=O\left(|\tau|^{\beta-1}\right)\ \ \mbox{as}\ \ |\tau|\to\infty.
	\end{equation*}
\end{lem}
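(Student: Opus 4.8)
The plan is to prove the Fourier-coefficient decay bound by adapting the classical integration-by-parts argument of Fox and Taqqu, but carrying the supremum over $n\in\mathbb{N}$ through every estimate so that the bound is uniform in the asymptotic parameter. The key observation is that the hypotheses are stated precisely as uniform bounds (the suprema over both $n$ and $\lambda$ are finite), so that every constant produced along the way can be taken independent of $n$. I would split the argument according to the sign of $\beta$, since the two cases require different numbers of integrations by parts, exactly as in the two separate lemmas of \cite{FT86}.

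\textbf{Case $\beta\in(0,1)$.} Here $k^n$ need not be integrable near $0$, so I would first integrate by parts once. Writing $\widehat{k^n}(\tau)=\int_{-\pi}^\pi e^{\sqrt{-1}\tau\lambda}k^n(\lambda)\,\mathrm{d}\lambda$ and using $2\pi$-periodicity to kill the boundary terms, the single integration by parts produces a factor $1/(\sqrt{-1}\tau)$ and an integral of $\partial_\lambda k^n$ against $e^{\sqrt{-1}\tau\lambda}$. First I would isolate the singularity at $0$: on the region $|\lambda|\leq 1/|\tau|$ I bound $|k^n(\lambda)|$ directly by the hypothesis $\sup|\lambda|^{\beta}|k^n(\lambda)|<\infty$, which contributes $O(|\tau|^{\beta-1})$; on the complementary region $|\lambda|\geq 1/|\tau|$ I use the integrated-by-parts form, bounding $|\partial_\lambda k^n(\lambda)|\leq C|\lambda|^{-\beta-1}$ uniformly in $n$, so that the integral is $O\bigl(|\tau|^{-1}\int_{1/|\tau|}^\pi \lambda^{-\beta-1}\,\mathrm{d}\lambda\bigr)=O(|\tau|^{-1}\cdot|\tau|^{\beta})=O(|\tau|^{\beta-1})$. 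Combining the two regions yields the claimed uniform rate.

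\textbf{Case $\beta\in(-1,0)$.} Now $k^n$ is integrable but we need the extra decay coming from a second derivative, so I would integrate by parts twice, producing a factor $1/\tau^2$ and an integral of $\partial_\lambda^2 k^n$. Again splitting at $|\lambda|=1/|\tau|$, the inner region contributes $O(|\tau|^{-1}\cdot|\tau|^{-\beta-1})$ on using the integrability bound $|k^n|\lesssim$ and the first-derivative bound for the boundary pieces produced by cutting the integral, while the outer region gives $O\bigl(\tau^{-2}\int_{1/|\tau|}^\pi \lambda^{-\beta-2}\,\mathrm{d}\lambda\bigr)=O(\tau^{-2}\cdot|\tau|^{\beta+1})=O(|\tau|^{\beta-1})$, matching the target rate. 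The uniformity in $n$ is inherited verbatim from the uniform hypotheses on the first and second derivatives.

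\textbf{The hard part} will not be any single estimate — each is an elementary dyadic splitting — but rather the careful bookkeeping of the boundary terms generated when one cuts the integral at $\pm 1/|\tau|$ and integrates by parts only on the outer piece. These boundary contributions involve $k^n$ (or $\partial_\lambda k^n$) evaluated at $\lambda=\pm1/|\tau|$, and one must check, using the stated supremum bounds, that they are themselves $O(|\tau|^{\beta-1})$ uniformly in $n$; this is precisely the point where the argument of \cite{FT86} must be reread with the extra $\sup_{n}$ inserted. Once these terms are controlled uniformly, the conclusion $\sup_{n\in\mathbb{N}}|\widehat{k^n}(\tau)|=O(|\tau|^{\beta-1})$ follows by collecting the regional estimates.
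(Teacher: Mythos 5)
Your case $\beta\in(0,1)$ is correct and goes through uniformly in $n$: splitting at $|\lambda|=1/|\tau|$, the inner integral is bounded by $C\int_{|\lambda|\le 1/|\tau|}|\lambda|^{-\beta}\,\mathrm{d}\lambda=O(|\tau|^{\beta-1})$, the outer integrated-by-parts piece by $C|\tau|^{-1}\int_{1/|\tau|}^{\pi}\lambda^{-\beta-1}\,\mathrm{d}\lambda=O(|\tau|^{\beta-1})$, and the cut boundary terms by $|\tau|^{-1}\sup_n|k^n(\pm 1/|\tau|)|\le C|\tau|^{\beta-1}$. This is a legitimate variant of the paper's argument, which instead uses the translation trick $2|\widehat{k^n}(\tau)|\le\int_{-\pi}^{\pi}|k^n(\lambda)-k^n(\lambda+\pi/|\tau|)|\,\mathrm{d}\lambda$ plus the mean value theorem; both routes use exactly the same two hypotheses and give the same rate.

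The case $\beta\in(-1,0)$, however, has a genuine gap. Every estimate in your plan in which a value of $k^n$ itself appears — the inner-region integral $\int_{|\lambda|\le 1/|\tau|}|k^n(\lambda)|\,\mathrm{d}\lambda$ and the boundary terms $|\tau|^{-1}|k^n(\pm 1/|\tau|)|$ produced by the first integration by parts on the outer piece — requires a bound of the form $\sup_{n}|\lambda|^{\gamma}|k^n(\lambda)|<\infty$ near the origin. No such bound is among the case-(2) hypotheses, which control only $\partial_\lambda k^n$ and $\partial^2_\lambda k^n$; this is exactly why your "integrability bound $|k^n|\lesssim$" is left blank. The obstruction is real, not cosmetic: $k^n\equiv n$ satisfies every case-(2) hypothesis, and for it each of the terms you must bound has size $n/|\tau|$, unbounded in $n$, even though $\widehat{k^n}(\tau)=0$ for all $\tau\ne 0$. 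So any term-by-term scheme that exposes values of $k^n$ cannot close; those values must be eliminated before estimating. That is precisely how the paper proceeds: since $k^n$ is $2\pi$-periodic and continuous off the origin, $k^n(\pi)=k^n(-\pi)$, so a single integration by parts over the \emph{whole} period produces no boundary terms and gives
\begin{equation*}
\widehat{k^n}(\tau)=-\frac{1}{\sqrt{-1}\tau}\int_{-\pi}^{\pi}e^{\sqrt{-1}\tau\lambda}\frac{\partial k^n}{\partial\lambda}(\lambda)\,\mathrm{d}\lambda ;
\end{equation*}
the derivative $\partial_\lambda k^n$ then satisfies the case-(1) hypotheses with exponent $\beta+1\in(0,1)$, so the case-(1) argument (yours or the paper's) yields $O(|\tau|^{\beta})$ for this integral uniformly in $n$, and the prefactor $|\tau|^{-1}$ finishes the proof. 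A smaller point: even granting an inner bound, your claimed inner contribution $O(|\tau|^{-1}\cdot|\tau|^{-\beta-1})=O(|\tau|^{-\beta-2})$ is weaker than the target $O(|\tau|^{\beta-1})$ whenever $\beta<-1/2$, so the bookkeeping would have to be redone in any event.
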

\begin{lem}\label{Extension2.FT86}
	Suppose a sequence of $2\pi$-periodic functions $k^n:\mathbb{R}\rightarrow[-\infty, \infty]$, $n\in\mathbb{N}$, is continuously differentiable on $[-\pi, \pi]\backslash\{0\}$ for each $n\in\mathbb{N}$ and
	\begin{equation*}
	\sup_{n\in\mathbb{N},\lambda\in[-\pi, \pi]\backslash\{0\}}\left|k^n(\lambda)\right|<\infty,\ \ \sup_{n\in\mathbb{N},\lambda\in[-\pi, \pi]\backslash\{0\}}|\lambda|\left|\frac{\partial k^n}{\partial\lambda}(\lambda)\right|<\infty.
	\end{equation*}
	Then the sequence of the Fourier coefficients $\widehat{k^n}(\tau)$, $\tau\in\mathbb{Z}$, satisfies 
	\begin{equation*}
	\sup_{n\in\mathbb{N}}\left|\widehat{k^n}(\tau)\right|=O\left(|\tau|^{-1}\log|\tau|\right)\ \ \mbox{as}\ \ |\tau|\to\infty.
	\end{equation*}
\end{lem}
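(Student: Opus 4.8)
The plan is to estimate the Fourier coefficient $\widehat{k^n}(\tau)=\int_{-\pi}^\pi e^{\sqrt{-1}\tau\lambda}k^n(\lambda)\,\mathrm{d}\lambda$ directly, treating the present hypotheses as the borderline case $\beta=0$ of Lemma~\ref{Extension1.FT86}. The two suprema in the statement supply uniform constants $C_0:=\sup_{n,\lambda}|k^n(\lambda)|<\infty$ and $C_1:=\sup_{n,\lambda}|\lambda|\,|\partial_\lambda k^n(\lambda)|<\infty$. Since every bound below will be expressed through $C_0$ and $C_1$ alone, the uniformity in $n$ is automatic, so I may fix $n$ and suppress it.

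First I would split the integral at the scale $a:=1/|\tau|$, writing $\widehat{k^n}(\tau)$ as the sum of a central piece over $\{|\lambda|\le a\}$ and two outer pieces over $[a,\pi]$ and $[-\pi,-a]$. On the central piece I bound the integrand by $C_0$ to get $\left|\int_{-a}^{a}e^{\sqrt{-1}\tau\lambda}k^n(\lambda)\,\mathrm{d}\lambda\right|\le 2C_0a=O(|\tau|^{-1})$.

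On each outer piece I integrate by parts, transferring the oscillation onto the factor $1/(\sqrt{-1}\tau)$ and differentiating $k^n$. The boundary contributions at $\pm a$ are each bounded by $C_0/|\tau|$, hence $O(|\tau|^{-1})$, while the boundary contributions at $\pm\pi$ cancel because $2\pi$-periodicity gives $k^n(\pi)=k^n(-\pi)$ and $e^{\sqrt{-1}\tau\pi}=e^{-\sqrt{-1}\tau\pi}$ for integer $\tau$. The remaining integral is controlled using $|\partial_\lambda k^n(\lambda)|\le C_1/|\lambda|$: its modulus is at most $\frac{C_1}{|\tau|}\int_a^\pi\lambda^{-1}\,\mathrm{d}\lambda=\frac{C_1}{|\tau|}\bigl(\log\pi+\log|\tau|\bigr)$, which is $O(|\tau|^{-1}\log|\tau|)$ and dominates the other terms.

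Collecting the three contributions yields $\sup_{n}|\widehat{k^n}(\tau)|=O(|\tau|^{-1}\log|\tau|)$. The only delicate point is the choice of cutoff $a$: taking $a=1/|\tau|$ balances the central estimate against the logarithmic divergence of $\int_a^\pi\lambda^{-1}\,\mathrm{d}\lambda$, and it is precisely this logarithm --- the signature of the endpoint exponent $\beta=0$, where the clean rate $O(|\tau|^{\beta-1})$ of Lemma~\ref{Extension1.FT86} is no longer available --- that forces the extra $\log|\tau|$ factor. I expect the bookkeeping of the $\pm\pi$ boundary terms (where periodicity is essential) and the verification that all constants remain $n$-free to be the main places where care, rather than genuine difficulty, is required.
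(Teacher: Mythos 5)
Your proof is correct, and the uniformity in $n$ holds for exactly the reason you state: every bound involves only $C_0$ and $C_1$. Your route is, however, genuinely different from the paper's. The paper never integrates by parts in this lemma; it uses the half-period shift trick: by $2\pi$-periodicity, $\widehat{k^n}(\tau)=-\int_{-\pi}^{\pi}e^{\sqrt{-1}\tau\lambda}\,k^n(\lambda+\pi/|\tau|)\,\mathrm{d}\lambda$, hence $2\,|\widehat{k^n}(\tau)|\le\int_{-\pi}^{\pi}\bigl|k^n(\lambda)-k^n(\lambda+\pi/|\tau|)\bigr|\,\mathrm{d}\lambda$, and this difference integral is split into two outer regions plus a window of width $O(1/|\tau|)$ around the singularity. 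On the outer regions the mean value theorem and $|\partial_\lambda k^n(\lambda)|\le C_1/|\lambda|$ give a bound of the form $\frac{\pi}{|\tau|}\int_{\pi}^{(|\tau|-1)\pi}\lambda^{-1}\,\mathrm{d}\lambda=O(|\tau|^{-1}\log|\tau|)$, while the window contributes $O(|\tau|^{-1})$ by boundedness of $k^n$. So both arguments rest on the same dichotomy --- boundedness on a $1/|\tau|$-window at the origin, and the non-integrable derivative bound $C_1/|\lambda|$ outside it, integrated down to scale $1/|\tau|$ --- and the logarithm arises identically in both. Your integration-by-parts version makes the mechanism slightly more explicit through boundary terms (note the $\pm\pi$ terms need not even cancel: each is separately bounded by $C_0/|\tau|$, so the periodicity argument, while valid, is not essential there); the paper's shift template has the advantage that one and the same inequality covers both Lemma E.1 case (1) and this lemma, with only the final derivative estimate changing. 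One point you handled correctly and which is essential: integration by parts cannot be performed over all of $[-\pi,\pi]$ here (as the paper does in Lemma E.1 case (2)), because $|\partial_\lambda k^n|$ is allowed to grow like $1/|\lambda|$, which is not integrable at the origin; your cutoff at $a=1/|\tau|$ is precisely what circumvents this.
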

The following result is an extension of Theorem 1 in~\cite{FT87} in the case where functions appeared in Theorem 1 in~\cite{FT87} depend on the asymptotic parameter $n\in\mathbb{N}$; they however have the same asymptotic behavior at the origin as that assumed in Theorem 1 in~\cite{FT87} uniformly to the asymptotic parameter $n\in\mathbb{N}$ and they uniformly converge to some functions almost everywhere as $n\to\infty$.  
\begin{thm}[cf. Theorem 1 in~\cite{FT87}]\label{Extension.FT87}
	Let $\alpha_1, \alpha_2 <1$ and $p\in\mathbb{N}$. Suppose sequences of even functions $k_1^n, k_2^n:[-\pi,\pi]\to[-\infty,\infty]$ satisfy the following two conditions:
	\begin{enumerate}[$(1)$]
		\item The following relations hold:
		\begin{equation*}
		\sup_{n\in\mathbb{N},\lambda\in[-\pi, \pi]\backslash\{0\}}|\lambda|^{\alpha_1}\left|k_1^n(\lambda)\right|<\infty, \ \ 	\sup_{n\in\mathbb{N},\lambda\in[-\pi, \pi]\backslash\{0\}}|\lambda|^{\alpha_2}\left|k_2^n(\lambda)\right|<\infty.
		\end{equation*}
		\item
		There exist functions $k_1, k_2:[-\pi,\pi]\to[-\infty,\infty]$ such that
		\begin{equation*}
		\lim_{n\to\infty}\esssup_{\lambda\in[-\pi,\pi]}|k_1^n(\lambda)-k_1(\lambda)|=0,\hspace{0.2cm}\lim_{n\to\infty}\esssup_{\lambda\in[-\pi,\pi]}|k_2^n(\lambda)-k_2(\lambda)|=0.
		\end{equation*}
		Moreover, the discontinuities of $k_1$ and $k_2$ have the Lebesgue measure $0$.
	\end{enumerate}
	Under the above conditions, we have
	\begin{enumerate}[$(a)$]
		\item \label{FT87.Thm.Part1}
		If $p(\alpha_1 + \alpha_2)<1$,
		\begin{equation*}
		\lim_{n\to\infty}\frac{1}{n}\mathrm{Tr}\left[\left(\Sigma_n(k_1^n)\Sigma_n(k_2^n)\right)^p\right]=\left(2\pi\right)^{2p-1}\int_{-\pi}^\pi[k_1(\lambda)k_2(\lambda)]^p\, \mathrm{d}\lambda.
		\end{equation*}
		\item \label{FT87.Thm.Part2}
		If $p(\alpha_1 + \alpha_2)\geq 1$, then for any $\psi>0$,
		\begin{equation*}
		\mathrm{Tr}\left[\left(\Sigma_n(k_1^n)\Sigma_n(k_2^n)\right)^p\right] = o\left(n^{p(\alpha_1 + \alpha_2)+\psi}\right)\ \ \mbox{as $n\to\infty$.}
		\end{equation*}
	\end{enumerate}
\end{thm}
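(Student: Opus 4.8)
The plan is to follow the proof of Theorem~1 in Fox and Taqqu~\cite{FT87}, inserting $\sup_{n}$ into every estimate and using the ess-sup convergence only at the single step where the limiting constant is identified. I start from the same integral representation of the trace. Writing $h_{2r-1}^n:=k_1^n$, $h_{2r}^n:=k_2^n$ for $r=1,\dots,p$ and using $\Sigma_n(k)_{jl}=\widehat{k}(j-l)=\int_{-\pi}^\pi k(x)e^{\sqrt{-1}(j-l)x}\,\mathrm{d}x$, a direct summation over the cyclically indexed $j_1,\dots,j_{2p}\in\Lambda_n$ gives
\begin{equation*}
\mathrm{Tr}\left[\left(\Sigma_n(k_1^n)\Sigma_n(k_2^n)\right)^p\right]=\int_{[-\pi,\pi]^{2p}}\left(\prod_{r=1}^{2p}h_r^n(x_r)\right)\prod_{r=1}^{2p}D_n(x_r-x_{r-1})\,\mathrm{d}x,
\end{equation*}
with the cyclic convention $x_0:=x_{2p}$ and $D_n(\theta):=\sin(n\theta/2)/\sin(\theta/2)$ the Dirichlet kernel, for which $\int_{-\pi}^\pi D_n=2\pi$ and whose mass concentrates at $\theta=0$ as $n\to\infty$. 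This is exactly the object analysed in~\cite{FT87}, the only novelty being the dependence of the integrand on $n$.

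First I treat part~(b), which needs only condition~(1). The Fox--Taqqu bound on the integral above is obtained by power counting the concentration of $\prod_r D_n(x_r-x_{r-1})$ near the diagonal and depends on the integrand solely through the constants in $\sup_{n,\lambda}|\lambda|^{\alpha_i}|k_i^n(\lambda)|$. Since condition~(1) makes these constants uniform in $n$, their estimate applies with $\sup_n$ in front, yielding $\mathrm{Tr}[(\Sigma_n(k_1^n)\Sigma_n(k_2^n))^p]=o(n^{p(\alpha_1+\alpha_2)+\psi})$ for every $\psi>0$ when $p(\alpha_1+\alpha_2)\ge1$; condition~(2) plays no role here.

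For part~(a) I pass to the limit by perturbing the integrand. Set $d_i^n:=k_i^n-k_i$, so $\varepsilon_i^n:=\esssup_\lambda|d_i^n(\lambda)|\to0$ by condition~(2); the a.e.\ limits $k_i$ are even, inherit $|\lambda|^{\alpha_i}|k_i(\lambda)|\le C$, and have null discontinuity sets, hence meet the hypotheses of Theorem~1 in~\cite{FT87}. Expanding the $2p$-linear trace slotwise via $k_i^n=k_i+d_i^n$ yields one principal term $\mathrm{Tr}[(\Sigma_n(k_1)\Sigma_n(k_2))^p]$---for which the original theorem gives $n^{-1}\mathrm{Tr}\to(2\pi)^{2p-1}\int_{-\pi}^\pi[k_1(\lambda)k_2(\lambda)]^p\,\mathrm{d}\lambda$---together with $2^{2p}-1$ corrections, each containing at least one factor built from some $d_i^n$. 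When $\alpha_i>0$ I extract the vanishing constant by interpolation,
\begin{equation*}
|\lambda|^{\theta\alpha_i}|d_i^n(\lambda)|\le\left(|\lambda|^{\alpha_i}|d_i^n(\lambda)|\right)^\theta|d_i^n(\lambda)|^{1-\theta}\le C^\theta(\varepsilon_i^n)^{1-\theta}\to0,\qquad\theta\in(0,1),
\end{equation*}
so $d_i^n$ obeys a singularity bound of strictly smaller exponent $\theta\alpha_i$ with a constant tending to $0$; when $\alpha_i\le0$ the symbol $d_i^n$ is bounded and $\|\Sigma_n(d_i^n)\|\le2\pi\varepsilon_i^n\to0$ directly. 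Feeding these into the uniform power-counting bound, each correction has total exponent at most $p(\alpha_1+\alpha_2)<1$, hence is $O(n)$ with a prefactor that vanishes, so every correction is $o(n)$ and the limit in~(a) follows.

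The main obstacle is the uniform-in-$n$ bookkeeping rather than any new idea. The crux is to verify that the Fox--Taqqu analysis of how $\prod_r D_n(x_r-x_{r-1})$ concentrates on the diagonal $x_1=\cdots=x_{2p}$ against the singular integrand uses the functions only through the constants in condition~(1); granting this, each of their inequalities survives the insertion of $\sup_n$, which is what simultaneously yields the uniform bound in~(b) and the $O(n)$ control of the corrections in~(a). The delicate region is a neighbourhood of $\lambda=0$, where the singularity $|\lambda|^{-p(\alpha_1+\alpha_2)}$ of the diagonal integrand and the Dirichlet kernels concentrate at once: the hypothesis $p(\alpha_1+\alpha_2)<1$ in~(a) is exactly what keeps $[k_1k_2]^p$ integrable there, while the interpolation above is what upgrades the qualitative ess-sup smallness of condition~(2) into the quantitative decay needed to annihilate the corrections near the singularity.
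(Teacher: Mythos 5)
Your part (b) is correct and coincides with the paper's own argument: the dominating-function bound of Lemma~\ref{Lemma.Dominating.Function} uses the symbols only through the uniform constants of condition (1), and Proposition 6.2(b) of~\cite{FT87} (restated in the paper) bounds $\int_{U_\pi}f_n$ over the \emph{whole} cube, so inserting $\sup_n$ is indeed all that is needed there. The gap is in part (a), at the step where you dispose of the correction terms. You invoke a ``uniform power-counting bound'' asserting that the trace of a product of $2p$ Toeplitz matrices whose symbols have singularity exponents summing to less than $1$ is $O(n)$ with a constant proportional to the product of the constants in condition (1). No such bound is available from~\cite{FT87}, and your justification --- that the Fox--Taqqu analysis ``uses the functions only through the constants in condition (1)'' --- fails exactly on the bulk region: their absolute-value estimates (Propositions 6.1 and 6.2) cover only the near-origin cube $U_t$ and the wedge set $W$, while on $E_t=U_\pi\backslash(U_t\cup W)$ both \cite{FT87} and the paper abandon absolute values and exploit the oscillation of $P_n$ through the weak convergence of the measures $\mu_n$ (Lemma~\ref{FT87.Lemma7.1.}). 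This is not a cosmetic choice: crude estimates of the cyclic product of Dirichlet kernels give only
\begin{equation*}
\int_{E_t}\left|P_n(\mathbf{y})\right|\,\mathrm{d}\mathbf{y}=O\left(n(\log n)^{2p-2}\right),
\end{equation*}
so your corrections are controlled only by $(\varepsilon_n)^{1-\theta}\,O\left(n(\log n)^{2p-2}\right)$, where $\varepsilon_n:=\max_i\esssup_{\lambda}|k_i^n(\lambda)-k_i(\lambda)|$; since condition (2) provides no rate for $\varepsilon_n$, this is not $o(n)$. A secondary problem: for $\alpha_i\le 0$ you replace the exponent $\alpha_i$ by $0$, which can push the exponent sum above $1$ (take $p=2$, $\alpha_1=0.9$, $\alpha_2=-0.5$, so $p(\alpha_1+\alpha_2)=0.8$ but the modified sum is $1.3$), and the operator-norm alternative requires a trace-norm bound on the remaining product of singular Toeplitz matrices, which is again not free.

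The repair is to run the paper's three-region decomposition on each correction term as well. On $U_t$ and on $G=U_\pi\cap W$ your interpolation does its job, provided you apply it for \emph{both} signs of $\alpha_i$ with $\theta$ close to $1$: the $d_i^n$-slot then carries exponent $\theta\alpha_i$, the total exponent sum stays below $1$, and the constant $C^\theta(\varepsilon_n)^{1-\theta}$ still vanishes. On $E_t$ every symbol is bounded by a constant $C_t$ and at least one is bounded by $\varepsilon_n$, and weak convergence of $\mu_n$ entails $\sup_n|\mu_n|(U_\pi)<\infty$ (uniform-boundedness principle), whence the $E_t$-part of each correction is at most $C_t\varepsilon_n\sup_n|\mu_n|(U_\pi)\to0$. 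With this insertion your route --- perturb the integrand slotwise, quote Theorem 1 of~\cite{FT87} for the principal term with symbols $k_1,k_2$, kill the corrections --- does go through and is genuinely different from the paper's proof, which never splits the integrand but instead passes to the limit on $E_t$ for $Q_n$ itself by combining $\esssup|Q_n-Q|\to0$ with Lemma~\ref{FT87.Lemma7.1.}, and treats $F_t$ and $G$ by the dominating function. Note finally that quoting Theorem 1 of~\cite{FT87} for the principal term requires checking that it covers limit symbols whose discontinuity set is merely Lebesgue-null, which is what hypothesis (2) of the present theorem allows.
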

\subsection{Proof of Lemma E.1 and Lemma E.2}\label{SubSec_FT86_Fourier_Coef}
\begin{proof}[Proof of Lemma E.1 in Case (1)]
	Consider the case of $\beta\in (0,1)$. Let $\tau\in\mathbb{Z}\backslash\{0\}$. Since $k^n$ is $2\pi$-periodic, we have
	\begin{align*}
		\widehat{k^n}(\tau)=&\int_{-\pi+\pi/|\tau|}^{\pi+\pi/|\tau|} e^{\sqrt{-1}\tau\lambda}k^n(\lambda)\, \mathrm{d}\lambda \\
		=&-\int_{-\pi+\pi/|\tau|}^{\pi+\pi/|\tau|} e^{\sqrt{-1}\tau\left(\lambda-\pi/|\tau|\right)}k^n(\lambda)\, \mathrm{d}\lambda =-\int_{-\pi}^\pi e^{\sqrt{-1}\tau\lambda}k^n\left(\lambda+\frac{\pi}{|\tau|}\right)\, \mathrm{d}\lambda.
	\end{align*}
	As a result, we obtain
	\begin{align}
		2\left|\widehat{k^n}(\tau)\right| &= \left|\int_{-\pi}^{\pi} e^{\sqrt{-1}\tau\lambda}\left[k^n(\lambda) - k^n\left(\lambda+\frac{\pi}{|\tau|}\right)\right]\, \mathrm{d}\lambda\right| \nonumber \\
		&\leq\int_{-\pi}^{\pi}\left|k^n(\lambda) - k^n\left(\lambda+\frac{\pi}{|\tau|}\right)\right|\, \mathrm{d}\lambda =\int_{-\pi}^{-2\pi/|\tau|} +\int_{-2\pi/|\tau|}^{\pi/|\tau|} +\int_{\pi/|\tau|}^{\pi}. \label{Upper.Estimate.1}
	\end{align}
	The assumption implies that 
	\begin{equation*}
		c_1:=\sup_{n\in\mathbb{N},\lambda\in[-\pi, \pi]\backslash\{0\}}\left\{|\lambda|^{\beta}\left|k^n(\lambda)\right|+|\lambda|^{\beta+1}\left|\frac{\partial k^n}{\partial\lambda}(\lambda)\right|\right\}<\infty.
	\end{equation*}
	By the mean value theorem, 
	\begin{align*}
		\int_{-\pi}^{-2\pi/|\tau|}\left|k^n(\lambda)-k^n\left(\lambda+\frac{\pi}{|\tau|}\right)\right|\, \mathrm{d}\lambda &\leq c_1\frac{\pi}{|\tau|} \int_{-\pi}^{-2\pi/|\tau|} \left|\lambda+\frac{\pi}{|\tau|}\right|^{-\beta-1}\, \mathrm{d}\lambda \\
		&= c_1\frac{\pi}{|\tau|} \int_{-\pi+\pi/|\tau|}^{-\pi/|\tau|} |\lambda|^{-\beta-1}\, \mathrm{d}\lambda\\
		&= c_1\pi|\tau|^{\beta-1} \int_{\pi}^{(|\tau|-1)\pi}\lambda^{-\beta-1}\, \mathrm{d}\lambda= O\left(|\tau|^{\beta-1}\right)
	\end{align*}
	as $|\tau|\to\infty$. Note that $\beta>0$ is necessary to obtain the last asymptotic behavior. A similar argument shows that
	\begin{equation*}
		\sup_{n\in\mathbb{N}}\int_{\pi/|\tau|}^{\pi}\left|k^n(\lambda) - k^n\left(\lambda+\frac{\pi}{|\tau|}\right)\right|\, \mathrm{d}\lambda= O\left(|\tau|^{\beta-1}\right)\ \ \mbox{as $|\tau|\to\infty$}.
	\end{equation*}
	We also have
	\begin{align*}
		\int_{-2\pi/|\tau|}^{\pi/|\tau|}\left|k^n(\lambda) - k^n\left(\lambda+\frac{\pi}{|\tau|}\right)\right|\, \mathrm{d}\lambda&\leq \int_{-2\pi/|\tau|}^{\pi/|\tau|}\left|k^n(\lambda)\right|\, \mathrm{d}\lambda+ \int_{-2\pi/|\tau|}^{\pi/|\tau|}\left|k^n\left(\lambda+\frac{\pi}{|\tau|}\right)\right|\, \mathrm{d}\lambda\\
		&\leq c_1\int_{-2\pi/|\tau|}^{\pi/|\tau|} |\lambda|^{-\beta}\, \mathrm{d}\lambda+ c_1\int_{-2\pi/|\tau|}^{\pi/|\tau|} \left|\lambda+\frac{\pi}{|\tau|} \right|^{-\beta}\, \mathrm{d}\lambda\\
		&= 2c_1\int_{-2\pi/|\tau|}^{\pi/|\tau|}|\lambda|^{-\beta}\, \mathrm{d}\lambda= O\left(|\tau|^{\beta-1}\right)\ \ \mbox{as $|\tau|\to\infty$}.
	\end{align*}
	This completes the proof in the case of $\beta\in (0,1)$.
\end{proof}
\begin{proof}[Proof of Lemma E.1 in Case (2)]
	Consider the case of $\beta\in(-1,0)$. Let $\tau\in\mathbb{Z}\backslash\{0\}$. Since the continuity of $k^n$ on $[-\pi,\pi]\backslash\{0\}$ implies $k^n(\pi)=k^n(-\pi)$, the integration by parts formula yields 
	\begin{equation*}
		\widehat{k^n}(\tau)=-\frac{1}{\sqrt{-1}\tau}\int_{-\pi}^{\pi} e^{\sqrt{-1}\tau\lambda}\frac{\partial k^n}{\partial\lambda}(\lambda)\, \mathrm{d}\lambda.
	\end{equation*}
	Moreover, since the derivative $\frac{\partial k^n}{\partial\lambda}$ is also $2\pi$-periodic from the assumption, the argument in the case (1) can be applied so that we obtain
	\begin{equation*}
		\sup_{n\in\mathbb{N}}\left|\widehat{k^n}(\tau)\right|=\frac{1}{|\tau|}O\left(|\tau|^{(\beta-1)-1}\right)=O\left(|\tau|^{\beta-1}\right)\ \ \mbox{as $|\tau|\to\infty$}.
	\end{equation*}
	This completes the proof in the case of $\beta\in(-1,0)$.
\end{proof}
\begin{proof}[Proof of Lemma~E.2]
	The same argument in Lemma~E.1 shows the inequality (\ref{Upper.Estimate.1}). 
	The assumption implies that 
	\begin{equation*}
		c_2:=\sup_{n\in\mathbb{N},\lambda\in[-\pi, \pi]\backslash\{0\}}\left\{\left|k^n(\lambda)\right|+|\lambda|\left|\frac{\partial k^n}{\partial\lambda}(\lambda)\right|\right\}<\infty.
	\end{equation*}
	By the mean value theorem, the similar argument in Lemma~E.1 yields
	\begin{align*}
		\hspace{-1cm}\int_{-\pi}^{-2\pi/|\tau|}\left|k^n(\lambda)-k^n\left(\lambda+\frac{\pi}{|\tau|}\right)\right|\, \mathrm{d}\lambda \leq& c_2\frac{\pi}{|\tau|} \int_\pi^{(|\tau|-1)\pi} \lambda^{-1}\, \mathrm{d}\lambda \\
		=& c_2\frac{\pi}{|\tau|}\{\log((|\tau|-1)\pi) - \log\pi\} =O\left(|\tau|^{-1}\log|\tau|\right)
	\end{align*}
	as $|\tau|\to\infty$. A similar argument shows that
	\begin{equation*}
		\sup_{n\in\mathbb{N}}\int_{\pi/|\tau|}^{\pi}\left|k^n(\lambda)-k^n\left(\lambda+\frac{\pi}{|\tau|}\right)\right|\, \mathrm{d}\lambda =O\left(|\tau|^{-1}\log|\tau|\right)\ \ \mbox{as $|\tau|\to\infty$}.
	\end{equation*}
	Since $k^n(\lambda)$ is bounded a.e. from the assumption, the same argument in Lemma~E.1 yields
	\begin{equation*}
		\sup_{n\in\mathbb{N}}\int_{-2\pi/|\tau|}^{\pi/|\tau|}\left|k^n(\lambda) - k^n\left(\lambda+\frac{\pi}{|\tau|}\right)\right|\, \mathrm{d}\lambda =O\left(|\tau|^{-1}\right)\ \ \mbox{as $|\tau|\to\infty$}.
	\end{equation*}
	This completes the proof of Lemma~E.2.
\end{proof}
\subsection{Proof of Theorem E.3}\label{SubSec_FT87_Trace}
\subsubsection{Outline of Proof of Theorem~E.3}
Fix $p\in\mathbb{N}$ and note that
\begin{align*}
	&\mathrm{Tr}\left[\left(\Sigma_n(k_1^n)\Sigma_n(k_2^n)\right)^p\right] \\ =&\sum_{j_1=0}^{n-1}\cdots\sum_{j_{2p}=0}^{n-1}\widehat{k_1^n}(j_1-j_2)\widehat{k_2^n}(j_2-j_3)\cdots\widehat{k_1^n}(j_{2p-1}-j_{2p})\widehat{k_2^n}(j_{2p}-j_1) \\
	=&\sum_{j_1=0}^{n-1}\cdots\sum_{j_{2p}=0}^{n-1}
	\left(\int_{-\pi}^\pi\cdots\int_{-\pi}^\pi e^{\sqrt{-1}(j_1-j_2)y_1}e^{\sqrt{-1}(j_2-j_3)y_2}\cdots e^{\sqrt{-1}(j_{2p}-j_1)y_{2p}} k_1^n(y_1)k_2^n(y_2)\cdots k_1^n(y_{2p-1})k_2^n(y_{2p})\, \mathrm{d}y_1\cdots \mathrm{d}y_{2p}\right) \\
	=&\int_{U_\pi}P_n(\mathbf{y})Q_n(\mathbf{y})\, \mathrm{d}\mathbf{y},
\end{align*}
where $U_t:=[-t,t]^{2p}$ for $t\in (0, \pi]$ and
\begin{align*}
	&P_n(\mathbf{y}):=h_n^\ast(y_1-y_{2p})h_n^\ast(y_2-y_1)\cdots h_n^\ast(y_{2p}-y_{2p-1}),\ \ h_n^\ast\left(y\right):=\sum_{j=0}^{n-1}e^{\sqrt{-1}jy}, \\
	&Q_n(\mathbf{y}):=k_1^n(y_1)k_2^n(y_2)\cdots k_1^n(y_{2p-1})k_2^n(y_{2p}).
\end{align*}

Following the arguments of Fox and Taqqu~\cite{FT87}, we divide $U_{\pi}$ into three disjoint sets $E_t$, $F_t$, $G$ given by
\begin{align*}
	&E_t:=U_\pi\backslash\{U_t\cup W\},\ \ F_t:=U_t\backslash W,\ \ G:=U_\pi\cap W,
\end{align*}
where $t\in (0, \pi]$ and
\begin{align*}
	&W_j:=\left\{\mathbf{y}=(y_1,\cdots,y_{2p})\in\mathbb{R}^{2p}:|y_j|\leq\frac{|y_{j+1}|}{2}\right\},\hspace{0.2cm}j=1,\cdots,2p, \\
	&W:=W_1\cup W_2\cup\cdots W_{2p}.
\end{align*}
Note that we use the notation $y_{2p+1}\equiv y_1$ for simplicity. 

In order to prove the first result of Theorem~E.3, it suffices to prove that $p(\alpha_1+\alpha_2)<1$ implies the following three results:
\begin{align}
	&\lim_{n\to\infty}\frac{1}{n}\int_{E_t}P_n(\mathbf{y})Q_n(\mathbf{y})\, \mathrm{d}\mathbf{y} =(2\pi)^{2p-1}\int_{t\leq|z|\leq\pi}[f(z)g(z)]^p\, \mathrm{d}z,\ \ \forall t\in(0,1], \label{Conv.Main} \\
	&\lim_{t\to 0}\limsup_{n\to\infty}\frac{1}{n}\int_{F_t}P_n(\mathbf{y})Q_n(\mathbf{y})\, \mathrm{d}\mathbf{y}=0, \label{Conv.Reminder.F} \\
	&\lim_{n\to\infty}\frac{1}{n}\int_{G}P_n(\mathbf{y})Q_n(\mathbf{y})\, \mathrm{d}\mathbf{y}=0. \label{Conv.Reminder.G}
\end{align}
\begin{rem}\rm
	In order to prove (\ref{Conv.Reminder.F}), we will show that $p(\alpha_1+\alpha_2)<1$ implies
	\begin{equation}
		\lim_{t\to 0}\limsup_{n\to\infty}\frac{1}{n}\int_{U_t}P_n(\mathbf{y})Q_n(\mathbf{y})\, \mathrm{d}\mathbf{y}=0. \label{Conv.Reminder.F2}
	\end{equation}
\end{rem}
\begin{rem}\rm
	Since $G=\bigcup_{j=1}^{2p}[U_\pi\cap W_j]$, the relation (\ref{Conv.Reminder.G}) will hold if we prove that $p(\alpha_1+\alpha_2)<1$ implies
	\begin{equation}\label{Conv.Reminder.G2}
		\lim_{n\to\infty}\frac{1}{n}\int_{U_\pi\cap W_j}|P_n(\mathbf{y})Q_n(\mathbf{y})|\, \mathrm{d}\mathbf{y}=0,\ \ j=1,\cdots,2p.
	\end{equation}
	From the definition of $P_n$ and $Q_n$, it is clear that
	\begin{equation*}
		\int_{U_\pi\cap W_1}|P_n(\mathbf{y})Q_n(\mathbf{y})|\, \mathrm{d}\mathbf{y} =\int_{U_\pi\cap W_3}|P_n(\mathbf{y})Q_n(\mathbf{y})|\, \mathrm{d}\mathbf{y} =\cdots =\int_{U_\pi\cap W_{2p-1}}|P_n(\mathbf{y})Q_n(\mathbf{y})|\, \mathrm{d}\mathbf{y}
	\end{equation*}
	and
	\begin{equation*}
		\int_{U_\pi\cap W_2}|P_n(\mathbf{y})Q_n(\mathbf{y})|\, \mathrm{d}\mathbf{y} =\int_{U_\pi\cap W_4}|P_n(\mathbf{y})Q_n(\mathbf{y})|\, \mathrm{d}\mathbf{y} =\cdots =\int_{U_\pi\cap W_{2p}}|P_n(\mathbf{y})Q_n(\mathbf{y})|\, \mathrm{d}\mathbf{y}.
	\end{equation*}
	Because of the symmetry between $\alpha_1$ and $\alpha_2$ in the hypothesis of theorem, it is clear that we prove that $p(\alpha_1+\alpha_2)<1$ implies
	\begin{equation}\label{Conv.Reminder.G3}
		\lim_{n\to\infty}\frac{1}{n}\int_{U_\pi\cap W_1}|P_n(\mathbf{y})Q_n(\mathbf{y})|\, \mathrm{d}\mathbf{y}=0,
	\end{equation}
	then we will have also established 
	\begin{equation*}
		\lim_{n\to\infty}\frac{1}{n}\int_{U_\pi\cap W_2}|P_n(\mathbf{y})Q_n(\mathbf{y})|\, \mathrm{d}\mathbf{y}=0.
	\end{equation*}
	Thus (\ref{Conv.Reminder.G2}) will follow from (\ref{Conv.Reminder.G3}). 
\end{rem}
In conclusion, the first result of Theorem~E.3 will be proven if we show that $p(\alpha_1+\alpha_2)<1$ implies (\ref{Conv.Main}), (\ref{Conv.Reminder.F2}) and (\ref{Conv.Reminder.G3}). 
Moreover, the second result of Theorem~E.3 will be proven if we show that $p(\alpha_1+\alpha_2)\geq 1$ implies
\begin{equation}\label{Conv.Main2}
	\forall\psi>0,\ \ \int_{U_\pi}|P_n(\mathbf{y})Q_n(\mathbf{y})|\, \mathrm{d}\mathbf{y}=O(n^{p(\alpha_1+\alpha_2)+\psi})\ \ \mbox{as $n\to\infty$.} 
\end{equation}
These results will be proven in Section~\ref{Subsection.Proof.Main.Theorem}. 
In the next subsection, we summarize several preliminaries used in the proof of Theorem~E.3 following with Fox and Taqqu~\cite{FT87}. 
\subsubsection{Preliminaries}
To state the lemma, introduce the diagonal
\begin{equation*}
	D:=\{\mathbf{y}=(y_1,\cdots,y_{2p})\in U_\pi : y_1=y_2=\cdots=y_{2p}\}.
\end{equation*}
Let $\mu$ be the measure on $U_\pi$ which is concentrated on $D$ and satisfies $\mu(\{\mathbf{y} : a\leq y_1=y_2=\cdots=y_{2p}\leq b\})=b-a$ for all $-\pi\leq a\leq b\leq\pi$. Thus $\mu$ is Lebesgue measure on $D$, normalized so that $\mu(D)=2\pi$.
\begin{lem}[cf. Lemma 7.1. in~\cite{FT87}]\label{FT87.Lemma7.1.}
	Define a measure $\mu_n$ on $U_\pi$ by
	\begin{equation}\label{Def.mu_n}
		\mu_n(A):=\frac{1}{(2\pi)^{2p-1}n}\int_A P_n(\mathbf{y})\, \mathrm{d}\mathbf{y}
	\end{equation}
	for each measurable set $A\subset U_\pi$. Then $\mu_n$ converges weakly to $\mu$ as $n\to\infty$. 
\end{lem}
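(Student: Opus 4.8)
The plan is to establish weak convergence by the standard recipe for (complex) measures on a compact space: since all integrands are $2\pi$-periodic in each coordinate, I regard $U_\pi$ as the torus $(\mathbb{R}/2\pi\mathbb{Z})^{2p}$, on which the trigonometric polynomials form a $\|\cdot\|_\infty$-dense subalgebra of $C(U_\pi)$ by the Stone--Weierstrass theorem. It then suffices to prove two things: (i) $\int_{U_\pi}Q\,\mathrm{d}\mu_n\to\int_{U_\pi}Q\,\mathrm{d}\mu$ for every exponential $Q(\mathbf{y})=\exp(\sqrt{-1}\langle\mathbf{m},\mathbf{y}\rangle)$ with $\mathbf{m}\in\mathbb{Z}^{2p}$, and (ii) a uniform total-variation bound $\sup_{n}|\mu_n|(U_\pi)<\infty$; the bound in (ii) is exactly what upgrades the convergence from the dense subalgebra to all of $C(U_\pi)$, via $|\int(Q-Q_\varepsilon)\,\mathrm{d}\mu_n|\le\varepsilon\,|\mu_n|(U_\pi)$ for a trigonometric approximant $Q_\varepsilon$.

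For (i), I would write $P_n(\mathbf{y})=\prod_{i=1}^{2p}h_n^\ast(y_i-y_{i-1})$ (indices cyclic, $y_0:=y_{2p}$), expand each factor as $\sum_{k_i=0}^{n-1}\exp(\sqrt{-1}k_i(y_i-y_{i-1}))$, and integrate the resulting sum against $Q$ term by term using $\int_{-\pi}^{\pi}e^{\sqrt{-1}\ell y}\,\mathrm{d}y=2\pi\,\delta_{\ell,0}$. Collecting the coefficient of each $y_i$, a term $\mathbf{k}\in\{0,\dots,n-1\}^{2p}$ survives precisely when the consecutive differences satisfy $k_{i+1}-k_i=m_i$ for all $i$ (cyclically, $k_{2p+1}:=k_1$), each surviving term contributing $(2\pi)^{2p}$. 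Summing these constraints around the cycle forces $\sum_i m_i=0$; when this holds the admissible $\mathbf{k}$ are parametrised by a single integer running over an interval of length $n-O(1)$ (the $O(1)$ depending only on $\mathbf{m}$), so their number is $n-O(1)$. Dividing by $(2\pi)^{2p-1}n$ and letting $n\to\infty$ gives $\int Q\,\mathrm{d}\mu_n\to 2\pi\,\mathbf{1}\{\sum_i m_i=0\}$, which is exactly $\int_{-\pi}^{\pi}\exp(\sqrt{-1}(\sum_i m_i)z)\,\mathrm{d}z=\int_{U_\pi}Q\,\mathrm{d}\mu$; linearity extends this to all trigonometric polynomials.

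The hard part will be (ii). Setting $g:=|h_n^\ast|$, which is even and $2\pi$-periodic, and substituting the cyclic differences $w_j=y_{j+1}-y_j$ ($j=1,\dots,2p-1$) together with the free variable $y_1$, the $y_1$-integration yields a factor $2\pi$ and leaves $\int_{U_\pi}|P_n|\,\mathrm{d}\mathbf{y}=2\pi\,g^{\ast(2p)}(0)$, the value at $0$ of the $2p$-fold circular convolution of $g$. With $c_k:=\frac{1}{2\pi}\int_{-\pi}^{\pi}g(y)e^{-\sqrt{-1}ky}\,\mathrm{d}y$ (real and even), Parseval gives $g^{\ast(2p)}(0)=(2\pi)^{2p-1}\sum_{k\in\mathbb{Z}}c_k^{2p}$, hence $|\mu_n|(U_\pi)=\frac{2\pi}{n}\sum_k c_k^{2p}$, and the whole bound reduces to the single estimate $\sum_k c_k^{2p}=O(n)$. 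For $2p=2$ this is immediate from $\sum_k c_k^2=\frac{1}{2\pi}\int_{-\pi}^{\pi}g^2=n$, giving $|\mu_n|(U_\pi)=2\pi$; for $2p\ge 4$ the crux is the Fourier-coefficient estimate $|c_k|\le C\,(1+\log^{+}(n/|k|))$ for the absolute Dirichlet kernel. Granting it, $\sum_{1\le|k|\le n}c_k^{2p}\lesssim n\int_0^1(\log(1/t))^{2p}\,\mathrm{d}t=O(n)$, while for $|k|>n$ one has $|c_k|=O(1)$ and $\sum_{|k|>n}c_k^{2p}\le(\sup_{|k|>n}|c_k|)^{2p-2}\sum_k c_k^2=O(n)$; combining these gives $\sum_k c_k^{2p}=O(n)$. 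This logarithmic bound on $c_k$ is the only genuinely delicate ingredient, and it is established exactly as the corresponding Fourier-coefficient estimates in Fox and Taqqu~\cite{FT87}.
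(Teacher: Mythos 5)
You cannot be compared against the paper's own argument here, because the paper gives none: Lemma~\ref{FT87.Lemma7.1.} is imported verbatim from Fox and Taqqu~\cite{FT87} (their Lemma~7.1), with the citation as sole justification. Judged on its own merits, your part~(i) is correct: the orthogonality count shows that $\int_{U_\pi}\exp(\sqrt{-1}\langle\mathbf{m},\mathbf{y}\rangle)\,\mathrm{d}\mu_n$ equals $2\pi\{1-O(1/n)\}$ when $\sum_i m_i=0$ and vanishes otherwise, which matches $\int_{U_\pi}\exp(\sqrt{-1}\langle\mathbf{m},\mathbf{y}\rangle)\,\mathrm{d}\mu$. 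Your reduction of the total-variation bound to $\sum_k c_k^{2p}=O(n)$, via the cyclic change of variables and Parseval, is also correct, and for $p=1$ it gives $|\mu_n|(U_\pi)=2\pi$ exactly.

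The genuine gap is the asserted estimate $|c_k|\le C(1+\log^{+}(n/|k|))$ for $g=|h_n^\ast|$: it is false, because taking the absolute value of the Dirichlet-type kernel creates logarithmic spikes at \emph{every} integer multiple of $n$, not only at $k=0$. Concretely, writing $\cos(ny)=1-2\sin^{2}(ny/2)$ and using that $g$ is even,
\begin{equation*}
c_n=\frac{1}{\pi}\int_0^{\pi}\frac{|\sin(ny/2)|-2|\sin(ny/2)|^{3}}{\sin(y/2)}\,\mathrm{d}y ,
\end{equation*}
and since the $\pi$-periodic function $t\mapsto|\sin t|-2|\sin t|^{3}$ vanishes at $0$ but has mean $-2/(3\pi)\neq 0$, the same computation that produces the Lebesgue constant $\asymp\log n$ gives $c_n\sim-\tfrac{4}{3\pi^{2}}\log n$; similarly $|c_{jn}|\asymp j^{-2}\log n$ for each fixed $j$. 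Hence $\sup_{|k|>n}|c_k|\asymp\log n$, so your bound for the tail region yields only $\sum_{|k|>n}c_k^{2p}=O\bigl(n(\log n)^{2p-2}\bigr)$ rather than $O(n)$, and the claimed pointwise bound already fails at $|k|=n$. Deferring the estimate to Fox--Taqqu is not possible either: their Fourier-coefficient lemmas (Lemmas~\ref{Extension1.FT86} and~\ref{Extension2.FT86} of this paper) require power-type bounds on the function and on its $\lambda$-derivative \emph{uniformly in} $n$, which $|h_n^\ast|$ --- oscillating on scale $1/n$, with derivative of size $n$ --- does not satisfy; the spikes are precisely the failure of that conclusion. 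The repair stays inside your framework: dominate $|h_n^\ast(z)|\le\pi\min(n,1/|z|)=\pi h_n(z)$ on $[-\pi,\pi]$, where $h_n$ is exactly the nonnegative, monotone kernel the paper introduces before Lemma~\ref{Lemma.Dominating.Function}, and run your convolution--Parseval computation with $h_n$ in place of $g$: splitting the defining integral at $|z|=1/n$ shows that the coefficients $c_k'$ of $h_n$ do satisfy $c_k'=O(1+\log(n/|k|))$ for $0<|k|\le n$ and $c_k'=O(n/|k|)$ for $|k|>n$, whence $\sum_k (c_k')^{2p}=O(n)$, $\int_{U_\pi}|P_n|\,\mathrm{d}\mathbf{y}=O(n)$, and $\sup_n|\mu_n|(U_\pi)<\infty$. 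One further caveat you should address: trigonometric polynomials are dense in the continuous functions on the torus, not in $C(U_\pi)$, and for \emph{signed} measures a total-variation bound plus convergence on the periodic class does not imply weak convergence on the cube (consider $\delta_{\pi-1/n}-\delta_{-\pi+1/n}$ on $[-\pi,\pi]$); you need in addition a boundary estimate such as $\sup_n|\mu_n|(\{\mathbf{y}\in U_\pi:\min_i(\pi-|y_i|)<\epsilon\})=O(\epsilon)$, which fortunately follows from the same $h_n$-domination by integrating out all coordinates except $y_i$.
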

For each $n\in\mathbb{N}$, define the function
\begin{equation*}
	h_n(z):=\left\{
	\begin{array}{ll}
		\min\left(\frac{1}{|z+2\pi|},n\right)& \mbox{if $-2\pi\leq z<-\pi$}, \\
		\min\left(\frac{1}{|z|},n\right)& \mbox{if $-\pi\leq z<\pi$}, \\
		\min\left(\frac{1}{|z-2\pi|},n\right)& \mbox{if $\pi\leq z\leq 2\pi$}.
	\end{array}
	\right.
\end{equation*}
and the function $f_n : \mathbb{R}^{2p}\to\mathbb{R}$ by
\begin{align*}
	f_n(\mathbf{y}):=&h_n\left(y_1-y_{2p}\right)h_n\left(y_2-y_1\right)h_n\left(y_3-y_2\right)\cdots h_n\left(y_{2p}-y_{2p-1}\right) \\
	&\times |y_1|^{-\alpha_1}|y_2|^{-\alpha_2}|y_3|^{-\alpha_1}\cdots|y_{2p}|^{-\alpha_2},
\end{align*}
where $\alpha_1,\alpha_2<1$.  
\begin{lem}\label{Lemma.Dominating.Function}
	There exists a constant $c>0$ such that for each $n\in\mathbb{N}$ and $\mathbf{y}\in U_\pi$, 
	\begin{equation*}
		\left|P_n(\mathbf{y})Q_n(\mathbf{y})\right|\leq cf_n(\mathbf{y}).
	\end{equation*}
\end{lem}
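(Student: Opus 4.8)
The plan is to prove the bound factor by factor, matching each of the $2p$ factors of $P_n(\mathbf{y})Q_n(\mathbf{y})$ with the corresponding factor of $f_n(\mathbf{y})$. First I would dispose of $Q_n$: by the first hypothesis of Theorem~\ref{Extension.FT87} the constant $C_0 := \sup_{n,\lambda}\{|\lambda|^{\alpha_1}|k_1^n(\lambda)| \vee |\lambda|^{\alpha_2}|k_2^n(\lambda)|\}$ is finite, so that
\begin{equation*}
|Q_n(\mathbf{y})| = \prod_{i=1}^p |k_1^n(y_{2i-1})|\,|k_2^n(y_{2i})| \leq C_0^{2p}\,|y_1|^{-\alpha_1}|y_2|^{-\alpha_2}\cdots|y_{2p}|^{-\alpha_2},
\end{equation*}
which already reproduces the power-law factors appearing in the definition of $f_n$.

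The core of the argument is the pointwise estimate $|h_n^\ast(y)|\leq \pi\, h_n(y)$ for every $y\in[-2\pi,2\pi]$. For this I would use the closed form $|h_n^\ast(y)| = |\sin(ny/2)|/|\sin(y/2)|$, valid whenever $y\notin 2\pi\mathbb{Z}$. Combining the elementary inequality $|\sin(y/2)|\geq |y|/\pi$ on $|y|\leq\pi$ with the trivial bound $|h_n^\ast(y)|\leq n$ (a sum of $n$ terms of unit modulus) gives $|h_n^\ast(y)|\leq\min(\pi/|y|,\, n)\leq \pi\min(1/|y|,\,n) = \pi\, h_n(y)$ on $[-\pi,\pi)$, where the middle step uses $\pi>1$. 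For arguments in $[-2\pi,-\pi)\cup[\pi,2\pi]$ I would invoke the $2\pi$-periodicity $h_n^\ast(y)=h_n^\ast(y\pm 2\pi)$, which shifts the argument back into $[-\pi,\pi]$ and precisely matches the shifted denominators $|z\pm 2\pi|$ built into the three-piece definition of $h_n$; the same estimate then yields $|h_n^\ast(y)|\leq \pi\, h_n(y)$ there as well.

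Since each $y_j\in[-\pi,\pi]$, every difference $y_{i+1}-y_i$ (with the cyclic convention $y_{2p+1}\equiv y_1$) lies in $[-2\pi,2\pi]$, so applying the pointwise bound to each of the $2p$ factors of $P_n$ yields $|P_n(\mathbf{y})|\leq \pi^{2p}\prod_{i} h_n(y_{i+1}-y_i)$. Multiplying this with the bound for $|Q_n|$ and recognizing that the resulting product of $h_n$-terms and power terms is exactly $f_n(\mathbf{y})$, I obtain the claim with $c := (\pi C_0)^{2p}$. I expect the only delicate point to be the uniform Dirichlet estimate across the whole range $[-2\pi,2\pi]$: one must carefully reconcile the $1/|y|$ singularity near the origin with the cap at $n$, and check that the periodicity shifts align with the piecewise structure of $h_n$ (in particular that the boundary values at $|y|=\pi$ and $|y|=2\pi$ create no inconsistency). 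Everything else is a bounded product of these factorwise estimates.
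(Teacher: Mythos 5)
Your proposal is correct and takes essentially the same route as the paper: both bound $|Q_n(\mathbf{y})|$ by $C_0^{2p}|y_1|^{-\alpha_1}|y_2|^{-\alpha_2}\cdots|y_{2p}|^{-\alpha_2}$ using hypothesis (1) of Theorem~\ref{Extension.FT87}, and bound $|P_n(\mathbf{y})|$ by a constant times the corresponding product of $h_n$-factors. The only difference is that you prove the kernel estimate $|h_n^\ast(y)|\le\pi\,h_n(y)$ on $[-2\pi,2\pi]$ yourself (via the Dirichlet closed form, Jordan's inequality $|\sin(y/2)|\ge|y|/\pi$, the trivial cap at $n$, and $2\pi$-periodicity to handle $|y|\in[\pi,2\pi]$), whereas the paper simply cites Fox and Taqqu~\cite{FT87}, p.237, for $|P_n(\mathbf{y})|\le 4^{2p}\prod_j h_n(y_{j+1}-y_j)$; your argument is self-contained and even yields the slightly sharper constant $c=(\pi C_0)^{2p}$.
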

\begin{proof}
	As shown in~\cite{FT87}, p.237, we have 
	\begin{equation*}
		|P_n(\mathbf{y})|\leq 4^{2p}h_n\left(y_1-y_{2p}\right)h_n\left(y_2-y_1\right)h_n\left(y_3-y_2\right)\cdots h_n\left(y_{2p}-y_{2p-1}\right)
	\end{equation*}
	for each $n\in\mathbb{N}$ and $\mathbf{y}\in U_\pi$. Therefore, the conclusion follows from the assumption.
\end{proof}
\begin{prop}[cf. Proposition 6.1. in~\cite{FT87}]
	Let $\alpha_1, \alpha_2<1$ and $W_1=\{\mathbf{y}\in\mathbb{R}^{2p}:|y_1|\leq \frac{|y_2|}{2}\}$. 
	\begin{enumerate}[a)]
		\item If $\alpha_1+\alpha_2\leq 0$, then for any $\psi>0$,
		\begin{equation*}
			\int_{U_\pi\cap W_1} f_n(\mathbf{y})\, \mathrm{d}\mathbf{y}=O(n^{\psi})\ \ \mbox{as $n\to\infty$.}
		\end{equation*}
		\item If $\alpha_1+\alpha_2>0$, then for any $\psi>0$,
		\begin{equation*}
			\int_{U_\pi\cap W_1} f_n(\mathbf{y})\, \mathrm{d}\mathbf{y}=O(n^{p(\alpha_1+\alpha_2)+\psi})\ \ \mbox{as $n\to\infty$.}
		\end{equation*}
	\end{enumerate}
\end{prop}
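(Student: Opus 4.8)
The plan is to adapt the proof of Proposition 6.1 in Fox and Taqqu~\cite{FT87} directly, since the dominating function $f_n$ has been arranged to have exactly their structure: the only $n$-dependence enters through the factors $h_n$, which satisfy the uniform bound $h_n(z)\le\min(1/|z|,n)$ that drives all of their estimates. First I would exploit the defining constraint of $W_1$, namely $|y_1|\le|y_2|/2$, which forces $|y_2-y_1|\ge|y_2|/2$ and hence $h_n(y_2-y_1)\le 2\,h_n(y_2)$ (the three periodic branches in the definition of $h_n$ being handled separately but in the same way). This decouples $y_1$: after this bound the variable $y_1$ appears only in the factor $h_n(y_1-y_{2p})|y_1|^{-\alpha_1}$, which I would integrate out first over $y_1\in[-\pi,\pi]$, enlarging the region to obtain an upper bound, producing a function $\Psi_n(y_{2p})$.

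The resulting integral is then a chain (no longer a cycle) in the variables $y_2,\dots,y_{2p}$, carrying the endpoint factors $h_n(y_2)$ and $\Psi_n(y_{2p})$ together with the power weights $|y_j|^{-\alpha_1}$ and $|y_j|^{-\alpha_2}$. The second step is to integrate these variables one at a time using the one-dimensional estimates for $\int_{-\pi}^\pi h_n(x-y)|y|^{-\gamma}\,\mathrm{d}y$ from~\cite{FT87}, whose form depends on whether $\gamma<1$, $\gamma=1$, or $\gamma>1$. The mechanism behind the exponent is a scaling heuristic: the dominant contribution comes from the region where all variables are of the cutoff order $1/n$, where each of the $2p$ factors $h_n$ is of size $n$, each weight $|y_j|^{-\alpha_i}$ is of size $n^{\alpha_i}$, and the volume element is of order $(1/n)^{2p}$, so the product scales like $(1/n)^{2p}\cdot n^{2p}\cdot n^{p\alpha_1+p\alpha_2}=n^{p(\alpha_1+\alpha_2)}$. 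Making this rigorous through the iterated one-dimensional bounds, the spurious logarithmic factors arising at the critical exponents are absorbed into the arbitrary $n^\psi$; when $\alpha_1+\alpha_2\le 0$ the cutoff-scale region no longer dominates and only logarithmic, hence $n^\psi$, growth survives, giving case (a).

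The main obstacle will be the careful bookkeeping of the cutoff scale $1/n$ against the weights, since the estimate for $\int h_n(x-y)|y|^{-\gamma}\,\mathrm{d}y$ genuinely changes form across $\gamma<1$, $\gamma=1$, and $\gamma>1$, and after each integration the effective exponent of the next variable shifts. One must therefore verify that the induction stays in the regime the next estimate requires and that the accumulated exponent is exactly $\min\{0,p(\alpha_1+\alpha_2)\}$ up to $\psi$. A secondary technicality is the periodic structure of $h_n$ on $[-2\pi,2\pi]$, which forces a splitting of each one-dimensional integral across the three branches in the definition of $h_n$; this is routine but must be carried out uniformly in $n$ so that the constants in the $O(\cdot)$ do not degrade. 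Since every step mirrors the corresponding lemmas in~\cite{FT87} with $h_n$ in place of their kernel, the uniformity in $n$ is inherited automatically.
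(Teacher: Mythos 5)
Your proposal is essentially correct, but note that the paper contains no proof of this proposition at all: since the dominating functions $h_n$ and $f_n$ involve none of the paper's $n$-dependent spectral densities (that dependence was already stripped off by the lemma bounding $|P_n(\mathbf{y})Q_n(\mathbf{y})|\le c f_n(\mathbf{y})$), the statement is exactly Proposition 6.1 of Fox and Taqqu~\cite{FT87} and is simply quoted there. Your outline --- using $|y_1|\le|y_2|/2$ to get $h_n(y_2-y_1)\le 2h_n(y_2)$ (with the periodic branches checked separately), integrating out $y_1$, then iterating the one-dimensional estimates for $\int_{-\pi}^{\pi}h_n(x-y)|y|^{-\gamma}\,\mathrm{d}y$ across the resulting chain and absorbing logarithmic factors into $n^{\psi}$ --- is a faithful reconstruction of the Fox--Taqqu argument, i.e.\ of the very proof the paper relies on, so the approach is the same. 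One slip to correct: the accumulated exponent is $\max\{0,\,p(\alpha_1+\alpha_2)\}$ up to $\psi$, not $\min$; with $\min$, case (a) with $\alpha_1+\alpha_2<0$ would assert a bound $O\left(n^{p(\alpha_1+\alpha_2)+\psi}\right)$ decaying in $n$, which is impossible because $h_n\ge 1/\pi$ everywhere and the weights $|y_j|^{-\alpha_i}$ are bounded below on a fixed positive-measure subset of $U_\pi\cap W_1$ away from the singularities, so the integral stays bounded away from $0$ uniformly in $n$.
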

\begin{prop}[cf. Proposition 6.2. in~\cite{FT87}]
	Let $\alpha_1, \alpha_2<1$.
	\begin{enumerate}[a)]
		\item If $p(\alpha_1+\alpha_2)<1$, then
		\begin{equation*}
			\lim_{t\to 0}\limsup_{n\to\infty}\frac{1}{n}\int_{U_t}f_n(\mathbf{y})\, \mathrm{d}\mathbf{y}=0.
		\end{equation*}
		\item If $p(\alpha_1+\alpha_2)\geq 1$, then for any $\psi>0$,
		\begin{equation*}
			\int_{U_\pi} f_n(\mathbf{y})\, \mathrm{d}\mathbf{y}=O(n^{p(\alpha_1+\alpha_2)+\psi})\ \ \mbox{as $n\to\infty$.}
		\end{equation*}
	\end{enumerate}
\end{prop}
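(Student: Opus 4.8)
The plan is to follow the proof of Proposition 6.2 in Fox and Taqqu~\cite{FT87} essentially verbatim. The crucial observation is that the dominating function $f_n$ does \emph{not} involve the genuinely $n$-dependent spectral densities $k_1^n,k_2^n$: by Lemma~\ref{Lemma.Dominating.Function} their singularities have already been absorbed into the fixed power weights $|y_j|^{-\alpha_1},|y_j|^{-\alpha_2}$, and the only $n$-dependence remaining in $f_n$ is the truncation level $n$ inside $h_n$, exactly as in~\cite{FT87}. Hence no real extension is needed here, and it suffices to reproduce their estimates for $f_n$.

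First I would record the scaling identity obtained from the change of variables $\mathbf{y}=t\mathbf{x}$ with $t\in(0,1]$. From $h_n(tz)=t^{-1}h_{tn}(z)$ for $|z|\le\pi$ (and the analogous relation on the periodic wrap-around pieces), the $2p$ kernel factors contribute $t^{-2p}$, the $2p$ power weights contribute $t^{-p(\alpha_1+\alpha_2)}$, and the Jacobian contributes $t^{2p}$, giving
\begin{equation*}
\frac1n\int_{U_t}f_n(\mathbf{y})\,\mathrm{d}\mathbf{y}=t^{-p(\alpha_1+\alpha_2)}\,\frac1n\int_{U_\pi}f_{tn}(\mathbf{x})\,\mathrm{d}\mathbf{x},
\end{equation*}
where the cap level $tn$ need not be an integer, which is immaterial. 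This reduces part~(a) to the single bound $\int_{U_\pi}f_m\,\mathrm{d}\mathbf{x}=O(m)$ valid when $p(\alpha_1+\alpha_2)<1$: the right-hand side then becomes $t^{-p(\alpha_1+\alpha_2)}\,O(t)=O(t^{\,1-p(\alpha_1+\alpha_2)})$, which tends to $0$ as $t\to0$ precisely because $p(\alpha_1+\alpha_2)<1$.

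Both the bound $\int_{U_\pi}f_m=O(m)$ (for part~(a)) and the bound $\int_{U_\pi}f_n=O(n^{p(\alpha_1+\alpha_2)+\psi})$ (for part~(b)) will then follow from the decomposition $U_\pi=(U_\pi\cap W)\cup(U_\pi\setminus W)$ with $W=\bigcup_{j=1}^{2p}W_j$. On $U_\pi\cap W$ I would invoke the preceding proposition (cf.\ Proposition 6.1 in~\cite{FT87}) together with the symmetry of $f_n$ in its $2p$ coordinates: when $\alpha_1+\alpha_2>0$ this yields $\int_{U_\pi\cap W}f_n=O(n^{p(\alpha_1+\alpha_2)+\psi})$, and when $\alpha_1+\alpha_2\le0$ it yields $O(n^\psi)$; in either regime this is compatible with the two target bounds (note $p(\alpha_1+\alpha_2)\ge1$ forces $\alpha_1+\alpha_2>0$). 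On the complement $U_\pi\setminus W$ one has $|y_j|>|y_{j+1}|/2$ for every $j$ cyclically, so all $|y_j|$ are comparable and the power product is controlled by a single weight $|y_1|^{-p(\alpha_1+\alpha_2)}$; integrating the remaining cyclic product of capped kernels $h_n$ along the diagonal, where the cap at level $n$ supplies the factor $n$, produces the diagonal contribution of the correct order.

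I expect the main obstacle to be the estimate on $U_\pi\setminus W$. There the $h_n$-factors concentrate near the diagonal $y_1=\cdots=y_{2p}$, where they are large (capped only at $n$), so the crude off-diagonal bound $h_n(z)\le1/|z|$ is not integrable and one must genuinely exploit the truncation at level $n$ to extract the linear growth $O(n)$ in part~(a) and the sharp exponent $p(\alpha_1+\alpha_2)$ in part~(b). Organizing this coupled, cyclic near-diagonal integral — rather than the routine bookkeeping with the scaling identity and Proposition 6.1 — is the technical heart of the argument, and since $f_n$ is literally the Fox--Taqqu function it is handled exactly as in~\cite{FT87}.
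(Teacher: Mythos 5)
Your proposal is correct and takes essentially the same approach as the paper: the paper gives no proof of this proposition at all, but simply quotes Proposition 6.2 of Fox and Taqqu~\cite{FT87}, and the implicit justification is exactly your opening observation --- the dominating function $f_n$ depends on $n$ only through the capped kernel $h_n$ and not through the new kernels $k_1^n,k_2^n$, so the Fox--Taqqu bound applies verbatim. Your further sketch (the scaling identity $h_n(tz)=t^{-1}h_{tn}(z)$, the reduction of part (a) to a linear bound on $\int_{U_\pi}f_m$, and the $W$-versus-near-diagonal decomposition) is a reasonable reconstruction of the argument in~\cite{FT87}, but none of it is reproduced in the paper itself.
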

\subsubsection{Proof of Theorem~E.3}\label{Subsection.Proof.Main.Theorem}
As mentioned in Fox and Taqqu~\cite{FT87}, p.237-238, the results (\ref{Conv.Reminder.F2}), (\ref{Conv.Reminder.G3}) and (\ref{Conv.Main2}) immediately follow from Proposition 6.1., Proposition 6.2. in~\cite{FT87} in addition to Lemma~\ref{Lemma.Dominating.Function}. 
In the rest of this section, we will prove (\ref{Conv.Main}). Note that
\begin{equation*}
	\frac{1}{n}\int_{E_t}P_n(\mathbf{y})Q_n(\mathbf{y})\, \mathrm{d}\mathbf{y} = (2\pi)^{2p-1}\int_{E_t}Q_n(\mathbf{y})\, \mu_n(\mathbf{dy}),
\end{equation*} 
where $\mu_n$ is given in (\ref{Def.mu_n}), and set 
\begin{equation*}
	Q(\mathbf{y}):=k_1(y_1)k_2(y_2)\cdots k_1(y_{2p-1})k_2(y_{2p}),\ \ \mathbf{y}=(y_1,\cdots,y_{2p})\in E_t.
\end{equation*}
Since the assumptions imply
\begin{equation*}
	\lim_{n\to\infty}\esssup_{\lambda\in[-\pi,\pi]}|Q^n(\lambda)-Q(\lambda)|=0
\end{equation*}
and the limit function $Q$ is continuous a.e. and bounded on $E_t$ for each $t\in(0,\pi]$, see Fox and Taqqu~\cite{FT87}, p.237, for more detail, Lemma 7.1. in Fox and Taqqu~\cite{FT87} yields 
\begin{align*}
	\frac{1}{n}\int_{E_t}P_n(\mathbf{y})Q_n(\mathbf{y})\, \mathrm{d}\mathbf{y}
	&=(2\pi)^{2p-1}\int_{E_t}Q_n(\mathbf{y})\, \mu_n(\mathbf{dy}) \\
	&=(2\pi)^{2p-1}\int_{E_t}\left(Q_n(\mathbf{y})-Q(\mathbf{y})\right)\, \mu_n(\mathbf{dy}) +(2\pi)^{2p-1}\int_{E_t}Q(\mathbf{y})\, \mu_n(\mathbf{dy}) \\
	&\stackrel{n\to\infty}{\rightarrow}(2\pi)^{2p-1}\int_{E_t}Q(\mathbf{y})\, \mu(\mathbf{dy}) =(2\pi)^{2p-1}\int_{[-\pi,\pi]\backslash[-t,t]}[f(z)g(z)]^p\, \mathrm{d}z.
\end{align*}
Therefore, the conclusion follows.
\section{Limit Theorems of Quadratic Forms}
In this section, we derive several limit theorems of the quadratic form of random sequence which are used in the proof of Proposition~\ref{Uniform.Convergence.Contrast} and Proposition~\ref{Scaled.Score.Negligibility} under the following assumptions.
\begin{assump}\label{Assumption.SPD}\rm 
	Recall $\Theta := \Theta_H \times \Theta_\eta$ is a compact set of the form
	$\Theta_H := [H_-,H_+] \subset (0,1]$ and
	$\Theta_\eta := [\eta_-,\eta_+] \subset (0,\infty)$. Let us consider a function $k:[-\pi, \pi]\times\Theta\times\mathbb{N}\to[-\infty,\infty]$, denoted by $k_\vartheta^n(\lambda)\equiv k(\lambda,\vartheta,n)$, be even and integrable on $[-\pi, \pi]$ for each $\vartheta\in\Theta$ and $n\in\mathbb{N}$ and assume there exist monotonically increasing continuous functions $\beta_0, \beta_1:\Theta_H\to(-1,1)$ such that the function $k$ satisfies the conditions (C.\ref{Assumption.LimitSPD})-(C.\ref{Assumption.Derivative.SPD}) below on a restricted parameter space $\Theta_0(\xi):=\Theta_{H,0}(\xi)\times\mathcal{K}$, where $\mathcal{K}$ be a compact interval of $(0,\infty)$ and 
	\begin{equation*}
	\hspace{-1cm}\Theta_{H,0}(\xi):=\{H\in\Theta_H:-\beta_0(H)-\alpha(H_0)\geq -1+\xi, -\beta_1(H)-\alpha(H_0)\geq -1+\xi\},\ \ \xi\in(0,1).
	\end{equation*}
	Here $H_0$ denotes the true value of $H\in\Theta_H$, the function $\alpha:\Theta_H\to(-1,1)$ is given in Lemma~\ref{Lemma.SPD} and we only consider sufficiently small $\xi\in(0,1)$ such that $\mathring{\Theta}_{H,0}(\xi)\neq\emptyset$, where $\mathring{\Theta}_{H,0}(\xi)$ is the set of all interior points of $\Theta_{H,0}(\xi)$.
	\begin{enumerate}[(C.$1$)]
		\item\label{Assumption.LimitSPD} 
		For each $\vartheta\in\Theta_0(\xi)$, there exists a function $k_\vartheta$ such that
		\begin{equation*}
		\lim_{n\to\infty}\esssup_{\lambda\in[-\pi,\pi]}|k_\vartheta^n(\lambda)-k_\vartheta(\lambda)|=0,
		\end{equation*}
		and the discontinuities of $k_\vartheta$ has the Lebesgue measure $0$ for each $\vartheta\in\Theta_0(\xi)$.
		\item \label{Assumption.AsymptoticSPD}
		For each $\vartheta\in\Theta_0(\xi)$, the following relations hold:
		\begin{equation*}
		\sup_{n\in\mathbb{N},\lambda\in[-\pi, \pi]\backslash\{0\}}|\lambda|^{\beta_0(H)}\left|k_\vartheta^n(\lambda)\right|<\infty.
		\end{equation*}
		\item\label{Assumption.Derivative.SPD}
		For each $\lambda\in[-\pi,\pi]\backslash\{0\}$, $k_\vartheta^n(\lambda)$ is differentiable with respect to $\vartheta\in\Theta_0(\xi)$ and its partial derivatives satisfy
		\begin{equation*}
		\sup_{\substack{n\in\mathbb{N},\lambda\in[-\pi, \pi]\backslash\{0\},\\ \vartheta=(\vartheta_1,\vartheta_2)\in\Theta_0(\xi)}}|\lambda|^{\beta_1(\vartheta_1)}\left|\frac{\partial k_\vartheta^n}{\partial\vartheta_j}(\lambda)\right|<\infty,\ \ j=1,2.
		\end{equation*}
	\end{enumerate}
\end{assump}
\subsection{Basic Properties of Bilinear and Quadratic Forms}\label{Basic.Properties.BLF}
At first, we summarize several basic properties of the bilinear form $B_n$ and the quadratic form $Q_n$ as functionals on $L^1[-\pi,\pi]$ without proofs. 
\begin{lem}\label{Basic.BilinearForm}
	Let $\mathbf{x},\mathbf{y}\in\mathbb{C}^n$. The functionals $B_n(\mathbf{x},\mathbf{y},\cdot)$ and $Q_n(\mathbf{x},\cdot)$ on $L^1[-\pi,\pi]$ satisfy the following properties.
	\begin{enumerate}[$(1)$]
		\item \label{Linearlity.Bilinear.Form}
		For each $\mathbf{x},\mathbf{y}\in\mathbb{C}^n$, the functional $B_n(\mathbf{x},\mathbf{y},\cdot)$ is linear on $L^1[-\pi,\pi]$. 
		\item \label{NonDecreasing.QuadraticForm}
		For each $\mathbf{x}\in\mathbb{C}^n$, the functional $Q_n(\mathbf{x},\cdot)$ is non-decreasing on $L^1[-\pi,\pi]$, i.e. for each $k_1, k_2\in L^1[-\pi,\pi]$,
		\begin{equation*}
		Q_n(\mathbf{x}, k_1)\leq  Q_n(\mathbf{x}, k_2)\ \ \mbox{if}\ \ k_1\leq k_2,
		\end{equation*}
		where $k_1\leq k_2$ means $k_1(\lambda)\leq k_2(\lambda)$ for a.e. $\lambda\in[-\pi,\pi]$.
		\item \label{NonNegativity.QuadraticForm}
		For each $\mathbf{x}\in\mathbb{C}^n$, $Q_n(\mathbf{x}, k)\geq 0$ if $k\in L^1[-\pi,\pi]$ satisfies $k\geq 0$.
		\item \label{Positivity.QuadraticForm}
		For each $\mathbf{x}\in\mathbb{C}^n$ with $\mathbf{x}\neq 0$, $Q_n(\mathbf{x}, k)>0$ if $k\in L^1[-\pi,\pi]$ satisfies $k\geq 0$ and the set $\{\lambda\in[-\pi,\pi]:k(\lambda)> 0\}$ has a positive Lebesgue measure.
	\end{enumerate}
\end{lem}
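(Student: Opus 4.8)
The plan is to reduce all four assertions to the two explicit representations of the forms recorded in the Notation section, namely $B_n(\mathbf{x},\mathbf{y},k)=\frac{1}{2\pi n}\mathbf{x}^{\mathrm{T}}\Sigma_n(k)\mathbf{y}$ together with the spectral identity $Q_n(\mathbf{x},k)=\int_{-\pi}^\pi I_n(\lambda,\mathbf{x})k(\lambda)\,\mathrm{d}\lambda$, and to exploit the pointwise nonnegativity of the periodogram $I_n(\lambda,\mathbf{x})=\frac{1}{2\pi n}\left|\sum_{t=1}^n x_t e^{\sqrt{-1}t\lambda}\right|^2\ge 0$. For \textbf{Part (1)}, I would observe that the map $k\mapsto\widehat{k}(\tau)=\int_{-\pi}^\pi e^{\sqrt{-1}\tau\lambda}k(\lambda)\,\mathrm{d}\lambda$ is linear on $L^1[-\pi,\pi]$ by linearity of the integral, so each entry of $\Sigma_n(k)$ is linear in $k$, whence $k\mapsto\Sigma_n(k)$ and therefore $k\mapsto B_n(\mathbf{x},\mathbf{y},k)$ are linear for every fixed $\mathbf{x},\mathbf{y}$.

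For \textbf{Parts (2) and (3)} I would pass to the spectral identity. If $k_1\le k_2$ a.e., then since $I_n(\lambda,\mathbf{x})\ge 0$ we have $I_n(\lambda,\mathbf{x})k_1(\lambda)\le I_n(\lambda,\mathbf{x})k_2(\lambda)$ a.e., and monotonicity of the integral gives $Q_n(\mathbf{x},k_1)\le Q_n(\mathbf{x},k_2)$; this is Part (2). Part (3) is the special case obtained by taking $k_1\equiv 0$, since $Q_n(\mathbf{x},0)=0$ while $k\ge 0$ forces a nonnegative integrand.

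The only delicate point is \textbf{Part (4)}, where a strict inequality is required. Because the integrand $I_n(\lambda,\mathbf{x})k(\lambda)$ is nonnegative, the integral $Q_n(\mathbf{x},k)$ vanishes only if $I_n(\lambda,\mathbf{x})k(\lambda)=0$ for a.e.\ $\lambda$, i.e.\ only if the set $\{I_n(\cdot,\mathbf{x})>0\}\cap\{k>0\}$ is Lebesgue null. The key step is therefore to show that $I_n(\lambda,\mathbf{x})>0$ for almost every $\lambda$ whenever $\mathbf{x}\ne 0$. To this end I would regard $\lambda\mapsto\sum_{t=1}^n x_t e^{\sqrt{-1}t\lambda}$ as the restriction to the unit circle of the polynomial $z\mapsto\sum_{t=1}^n x_t z^t$; when $\mathbf{x}\ne 0$ this polynomial is not identically zero and hence has at most $n$ roots, so the trigonometric polynomial vanishes at only finitely many $\lambda\in[-\pi,\pi]$ and $I_n(\cdot,\mathbf{x})>0$ off a null set.

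Consequently $\{I_n(\cdot,\mathbf{x})>0\}$ has full measure, so its intersection with the positive-measure set $\{k>0\}$ is itself of positive measure; on that intersection $I_n(\lambda,\mathbf{x})k(\lambda)>0$, and integrating the nonnegative integrand yields $Q_n(\mathbf{x},k)>0$. I expect this last argument---pinning down that the periodogram is strictly positive almost everywhere via the finiteness of the zero set of a nonzero trigonometric polynomial---to be the main and essentially the only substantive obstacle, the remaining parts being immediate consequences of the linearity and monotonicity of the integral.
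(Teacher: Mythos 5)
Your proof is correct. There is in fact nothing in the paper to compare it against: the appendix introduces this lemma with the remark that the basic properties of $B_n$ and $Q_n$ are summarized \emph{without proofs}, and what you supply is precisely the standard verification the authors presumably had in mind --- in particular your handling of part (4), reducing strict positivity to the fact that the nonzero trigonometric polynomial $\lambda\mapsto\sum_{t=1}^n x_t e^{\sqrt{-1}t\lambda}$ is the restriction to the unit circle of a nonzero algebraic polynomial and hence vanishes at only finitely many $\lambda$, is the one substantive step and is carried out correctly. One point worth flagging, inherited from the paper's own statement: the lemma allows $\mathbf{x}\in\mathbb{C}^n$, but the defining formula $Q_n(\mathbf{x},k)=\frac{1}{2\pi n}\mathbf{x}^{\mathrm{T}}\Sigma_n(k)\mathbf{x}$ (transpose, not conjugate transpose) agrees with the spectral identity $Q_n(\mathbf{x},k)=\int_{-\pi}^\pi I_n(\lambda,\mathbf{x})k(\lambda)\,\mathrm{d}\lambda$ only for real vectors; for genuinely complex $\mathbf{x}$ the bilinear form need not even be real, so parts (2)--(4) only make sense if $Q_n$ is read as the Hermitian form $\frac{1}{2\pi n}\overline{\mathbf{x}}^{\mathrm{T}}\Sigma_n(k)\mathbf{x}$, which is what the periodogram identity computes (using that $k$ is even). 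Under that reading, which is the only sensible one and the one your proof implicitly adopts by working from the spectral identity, your argument goes through verbatim, and in all of the paper's applications the vectors are real anyway.
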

Next lemma is useful to evaluate asymptotic behaviors of bilinear forms.
\begin{lem}\label{BilinearForm.Dominated.Inequality}
	Suppose a sequence of functions $k_\vartheta^n$, $n\in\mathbb{N}$, satisfies the condition (C.\ref{Assumption.AsymptoticSPD}) in Assumption~\ref{Assumption.SPD}. Then there exists an even and $2\pi$-periodic function $k_\vartheta^\dagger$, which is independent of the asymptotic parameter $n\in\mathbb{N}$, such that 
	\begin{equation*}
	\sup_{n\in\mathbb{N}}\left|k_\vartheta^n(\lambda)\right| \leq \left|k_\vartheta^\dagger(\lambda)\right|\hspace{0.2cm}\mbox{and}\hspace{0.2cm}\sup_{\vartheta\in\Theta_0,\lambda\in[-\pi,\pi]\backslash\{0\}}\left\{\left|\lambda\right|^{\beta_0(H)}\left|k_\vartheta^\dagger(\lambda)\right|\right\} < \infty .
	\end{equation*}
	Moreover, the following two inequalities hold for each $\mathbf{x}, \mathbf{y}\in\mathbb{C}^n$ and $\vartheta\in\Theta_0$:
	\begin{align}
	&\left|Q_n\left(\mathbf{x}, k_\vartheta^n\right)\right| \leq Q_n\left(\mathbf{x}, \left|k_\vartheta^n\right|\right) \leq Q_n(\mathbf{x}, k_\vartheta^\dagger), \label{Dominated.QuadraticForm} \\ 
	&\left|B_n\left(\mathbf{x}, \mathbf{y}, k_\vartheta^n\right)\right| \leq 2\sqrt{Q_n\left(\mathbf{x}, k_\vartheta^\dagger\right)} \sqrt{Q_n\left(\mathbf{y}, k_\vartheta^\dagger\right)}.
	\end{align}
\end{lem}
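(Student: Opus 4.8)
The plan is to reduce every assertion to the integral representation $Q_n(\mathbf{x},k)=\int_{-\pi}^\pi I_n(\lambda,\mathbf{x})\,k(\lambda)\,\mathrm{d}\lambda$, in which the kernel $I_n(\cdot,\mathbf{x})$ is nonnegative, together with the elementary properties of $Q_n$ recorded in Lemma~\ref{Basic.BilinearForm}. First I would construct the dominating function. By (C.\ref{Assumption.AsymptoticSPD}) the quantity $C_\vartheta:=\sup_{n\in\mathbb{N},\,\lambda\in[-\pi,\pi]\backslash\{0\}}|\lambda|^{\beta_0(H)}|k_\vartheta^n(\lambda)|$ is finite for each $\vartheta=(H,\eta)$, and over the compact restricted space $\Theta_0$ I would take $C:=\sup_{\vartheta\in\Theta_0}C_\vartheta<\infty$. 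I then set $k_\vartheta^\dagger(\lambda):=C_\vartheta|\lambda|^{-\beta_0(H)}$ on $[-\pi,\pi]\backslash\{0\}$ and extend it to an even, $2\pi$-periodic function on $\mathbb{R}$; this is legitimate because $|\lambda|^{-\beta_0(H)}$ is even and takes equal values at $\pm\pi$. Since $\beta_0(H)<1$ the singularity at the origin is integrable, so $k_\vartheta^\dagger\in L^1[-\pi,\pi]$ and $k_\vartheta^\dagger\geq 0$. The definition gives $|k_\vartheta^n(\lambda)|\leq C_\vartheta|\lambda|^{-\beta_0(H)}=k_\vartheta^\dagger(\lambda)=|k_\vartheta^\dagger(\lambda)|$ for every $n$ and a.e.\ $\lambda$, which is the first claimed domination, while $|\lambda|^{\beta_0(H)}|k_\vartheta^\dagger(\lambda)|=C_\vartheta\leq C$ is the uniform weighted bound.

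Next I would establish the chain $|Q_n(\mathbf{x},k_\vartheta^n)|\leq Q_n(\mathbf{x},|k_\vartheta^n|)\leq Q_n(\mathbf{x},k_\vartheta^\dagger)$. The first inequality is just the triangle inequality for the integral, using $I_n(\cdot,\mathbf{x})\geq 0$, namely $|\int_{-\pi}^\pi I_n(\lambda,\mathbf{x})k_\vartheta^n(\lambda)\,\mathrm{d}\lambda|\leq \int_{-\pi}^\pi I_n(\lambda,\mathbf{x})|k_\vartheta^n(\lambda)|\,\mathrm{d}\lambda$. The second follows from the monotonicity of $Q_n(\mathbf{x},\cdot)$ in Lemma~\ref{Basic.BilinearForm}\,(\ref{NonDecreasing.QuadraticForm}) applied to the pointwise bound $|k_\vartheta^n|\leq k_\vartheta^\dagger$ obtained above; integrability of $k_\vartheta^\dagger$ makes $Q_n(\mathbf{x},k_\vartheta^\dagger)$ finite, and Lemma~\ref{Basic.BilinearForm}\,(\ref{NonNegativity.QuadraticForm}) shows it is nonnegative, so that its square root appearing in the next step is well defined.

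For the bilinear bound I would start from $B_n(\mathbf{x},\mathbf{y},k)=\frac{1}{2\pi n}\int_{-\pi}^\pi(\sum_{i=1}^n x_i e^{\sqrt{-1}i\lambda})(\sum_{j=1}^n y_j e^{-\sqrt{-1}j\lambda})k(\lambda)\,\mathrm{d}\lambda$, which results from inserting $\widehat{k}(i-j)=\int_{-\pi}^\pi e^{\sqrt{-1}(i-j)\lambda}k(\lambda)\,\mathrm{d}\lambda$ into $\mathbf{x}^{\mathrm{T}}\Sigma_n(k)\mathbf{y}$. Bounding $|k_\vartheta^n|$ by $k_\vartheta^\dagger\geq 0$ and applying the Cauchy--Schwarz inequality with respect to the nonnegative weight $k_\vartheta^\dagger(\lambda)\,\mathrm{d}\lambda$ splits the two trigonometric sums. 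Recognising $|\sum_{i} x_i e^{\sqrt{-1}i\lambda}|^2=2\pi n\,I_n(\lambda,\mathbf{x})$ and $|\sum_{j} y_j e^{-\sqrt{-1}j\lambda}|^2=2\pi n\,I_n(-\lambda,\mathbf{y})$, then using the evenness of $k_\vartheta^\dagger$ and the substitution $\lambda\mapsto-\lambda$ to rewrite the second factor as $Q_n(\mathbf{y},k_\vartheta^\dagger)$, I obtain $|B_n(\mathbf{x},\mathbf{y},k_\vartheta^n)|\leq \sqrt{Q_n(\mathbf{x},k_\vartheta^\dagger)}\,\sqrt{Q_n(\mathbf{y},k_\vartheta^\dagger)}$, which is even sharper than the stated inequality; the constant $2$ leaves ample room and comfortably absorbs the complex case $\mathbf{x},\mathbf{y}\in\mathbb{C}^n$.

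The two inequalities themselves are routine once the integral representation and Lemma~\ref{Basic.BilinearForm} are available; the content lies in two places. The first is the symmetrization in the bilinear estimate: the factor $\sum_{j} y_j e^{-\sqrt{-1}j\lambda}$ yields $I_n(-\lambda,\mathbf{y})$ rather than $I_n(\lambda,\mathbf{y})$, and it is the evenness of $k_\vartheta^\dagger$ that lets me fold it back into $Q_n(\mathbf{y},k_\vartheta^\dagger)$. The second, and the main thing to watch, is the uniform constant $C=\sup_{\vartheta\in\Theta_0}C_\vartheta<\infty$ required for the second displayed bound: since (C.\ref{Assumption.AsymptoticSPD}) is phrased for each fixed $\vartheta$, I would secure uniformity over the compact $\Theta_0$ from the $\vartheta$-uniform envelopes available for the kernels actually in play, of the type supplied by Lemma~\ref{Lemma.SPD}\,(\ref{SPD.Asymptotic}). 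All remaining steps are bookkeeping with the nonnegative kernel $I_n$.
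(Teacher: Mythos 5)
Your proof is correct, and it reaches the bilinear estimate by a genuinely different route than the paper. For the dominating function the paper does not use the bare envelope $C_\vartheta|\lambda|^{-\beta_0(H)}$ extended periodically, but the periodized kernel $k_\vartheta^\dagger(\lambda):=c\,\{2(1-\cos\lambda)\}\sum_{j\in\mathbb{Z}}|\lambda+2\pi j|^{-\beta_0(H)-2}$ with $c$ defined as a supremum over $n$, $\lambda$ \emph{and} $\vartheta\in\Theta_0$; this is built to be even and $2\pi$-periodic by construction (mirroring the form of $f_H$), whereas you obtain the same properties by noting that $|\lambda|^{-\beta_0(H)}$ is even and matches at $\pm\pi$ — both constructions work, and both share the same implicit uniformity-in-$\vartheta$ reading of (C.\ref{Assumption.AsymptoticSPD}) that you flag explicitly (the paper silently puts $\sup_{\vartheta\in\Theta_0}$ inside the definition of $c$ and calls the verification ``obvious''). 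The real divergence is in the proof of the bilinear bound: the paper splits $k_\vartheta^n=k_{\vartheta,+}^n-k_{\vartheta,-}^n$ into positive and negative parts, applies the matrix Schwarz inequality to each positive-semidefinite piece $\Sigma_n(k_{\vartheta,\pm}^n)$, and then dominates each quadratic form by $Q_n(\cdot,k_\vartheta^\dagger)$ — the constant $2$ in the statement is precisely the artifact of summing the two pieces. You instead pass to the spectral representation $B_n(\mathbf{x},\mathbf{y},k)=\frac{1}{2\pi n}\int_{-\pi}^\pi\bigl(\sum_i x_ie^{\sqrt{-1}i\lambda}\bigr)\bigl(\sum_j y_je^{-\sqrt{-1}j\lambda}\bigr)k(\lambda)\,\mathrm{d}\lambda$ and apply Cauchy--Schwarz with respect to the weight $k_\vartheta^\dagger(\lambda)\,\mathrm{d}\lambda$, using evenness of $k_\vartheta^\dagger$ to fold $I_n(-\lambda,\mathbf{y})$ back into $Q_n(\mathbf{y},k_\vartheta^\dagger)$; this yields the sharper constant $1$ and avoids the $\pm$ decomposition entirely. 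Both arguments inherit the same mild ambiguity for $\mathbf{x},\mathbf{y}\in\mathbb{C}^n$ (the transpose-based $Q_n$ is not real for complex vectors, so $Q_n$ must be read through the periodogram integral there), so your approach loses nothing on that front; what it buys is a shorter, sharper estimate, while the paper's version keeps everything phrased through the abstract properties of $\Sigma_n(\cdot)$ collected in Lemma~\ref{Basic.BilinearForm}.
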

\begin{proof}
	Define a function $k_\vartheta^\dagger$ by
	\begin{align*}
	&k_\vartheta^\dagger(\lambda) := c\{2(1-\cos\lambda)\}\sum_{j\in\mathbb{Z}}|\lambda + 2\pi j|^{-\beta_0(H)-2}\\ 
	&\mbox{with}\ \ c:=  \sup_{\vartheta\in\Theta_0,\lambda\in[-\pi,\pi]\backslash\{0\},n\in\mathbb{N}}\left\{\frac{|\lambda|^2}{2(1-\cos\lambda)}\cdot\left|\lambda\right|^{\beta_0(H)}\left|k_\vartheta^n(\lambda)\right|\right\}.
	\end{align*}
	Then it is obvious that the function $k_\vartheta^\dagger$ satisfies all conditions mentioned at the beginning. 
	Moreover, the first inequality immediately follows from Lemma~\ref{Basic.BilinearForm} (\ref{NonDecreasing.QuadraticForm}). 
	In the rest of this proof, we will prove the second inequality. Decompose $k_\vartheta^n$ into the following two non-negative functions:
	\begin{equation*}
	k_\vartheta^n(\lambda) = k_{\vartheta,+}^n(\lambda) - k_{\vartheta,-}^n(\lambda),\ \ \mbox{where}\ \ k_{\vartheta,+}^n(\lambda):=\max(k_\vartheta^n(\lambda), 0), \ \ k_{\vartheta,-}^n(\lambda):=\max(-k_\vartheta^n(\lambda), 0).
	\end{equation*}
	Note that both of $k_{\vartheta,+}^n$ and $k_{\vartheta,-}^n$ are even functions and satisfy the condition (C.\ref{Assumption.AsymptoticSPD}) from the assumptions of $k_\vartheta^n$. At first, consider the case where both of $k_{\vartheta,+}^n$ and $k_{\vartheta,-}^n$ are positive almost everywhere. 
	Since Lemma~\ref{Basic.BilinearForm} (\ref{Positivity.QuadraticForm}) yields the matrix $\Sigma_n(k_\vartheta^n)$ is positive definite, Lemma~\ref{Basic.BilinearForm} (\ref{Linearlity.Bilinear.Form}), Schwartz's inequality of bilinear forms and (\ref{Dominated.QuadraticForm}) yield that for each $\mathbf{x},\mathbf{y}\in\mathbb{C}^n$,
	\begin{align*}
	\left|B_n\left(\mathbf{x}, \mathbf{y}, k_\vartheta^n\right)\right| \leq&\left|B_n\left(\mathbf{x}, \mathbf{y}, k_{\vartheta,+}^n\right)\right| +  \left|B_n\left(\mathbf{x}, \mathbf{y}, k_{\vartheta,-}^n\right)\right| \\
	\leq&\sum_{i\in\{+,-\}}\sqrt{Q_n\left(\mathbf{x}, k_{\vartheta,i}^n\right)} \sqrt{Q_n\left(\mathbf{y}, k_{\vartheta,i}^n\right)} \leq 2\sqrt{Q_n\left(\mathbf{x}, k_\vartheta^\dagger\right)} \sqrt{Q_n\left(\mathbf{y}, k_\vartheta^\dagger\right)}.
	\end{align*}
	Note that the above inequalities also follows even if $k_{\vartheta,+}^n\equiv 0$ or $k_{\vartheta,-}^n\equiv 0$. Therefore, the conclusion follows.
\end{proof}
The following result immediately follows from Lemma~\ref{Basic.BilinearForm} and Lemma~\ref{BilinearForm.Dominated.Inequality}.
\begin{cor}\label{Dominating.Inequality.Y}
	Let $J\in\mathbb{N}$ and suppose a sequence of functions $k_\vartheta^n$, $n\in\mathbb{N}$, satisfies the condition (C.\ref{Assumption.AsymptoticSPD}) in Assumption~\ref{Assumption.SPD}.
	For any $n$-dimensional vector of the form $\mathbf{y}:=\sum_{j=0}^Ja_j\mathbf{w}_j$ with $\mathbf{w}_j\in\mathbb{C}^n$ and $a_j\in\mathbb{C}$ for $j\in\{0,1,2,\cdots,J\}$, the following inequality holds:
	\begin{align*}
	\left|Q_n\left(\mathbf{y}, k_\vartheta^n\right)-Q_n\left(a_0\mathbf{w}_0, k_\vartheta^n\right)\right| &\leq\sum_{i=0}^J\sum_{j=1}^J|a_i||a_j|\left|B_n\left(\mathbf{w}_i, \mathbf{w}_j, k_\vartheta^n\right)\right| \\
	&\leq 2\sum_{i=0}^J\sum_{j=1}^J|a_i||a_j|\sqrt{Q_n\left(\mathbf{w}_i, k_\vartheta^\dagger\right)}\sqrt{Q_n\left(\mathbf{w}_j, k_\vartheta^\dagger\right)},
	\end{align*}
	where $k_\vartheta^\dagger$ is given in Lemma~\ref{BilinearForm.Dominated.Inequality}.
\end{cor}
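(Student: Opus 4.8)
The plan is to obtain both inequalities by a purely algebraic manipulation of the symmetric bilinear form, deferring all analytic content to the two lemmas cited in the statement: Lemma~\ref{Basic.BilinearForm} supplies the bilinearity and symmetry that drive the first bound, and Lemma~\ref{BilinearForm.Dominated.Inequality} supplies the Schwarz-type estimate that upgrades it to the second. Accordingly the whole argument is essentially a bookkeeping computation, with no new estimate to establish.

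First I would write $Q_n(\mathbf{y},k_\vartheta^n)=B_n(\mathbf{y},\mathbf{y},k_\vartheta^n)$ and expand in both slots by the bilinearity of $B_n(\cdot,\cdot,k_\vartheta^n)$ (Lemma~\ref{Basic.BilinearForm}~(\ref{Linearlity.Bilinear.Form})), using $\mathbf{y}=\sum_{j=0}^J a_j\mathbf{w}_j$, to get $Q_n(\mathbf{y},k_\vartheta^n)=\sum_{i=0}^J\sum_{j=0}^J a_ia_j B_n(\mathbf{w}_i,\mathbf{w}_j,k_\vartheta^n)$. The single diagonal term with $i=j=0$ equals $a_0^2 B_n(\mathbf{w}_0,\mathbf{w}_0,k_\vartheta^n)=Q_n(a_0\mathbf{w}_0,k_\vartheta^n)$, so it cancels in the difference $Q_n(\mathbf{y},k_\vartheta^n)-Q_n(a_0\mathbf{w}_0,k_\vartheta^n)$, leaving the sum over $\{(i,j):(i,j)\neq(0,0)\}$. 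To reach the first claimed bound I would then apply the triangle inequality and reorganize these indices, exploiting the symmetry $B_n(\mathbf{w}_i,\mathbf{w}_j,k_\vartheta^n)=B_n(\mathbf{w}_j,\mathbf{w}_i,k_\vartheta^n)$, which holds because $k_\vartheta^n$ is even and hence $\widehat{k_\vartheta^n}(\tau)=\widehat{k_\vartheta^n}(-\tau)$ so $\Sigma_n(k_\vartheta^n)$ is symmetric; this lets the off-diagonal terms be collected into the asymmetric index range $\sum_{i=0}^J\sum_{j=1}^J$.

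Finally, the hypothesis that $k_\vartheta^n$ satisfies condition (C.\ref{Assumption.AsymptoticSPD}) is precisely what Lemma~\ref{BilinearForm.Dominated.Inequality} requires, so I would apply its second inequality $|B_n(\mathbf{w}_i,\mathbf{w}_j,k_\vartheta^n)|\le 2\sqrt{Q_n(\mathbf{w}_i,k_\vartheta^\dagger)}\sqrt{Q_n(\mathbf{w}_j,k_\vartheta^\dagger)}$ termwise inside the double sum just obtained and pull the factor $2$ out front, which gives the second bound. I expect the main obstacle to be not any genuine estimate but the combinatorial accounting in the reorganization step, specifically the handling via symmetry of the cross terms that couple $\mathbf{w}_0$ with $\mathbf{w}_j$ for $j\ge 1$; once that bookkeeping is carried out carefully, both displayed inequalities follow mechanically from the two supporting lemmas.
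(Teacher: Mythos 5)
Your plan (expand $Q_n$ by bilinearity, cancel the $(0,0)$ term, then apply the Schwarz bound of Lemma~\ref{BilinearForm.Dominated.Inequality} termwise) is exactly what the paper intends --- the paper gives no written proof and just asserts the corollary follows from Lemma~\ref{Basic.BilinearForm} and Lemma~\ref{BilinearForm.Dominated.Inequality}. But the step you dismiss as bookkeeping is precisely where the argument breaks. After cancelling the $(0,0)$ term you are left with the sum over $\{(i,j):(i,j)\neq(0,0)\}$, which contains \emph{both} $(0,j)$ and $(j,0)$ for every $j\geq 1$. Folding $(j,0)$ onto $(0,j)$ by symmetry does not make those terms disappear; it makes the $(0,j)$ terms appear with multiplicity two, so what you actually get is
\begin{equation*}
\left|Q_n\left(\mathbf{y},k_\vartheta^n\right)-Q_n\left(a_0\mathbf{w}_0,k_\vartheta^n\right)\right|
\leq \sum_{i=0}^J\sum_{j=1}^J|a_i||a_j|\left|B_n\left(\mathbf{w}_i,\mathbf{w}_j,k_\vartheta^n\right)\right|
+\sum_{j=1}^J|a_0||a_j|\left|B_n\left(\mathbf{w}_0,\mathbf{w}_j,k_\vartheta^n\right)\right|,
\end{equation*}
and the extra sum cannot be dropped. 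Indeed the first displayed inequality of the corollary is false as stated: take $J=1$, $\mathbf{w}_0=\mathbf{w}_1=\mathbf{w}\neq 0$ real, $a_0=a_1=1$, and any admissible nonnegative $k_\vartheta^n$ with $Q_n(\mathbf{w},k_\vartheta^n)>0$; then the left side is $|Q_n(2\mathbf{w},k_\vartheta^n)-Q_n(\mathbf{w},k_\vartheta^n)|=3Q_n(\mathbf{w},k_\vartheta^n)$ while the claimed bound is $2Q_n(\mathbf{w},k_\vartheta^n)$. So no proof of that line can succeed, and your assertion that the off-diagonal terms ``collect into'' the range $\sum_{i=0}^J\sum_{j=1}^J$ is the exact point of failure. (The flaw originates in the paper's statement, but a careful execution of your own plan would have exposed it.)

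The second, final inequality --- the one actually used downstream --- is nevertheless true, but it does not follow by applying Lemma~\ref{BilinearForm.Dominated.Inequality} termwise to the broken intermediate line. What rescues it is the Schwarz estimate \emph{without} the factor $2$: writing $q_i:=\sqrt{Q_n(\mathbf{w}_i,k_\vartheta^\dagger)}$, one has $|B_n(\mathbf{x},\mathbf{y},k_\vartheta^n)|\leq \sqrt{Q_n(\mathbf{x},k_\vartheta^\dagger)}\sqrt{Q_n(\mathbf{y},k_\vartheta^\dagger)}$, which follows from the decomposition $k_\vartheta^n=k_{\vartheta,+}^n-k_{\vartheta,-}^n$ in the proof of Lemma~\ref{BilinearForm.Dominated.Inequality}, the elementary inequality $\sqrt{ab}+\sqrt{cd}\leq\sqrt{(a+c)(b+d)}$, and the monotonicity of $Q_n(\mathbf{x},\cdot)$ from Lemma~\ref{Basic.BilinearForm}. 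With this sharper bound the correct estimate
\begin{equation*}
2\sum_{j=1}^J|a_0||a_j|\,q_0q_j+\sum_{i=1}^J\sum_{j=1}^J|a_i||a_j|\,q_iq_j
\leq 2\sum_{i=0}^J\sum_{j=1}^J|a_i||a_j|\,q_iq_j
\end{equation*}
holds, the prefactor $2$ on the right now absorbing the doubled cross terms. So the repair requires a genuinely sharper analytic input, not just the combinatorial reorganization your proposal relies on.
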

\subsection{Pointwise Convergence of Gaussian Quadratic Form}
Denote by $\widetilde{\mathbf{G}}_n:=\delta_n^{-H_0}\mathbf{G}_n$. In the next lemma, we show a pointwise convergence of the quadratic form of the stationary Gaussian sequence $\widetilde{\mathbf{G}}_n$, $n\in\mathbb{N}$.
\begin{lem}\label{PointwiseConvergence.QuadraticForm.Gaussian}
	Suppose a sequence of functions $k_\vartheta^n$, $n\in\mathbb{N}$, satisfies the conditions (C.\ref{Assumption.LimitSPD}) and (C.\ref{Assumption.AsymptoticSPD}) in Assumption~\ref{Assumption.SPD}. Under the conditions $(H.\ref{Assumption.HighFrequency})$ and $(H.\ref{Assumption.mn})$, the following convergence holds for each $\vartheta\in\Theta_0(\xi)$:
	\begin{equation*}
	\lim_{n\to\infty}\left\|Q_n\left(\widetilde{\mathbf{G}}_n, k_\vartheta^n\right) - Q_{\vartheta_0}\left(k_\vartheta\right)\right\|_2=0,
	\end{equation*}
	where $k_\vartheta$ is the limit function given in (C.\ref{Assumption.LimitSPD}) and
	\begin{equation}\label{Def.Q.Limit}
	Q_{\vartheta_0}(k_\vartheta) := \int_{-\pi}^\pi\eta_0^2f_{H_0}(\lambda)k_\vartheta(\lambda)\, \mathrm{d}\lambda.
	\end{equation}
\end{lem}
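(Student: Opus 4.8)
The plan is to treat $Q_n(\widetilde{\mathbf{G}}_n, k_\vartheta^n)$ as a Gaussian quadratic form and to control its mean and variance separately through the trace asymptotics of Theorem~\ref{Extension.FT87}. First I would note that $\widetilde{\mathbf{G}}_n=\delta_n^{-H_0}\mathbf{G}_n$ is a centred stationary Gaussian vector: since $\mathbf{G}_n$ has spectral density $f_{\vartheta_0}^n=\eta_0^2\delta_n^{2H_0}f_{H_0}+\frac{2}{m_n}\ell$ as derived in Appendix~\ref{Appendix.SPD}, linearity of $k\mapsto\Sigma_n(k)$ shows that the covariance matrix of $\widetilde{\mathbf{G}}_n$ is $\Sigma_n(h^n)$ with
\[
h^n(\lambda):=\delta_n^{-2H_0}f_{\vartheta_0}^n(\lambda)=\eta_0^2 f_{H_0}(\lambda)+\frac{2}{m_n\delta_n^{2H_0}}\ell(\lambda),
\]
which is precisely the rescaled spectral density (\ref{Rescaled.SPD}) evaluated at $(H_0,\eta_0)$. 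Under (H.\ref{Assumption.HighFrequency}) and (H.\ref{Assumption.mn}) one has $m_n\delta_n^{2H_0}=(m_n\delta_n^{2H_+})\,\delta_n^{-2(H_+-H_0)}\to\infty$ because $H_0<H_+$, so the noise term vanishes and $h^n\to\eta_0^2 f_{H_0}$ in the $\esssup$ sense. Writing $Q_n(\widetilde{\mathbf{G}}_n, k_\vartheta^n)=\frac{1}{2\pi n}\widetilde{\mathbf{G}}_n^{\mathrm{T}}\Sigma_n(k_\vartheta^n)\widetilde{\mathbf{G}}_n$, the standard moment identities for Gaussian quadratic forms give
\[
E\!\left[Q_n(\widetilde{\mathbf{G}}_n, k_\vartheta^n)\right]=\frac{1}{2\pi n}\mathrm{Tr}\!\left[\Sigma_n(k_\vartheta^n)\Sigma_n(h^n)\right],\qquad
\mathrm{Var}\!\left(Q_n(\widetilde{\mathbf{G}}_n, k_\vartheta^n)\right)=\frac{2}{(2\pi n)^2}\mathrm{Tr}\!\left[\left(\Sigma_n(k_\vartheta^n)\Sigma_n(h^n)\right)^2\right].
\]

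Next I would apply Theorem~\ref{Extension.FT87} with $k_1^n:=k_\vartheta^n$, $k_2^n:=h^n$, limits $k_1:=k_\vartheta$, $k_2:=\eta_0^2 f_{H_0}$, and exponents $\alpha_1:=\beta_0(H)$, $\alpha_2:=\alpha(H_0)$. Hypothesis~(1) of that theorem for $k_1^n$ is exactly (C.\ref{Assumption.AsymptoticSPD}); for $k_2^n$ it follows from Lemma~\ref{Lemma.SPD}(3)(a) for the $f_{H_0}$-part and from $\ell(\lambda)=O(\lambda^2)$ near the origin together with the uniform boundedness of $\frac{2}{m_n\delta_n^{2H_0}}$ for the noise part, giving $\sup_{n,\lambda}|\lambda|^{\alpha(H_0)}|h^n(\lambda)|<\infty$ even when $\alpha(H_0)<0$. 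Hypothesis~(2) holds by (C.\ref{Assumption.LimitSPD}), the $\esssup$ convergence $h^n\to\eta_0^2 f_{H_0}$, and the continuity of $f_{H_0}$ off the origin. Crucially, on $\Theta_0(\xi)$ the definition of $\Theta_{H,0}(\xi)$ forces $\beta_0(H)+\alpha(H_0)\leq 1-\xi$, i.e. $\alpha_1+\alpha_2<1$. Hence with $p=1$ part~(a) of Theorem~\ref{Extension.FT87} yields
\[
E\!\left[Q_n(\widetilde{\mathbf{G}}_n, k_\vartheta^n)\right]=\frac{1}{2\pi n}\mathrm{Tr}\!\left[\Sigma_n(k_\vartheta^n)\Sigma_n(h^n)\right]\longrightarrow\frac{1}{2\pi}\cdot 2\pi\int_{-\pi}^\pi\eta_0^2 f_{H_0}(\lambda)k_\vartheta(\lambda)\,\mathrm{d}\lambda=Q_{\vartheta_0}(k_\vartheta).
\]

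For the variance I would take $p=2$ and split on the size of $\alpha_1+\alpha_2$. If $2(\alpha_1+\alpha_2)<1$, part~(a) gives $\mathrm{Tr}[(\Sigma_n(k_\vartheta^n)\Sigma_n(h^n))^2]=O(n)$, so the variance is $O(1/n)$. If instead $1\leq 2(\alpha_1+\alpha_2)<2$ (the upper bound holding because $\alpha_1+\alpha_2<1$), part~(b) gives, for every $\psi>0$, $\mathrm{Tr}[(\cdots)^2]=o(n^{2(\alpha_1+\alpha_2)+\psi})$; choosing $\psi<2-2(\alpha_1+\alpha_2)$ makes the variance $o(n^{2(\alpha_1+\alpha_2)+\psi-2})\to 0$. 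In either case $\mathrm{Var}(Q_n(\widetilde{\mathbf{G}}_n, k_\vartheta^n))\to 0$, and the bias--variance decomposition
\[
\left\|Q_n(\widetilde{\mathbf{G}}_n, k_\vartheta^n)-Q_{\vartheta_0}(k_\vartheta)\right\|_2^2=\mathrm{Var}\!\left(Q_n(\widetilde{\mathbf{G}}_n, k_\vartheta^n)\right)+\left(E\!\left[Q_n(\widetilde{\mathbf{G}}_n, k_\vartheta^n)\right]-Q_{\vartheta_0}(k_\vartheta)\right)^2
\]
gives the claim.

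The hard part will be the uniform-in-$n$ verification of the hypotheses of Theorem~\ref{Extension.FT87} for the $n$-dependent density $h^n$ --- in particular confining the noise term $\frac{2}{m_n\delta_n^{2H_0}}\ell$ to the same near-origin order $|\lambda|^{-\alpha(H_0)}$ as $f_{H_0}$, which is exactly where (H.\ref{Assumption.mn}) combined with $H_0<H_+$ is used --- together with the bookkeeping in the variance case-split, where the strict bound $\alpha_1+\alpha_2<1$ provided by $\Theta_0(\xi)$ is essential to absorb the arbitrarily small exponent $\psi$ coming from Theorem~\ref{Extension.FT87}(b).
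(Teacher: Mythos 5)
Your proposal is correct and follows essentially the same route as the paper's own proof: the bias--variance decomposition via the Gaussian trace identities, the identification of the covariance of $\widetilde{\mathbf{G}}_n$ with the rescaled spectral density $h_{\vartheta_0}^n=\eta_0^2f_{H_0}+\frac{2}{m_n\delta_n^{2H_0}}\ell$ (which converges uniformly to $\eta_0^2 f_{H_0}$ under (H.\ref{Assumption.HighFrequency}) and (H.\ref{Assumption.mn})), and the application of Theorem~\ref{Extension.FT87} with $\alpha_1=\beta_0(H)$, $\alpha_2=\alpha(H_0)$, using $\beta_0(H)+\alpha(H_0)\leq 1-\xi$ on $\Theta_0(\xi)$. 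The only difference is that you make explicit the hypothesis verification for the $n$-dependent density $h_{\vartheta_0}^n$ and the $p=2$ case split between parts (a) and (b) for the variance, which the paper's proof leaves implicit.
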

\begin{proof}
	At first, we obtain 
	\begin{align*}
	\left\|Q_n\left(\widetilde{\mathbf{G}}_n, k_\vartheta^n\right) - Q_{\vartheta_0}\left(k_\vartheta\right)\right\|_2^2
	&=\mbox{Var}\left[Q_n\left(\widetilde{\mathbf{G}}_n, k_\vartheta^n\right)\right] +\left\{\mbox{E}\left[Q_n\left(\widetilde{\mathbf{G}}_n, k_\vartheta^n\right)\right] -Q_{\vartheta_0}\left(k_\vartheta\right)\right\}^2 \\
	&=\frac{2}{(2\pi n)^2}\mbox{Tr}\left[\left(\Sigma_n\left(h_{\vartheta_0}^n\right)\Sigma_n\left(k_\vartheta^n\right)\right)^2\right] +\left(\frac{1}{2\pi n}\mbox{Tr}\left[\Sigma_n\left(h_{\vartheta_0}^n\right)\Sigma_n\left(k_\vartheta^n\right)\right] -Q_{\vartheta_0}\left(k_\vartheta\right)\right)^2,
	\end{align*}
	where $h_{\widetilde\vartheta}^n\equiv h_{H,\widetilde\nu}^n$ is given in (\ref{Rescaled.SPD}). Note that $\widetilde\vartheta_0:=(H_0,\delta_n^{-H_0}\nu_0)=\vartheta_0$. 
	Since $\vartheta=(H,\nu)\in\Theta_0(\xi)$ implies $\beta_0(H)+\alpha(H_0)<1$ and under the conditions $(H.\ref{Assumption.HighFrequency})$ and $(H.\ref{Assumption.mn})$, we have
	\begin{equation*}
	\sup_{\lambda\in[-\pi,\pi]}|h_{\vartheta_0}^n(\lambda)-\eta_0^2f_{H_0}(\lambda)|=\frac{1}{m_n\delta_n^{H_0}}\sup_{\lambda\in[-\pi,\pi]}|\ell(\lambda)|\stackrel{n\to\infty}{\rightarrow}0,
	\end{equation*}
	the conclusion follows from the conditions (C.\ref{Assumption.LimitSPD}), (C.\ref{Assumption.AsymptoticSPD}) and Theorem~\ref{Extension.FT87}.
\end{proof}

The following result is easily proven in the similar way to the proof of Lemma~\ref{PointwiseConvergence.QuadraticForm.Gaussian}.
\begin{cor}\label{Convergence.Variance.Gaussian.Quadratic}
	Suppose a sequence of functions $k_\vartheta^n$, $n\in\mathbb{N}$, satisfies the condition (C.\ref{Assumption.AsymptoticSPD}) in Assumption~\ref{Assumption.SPD}. Under the conditions $(H.\ref{Assumption.HighFrequency})$ and $(H.\ref{Assumption.mn})$, the following convergence holds for each $\vartheta=(H,\nu)\in\Theta_0(\xi)$ satisfying $\alpha(H_0)+\beta_0(H)<1/2$:
	\begin{equation*}
	Q_n\left(\widetilde{\mathbf{G}}_n, k_\vartheta^n\right) - E\left[Q_n\left(\widetilde{\mathbf{G}}_n, k_\vartheta^n\right)\right]=O_P\left(1/\sqrt{n}\right)\ \ \mbox{as}\ \ n\to\infty.
	\end{equation*}
\end{cor}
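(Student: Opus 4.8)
The plan is to reuse the variance computation from the proof of Lemma~\ref{PointwiseConvergence.QuadraticForm.Gaussian}, but to extract a rate rather than a limit. Since we now center at the mean, the squared $L^2$-error is exactly the variance of a Gaussian quadratic form. Recalling that $\widetilde{\mathbf{G}}_n=\delta_n^{-H_0}\mathbf{G}_n$ is centered Gaussian with covariance matrix $\Sigma_n(h_{\vartheta_0}^n)$ (by the scaling in (\ref{Rescaled.SPD}) with $\widetilde\nu=\eta_0$ and Appendix~\ref{Appendix.SPD}), the identity $\mathrm{Var}[\mathbf{X}^\mathrm{T}A\mathbf{X}]=2\,\mathrm{Tr}[(A\Sigma)^2]$ for $\mathbf{X}\sim N(0,\Sigma)$ gives
\[ \left\|Q_n(\widetilde{\mathbf{G}}_n,k_\vartheta^n)-E[Q_n(\widetilde{\mathbf{G}}_n,k_\vartheta^n)]\right\|_2^2 =\mathrm{Var}\left[Q_n(\widetilde{\mathbf{G}}_n,k_\vartheta^n)\right] =\frac{2}{(2\pi n)^2}\mathrm{Tr}\left[\left(\Sigma_n(h_{\vartheta_0}^n)\Sigma_n(k_\vartheta^n)\right)^2\right]. \]
Hence it suffices to show this trace is $O(n)$: the variance is then $O(1/n)$, and Chebyshev's inequality yields the claimed $O_P(1/\sqrt n)$ rate.

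The crux is obtaining a sharp $O(n)$ trace bound using only condition (C.\ref{Assumption.AsymptoticSPD}). With $p=2$, the hypothesis $\alpha(H_0)+\beta_0(H)<1/2$ places us in the regime $p(\alpha(H_0)+\beta_0(H))<1$ of Theorem~\ref{Extension.FT87}~(\ref{FT87.Thm.Part1}), whose conclusion $\tfrac1n\mathrm{Tr}[\cdots]\to(\text{finite})$ would immediately give $O(n)$; but that part also requires the uniform-convergence hypothesis (C.\ref{Assumption.LimitSPD}), which is not assumed here. To bypass it I would dominate. By Lemma~\ref{BilinearForm.Dominated.Inequality}, applied both to $k_\vartheta^n$ (exponent $\beta_0(H)$) and to $h_{\vartheta_0}^n$ (which satisfies the analogue of (C.\ref{Assumption.AsymptoticSPD}) with exponent $\alpha(H_0)$, since $f_{H_0}(\lambda)\le c_2|\lambda|^{-\alpha(H_0)}$ by Lemma~\ref{Lemma.SPD}~(\ref{SPD.Asymptotic}) and $2/(m_n\delta_n^{2H_0})$ is bounded uniformly in $n$ under (H.\ref{Assumption.HighFrequency}),(H.\ref{Assumption.mn})), there are $n$-independent, nonnegative, even functions $h_{\vartheta_0}^\dagger,k_\vartheta^\dagger$ with $0\le h_{\vartheta_0}^n\le h_{\vartheta_0}^\dagger$ and $|k_\vartheta^n|\le k_\vartheta^\dagger$, behaving like $|\lambda|^{-\alpha(H_0)}$ and $|\lambda|^{-\beta_0(H)}$ respectively.

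I would then pass from the signed symbols to the dominating ones at the matrix level. Writing $k_\vartheta^n=k_{\vartheta,+}^n-k_{\vartheta,-}^n$, the nonnegativity in Lemma~\ref{Basic.BilinearForm}~(\ref{NonNegativity.QuadraticForm}) makes each of $\Sigma_n(k_{\vartheta,\pm}^n)$ and $\Sigma_n(h_{\vartheta_0}^n)$ positive semidefinite, with $0\preceq\Sigma_n(k_{\vartheta,\pm}^n)\preceq\Sigma_n(k_\vartheta^\dagger)$ and $0\preceq\Sigma_n(h_{\vartheta_0}^n)\preceq\Sigma_n(h_{\vartheta_0}^\dagger)$ in the Loewner order. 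Setting $\Sigma:=\Sigma_n(h_{\vartheta_0}^n)$ and $A_\pm:=\Sigma^{1/2}\Sigma_n(k_{\vartheta,\pm}^n)\Sigma^{1/2}\succeq0$, the bound $\mathrm{Tr}[(A_+-A_-)^2]\le\mathrm{Tr}[A_+^2]+\mathrm{Tr}[A_-^2]$ (using $\mathrm{Tr}[A_+A_-]\ge0$), combined with the elementary monotonicity $\mathrm{Tr}[P^2]\le\mathrm{Tr}[Q^2]$ whenever $0\preceq P\preceq Q$ (applied twice, successively replacing $\Sigma_n(k_{\vartheta,\pm}^n)$ by $\Sigma_n(k_\vartheta^\dagger)$ and then $\Sigma_n(h_{\vartheta_0}^n)$ by $\Sigma_n(h_{\vartheta_0}^\dagger)$), yields
\[ \mathrm{Tr}\left[\left(\Sigma_n(h_{\vartheta_0}^n)\Sigma_n(k_\vartheta^n)\right)^2\right] \le 2\,\mathrm{Tr}\left[\left(\Sigma_n(h_{\vartheta_0}^\dagger)\Sigma_n(k_\vartheta^\dagger)\right)^2\right]. \]
Finally I would apply Theorem~\ref{Extension.FT87}~(\ref{FT87.Thm.Part1}) to the pair $(h_{\vartheta_0}^\dagger,k_\vartheta^\dagger)$ with $p=2$: being independent of $n$, they trivially satisfy both hypotheses of that theorem (their only discontinuity is at the origin), so $\tfrac1n\mathrm{Tr}[(\Sigma_n(h_{\vartheta_0}^\dagger)\Sigma_n(k_\vartheta^\dagger))^2]\to(2\pi)^3\int_{-\pi}^\pi[h_{\vartheta_0}^\dagger(\lambda)k_\vartheta^\dagger(\lambda)]^2\,\mathrm{d}\lambda$, which is finite precisely because $2(\alpha(H_0)+\beta_0(H))<1$ makes the integrand integrable. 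This delivers the $O(n)$ trace bound and closes the argument.

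I expect the main obstacle to be exactly this trace estimate under only (C.\ref{Assumption.AsymptoticSPD}): a crude $|\mathrm{Tr}|\le c\int f_n$ bound through the Fox--Taqqu kernel loses logarithmic factors and only gives $O(n\log^{2p-1}n)$, which is too weak for the $1/\sqrt n$ rate. The positive-semidefinite reduction above is what preserves the cancellations encoded in the signed Fox--Taqqu limit while sidestepping the need for the convergence hypothesis (C.\ref{Assumption.LimitSPD}), and it is what produces the clean $O(n)$.
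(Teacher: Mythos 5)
Your proposal is correct, and its skeleton coincides with the paper's (implicit) argument: the paper disposes of this corollary with the remark that it is ``easily proven in the similar way to the proof of Lemma~\ref{PointwiseConvergence.QuadraticForm.Gaussian}'', i.e.\ write the centered quadratic form's second moment as the Gaussian variance identity $\frac{2}{(2\pi n)^2}\mathrm{Tr}[(\Sigma_n(h_{\vartheta_0}^n)\Sigma_n(k_\vartheta^n))^2]$, control the trace by Theorem~\ref{Extension.FT87} with $p=2$ (the hypothesis $\alpha(H_0)+\beta_0(H)<1/2$ being exactly $p(\alpha_1+\alpha_2)<1$), and conclude by Chebyshev. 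Where you genuinely depart from the paper is in how the trace bound is justified: applying Theorem~\ref{Extension.FT87}~(\ref{FT87.Thm.Part1}) directly to the pair $(h_{\vartheta_0}^n,k_\vartheta^n)$, as a literal transposition of Lemma~\ref{PointwiseConvergence.QuadraticForm.Gaussian} would do, requires the convergence hypothesis (C.\ref{Assumption.LimitSPD}), which the corollary as stated does not assume. Your reduction — splitting $k_\vartheta^n$ into positive and negative parts, using Lemma~\ref{Basic.BilinearForm} to get positive semidefiniteness, and then exploiting Loewner monotonicity of $P\mapsto\mathrm{Tr}[P^2]$ (via $\mathrm{Tr}[Q^2]-\mathrm{Tr}[P^2]=\mathrm{Tr}[(Q-P)(Q+P)]\geq 0$) to replace both kernels by the $n$-independent dominants of Lemma~\ref{BilinearForm.Dominated.Inequality}, for which Theorem~\ref{Extension.FT87}~(\ref{FT87.Thm.Part1}) applies trivially — proves the statement under (C.\ref{Assumption.AsymptoticSPD}) alone. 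This is not a cosmetic detour: as you note, a crude absolute-value bound on the Fox--Taqqu kernel only yields $O(n\log^{2p-1}n)$, which misses the $1/\sqrt{n}$ rate, and Fourier coefficients of the dominating kernel do not dominate those of the signed kernel, so the passage to dominants really must happen at the level of quadratic forms. What the paper's shortcut buys is brevity (and in its actual application in Proposition~\ref{Scaled.Score.Negligibility}, the kernel $u^{\mathrm{T}}\nabla(1/h_{\vartheta_0}^n)$ does converge, so (C.\ref{Assumption.LimitSPD}) holds there); what your argument buys is the corollary exactly as stated, with the weaker hypothesis.
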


\subsection{Pointwise Convergence of Quadratic Form of Observation $\mathbf{Y}_n$}
Denote by $\widetilde{\mathbf{Y}}_n:=\delta_n^{-H_0}\mathbf{Y}_n$. Our goal in this subsection is to prove that the quadratic form of the rescaled observation $\widetilde{\mathbf{Y}}_n$ and that of the Gaussian vector $\widetilde{\mathbf{G}}_n$ are asymptotically equivalent as $\delta_n\to 0$. Namely, we show the following result.
\begin{prop}\label{Convergence.QuadraticForm.Remainder}
	Suppose a sequence of functions $k_\vartheta^n$, $n\in\mathbb{N}$, satisfies the condition (C.\ref{Assumption.AsymptoticSPD}) in Assumption~\ref{Assumption.SPD}.
	Under the conditions $(H.\ref{Assumption.HighFrequency})-(H.\ref{Assumption.mn})$, there exists a constant $\psi>0$ such that the following convergence holds for each $\vartheta\in\Theta_0(\xi)$:
	\begin{equation*}
	Q_n\left(\mathbf{Y}_n, k_\vartheta^n\right) =Q_n\left(\mathbf{G}_n, k_\vartheta^n\right) +o_P\left(\delta_n^{2H_0+\psi}\right)\ \ \mbox{as}\ \ n\to\infty.
	\end{equation*}
\end{prop}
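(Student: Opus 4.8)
The plan is to bound the difference $Q_n(\mathbf{Y}_n,k_\vartheta^n)-Q_n(\mathbf{G}_n,k_\vartheta^n)$ by a Schwarz-type estimate for the bilinear form, after expanding $\mathbf{Y}_n$ around $\mathbf{G}_n$ via Proposition~\ref{Expansion.Formula.Y}. Fix $\vartheta\in\Theta_0(\xi)$ and write $\beta_0=\beta_0(H)$. Choosing $J$ large relative to $H_-$ (so that, for a fixed $\psi_0\in(0,H)$, $\min\{1,(J+1)H-\psi_0\}=1$), Proposition~\ref{Expansion.Formula.Y} yields
\[
\mathbf{Y}_n=\mathbf{G}_n+\sum_{j=2}^J\sum_{k=1}^j\frac{(-1)^{k-1}}{k}\sum_{\mathbf{p}\in\mathbb{N}^k,|\mathbf{p}|=j}\frac{1}{\mathbf{p}!}\Delta\mathbf{W}_n^{\mathbf{p}}+\mathbf{R}_n,\qquad \|\mathbf{R}_n\|_\infty\le M\delta_n,
\]
with $M$ an $n$-independent a.s.\ finite random variable. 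Treating the finitely many vectors $\mathbf{G}_n$, $\{\Delta\mathbf{W}_n^{\mathbf{p}}\}_{2\le|\mathbf{p}|\le J}$, $\mathbf{R}_n$ as the $\mathbf{w}_j$ in Corollary~\ref{Dominating.Inequality.Y} (with $\mathbf{w}_0=\mathbf{G}_n$), that corollary bounds $|Q_n(\mathbf{Y}_n,k_\vartheta^n)-Q_n(\mathbf{G}_n,k_\vartheta^n)|$ by a finite sum of products $\sqrt{Q_n(\mathbf{w}_i,k_\vartheta^\dagger)}\sqrt{Q_n(\mathbf{w}_j,k_\vartheta^\dagger)}$ in which at least one index is not the $\mathbf{G}_n$-index, $k_\vartheta^\dagger$ being the dominating function from Lemma~\ref{BilinearForm.Dominated.Inequality}. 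It thus suffices to establish three scalings for these nonnegative quadratic forms.

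First, applying Lemma~\ref{PointwiseConvergence.QuadraticForm.Gaussian} to $\widetilde{\mathbf{G}}_n=\delta_n^{-H_0}\mathbf{G}_n$ and the constant sequence $k_\vartheta^\dagger$ (admissible since its singularity $|\lambda|^{-\beta_0}$ is integrable against $f_{H_0}\sim|\lambda|^{-\alpha(H_0)}$ because $\beta_0+\alpha(H_0)<1$ on $\Theta_0(\xi)$), $Q_n(\widetilde{\mathbf{G}}_n,k_\vartheta^\dagger)\to Q_{\vartheta_0}(k_\vartheta^\dagger)$ in $L^2$, so by quadratic homogeneity $Q_n(\mathbf{G}_n,k_\vartheta^\dagger)=\delta_n^{2H_0}O_P(1)$. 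Second, for $|\mathbf{p}|=j$, Proposition~\ref{Key.Lemma.Result.Cor} shows $\delta_n^{-jH_0}\Delta\mathbf{W}_n^{\mathbf{p}}$ is stationary with autocovariances $\widetilde\gamma_n(\tau)=O(|\tau|^{2H_0-4})$ uniformly in $n$; writing its (even) spectral density as $\widetilde g_n^{\mathbf{p}}$, the covariance matrix is $\Sigma_n(\widetilde g_n^{\mathbf{p}})$, and since Lemma~\ref{Extension1.FT86} gives $\widehat{k_\vartheta^\dagger}(\tau)=O(|\tau|^{\beta_0-1})$ with $\beta_0+2H_0-5<-1$, the diagonal Fourier sum bounds $\frac{1}{2\pi n}\mathrm{Tr}[\Sigma_n(k_\vartheta^\dagger)\Sigma_n(\widetilde g_n^{\mathbf{p}})]=O(1)$ (Theorem~\ref{Extension.FT87} identifies the exact limit, but $O(1)$ is all we need); hence $E[Q_n(\Delta\mathbf{W}_n^{\mathbf{p}},k_\vartheta^\dagger)]=O(\delta_n^{2jH_0})$ and, by nonnegativity and Markov, $Q_n(\Delta\mathbf{W}_n^{\mathbf{p}},k_\vartheta^\dagger)=O_P(\delta_n^{2jH_0})$. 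Third, splitting $\int I_n(\lambda,\mathbf{R}_n)k_\vartheta^\dagger\,d\lambda$ at $|\lambda|=1/n$ — using $k_\vartheta^\dagger\le C|\lambda|^{-\beta_0}\le Cn^{\beta_0}$ with the total mass $\int I_n(\lambda,\mathbf{R}_n)\,d\lambda=n^{-1}\|\mathbf{R}_n\|_2^2\le\|\mathbf{R}_n\|_\infty^2$ on the outer region, and the uniform bound $I_n(\lambda,\mathbf{R}_n)\le(n/2\pi)\|\mathbf{R}_n\|_\infty^2$ with $\int_{|\lambda|<1/n}|\lambda|^{-\beta_0}d\lambda=O(n^{\beta_0-1})$ on the inner region — gives $Q_n(\mathbf{R}_n,k_\vartheta^\dagger)=O(n^{\beta_0}\|\mathbf{R}_n\|_\infty^2)$, so that $n\asymp\delta_n^{-1}$ from (H.\ref{Assumption.Tn}) and $\|\mathbf{R}_n\|_\infty=O_P(\delta_n)$ yield $Q_n(\mathbf{R}_n,k_\vartheta^\dagger)=O_P(\delta_n^{2-\beta_0})$.

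Combining, the Schwarz products are of order $\delta_n^{(1+j)H_0}$ (Gaussian against a degree-$j$ increment), $\delta_n^{(j+j')H_0}$ (two increments), and $\delta_n^{H_0+1-\beta_0/2}$ (Gaussian against the remainder). Since $j,j'\ge2$, the first two are at most $\delta_n^{3H_0}=\delta_n^{2H_0+H_0}$; for the third, the defining constraint of $\Theta_{H,0}(\xi)$, namely $\beta_0+\alpha(H_0)\le1-\xi$ i.e.\ $\beta_0\le2-\xi-2H_0$, gives $H_0+1-\beta_0/2\ge2H_0+\xi/2$. Every term is therefore $o_P(\delta_n^{2H_0+\psi})$ as soon as $0<\psi<\min\{H_0,\xi/2\}$, which proves the claim with a $\psi$ uniform over $\Theta_0(\xi)$.

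The main obstacle is the singular-symbol bookkeeping in the last two scalings. In the remainder estimate the naive uniform periodogram bound alone gives only $O_P(n\delta_n^2)=O_P(\delta_n)$, which is useless once $H_0\ge1/2$; the frequency splitting at scale $1/n$ is exactly what upgrades this to $\delta_n^{2-\beta_0}$ and is the step that genuinely exploits $\beta_0<1$. For the increment terms the delicate point is turning the \emph{pointwise} covariance decay of Proposition~\ref{Key.Lemma.Result.Cor} into a trace bound uniform in $n$; here the summability exponent $2H_0-4<-1$ is what makes the normalized increments have uniformly summable autocovariances, after which the Fourier-coefficient decay of Lemma~\ref{Extension1.FT86} (equivalently Theorem~\ref{Extension.FT87}) closes the estimate. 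The rest is the routine bilinear algebra encapsulated in Corollary~\ref{Dominating.Inequality.Y}.
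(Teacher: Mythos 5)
Your overall strategy coincides with the paper's proof: the same expansion of $\mathbf{Y}_n$ around $\mathbf{G}_n$ (Proposition~\ref{Expansion.Formula.Y} with $J$ chosen so the remainder is $O_P(\delta_n)$), the same reduction through Corollary~\ref{Dominating.Inequality.Y}, and the same three scalings. Your first two scalings are also the paper's: the Gaussian term is Lemma~\ref{PointwiseConvergence.QuadraticForm.Gaussian} plus homogeneity, and your trace bound for $Q_n(\Delta\mathbf{W}_n^{\mathbf{p}},k_\vartheta^\dagger)$, once written out, is exactly the paper's step (R.\ref{Lemma.Uniform.Convergence.Wn}): stationarity turns the first moment into $\frac{1}{2\pi}\sum_{|\tau|<n}(1-|\tau|/n)\,\widehat{k_\vartheta^\dagger}(\tau)\,\mathrm{Cov}[\Delta W_1^{n,\mathbf{p}},\Delta W_{1+|\tau|}^{n,\mathbf{p}}]$, the decay rates $\widehat{k_\vartheta^\dagger}(\tau)=O(|\tau|^{\beta_0-1})$ and $O(\delta_n^{2|\mathbf{p}|H_0}|\tau|^{2H_0-4})$ make this $O(\delta_n^{2|\mathbf{p}|H_0})$, and Markov plus nonnegativity concludes. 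The only genuinely different ingredient is the remainder term: the paper bounds $Q_n(\mathbf{R}_n,k_\vartheta^\dagger)$ by a constant times $A^2\delta_n^2\sum_{|\tau|<n}|\tau|^{\beta_0-1}$ and inserts the factor $(n/|\tau|)^{1-\alpha(H_0)-\psi}\ge 1$ to extract $\delta_n^{2H_0+\psi}T_n^{2-2H_0-\psi}$ times a convergent series, while you split the periodogram integral at $|\lambda|=1/n$, using Parseval outside and the crude bound $I_n(\lambda,\mathbf{R}_n)\le n\|\mathbf{R}_n\|_\infty^2/(2\pi)$ inside. Both deliver the same quantitative content when $\beta_0\ge 0$.

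There is, however, a gap in your remainder estimate. The chain $k_\vartheta^\dagger(\lambda)\le C|\lambda|^{-\beta_0}\le Cn^{\beta_0}$ on $\{|\lambda|\ge 1/n\}$ is valid only for $\beta_0\ge 0$; Assumption~\ref{Assumption.SPD} allows $\beta_0:\Theta_H\to(-1,1)$, and the instances this proposition is actually used for (the proofs of Propositions~\ref{Uniform.Convergence.Contrast} and~\ref{Scaled.Score.Negligibility} take $\beta_0(H)=1-2H$, resp.\ $1-2H+\iota$, or $-\alpha(H)$) have $\beta_0(H)<0$ whenever $H>1/2$. When $\beta_0<0$ the supremum of $|\lambda|^{-\beta_0}=|\lambda|^{|\beta_0|}$ on the outer region is the constant $\pi^{|\beta_0|}$, so your splitting only gives $Q_n(\mathbf{R}_n,k_\vartheta^\dagger)=O_P(\|\mathbf{R}_n\|_\infty^2)=O_P(\delta_n^2)$, not $O_P(\delta_n^{2-\beta_0})$. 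The fix is routine: use $Q_n(\mathbf{R}_n,k_\vartheta^\dagger)=O_P\bigl(n^{\max\{\beta_0,0\}}\delta_n^2\bigr)$, in which case the Gaussian-times-remainder Schwarz product is $O_P(\delta_n^{H_0+1})$ and the remainder-squared term is $O_P(\delta_n^{2})$ in the worst case, so the admissible threshold becomes $\psi<\min\{H_0,\xi/2,1-H_0\}$ rather than your $\min\{H_0,\xi/2\}$. This is still positive because $\vartheta_0$ is an interior point of $\Theta$, hence $H_0<H_+\le 1$. (The paper's weighted-sum trick carries a similar unstated requirement, $\psi\le 2-2H_0$, needed for the inserted factor to exceed one; it is harmless for the same reason.) With this correction your argument is complete.
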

\begin{proof}
	From Proposition~\ref{Expansion.Formula.Y}, Corollary~\ref{Dominating.Inequality.Y} and Lemma~\ref{PointwiseConvergence.QuadraticForm.Gaussian}, it suffices to prove the following two results for the non-negative function $k_\vartheta^\dagger$ given in Lemma~\ref{BilinearForm.Dominated.Inequality} and each $\vartheta\in\Theta_0(\xi)\times(0,\infty)$:
	\begin{enumerate}[(R.1)]
		\item \label{Lemma.Uniform.Convergence.Wn}
		For any $K\in\mathbb{N}$ and $\mathbf{p}\equiv(p_1,\cdots,p_K)\in\mathbb{N}^K$, the following relation holds:
		\begin{equation*}
		Q_n\left(\Delta\mathbf{W}_n^{\mathbf{p}}, k_\vartheta^\dagger\right) =O_P\left(\delta_n^{2|\mathbf{p}|H_0}\right)\ \ \mbox{as}\ \ n\to\infty.
		\end{equation*}
		\item \label{Negligibility.QuadraticForm.Rn}
		Assume that there exists a positive random variable $A$, which is independent of the asymptotic parameter $n\in\mathbb{N}$, such that a random vector $\mathbf{R}_n:=(R_1^n,R_2^n,\cdots,R_n^n)$ satisfies
		\begin{equation}\label{Assumption.Reminder}
		\sup_{t\in\Lambda_n}\left|R_t^n\right|\leq A\cdot\delta_n.
		\end{equation}
		Then there exists a constant $\psi>0$ such that the following relation holds:
		\begin{equation*}
		Q_n\left(\mathbf{R}_n,k_\vartheta^\dagger\right) =o\left(\delta_n^{2H_0+\psi}\right)\ \ \mbox{as}\ \ n\to\infty.
		\end{equation*}
	\end{enumerate}
	In the rest of this proof section, we prove (R.\ref{Lemma.Uniform.Convergence.Wn}) and (R.\ref{Negligibility.QuadraticForm.Rn}).
\end{proof}

\begin{proof}[Proof of (R.\ref{Lemma.Uniform.Convergence.Wn})]
	Fix $\vartheta\in\Theta_0(\xi)\times(0,\infty)$. At first, Chebyshev's inequality and Lemma~\ref{Basic.BilinearForm} (\ref{NonNegativity.QuadraticForm}) yield that the following inequality holds for any $M>0$:
	\begin{align}
	P\left[\left|Q_n\left(\Delta\mathbf{W}_n^\mathbf{p}, k_\vartheta^\dagger\right)\right|>  M\right] 
	\leq&\frac{1}{ M}E\left[Q_n\left(\Delta\mathbf{W}_n^\mathbf{p}, k_\vartheta^\dagger\right)\right] \nonumber \\
	=&\frac{1}{2\pi M}\cdot\frac{1}{n}\sum_{s,t=1}^n \widehat{k_\vartheta^\dagger}(s-t) \mathrm{Cov}\left[\Delta W_s^{n,\mathbf{p}}, \Delta W_t^{n,\mathbf{p}}\right] \nonumber \\
	=&\frac{1}{2\pi M}\sum_{|\tau|<n}\left(1-\frac{|\tau|}{n}\right)\widehat{k_\vartheta^\dagger}(\tau) \mathrm{Cov}\left[\Delta W_1^{n,\mathbf{p}}, \Delta W_{1+|\tau|}^{n,\mathbf{p}}\right], \label{Negligibility.DeltaWn.1}
	\end{align}
	where the stationarity property of $\{W_t^{n,\mathbf{p}}\}_{t\in\mathbb{Z}}$ is used in the last equality, see Proposition \ref{Key.Lemma.Result.Cor}. Since the function $k_\vartheta^\dagger$ satisfies the all assumptions in Lemma~\ref{Extension1.FT86} and Lemma~\ref{Extension2.FT86} with respect to $\beta\equiv\beta_0(H)$, we obtain
	\begin{equation}\label{k.dagger.Fourier.Decay}
	\widehat{k_\vartheta^\dagger}(\tau)=O\left(|\tau|^{\beta_0(H)-1}\right)\ \ \mbox{as}\ \ |\tau|\to\infty.
	\end{equation}
	As a result, (\ref{k.dagger.Fourier.Decay}) and Proposition~\ref{Key.Lemma.Result.Cor} yield that there exists a constant $c>0$ such that the last quantity of (\ref{Negligibility.DeltaWn.1}) is dominated by 
	\begin{align}
	\frac{1}{2\pi M}\sum_{|\tau|<n}\left|\widehat{k_\vartheta^\dagger}(\tau)\right| \left|\mathrm{Cov}\left[\Delta W_1^p, \Delta W_{1+|\tau|}^p\right]\right| \leq&\frac{c\delta_n^{2|\mathbf{p}|H_0}}{2\pi M}\sum_{|\tau|<n}|\tau|^{\beta_0(H)-1+(2H_0-4)} \nonumber \\
	\leq&\frac{c\delta_n^{2|\mathbf{p}|H_0}}{2\pi M}\sum_{\tau\in\mathbb{Z}}|\tau|^{\beta_0(H)+\alpha(H_0)-4}. \label{Negligibility.DeltaWn.2}
	\end{align}
	Note that the series in (\ref{Negligibility.DeltaWn.2}) converges because $H\in\Theta_{H,0}(\xi)$ implies $\beta_0(H)+\alpha(H_0)-4<-1$. 
	Since the last quantity of (\ref{Negligibility.DeltaWn.2}) is independent of the asymptotic parameter $n\in\mathbb{N}$, the conclusion follows as $M\to\infty$. 
\end{proof}
\begin{proof}[Proof of (R.\ref{Negligibility.QuadraticForm.Rn})]
	Fix $\xi\in(0,1)$. At first, (\ref{Assumption.Reminder}) and (\ref{k.dagger.Fourier.Decay}) yield that there exists a constant $c>0$ such that
	\begin{align}
	Q_n(\mathbf{R}_n, k_\vartheta^\dagger) =& \frac{1}{2\pi n}\sum_{s,t=1}^n \widehat{k_\vartheta^\dagger}(s-t)R_s^nR_t^n \nonumber \\
	\leq&\frac{A^2}{2\pi} \cdot\frac{\delta_n^2}{n}\sum_{s,t=1}^n \left|\widehat{k_\vartheta^\dagger}(s-t)\right| \nonumber\\
	=&\frac{A^2}{2\pi} \cdot\delta_n^2\sum_{|\tau|< n}\left(1-\frac{|\tau|}{n}\right) \left|\widehat{k_\vartheta^\dagger}(\tau)\right| \leq\frac{c A^2}{2\pi} \cdot\delta_n^2\sum_{|\tau|< n}\left|\tau\right|^{\beta_0(H)-1}. \label{An.QuadraticForm.1}
	\end{align}
	Moreover, for any $\psi\in(0,\xi)$, the last quantity of (\ref{An.QuadraticForm.1}) is dominated by
	\begin{equation}
	\delta_n^2\sum_{|\tau|<n}\left|\tau\right|^{\beta_0(H)-1} \leq\delta_n^2\sum_{|\tau|<n}\frac{n^{1-\alpha(H_0)-\psi}}{|\tau|^{1-\alpha(H_0)-\psi}}\left|\tau\right|^{\beta_0(H)-1} \leq\delta_n^{2H_0+\psi}T_n^{2-2H_0-\psi}\sum_{\tau\in\mathbb{Z}}\left|\tau\right|^{\alpha(H_0)+\beta_0(H)-2+\psi}. \label{Rn.Quad.2}
	\end{equation}
	Note that the series in (\ref{Rn.Quad.2}) converges because $\psi\in(0,\xi)$ and $H\in\Theta_{H,0}(\xi)$ imply $\alpha(H_0)+\beta_0(H)-2+\psi\leq -1+\psi-\xi<-1$. Then  the conclusion follows from (\ref{An.QuadraticForm.1}), (\ref{Rn.Quad.2}) and the assumptions $(H.\ref{Assumption.HighFrequency})$ and $(H.\ref{Assumption.Tn})$.
\end{proof}
We can obtain the following result from Lemma~\ref{PointwiseConvergence.QuadraticForm.Gaussian} and Proposition~\ref{Convergence.QuadraticForm.Remainder}.
\begin{cor}\label{Pointwise.Convergence.DeltaY}
	Suppose a sequence of functions $k_\vartheta^n$, $n\in\mathbb{N}$, satisfies the conditions (C.\ref{Assumption.LimitSPD}) and (C.\ref{Assumption.AsymptoticSPD}) in Assumption~\ref{Assumption.SPD}. Under the conditions $(H.\ref{Assumption.HighFrequency})-(H.\ref{Assumption.mn})$, the following convergence holds for each $\vartheta\in\Theta_0(\xi)$:
	\begin{equation*}
	Q_n\left(\widetilde{\mathbf{Y}}_n, k_\vartheta^n\right)=Q_{\vartheta_0}(k_\vartheta)+o_P(1)\ \ \mbox{as}\ \ n\to\infty.
	\end{equation*}
\end{cor}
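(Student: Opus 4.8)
The plan is to combine the two preceding results by exploiting the fact that $Q_n(\cdot,k)$ is a quadratic form in its first argument, hence homogeneous of degree two: for any scalar $c$ and vector $\mathbf{x}$ one has $Q_n(c\mathbf{x},k)=c^2Q_n(\mathbf{x},k)$, since $Q_n(\mathbf{x},k)=\frac{1}{2\pi n}\mathbf{x}^{\mathrm{T}}\Sigma_n(k)\mathbf{x}$. Applying this with $c=\delta_n^{-H_0}$ gives the two scaling identities
\[
Q_n\!\left(\widetilde{\mathbf{Y}}_n,k_\vartheta^n\right)=\delta_n^{-2H_0}Q_n\!\left(\mathbf{Y}_n,k_\vartheta^n\right),\qquad Q_n\!\left(\widetilde{\mathbf{G}}_n,k_\vartheta^n\right)=\delta_n^{-2H_0}Q_n\!\left(\mathbf{G}_n,k_\vartheta^n\right),
\]
which convert statements about the unscaled vectors $\mathbf{Y}_n,\mathbf{G}_n$ into statements about the rescaled ones $\widetilde{\mathbf{Y}}_n,\widetilde{\mathbf{G}}_n$.

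Next I would feed Proposition~\ref{Convergence.QuadraticForm.Remainder} through this rescaling. That proposition supplies a constant $\psi>0$ with $Q_n(\mathbf{Y}_n,k_\vartheta^n)=Q_n(\mathbf{G}_n,k_\vartheta^n)+o_P(\delta_n^{2H_0+\psi})$. Multiplying both sides by $\delta_n^{-2H_0}$ and using the scaling identities yields $Q_n(\widetilde{\mathbf{Y}}_n,k_\vartheta^n)=Q_n(\widetilde{\mathbf{G}}_n,k_\vartheta^n)+o_P(\delta_n^{\psi})$. Because $\delta_n\to 0$ by $(H.\ref{Assumption.HighFrequency})$ and $\psi>0$, the factor $\delta_n^\psi$ tends to $0$, so $o_P(\delta_n^\psi)=o_P(1)$. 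This is precisely the point at which the surplus exponent $\psi$ in the error bound of Proposition~\ref{Convergence.QuadraticForm.Remainder} is used, and essentially the only place where the high-frequency assumption enters the reduction.

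Finally I would invoke Lemma~\ref{PointwiseConvergence.QuadraticForm.Gaussian}, which asserts the $L^2$-convergence $\|Q_n(\widetilde{\mathbf{G}}_n,k_\vartheta^n)-Q_{\vartheta_0}(k_\vartheta)\|_2\to 0$ and therefore convergence in probability, i.e. $Q_n(\widetilde{\mathbf{G}}_n,k_\vartheta^n)=Q_{\vartheta_0}(k_\vartheta)+o_P(1)$. Adding the two $o_P(1)$ estimates gives $Q_n(\widetilde{\mathbf{Y}}_n,k_\vartheta^n)=Q_{\vartheta_0}(k_\vartheta)+o_P(1)$, which is the claim.

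I do not anticipate a genuine obstacle in this corollary. All of the analytic effort—the Taylor/Euler expansion of the data around the Gaussian surrogate, the covariance decay of the remainder terms from Proposition~\ref{Key.Lemma.Result.Cor}, and the Fox--Taqqu trace asymptotics of Theorem~\ref{Extension.FT87}—has already been carried out inside Proposition~\ref{Convergence.QuadraticForm.Remainder} and Lemma~\ref{PointwiseConvergence.QuadraticForm.Gaussian}, so the statement reduces to the bookkeeping of the $\delta_n^{-2H_0}$ scaling together with the observation that the extra exponent $\psi$ absorbs it. The only mild care required is to check that the range $\vartheta\in\Theta_0(\xi)$ is admissible for both cited results simultaneously, which holds since each is stated for exactly this parameter set.
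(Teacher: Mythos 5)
Your proposal is correct and is exactly the argument the paper intends: the paper derives this corollary by combining Lemma~\ref{PointwiseConvergence.QuadraticForm.Gaussian} with Proposition~\ref{Convergence.QuadraticForm.Remainder}, and your degree-two homogeneity identity $Q_n(\delta_n^{-H_0}\mathbf{x},k)=\delta_n^{-2H_0}Q_n(\mathbf{x},k)$ together with the observation that the surplus exponent $\psi$ absorbs the rescaling is precisely the bookkeeping the paper leaves implicit. No gap; both cited results are indeed stated for $\vartheta\in\Theta_0(\xi)$ under hypotheses covered by those of the corollary.
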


\subsection{Uniform Convergence of Quadratic Form of Observations $\mathbf{Y}_n$}\label{Appendix.Uniform.Convergence}
In this subsection, we prove a uniform convergence of the quadratic form of $\widetilde{\mathbf{Y}}_n$ which is an extension of Corollary~\ref{Pointwise.Convergence.DeltaY} given in the previous subsection.
\begin{prop}\label{Uniform.Convergence.QuadraticForm.DeltaY}
	Suppose a sequence of functions $k_\vartheta^n$, $n\in\mathbb{N}$, satisfies the conditions (C.\ref{Assumption.LimitSPD})-(C.\ref{Assumption.Derivative.SPD}) in Assumption~\ref{Assumption.SPD}. Under the conditions $(H.\ref{Assumption.HighFrequency})-(H.\ref{Assumption.mn})$, the following uniform convergence holds:
	\begin{equation*}
	\sup_{\vartheta\in\Theta_0(\xi)}\left|Q_n\left(\widetilde{\mathbf{Y}}_n, k_\vartheta^n\right) - Q_{\vartheta_0}(k_\vartheta)\right| = o_P(1)\ \ \mbox{as}\ \ n\to\infty.
	\end{equation*}
\end{prop}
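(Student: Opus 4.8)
The plan is to upgrade the pointwise convergence already obtained in Corollary~\ref{Pointwise.Convergence.DeltaY} to a uniform one by combining it with a stochastic equicontinuity estimate and the compactness of $\Theta_0(\xi)$; this is the standard route from pointwise to uniform convergence. Writing $X_n(\vartheta):=Q_n(\widetilde{\mathbf{Y}}_n,k_\vartheta^n)$ and $X(\vartheta):=Q_{\vartheta_0}(k_\vartheta)$, Corollary~\ref{Pointwise.Convergence.DeltaY} gives $X_n(\vartheta)\to X(\vartheta)$ in probability for each fixed $\vartheta\in\Theta_0(\xi)$. I would first note that $\Theta_0(\xi)=\Theta_{H,0}(\xi)\times\mathcal{K}$ is a convex compact rectangle, since $\beta_0,\beta_1$ are monotone and continuous, so that $\Theta_{H,0}(\xi)$ is an interval and the segment joining any two points of $\Theta_0(\xi)$ stays inside it.

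The core step is a stochastic Lipschitz bound for $X_n$ that is uniform in $n$. Since $k_\vartheta^n$ is differentiable in $\vartheta$ by condition (C.\ref{Assumption.Derivative.SPD}), the mean value theorem along the segment joining $\vartheta,\vartheta'\in\Theta_0(\xi)$ yields $|k_{\vartheta'}^n(\lambda)-k_\vartheta^n(\lambda)|\leq |\vartheta'-\vartheta|\sum_{j}\sup_{\bar\vartheta\in\Theta_0(\xi)}|\partial_{\vartheta_j}k_{\bar\vartheta}^n(\lambda)|$. By the bound in (C.\ref{Assumption.Derivative.SPD}) together with the monotonicity of $\beta_1$, this derivative supremum is dominated by $C|\lambda|^{-\bar\beta_1}$ with $\bar\beta_1:=\max_{H\in\Theta_{H,0}(\xi)}\beta_1(H)$, so that, exactly as in Lemma~\ref{BilinearForm.Dominated.Inequality}, one constructs a single non-negative even dominating function $\widetilde{k}^\dagger$ (built from the exponent $\bar\beta_1$ in place of $\beta_0$) with $|k_{\vartheta'}^n-k_\vartheta^n|\leq |\vartheta'-\vartheta|\,\widetilde{k}^\dagger$. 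The linearity of $Q_n$ in its second argument together with the monotonicity in Lemma~\ref{Basic.BilinearForm}(\ref{NonDecreasing.QuadraticForm}) then give
\begin{equation*}
|X_n(\vartheta')-X_n(\vartheta)|=\left|Q_n\left(\widetilde{\mathbf{Y}}_n,k_{\vartheta'}^n-k_\vartheta^n\right)\right|\leq |\vartheta'-\vartheta|\,Q_n\left(\widetilde{\mathbf{Y}}_n,\widetilde{k}^\dagger\right).
\end{equation*}
The crucial point is that $\bar\beta_1+\alpha(H_0)<1$ on $\Theta_{H,0}(\xi)$, which is precisely the defining inequality $-\beta_1(H)-\alpha(H_0)\geq -1+\xi$; hence the constant-in-$n$ sequence $\widetilde{k}^\dagger$ satisfies the hypotheses of Corollary~\ref{Pointwise.Convergence.DeltaY}, so $Q_n(\widetilde{\mathbf{Y}}_n,\widetilde{k}^\dagger)\to Q_{\vartheta_0}(\widetilde{k}^\dagger)<\infty$ and therefore $Q_n(\widetilde{\mathbf{Y}}_n,\widetilde{k}^\dagger)=O_P(1)$. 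This makes $X_n$ stochastically Lipschitz with a modulus uniform in $n$.

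Finally, I would run the standard finite-net argument. The limit $X(\vartheta)=\int_{-\pi}^\pi\eta_0^2 f_{H_0}(\lambda)k_\vartheta(\lambda)\,\mathrm{d}\lambda$ is continuous on $\Theta_0(\xi)$ by dominated convergence using the same $\widetilde{k}^\dagger$-type majorant for $k_\vartheta$. Given $\epsilon>0$, cover $\Theta_0(\xi)$ by finitely many balls of radius $\epsilon$ centred at $\vartheta^{(1)},\dots,\vartheta^{(N)}$ and estimate
\begin{equation*}
\sup_{\vartheta}|X_n(\vartheta)-X(\vartheta)|\leq \max_{i}|X_n(\vartheta^{(i)})-X(\vartheta^{(i)})|+\epsilon\, Q_n(\widetilde{\mathbf{Y}}_n,\widetilde{k}^\dagger)+\sup_{|\vartheta'-\vartheta|\leq\epsilon}|X(\vartheta')-X(\vartheta)|.
\end{equation*}
Letting $n\to\infty$ annihilates the first term by pointwise convergence and leaves the second bounded by $\epsilon\cdot O_P(1)$; letting $\epsilon\to 0$ and invoking the continuity of $X$ closes the argument. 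I expect the main obstacle to be the equicontinuity step, namely constructing the derivative dominating function $\widetilde{k}^\dagger$ so that its exponent respects the $H$-dependence of $\beta_1$ through monotonicity and verifying that the associated quadratic form stays $O_P(1)$ uniformly in $n$; this is exactly where the restriction to $\Theta_0(\xi)$, and hence the integrability margin provided by $\xi$, is indispensable.
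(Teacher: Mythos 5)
Your proof is correct and follows essentially the same route as the paper's: a finite covering of the compact set $\Theta_0(\xi)$, pointwise convergence at the net points via Corollary~\ref{Pointwise.Convergence.DeltaY}, continuity of the limit $Q_{\vartheta_0}(k_\vartheta)$, and an equicontinuity bound obtained from the mean value theorem, condition (C.\ref{Assumption.Derivative.SPD}), and a dominating function built as in Lemma~\ref{BilinearForm.Dominated.Inequality}. The only cosmetic difference is that you control the gradient by the single global exponent $\bar\beta_1=\max_{H\in\Theta_{H,0}(\xi)}\beta_1(H)$, whereas the paper works ball-by-ball with the exponent $\beta_1(H_i)+\xi/2$ via uniform continuity of $\beta_1$; both exploit exactly the same integrability margin $\xi$ built into the definition of $\Theta_{H,0}(\xi)$.
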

\begin{proof}
	Fix $\xi\in(0,1)$. At first, the compactness of $\Theta_0(\xi)$ yields that for each $r>0$, there exists $j(r)\in\mathbb{N}$ and a finite open covering $\{B_r(\vartheta_i)\}_{i\in\Lambda_{j(r)}}$ given by
	\begin{equation*}
	B_r(\vartheta_i) := \left\{\vartheta\in\Theta_0(\xi):\|\vartheta-\vartheta_i\|_{\mathbb{R}^2}< r\right\}\ \ \mbox{for}\ \  \vartheta_i=(H_i,\eta_i)\in\Theta_0(\xi), i\in\Lambda_{j(r)}.
	\end{equation*}
	Then we obtain the following inequality:
	\begin{align}
	\sup_{\vartheta\in\Theta_0(\xi)}\left|Q_n\left(\widetilde{\mathbf{Y}}_n, k_\vartheta^n\right) - Q_{\vartheta_0}(k_\vartheta)\right|\leq& \sup_{i\in\Lambda_{j(r)},\vartheta\in B_r(\vartheta_i)}\left|Q_n\left(\widetilde{\mathbf{Y}}_n, k_\vartheta^n\right) - Q_{\vartheta_0}(k_\vartheta)\right| \nonumber \\
	\leq& \max_{i\in\Lambda_{j(r)}}\left|Q_n(\widetilde{\mathbf{Y}}_n, k_{\vartheta_i}^n) - Q_{\vartheta_0}(k_{\vartheta_i})\right| \nonumber \\ &+\sup_{\substack{\|\vartheta_1-\vartheta_2\|_{\mathbb{R}^2}<r\\ \vartheta_1,\vartheta_2\in \Theta_0(\xi)\times\mathcal{K}}}\left|Q_{\vartheta_0}(k_{\vartheta_1}) - Q_{\vartheta_0}(k_{\vartheta_2})\right| \nonumber \\
	&+\sup_{i\in\Lambda_{j(r)},\vartheta\in B_r(\vartheta_i)}\left|Q_n\left(\widetilde{\mathbf{Y}}_n, k_{\vartheta}^n\right) - Q_n\left(\widetilde{\mathbf{Y}}_n, k_{\vartheta_i}^n\right)\right|. \label{Decomposition.Uniform.Convergence}
	\end{align}
	Here Corollary~\ref{Pointwise.Convergence.DeltaY} yields that for each $r>0$, the first term of the last quantity of (\ref{Decomposition.Uniform.Convergence}) converges to $0$ as $n\to\infty$. Moreover, the second term of it also converges to $0$ as $r\downarrow 0$ because $\vartheta\mapsto Q_{\vartheta_0}(k_\vartheta)$ is uniformly continuous on $\Theta_0(\xi)$ under the condition (C.\ref{Assumption.Derivative.SPD}). As a result, it suffices to show that the third term of it is negligible for sufficiently small $r>0$ and large $n\in\mathbb{N}$.
	
	Without loss of generality, we assume $r\in(0,\xi/2)$ and
	\begin{equation*}
	\sup_{H_1^\dagger,H_2^\dagger\in\Theta_{H,0}(\xi), |H_1^\dagger-H_2^\dagger|<r}|\beta_1(H_1^\dagger)-\beta_1(H_2^\dagger)\|<\xi/2
	\end{equation*}
	since $\beta_1$ is uniformly continuous on $\Theta_{H,0}(\xi)$. Here the condition (C.\ref{Assumption.Derivative.SPD}) implies
	\begin{equation*}
	c_1:=\sup_{\substack{n\in\mathbb{N},\lambda\in[-\pi, \pi]\backslash\{0\},\\ \vartheta=(H,\eta)\in\Theta_0(\xi)}}|\lambda|^{\beta_1(H)}\left\|\nabla k_\vartheta^n(\lambda)\right\|_{\mathbb{R}^2}<\infty.
	\end{equation*}
	Then the mean value theorem and Schwartz's inequality yield that for any $\vartheta_i^{\dagger,1},\vartheta_i^{\dagger,2}\in B_r(\vartheta_i)$, $i\in\Lambda_{j(r)}$ and $\lambda\in[-\pi,\pi]\backslash\{0\}$,
	\begin{equation}\label{UC.Mean.Value.}
	\left|k_{\vartheta_i^{\dagger,1}}^n(\lambda) - k_{\vartheta_i^{\dagger,2}}^n(\lambda)\right|\leq \left\|\nabla k_{\vartheta_i^\dagger}^n(\lambda)\right\|_{\mathbb{R}^2}\left\|\vartheta_i^{\dagger,1}-\vartheta_i^{\dagger,2}\right\|_{\mathbb{R}^2}\leq r c_1\left|\lambda\right|^{-\beta_1(H_i^\dagger)}\leq r c_2\left|\lambda\right|^{-\beta_1(H_i) -\xi/2},
	\end{equation}
	where $c_2:=c_1\pi^\xi$ and $\vartheta_i^\dagger\equiv(H_i^\dagger,\eta_i^\dagger)\in B_r(\vartheta_i)$ is determined by the relation $\vartheta_i^\dagger=\vartheta_i^{\dagger,1} +t(\vartheta_i^{\dagger,1}-\vartheta_i^{\dagger,2})$ with $t\equiv t(\vartheta_i^{\dagger,1},\vartheta_i^{\dagger,2})\in(0,1)$. 
	Since $\vartheta_i^\dagger\equiv(H_i^\dagger,\eta_i^\dagger)\in\Theta_0(\xi)$ implies $-\beta_1(H_i^\dagger)-\alpha(H_0)-\xi/2>-1$, Lemma~\ref{Basic.BilinearForm} and (\ref{UC.Mean.Value.}) yield that the third term of the last quantity of (\ref{Decomposition.Uniform.Convergence}) is dominated by
	\begin{align}
	\max_{i\in\Lambda_{j(r)}}Q_n\left(\widetilde{\mathbf{Y}}_n, \sup_{\vartheta\in B_r(\vartheta_i)}\left|k_{\vartheta}^n-k_{\vartheta_i}^n\right|\right)\leq&
	r\frac{c_2}{2\pi}\max_{i\in\Lambda_{j(r)}}\int_{-\pi}^\pi I_n\left(\lambda,\widetilde{\mathbf{Y}}_n\right)\left|\lambda\right|^{-\beta_1(H_i)-\xi/2}\, \mathrm{d}\lambda \nonumber \\
	\leq& r\frac{c_2}{2\pi}\left(\max_{i\in\Lambda_{j(r)}}Q_{H_0,\xi}(H_i) + \max_{i\in\Lambda_{j(r)}}R_{n,\xi}(H_i)\right), \label{Uniform.Convergence.Inequality.Negligibility}
	\end{align}
	where 
	\begin{align*}
	&Q_{H_0,\xi}(H_i) := \int_{-\pi}^\pi \eta_0^2f_{H_0}(\lambda)\left|\lambda\right|^{-\beta_1(H_i)-\xi/2}\, \mathrm{d}\lambda,\\
	&R_{n,\xi}(H_i) := \left|\int_{-\pi}^\pi I_n\left(\lambda,\widetilde{\mathbf{Y}}_n\right)\left|\lambda\right|^{-\beta_1(H_i)-\xi/2}\, \mathrm{d}\lambda - Q_{H_0,\xi}(H_i)\right|.
	\end{align*}
	Moreover, Lemma~\ref{Lemma.SPD} and $\vartheta_i^\dagger\equiv(H_i^\dagger,\eta_i^\dagger)\in\Theta_0(\xi)$, $i\in\Lambda_{j(r)}$, yield that there exists a constant $c_3\equiv c_3(\xi)>0$, which is independent of $r\in\mathbb{N}$, such that the first term of the last quantity of (\ref{Uniform.Convergence.Inequality.Negligibility}) is dominated by
	\begin{equation*}
	r\frac{c_2}{2\pi}\cdot\max_{i\in\Lambda_{j(r)}}Q_{H_0,\xi}(H_i) \leq r\frac{c_3}{2\pi}\int_{-\pi}^\pi \left|\lambda\right|^{-1+\xi/2}\, \mathrm{d}\lambda =rc_3\pi^{\xi/2-1}.
	\end{equation*}
	As a result, the first term of the last quantity of (\ref{Uniform.Convergence.Inequality.Negligibility}) converges to $0$ as $r\downarrow 0$ irrespectively to the asymptotic parameter $n\in\mathbb{N}$. Moreover, Corollary~\ref{Pointwise.Convergence.DeltaY} yields that for each $r\in(0,\xi/2)$, the second term of the last quantity of (\ref{Uniform.Convergence.Inequality.Negligibility}) converges to $0$ as $n\to\infty$. Therefore, the conclusion follows.
\end{proof}
\section{Proof of Theorem~\ref{Consistency.Estimator}}\label{Section.Proof.Main.Theorem}
Main purpose in this section is to give a proof of Theorem~\ref{Consistency.Estimator}. We prepare notation used in its proof in Section~\ref{Section.Notation.Main.Proof} and several limit theorems of estimation and its score functions are summarized in Section~\ref{Appendix.Proof.Preliminary.Propositions}. A key proposition and Theorem~\ref{Consistency.Estimator} are proven in Section~\ref{Appendix.Proof.Main.Theorem}.
\subsection{Notation of Parameter Space and Estimation Function}\label{Section.Notation.Main.Proof}
Recall $\Theta:=\Theta_H\times\Theta_\eta$ is a compact set of the form $\Theta_H:=[H_-,H_+]\subset(0,1]$, $\Theta_\eta:=[\eta_-,\eta_+]\subset(0,\infty)$ and $\Theta_{\widetilde\vartheta}^n:=\Theta_H\times\Theta_{\widetilde\nu}^n$ with $\Theta_{\widetilde\nu}^n:=[\eta_- \delta_n^{H_+-H_0},\eta_+ \delta_n^{H_- - H_0}]$. 
Following the argument in Velasco and Robinson~\cite{VR}, we divide the parameter space $\Theta_H$ into the following two subsets:
\begin{align*}
&\Theta_{H,1}(\xi):=\left\{H\in\Theta_H:\alpha(H)-\alpha(H_0)\geq -1+\xi\right\}, \\
&\Theta_{H,2}(\xi):=\left\{H\in\Theta_H:\alpha(H)-\alpha(H_0)< -1+\xi\right\},
\end{align*}
where $\alpha(H) :=2H-1$ and $\xi\in(0,1)$. Moreover, we also divide the rescaled parameter space $\Theta_{\widetilde\vartheta}^n$ into the following four subsets:
\begin{equation*}
\Theta_j^n(\xi,L):=\Theta_{H,j}(\xi)\times\Theta_{\widetilde\nu,0}^n(L),\hspace{0.2cm} \Theta_k^n(L):=\Theta_H\times\Theta_{\widetilde\nu,k}^n(L)
\end{equation*}
for $L\in\mathbb{R}_+$ and $j, k\in\{1,2\}$, where
\begin{equation*}
\Theta_{\widetilde\nu,0}^n(L):=\Theta_{\widetilde\nu}^n\cap[1/L,L],\hspace{0.2cm}\Theta_{\widetilde\nu,1}^n(L):=\Theta_{\widetilde\nu}^n\cap(0,1/L),\hspace{0.2cm} \Theta_{\widetilde\nu,2}^n(L):=\Theta_{\widetilde\nu}^n\cap(L,\infty).
\end{equation*}
Denote by $U_{n,0}(\widetilde\vartheta)\equiv U_{n,0}(H,\widetilde\nu)$, defined in Section~\ref{SubSection.Construction.Estimator}, and
\begin{equation*}
U_{\vartheta_0}(\widetilde\vartheta):=\frac{1}{4\pi}\int_{-\pi}^\pi\left\{\log\left(\widetilde\nu^2f_H(\lambda)\right) +\frac{\eta_0^2f_{H_0}(\lambda)}{\widetilde\nu^2f_H(\lambda)}\right\}\, \mathrm{d}\lambda.
\end{equation*}
Note that $\vartheta_0=(H_0,\eta_0)=(H_0,\widetilde\nu_0)$ and for any $\xi\in(0,1)$ and $L\in\mathbb{R}_+$, we can show that $U_{\vartheta_0}(\widetilde\vartheta)$ satisfies the identifiability condition with respect to the parameter $\widetilde\vartheta$ on $\Theta_1^n(\xi,L)$, i.e. for any $\iota>0$,
\begin{equation}\label{Limit.Identifiability.Cond}
\inf_{\widetilde\vartheta\in\Theta_1^n(\xi,L),\|\widetilde\vartheta-\vartheta_0\|\geq\iota}U_{\vartheta_0}(\widetilde\vartheta)>U_{\vartheta_0}(\vartheta_0),
\end{equation}
by using Lemma~\ref{Lemma.SPD} (\ref{SPD.Identifiability.Cond}) and the elementary inequality $\log{x}\leq x-1$ for any $x>0$ that is actually an equality only when $x=1$.
\subsection{Convergence of Estimation and Its Score Functions}\label{Appendix.Proof.Preliminary.Propositions}
\begin{prop}\label{Uniform.Convergence.Contrast}
	Let $\mathcal{K}$ be a compact interval of $(0,\infty)$ and $\vartheta_0$ be an interior point of $\Theta$. Under the conditions $(H.\ref{Assumption.HighFrequency})-(H.\ref{Assumption.mn})$, the following uniform convergences on $\Theta_{H,1}(\xi)\times\mathcal{K}$ hold for any $\xi\in(0,1)$:
	\begin{align*}
	&\sup_{\widetilde\vartheta\in\Theta_{H,1}(\xi)\times\mathcal{K}}\left|U_{n,0}(\widetilde\vartheta) -U_{\vartheta_0}(\widetilde\vartheta)\right|=o_{P}(1) \ \ \mbox{as}\ \ n\to\infty, \\ &\sup_{\widetilde\vartheta\in\Theta_{H,1}(\xi)\times\mathcal{K}}\left|\nabla^2U_{n,0}(\widetilde\vartheta) -\nabla^2U_{\vartheta_0}(\widetilde\vartheta)\right|=o_{P}(1) \ \ \mbox{as $n\to\infty$.}
	\end{align*}
\end{prop}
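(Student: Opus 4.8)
The plan is to exploit that the stochastic part of $U_{n,0}$ is literally a quadratic form, so that Proposition~\ref{Uniform.Convergence.QuadraticForm.DeltaY} applies, while the rest is deterministic. Using $Q_n(\mathbf{x},k)=\int_{-\pi}^\pi I_n(\lambda,\mathbf{x})k(\lambda)\,\mathrm{d}\lambda$, I would first decompose
\begin{equation*}
U_{n,0}(H,\widetilde\nu)=D_n(H,\widetilde\nu)+\frac{1}{4\pi}Q_n\left(\widetilde{\mathbf{Y}}_n,1/h_{H,\widetilde\nu}^n\right),\qquad D_n(H,\widetilde\nu):=\frac{1}{4\pi}\int_{-\pi}^\pi\log h_{H,\widetilde\nu}^n(\lambda)\,\mathrm{d}\lambda,
\end{equation*}
so that $U_{\vartheta_0}=D_\infty+\frac{1}{4\pi}Q_{\vartheta_0}(1/(\widetilde\nu^2f_H))$ with $D_\infty(H,\widetilde\nu):=\frac{1}{4\pi}\int_{-\pi}^\pi\log(\widetilde\nu^2f_H(\lambda))\,\mathrm{d}\lambda$ and $Q_{\vartheta_0}$ as in (\ref{Def.Q.Limit}). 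For the deterministic term I would write $\log h^n-\log(\widetilde\nu^2f_H)=\log(1+r_n)$ with $r_n:=2\ell/(m_n\delta_n^{2H_0}\widetilde\nu^2f_H)\geq 0$. By (H.\ref{Assumption.mn}) the coefficient $2/(m_n\delta_n^{2H_0})$ tends to $0$, since $m_n\delta_n^{2H_0}=(m_n\delta_n^{2H_+})\,\delta_n^{-2(H_+-H_0)}\to\infty$; meanwhile $1-\cos\lambda\leq\lambda^2/2$ and Lemma~\ref{Lemma.SPD}$(3)(a)$ give $\ell/f_H\leq(2\pi c_1)^{-1}|\lambda|^{2+\alpha(H)}$, which is bounded on $[-\pi,\pi]$ uniformly over $H\in\Theta_{H,1}(\xi)$. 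As $\widetilde\nu\in\mathcal{K}$ is bounded away from $0$ and $0\leq\log(1+r_n)\leq r_n$, this gives $\sup_{\widetilde\vartheta}|D_n-D_\infty|\to 0$ deterministically.

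For the stochastic term I would apply Proposition~\ref{Uniform.Convergence.QuadraticForm.DeltaY} to the family $k_\vartheta^n:=1/h_{H,\widetilde\nu}^n$, whose natural limit is $k_\vartheta:=1/(\widetilde\nu^2f_H)$, so that $Q_{\vartheta_0}(k_\vartheta)$ is exactly the second term of $U_{\vartheta_0}$. The crux is verifying Assumption~\ref{Assumption.SPD}. Condition (C.\ref{Assumption.LimitSPD}) follows from $|1/h^n-1/(\widetilde\nu^2f_H)|\leq r_n/(\widetilde\nu^2f_H)\to 0$ uniformly; condition (C.\ref{Assumption.AsymptoticSPD}) follows from $h^n\geq\widetilde\nu^2f_H\geq c_1\widetilde\nu^2|\lambda|^{-\alpha(H)}$, which yields $|\lambda|^{-\alpha(H)}|1/h^n|\leq(c_1\widetilde\nu^2)^{-1}$, so one takes $\beta_0(H)=-\alpha(H)$; and condition (C.\ref{Assumption.Derivative.SPD}) follows from the same lower bound together with the derivative estimates of Lemma~\ref{Lemma.SPD}$(3)(b)$ on $\partial_Hf_H$, forcing $\beta_1(H)=-\alpha(H)+\iota$ for an arbitrarily small $\iota>0$. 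With $\beta_0(H)=-\alpha(H)$ the restricted Hurst region of Assumption~\ref{Assumption.SPD} is exactly $\Theta_{H,0}(\xi)=\{H:\alpha(H)-\alpha(H_0)\geq-1+\xi\}=\Theta_{H,1}(\xi)$, and the $\iota$-loss in $\beta_1$ is absorbed by invoking Proposition~\ref{Uniform.Convergence.QuadraticForm.DeltaY} with $\xi$ replaced by $\xi-\iota$, since $\Theta_{H,0}(\xi-\iota)$ for these exponents still equals $\Theta_{H,1}(\xi)$. This establishes the uniform convergence of the stochastic part over $\Theta_{H,1}(\xi)\times\mathcal{K}$ and hence the first assertion.

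The Hessian statement follows the same template after differentiating twice under the integral sign, which is justified by the $\mathcal{C}^{3,1}$-regularity in Lemma~\ref{Lemma.SPD}$(1)$. Writing $\nabla^2U_{n,0}=\frac{1}{4\pi}\int_{-\pi}^\pi\nabla^2\log h_{H,\widetilde\nu}^n\,\mathrm{d}\lambda+\frac{1}{4\pi}Q_n(\widetilde{\mathbf{Y}}_n,\nabla^2(1/h_{H,\widetilde\nu}^n))$, I would treat the first summand by the deterministic argument above and apply Proposition~\ref{Uniform.Convergence.QuadraticForm.DeltaY} to each entry $\partial^2_{\vartheta_i\vartheta_j}(1/h_{H,\widetilde\nu}^n)$ of the second. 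The $\lambda$-singularity of these second derivatives is again of order $-\alpha(H)$ up to an $\iota$-loss governed by the higher-derivative bounds in Lemma~\ref{Lemma.SPD}$(3)(b)$, so (C.\ref{Assumption.LimitSPD})--(C.\ref{Assumption.Derivative.SPD}) hold with the same $\beta_0,\beta_1$, and the convergence transfers to $\nabla^2U_{n,0}$.

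The main obstacle is the verification of Assumption~\ref{Assumption.SPD} for $1/h_{H,\widetilde\nu}^n$ and its $\vartheta$-derivatives: identifying the correct singularity exponents $\beta_0(H)=-\alpha(H)$ and $\beta_1(H)=-\alpha(H)+\iota$, checking that the induced restricted parameter region coincides \emph{exactly} with $\Theta_{H,1}(\xi)$, and tracking the $\iota$-losses carefully so that all three conditions, and in particular the derivative condition needed for the Hessian, survive on the same region.
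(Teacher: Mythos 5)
Your proposal is correct and follows essentially the same route as the paper's proof: the same decomposition of $U_{n,0}-U_{\vartheta_0}$ into the deterministic log-integral term and the quadratic-form difference, uniform convergence of the former under (H.\ref{Assumption.HighFrequency}) and (H.\ref{Assumption.mn}), and an application of Proposition~\ref{Uniform.Convergence.QuadraticForm.DeltaY} to $1/h_{\widetilde\vartheta}^n$ with exactly the paper's exponents $\beta_0(H)=1-2H$ and $\beta_1(H)=1-2H+\iota$, with the Hessian handled by the same template. Your write-up is in fact more explicit than the paper's (the $r_n$ bound for the deterministic part, the identification $\Theta_{H,0}(\xi-\iota)=\Theta_{H,1}(\xi)$, and the entrywise treatment of $\nabla^2(1/h_{\widetilde\vartheta}^n)$), but these are elaborations of the same argument rather than a different approach.
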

\begin{proof}
	Let us consider only the first claim because the second one is proven in the similar way. 
	Here we have
	\begin{align}\label{Decomp.Error.EstFunc}
	U_{n,0}(\widetilde\vartheta) -U_{\vartheta_0}(\widetilde\vartheta) 
	=& \frac{1}{4\pi}\int_{-\pi}^\pi \left\{\log{h_{\widetilde\vartheta}^n(\lambda)}-\log{\left({\widetilde\nu}^2f_H(\lambda)\right)}\right\}\, \mathrm{d}\lambda \\ &+\frac{1}{4\pi}\left\{Q_n\left(\widetilde{\mathbf{Y}}_n,1/h_{\widetilde\vartheta}^n\right) - Q_{\vartheta_0}\left(1/(\widetilde\nu^2f_H)\right)\right\}, \nonumber
	\end{align}
	where $Q_{\vartheta_0}$ is defined in (\ref{Def.Q.Limit}). Under the assumption $(H.\ref{Assumption.HighFrequency})$ and $(H.\ref{Assumption.mn})$, we obtain 
	\begin{equation*}
	\lim_{n\to\infty}\sup_{\lambda\in[-\pi,\pi]\backslash\{0\},\widetilde\vartheta\in\Theta_{H,1}(\xi)\times\mathcal{K}}\left|\log{h_{\widetilde\vartheta}^n(\lambda)}-\log{\left({\widetilde\nu}^2f_H(\lambda)\right)}\right|=0
	\end{equation*}
	so that the first term of rhs of (\ref{Decomp.Error.EstFunc}) is negligible as $n\to\infty$. Note that we have
	\begin{equation*}
	\sup_{n\in\mathbb{N},\lambda\in[-\pi, \pi]\backslash\{0\}}\left\{|\lambda|^{1-2H}\left|1/h_{\widetilde\vartheta}^n(\lambda)\right|+|\lambda|^{1-2H+\iota}\left\|\nabla\left(1/h_{\widetilde\vartheta}^n\right)(\lambda)\right\|_{\mathbb{R}^2}\right\}<\infty
	\end{equation*}
	for any $\iota>0$. Let us fix sufficiently small $\iota>0$. Then we can show that the second term of rhs of (\ref{Decomp.Error.EstFunc}) is also negligible as $n\to\infty$ by using Proposition~\ref{Uniform.Convergence.QuadraticForm.DeltaY} in the case of the function $1/h_{\widetilde\vartheta}^n$, $\beta_0(H):=1-2H$ and $\beta_1(H):=1-2H+\iota$. Therefore, the conclusion follows.
\end{proof}
\begin{prop}\label{Scaled.Score.Negligibility}
	Let $\vartheta_0$ be an interior point of $\Theta$. Under the conditions $(H.\ref{Assumption.HighFrequency})-(H.\ref{Assumption.mn})$, the score function $\nabla{U}_{n,0}(\widetilde\vartheta)$ has the following asymptotic behavior at the point $\vartheta_0$: there exists a constant $\psi>0$ such that
	\begin{equation*}
	\nabla{U}_{n,0}(\vartheta_0) = o_{P}(\delta_n^\psi)\ \ \mbox{as}\ \ n\to\infty. 
	\end{equation*}
\end{prop}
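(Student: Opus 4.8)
The plan is to write the score at $\vartheta_0=(H_0,\widetilde\nu_0)$ (recall $\widetilde\nu_0=\nu_0\delta_n^{-H_0}=\eta_0$) in the standard Whittle form and to reduce it, step by step, to a centred Gaussian quadratic form whose mean and fluctuation can each be bounded at a polynomial rate in $\delta_n$. Differentiating under the integral sign in (\ref{Shadow.Estimation.Function}), using $\partial_{\widetilde\vartheta_j}(1/h)=-\partial_{\widetilde\vartheta_j}h/h^2$ and $Q_n(\mathbf{y},k)=\int_{-\pi}^\pi I_n(\lambda,\mathbf{y})k(\lambda)\,\mathrm d\lambda$, I would write for $j=1,2$
\begin{equation*}
\partial_{\widetilde\vartheta_j}U_{n,0}(\vartheta_0)=\frac{1}{4\pi}\int_{-\pi}^\pi\frac{\partial_{\widetilde\vartheta_j}h_{\vartheta_0}^n(\lambda)}{h_{\vartheta_0}^n(\lambda)}\,\mathrm d\lambda-\frac{1}{4\pi}Q_n\!\left(\widetilde{\mathbf{Y}}_n,k_j^n\right),\qquad k_j^n:=\frac{\partial_{\widetilde\vartheta_j}h_{\vartheta_0}^n}{(h_{\vartheta_0}^n)^2},
\end{equation*}
where the noise part $\tfrac{2}{m_n\delta_n^{2H_0}}\ell$ of $h_{\vartheta_0}^n$ is parameter-free, so $\partial_{\widetilde\vartheta_j}h_{\vartheta_0}^n=\partial_{\widetilde\vartheta_j}(\widetilde\nu^2f_H)|_{\vartheta_0}$. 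The decisive structural fact I would exploit is that $h_{\vartheta_0}^n$ is \emph{exactly} the spectral density of the rescaled Gaussian vector $\widetilde{\mathbf{G}}_n=\delta_n^{-H_0}\mathbf{G}_n$ (Appendix~\ref{Appendix.SPD}), so that the deterministic term is precisely the Whittle bias associated with that density.

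First I would replace the non-Gaussian data by the Gaussian surrogate. Since $\widetilde{\mathbf{Y}}_n=\delta_n^{-H_0}\mathbf{Y}_n$ and $Q_n$ is quadratic, Proposition~\ref{Convergence.QuadraticForm.Remainder} applied to $k_j^n$ gives $Q_n(\widetilde{\mathbf{Y}}_n,k_j^n)-Q_n(\widetilde{\mathbf{G}}_n,k_j^n)=\delta_n^{-2H_0}\,o_P(\delta_n^{2H_0+\psi})=o_P(\delta_n^\psi)$. Here I must check that $k_j^n$ meets condition (C.\ref{Assumption.AsymptoticSPD}) of Assumption~\ref{Assumption.SPD}, which holds with $\beta_0(H)=1-2H$ for $j=2$ and, by the derivative bound in Lemma~\ref{Lemma.SPD}, with $\beta_0(H)=1-2H+\iota$ for $j=1$, and that $H_0$ lies in the interior of $\Theta_{H,0}(\xi)$ for small $\xi,\iota$ (it does, since then $\alpha(H_0)+\beta_0(H_0)=\iota\ge 0$). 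It remains to bound the Gaussian score $\tfrac{1}{4\pi}\int\partial_{\widetilde\vartheta_j}h_{\vartheta_0}^n/h_{\vartheta_0}^n\,\mathrm d\lambda-\tfrac{1}{4\pi}Q_n(\widetilde{\mathbf{G}}_n,k_j^n)$, which I would split into its mean and its centred part.

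The centred part is the easy half: Corollary~\ref{Convergence.Variance.Gaussian.Quadratic}, whose hypothesis $\alpha(H_0)+\beta_0(H_0)<1/2$ holds for $\iota$ small, yields $Q_n(\widetilde{\mathbf{G}}_n,k_j^n)-E[Q_n(\widetilde{\mathbf{G}}_n,k_j^n)]=O_P(1/\sqrt n)$, and since $(H.\ref{Assumption.Tn})$ forces $n\asymp 1/\delta_n$ this is $O_P(\delta_n^{1/2})=o_P(\delta_n^\psi)$ for any fixed $\psi\in(0,1/2)$. The delicate half, and the main obstacle, is the mean. Writing $E[Q_n(\widetilde{\mathbf{G}}_n,k_j^n)]=\tfrac{1}{2\pi n}\mathrm{Tr}[\Sigma_n(h_{\vartheta_0}^n)\Sigma_n(k_j^n)]$ and using the exact algebraic identity $h_{\vartheta_0}^n k_j^n=\partial_{\widetilde\vartheta_j}h_{\vartheta_0}^n/h_{\vartheta_0}^n$, the deterministic term and the mean cancel exactly at the level of the limiting integral $\tfrac1{4\pi}\int\partial_{\widetilde\vartheta_j}h_{\vartheta_0}^n/h_{\vartheta_0}^n\,\mathrm d\lambda$, so only the finite-sample trace error survives.

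The difficulty is that Theorem~\ref{Extension.FT87} supplies merely the limit of $\tfrac1n\mathrm{Tr}[\Sigma_n(k_1^n)\Sigma_n(k_2^n)]$, not a rate; I would therefore argue directly from $\mathrm{Tr}[\Sigma_n(h^n)\Sigma_n(k^n)]=\sum_{|\tau|<n}(n-|\tau|)\widehat{h^n}(\tau)\widehat{k^n}(\tau)$, comparing it with $\int_{-\pi}^\pi h^n k^n\,\mathrm d\lambda=\tfrac{1}{2\pi}\sum_{\tau}\widehat{h^n}(\tau)\widehat{k^n}(\tau)$. The uniform Fourier-coefficient bounds $\sup_n|\widehat{h_{\vartheta_0}^n}(\tau)|=O(|\tau|^{\alpha(H_0)-1})$ and $\sup_n|\widehat{k_j^n}(\tau)|=O(|\tau|^{-\alpha(H_0)-1+\iota})$, obtained from Lemmas~\ref{Extension1.FT86}--\ref{Extension2.FT86} (the latter covering the borderline case $H_0=1/2$), make both the weight defect $\tfrac1n\sum_{|\tau|<n}|\tau|\,\widehat{h^n}(\tau)\widehat{k^n}(\tau)$ and the tail $\sum_{|\tau|\ge n}\widehat{h^n}(\tau)\widehat{k^n}(\tau)$ of order $O(n^{-1+\iota}\log n)$. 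Hence $\tfrac{1}{2\pi n}\mathrm{Tr}[\Sigma_n(h_{\vartheta_0}^n)\Sigma_n(k_j^n)]=\int_{-\pi}^\pi\partial_{\widetilde\vartheta_j}h_{\vartheta_0}^n/h_{\vartheta_0}^n\,\mathrm d\lambda+O(n^{-1+\iota}\log n)$, so the mean of the Gaussian score is $O(\delta_n^{1-\iota}\log(1/\delta_n))=o(\delta_n^\psi)$. Collecting the three contributions and choosing $\psi\in(0,1/2)$ no larger than the exponent furnished by Proposition~\ref{Convergence.QuadraticForm.Remainder} gives $\nabla U_{n,0}(\vartheta_0)=o_P(\delta_n^\psi)$. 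The one point demanding care throughout is bookkeeping the $\iota$ from the $H$-derivative of $f_H$ so that every series remains summable and every exponent stays strictly below the $1/2$ threshold.
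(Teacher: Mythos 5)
Your proposal is correct and follows essentially the same route as the paper's own proof: the same decomposition of $\nabla U_{n,0}(\vartheta_0)$ into the data-to-Gaussian replacement term (handled by Proposition~\ref{Convergence.QuadraticForm.Remainder}, rescaled by $\delta_n^{-2H_0}$), the centred Gaussian quadratic form (handled by Corollary~\ref{Convergence.Variance.Gaussian.Quadratic} together with $(H.\ref{Assumption.Tn})$, which gives $n\asymp 1/\delta_n$), and the deterministic bias, which the paper also kills via the convolution identity $\frac{1}{2\pi}\sum_{\tau}\widehat{k_{\vartheta_0}^n}(\tau)\widehat{h_{\vartheta_0}^n}(\tau)=-\int_{-\pi}^{\pi}u^{\mathrm{T}}\nabla\log h_{\vartheta_0}^n\,\mathrm{d}\lambda$ and the uniform Fourier-coefficient decay $O(|\tau|^{-2+\iota})$ from Lemmas~\ref{Extension1.FT86}--\ref{Extension2.FT86}, splitting the error into the tail sum and the weight defect exactly as you do. The only differences are cosmetic (you treat the two partial derivatives separately with slightly different $\beta_0$, while the paper works with a directional derivative $u^{\mathrm{T}}\nabla$ and a single exponent $1-2H_0+\iota$).
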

\begin{proof}
	At first, we decompose $u^{\mathrm{T}}\nabla{U}_{n,0}(\vartheta_0)$ with $u\in\mathbb{R}^2$ into the following three parts: 
	\begin{equation*}
	u^{\mathrm{T}}\nabla{U}_{n,0}(\vartheta_0)=\frac{1}{4\pi}\left(A_1^n+A_2^n+A_3^n\right),
	\end{equation*}
	where
	\begin{align*}
	&A_1^n :=\int_{-\pi}^\pi u^{\mathrm{T}}\nabla\log{h_{\vartheta_0}^n(\lambda)}\, \mathrm{d}\lambda +E\left[Q_n\left(\widetilde{\mathbf{G}}_n,k_{\vartheta_0}^n\right)\right], \\
	&A_2^n :=Q_n\left(\widetilde{\mathbf{Y}}_n,k_{\vartheta_0}^n\right) -Q_n\left(\widetilde{\mathbf{G}}_n,k_{\vartheta_0}^n\right),\ \ A_3^n :=Q_n\left(\widetilde{\mathbf{G}}_n,k_{\vartheta_0}^n\right) - E\left[Q_n\left(\widetilde{\mathbf{G}}_n,k_{\vartheta_0}^n\right)\right]
	\end{align*}
	and $k_{\vartheta_0}^n:=u^{\mathrm{T}}\nabla\left(1/h_{\vartheta_0}^n\right)$. 
	Note that Lemma~\ref{Lemma.SPD}~(\ref{SPD.Asymptotic}) yields that for any $\iota>0$,
	\begin{equation*}
	\sup_{n\in\mathbb{N},\lambda\in[-\pi, \pi]\backslash\{0\}}|\lambda|^{1-2H_0+\iota}\left|k_{\vartheta_0}^n(\lambda)\right|<\infty.
	\end{equation*}
	Let us fix sufficiently small $\iota>0$. Then we can show that $A_3^n=O_P(1/\sqrt{n})$ as $n\to\infty$ and $A_2^n=o_P(\delta_n^\psi)$ as $n\to\infty$ for a certain constant $\psi>0$ by using Corollary~\ref{Convergence.Variance.Gaussian.Quadratic} and Proposition~\ref{Convergence.QuadraticForm.Remainder} in the case of the function $k_{\vartheta_0}^n$ and $\beta_0(H):=1-2H+\iota$ respectively. As a result, it suffices to prove that $A_1^n=O(1/\sqrt{n})$ as $n\to\infty$. Our proof is similar to that of Theorem 2 in Fox and Taqqu~\cite{FT86}. At first, we obtain
	\begin{align}
	E\left[Q_n\left(\widetilde{\mathbf{G}}_n,k_{\vartheta_0}^n\right)\right] =& \frac{1}{2\pi n}\sum_{i,j=1}^n \widehat{k_{\vartheta_0}^n}(j-i)\widehat{h_{\vartheta_0}^n}(j-i) =\frac{1}{2\pi}\sum_{|\tau|<n}\left(1-\frac{|\tau|}{n}\right)\widehat{k_{\vartheta_0}^n}(\tau)\widehat{h_{\vartheta_0}^n}(\tau) \nonumber \\
	=& \frac{1}{2\pi}\sum_{|\tau|<n}\widehat{k_{\vartheta_0}^n}(\tau)\widehat{h_{\vartheta_0}^n}(\tau) -\frac{1}{2\pi n}\sum_{|\tau|<n}|\tau|\widehat{k_{\vartheta_0}^n}(\tau)\widehat{h_{\vartheta_0}^n}(\tau). \label{Sqrt.n.Expectation.Negligibility.1}
	\end{align}
	Since the functions $k_{\vartheta_0}^n, h_{\vartheta_0}^n\in L^1[-\pi,\pi]$ are $2\pi$-periodic, it is well-known that $\widehat{k_{\vartheta_0}^n}(\tau)\widehat{h_{\vartheta_0}^n}(\tau)$ is the $\tau$th Fourier coefficient of the convolution $k_{\vartheta_0}^n\ast h_{\vartheta_0}^n$ defined by
	\begin{equation*}
	\left(k_{\vartheta_0}^n\ast h_{\vartheta_0}^n\right)(\lambda) :=\int_{-\pi}^\pi k_{\vartheta_0}^n(x) h_{\vartheta_0}^n(\lambda-x)\, \mathrm{d}x =\int_{-\pi}^\pi u^{\mathrm{T}}\nabla\left(1/h_{\vartheta_0}^n(x)\right) h_{\vartheta_0}^n(x-\lambda)\, \mathrm{d}x,
	\end{equation*}
	where we use the property that $h_{\vartheta_0}^n$ is an even function in the above equality. Moreover, we have
	\begin{equation}\label{Asymptotic.Decay.Convolution}
	\sup_{n\in\mathbb{N}}\left|\widehat{k_{\vartheta_0}^n}(\tau)\widehat{h_{\vartheta_0}^n}(\tau)\right|=O\left(|\tau|^{-2+\iota}\right)\ \ \mbox{as $|\tau|\to\infty$}
	\end{equation}
	for any $\iota>0$ from Lemma~\ref{Extension1.FT86} and Lemma~\ref{Extension2.FT86}. As a result, for each $n\in\mathbb{N}$, $(k_{\vartheta_0}^n\ast h_{\vartheta_0}^n)(\lambda)$ is expanded as the following Fourier series for a.e. $\lambda\in[-\pi,\pi]$:
	\begin{equation}\label{Conv.Fourier.Series}
	\left(k_{\vartheta_0}^n\ast h_{\vartheta_0}^n\right)(\lambda)=\frac{1}{2\pi}\sum_{\tau\in\mathbb{Z}}\widehat{k_{\vartheta_0}^n}(\tau)\widehat{h_{\vartheta_0}^n}(\tau)e^{\sqrt{-1}\tau\lambda}.
	\end{equation}
	Note that the continuity of the function $k_{\vartheta_0}^n\ast h_{\vartheta_0}^n$ on $[-\pi,\pi]$ implies the Fourier series expansion (\ref{Conv.Fourier.Series}) also holds for all $\lambda\in[-\pi,\pi]$. In particular, we obtain 
	\begin{align}\label{Fourier.Exp.Convolution}
	\frac{1}{2\pi}\sum_{\tau\in\mathbb{Z}}\widehat{k_{\vartheta_0}^n}(\tau)\widehat{h_{\vartheta_0}^n}(\tau) =&\left(k_{\vartheta_0}^n\ast h_{\vartheta_0}^n\right)(0) \\
	=&\int_{-\pi}^\pi u^{\mathrm{T}}\nabla\left(1/h_{\vartheta_0}^n(\lambda)\right) h_{\vartheta_0}^n(\lambda)\, \mathrm{d}\lambda =-\int_{-\pi}^\pi u^{\mathrm{T}}\nabla\log{h_{\vartheta_0}^n(\lambda)}\, \mathrm{d}\lambda. \nonumber
	\end{align}
	From the equalities (\ref{Sqrt.n.Expectation.Negligibility.1}) and (\ref{Fourier.Exp.Convolution}), we obtain
	\begin{align}
		\sqrt{n}A_3^n =& \sqrt{n}\left(\int_{-\pi}^\pi u^{\mathrm{T}}\nabla\log{h_{\vartheta_0}^n(\lambda)}\, \mathrm{d}\lambda +\frac{1}{2\pi}\sum_{|\tau|<n}\widehat{k_{\vartheta_0}^n}(\tau)\widehat{h_{\vartheta_0}^n}(\tau)\right) -\frac{1}{2\pi\sqrt{n}}\sum_{|\tau|<n}|\tau|\widehat{k_{\vartheta_0}^n}(\tau)\widehat{h_{\vartheta_0}^n}(\tau)  \nonumber \\
		=& -\frac{\sqrt{n}}{2\pi}\sum_{|\tau|\geq n}\widehat{k_{\vartheta_0}^n}(\tau)\widehat{h_{\vartheta_0}^n}(\tau)   -\frac{1}{2\pi\sqrt{n}}\sum_{|\tau|<n}|\tau|\widehat{k_{\vartheta_0}^n}(\tau)\widehat{h_{\vartheta_0}^n}(\tau) . \label{Sqrt.n.Expectation.Negligibility.2}
	\end{align}
	Then we can show that both terms of (\ref{Sqrt.n.Expectation.Negligibility.2}) are negligible as $n\to\infty$ in the similar way to the proof of Theorem 2 in Fox and Taqqu~\cite{FT86} by using the relation (\ref{Asymptotic.Decay.Convolution}). Therefore, the conclusion follows.
\end{proof}
\subsection{Proof of Theorem~\ref{Consistency.Estimator}}\label{Appendix.Proof.Main.Theorem}
Before proving Theorem~\ref{Consistency.Estimator}, we show the following result.
\begin{prop}\label{Consistency.ThetaTilde}
	Let $\vartheta_0$ be an interior point of $\Theta$. Under the conditions $(H.\ref{Assumption.HighFrequency})-(H.\ref{Assumption.mn})$,
	\begin{equation*}
	\widetilde\vartheta_n:=(\widehat{H}_n,\delta_n^{-H_0}\widehat\nu_n)\rightarrow(H_0,\eta_0)\ \ \mbox{as}\ \ n\to\infty\ \ \mbox{in probability}.
	\end{equation*}
\end{prop}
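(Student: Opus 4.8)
The plan is to read \eqref{Minimization.Rescaled.EF} as the statement that $\widetilde\vartheta_n=(\widehat H_n,\delta_n^{-H_0}\widehat\nu_n)$ is the global minimizer of the shadow contrast $U_{n,0}$ over the rescaled space $\Theta_{\widetilde\vartheta}^n=\Theta_H\times\Theta_{\widetilde\nu}^n$, and then to run a standard argmin consistency argument, the only novelty being that $\Theta_{\widetilde\nu}^n$ expands to the non-compact interval $(0,\infty)$. First I would note that $\widetilde\nu_0=\eta_0$ eventually belongs to $\Theta_{\widetilde\nu}^n$, since its endpoints $\eta_-\delta_n^{H_+-H_0}$ and $\eta_+\delta_n^{H_--H_0}$ tend to $0$ and to $\infty$ respectively; hence $U_{n,0}(\widetilde\vartheta_n)\le U_{n,0}(\vartheta_0)$ for large $n$. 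Fixing $\iota>0$, I would then choose $\xi\in(0,1)$ small and $L$ large so that $\vartheta_0$ lies in the interior of the good region $\Theta_1^n(\xi,L)=\Theta_{H,1}(\xi)\times\Theta_{\widetilde\nu,0}^n(L)$ (possible because $\alpha(H_0)-\alpha(H_0)=0\ge-1+\xi$, $\alpha$ is continuous, and $\eta_0\in(1/L,L)$), so that the three complementary ``bad'' regions $\Theta_2^n(\xi,L)$, $\Theta_1^n(L)$ and $\Theta_2^n(L)$ are all bounded away from $\vartheta_0$, and these four regions partition $\Theta_{\widetilde\vartheta}^n$.

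On the good region the argument is routine. For $n$ large $\Theta_{\widetilde\nu,0}^n(L)=[1/L,L]$, so $\Theta_1^n(\xi,L)$ is the fixed compact $\Theta_{H,1}(\xi)\times[1/L,L]$, and Proposition~\ref{Uniform.Convergence.Contrast} (with $\mathcal K=[1/L,L]$) gives $\sup_{\widetilde\vartheta\in\Theta_1^n(\xi,L)}|U_{n,0}(\widetilde\vartheta)-U_{\vartheta_0}(\widetilde\vartheta)|=o_P(1)$, while Corollary~\ref{Pointwise.Convergence.DeltaY} gives $U_{n,0}(\vartheta_0)\to U_{\vartheta_0}(\vartheta_0)$ in probability. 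Combining these with the identifiability gap \eqref{Limit.Identifiability.Cond}, i.e. $\inf\{U_{\vartheta_0}(\widetilde\vartheta):\widetilde\vartheta\in\Theta_1^n(\xi,L),\ \|\widetilde\vartheta-\vartheta_0\|\ge\iota\}>U_{\vartheta_0}(\vartheta_0)$, yields
\[
P\Big[\inf_{\widetilde\vartheta\in\Theta_1^n(\xi,L),\,\|\widetilde\vartheta-\vartheta_0\|\ge\iota}U_{n,0}(\widetilde\vartheta)>U_{n,0}(\vartheta_0)\Big]\to1 .
\]

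The crux, and the step I expect to be the main obstacle, is excluding the three bad regions, where the convergence results fail because the nuisance scale $\widetilde\nu$ degenerates or $H$ leaves the range in which the limiting contrast is finite. For $\Theta_2^n(L)$ (large $\widetilde\nu$) this is easy: since $h_{H,\widetilde\nu}^n\ge\widetilde\nu^2 f_H$, one has $U_{n,0}(H,\widetilde\nu)\ge\log\widetilde\nu+\tfrac{1}{4\pi}\int_{-\pi}^\pi\log f_H(\lambda)\,\mathrm{d}\lambda\ge\log L-C$, with $C$ bounding $\tfrac{1}{4\pi}\int\log f_H$ uniformly over $\Theta_H$ (finite by Lemma~\ref{Lemma.SPD}), so taking $L$ large pushes the infimum there above the bounded quantity $U_{n,0}(\vartheta_0)$. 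The genuinely hard regions are $\Theta_1^n(L)$ (small $\widetilde\nu$) and $\Theta_2^n(\xi,L)$ (here $H\in\Theta_{H,2}(\xi)$, i.e. $\alpha(H)-\alpha(H_0)<-1+\xi$). In both, the model spectral density $h_{H,\widetilde\nu}^n$ is too small, or too flat near the origin, to absorb the low-frequency energy of the data, whose periodogram behaves like $\eta_0^2 f_{H_0}$; consequently the fit term $\tfrac{1}{4\pi}Q_n(\widetilde{\mathbf{Y}}_n,1/h_{H,\widetilde\nu}^n)$ must diverge. To make this rigorous I would first replace $\widetilde{\mathbf{Y}}_n$ by the Gaussian proxy $\widetilde{\mathbf{G}}_n$ via Proposition~\ref{Convergence.QuadraticForm.Remainder}, and then produce a divergent lower bound on $Q_n(\widetilde{\mathbf{G}}_n,1/h_{H,\widetilde\nu}^n)$, uniform over each region, from the $n$-dependent trace asymptotics of Theorem~\ref{Extension.FT87}: with $\beta_0(H)=1-2H$ the exponent relation $\alpha(H_0)+\beta_0(H)\ge1$ characterizing $\Theta_{H,2}(\xi)$ places us in the second conclusion of Theorem~\ref{Extension.FT87} rather than the first, which is precisely why the good-region convergence cannot be invoked and why these uniform divergence estimates are the delicate part of the proof.

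Finally I would assemble the pieces. Since $U_{n,0}(\widetilde\vartheta_n)\le U_{n,0}(\vartheta_0)$, the event $\{\|\widetilde\vartheta_n-\vartheta_0\|\ge\iota\}$ is contained in the union of the event displayed above (the far part of the good region) and the three events ``$\inf U_{n,0}>U_{n,0}(\vartheta_0)$'' over the bad regions, each of probability tending to $0$ by the preceding steps. Letting $n\to\infty$ and recalling that $\iota>0$ was arbitrary gives $\widetilde\vartheta_n\to\vartheta_0=(H_0,\eta_0)$ in probability. I note that the score negligibility of Proposition~\ref{Scaled.Score.Negligibility} and the Hessian convergence in Proposition~\ref{Uniform.Convergence.Contrast} are not needed for consistency itself; they set up the local quadratic control used for the subsequent rate analysis.
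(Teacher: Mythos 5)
Your overall architecture --- partition $\Theta_{\widetilde\vartheta}^n$ into the good region $\Theta_1^n(\xi,L)$ and three bad regions, handle the good region by uniform convergence (Proposition~\ref{Uniform.Convergence.Contrast}) plus the identifiability gap (\ref{Limit.Identifiability.Cond}), handle $\Theta_2^n(L)$ through the $\log\widetilde\nu\geq\log L$ term, and assemble everything via the global-minimizer inequality $U_{n,0}(\widetilde\vartheta_n)\leq U_{n,0}(\vartheta_0)$ --- is exactly the paper's (Steps~\ref{Step1} and~\ref{Step2} in Appendix~\ref{Appendix.Proof.Main.Theorem}). But for the two regions you yourself single out as the crux, $\Theta_2^n(\xi,L)$ and $\Theta_1^n(L)$, you do not give a proof, and the mechanism you propose cannot give one. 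Part~(b) of Theorem~\ref{Extension.FT87} is an \emph{upper} bound ($\mathrm{Tr}[(\Sigma_n(k_1^n)\Sigma_n(k_2^n))^p]=o(n^{p(\alpha_1+\alpha_2)+\psi})$): an upper bound on a trace or quadratic form can never produce the divergent \emph{lower} bound on $\inf U_{n,0}$ that exclusion of these regions requires. Moreover, your preliminary step of replacing $\widetilde{\mathbf{Y}}_n$ by $\widetilde{\mathbf{G}}_n$ via Proposition~\ref{Convergence.QuadraticForm.Remainder} is also unavailable there: that proposition is proved only for weights whose exponent satisfies $-\beta_0(H)-\alpha(H_0)\geq-1+\xi$, i.e.\ precisely the good exponent regime that the bad regions leave. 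So the proof has a genuine hole at its central step.

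The paper's resolution is different in kind: it never works in the bad exponent regime at all. In Lemma~\ref{Main.Lemma.Theta2}, for $(H,\widetilde\nu)\in\Theta_2^n(\xi,L)$ one sandwiches $c_1|\lambda|^{-\alpha(H)}<h_{H,\widetilde\nu}^n(\lambda)<c_2|\lambda|^{-\alpha(H)}$ (uniformly over $\widetilde\nu\in[1/L,L]$, using (H.\ref{Assumption.mn})), then uses monotonicity of the quadratic form in the weight (Lemma~\ref{Basic.BilinearForm}) to replace the $H$-dependent weight $|\lambda|^{\alpha(H)}$ by the single \emph{fixed} weight $|\lambda|^{\alpha(H_0)-1+\xi}$ sitting on the boundary of the good region; there the exponent sum is $1-\xi<1$, so Corollary~\ref{Pointwise.Convergence.DeltaY} (the part-(a) regime of Theorem~\ref{Extension.FT87}) applies, and the limiting constant is bounded below by $c\,\pi^{\xi}/\xi$. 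The required largeness thus comes from letting $\xi\downarrow 0$ in the \emph{limit constant}, not from any divergence in $n$. For $\Theta_1^n(L)$ you also missed a real obstruction: because of the additive noise component $2(m_n\delta_n^{2H_0})^{-1}\ell$ in $h_{H,\widetilde\nu}^n$, the fit term does \emph{not} blow up as $\widetilde\nu\downarrow 0$. The paper instead factors out $1/\widetilde\nu^2\geq L^2$, uses (H.\ref{Assumption.mn}) and the constraint $\widetilde\nu\geq\eta_-\delta_n^{H_+-H_0}$ to bound $\widetilde\nu^2 m_n\delta_n^{2H_0}\geq\eta_-^2 m_n\delta_n^{2H_+}$ from below, and then needs a positive lower bound on the fit integral uniform over all $H\in\Theta_H$, obtained from Proposition~\ref{Uniform.Convergence.QuadraticForm.DeltaY} on $\Theta_{H,1}(\xi)$ combined with the same boundary-weight trick on $\Theta_{H,2}(\xi)$; the elementary bound $\widetilde\nu^2|\log\widetilde\nu|\leq L^{-2}\log L$ then lets $L\to\infty$ do the work. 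These two lemmas are the substantive content of Proposition~\ref{Consistency.ThetaTilde}, and they are precisely what your proposal leaves open.
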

Following the argument of Velasco and Robinson~\cite{VR}, we divide the proof of Proposition~\ref{Consistency.ThetaTilde} into the following two steps.
\begin{step}\label{Step1}
	Let $\xi\in(0,1)$ and $L>0$. Define a random variable $\widetilde\vartheta_n^{(1)}(\xi,L)$ by
	\begin{equation*}
	\widetilde\vartheta_n^{(1)}(\xi,L):=\argmin_{\widetilde\vartheta\in\Theta_1^n(\xi,L)}U_{n,0}(\widetilde\vartheta).
	\end{equation*}
	Then, for each $\xi\in(0,1)$ and $L>0$, $\widetilde\vartheta_n^{(1)}(\xi,L)\to\vartheta_0$ in probability as $n\to\infty$. 
\end{step}
\begin{proof}[Proof of Step~\ref{Step1}]
	Note that for any $\xi\in(0,1)$ and $L>0$, the parameter space $\Theta_1^n(\xi,L)$ no longer depends on the asymptotic parameter $n\in\mathbb{N}$ if $n$ is sufficiently large. As a result, the conclusion immediately follows from Proposition~\ref{Uniform.Convergence.Contrast} and the identifiability condition of the limit function $U_{\vartheta_0}(\widetilde\vartheta)$ on $\Theta_1^n(\xi,L)$, see (\ref{Limit.Identifiability.Cond}).
\end{proof}
\begin{step}\label{Step2}
	There exist constants $\xi_0\in(0,1)$ and $L_0>0$ such that for any $\xi\in(0,\xi_0)$ and $L\geq L_0$, $\widetilde\vartheta_n - \widetilde\vartheta_n^{(1)}(\xi,L)\rightarrow 0$ in probability as $n\to\infty$. 
\end{step}
\begin{proof}[Proof of Step~\ref{Step2}]
	Without loss of generality, we can assume $\Theta_2^n(\xi,L)$ and $\Theta_k^n(L)$, $k=1,2$, are non-empty sets for each $\xi\in(0,1)$ and $L\in(0,\infty)$. 
	Note that for any $\iota>0$, we have
	\begin{equation}\label{Step2.1}
	P\left[\|\widetilde\vartheta_n-\widetilde\vartheta_n^{(1)}(\xi,L)\|_{\mathbb{R}^2}>\iota\right]\leq P\left[\inf_{\widetilde\vartheta\in\Theta_{\widetilde\vartheta}^n\backslash\Theta_1^n(\xi,L)}U_{n,0}(\widetilde\vartheta)\leq \inf_{\widetilde\vartheta\in\Theta_1^n(\xi,L)}U_{n,0}(\widetilde\vartheta)\right].
	\end{equation}
	Since we can show that
	\begin{align*}
	\left\{\inf_{\widetilde\vartheta\in\Theta_{\widetilde\vartheta}^n\backslash\Theta_1^n(\xi,L)}U_{n,0}(\widetilde\vartheta)\leq U_{n,0}(\widetilde\vartheta_n^{(1)})\right\} \subset\left\{\inf_{\widetilde\vartheta\in\Theta_2^n(\xi,L)}U_{n,0}(\widetilde\vartheta)\leq U_{n,0}(\widetilde\vartheta_n^{(1)})\right\} \cup\bigcup_{k=1}^2\left\{\inf_{\widetilde\vartheta\in\Theta_k^n(L)}U_{n,0}(\widetilde\vartheta)\leq U_{n,0}(\widetilde\vartheta_n^{(1)})\right\},
	\end{align*}
	the rhs of (\ref{Step2.1}) is dominated by
	\begin{align}
	&P\left[\inf_{\widetilde\vartheta\in\Theta_2^n(\xi,L)}U_{n,0}(\widetilde\vartheta) \leq U_{n,0}(\widetilde\vartheta_n^{(1)})\right] +\sum_{k=1}^2P\left[\inf_{\widetilde\vartheta\in\Theta_k^n(L)}U_{n,0}(\widetilde\vartheta) \leq U_{n,0}(\widetilde\vartheta_n^{(1)})\right] \nonumber \\
	\leq& P\left[\inf_{\widetilde\vartheta\in\Theta_2^n(\xi,L)}U_{n,0}(\widetilde\vartheta) \leq U_{\vartheta_0}(\vartheta_0)+\acute\iota\right] +\sum_{k=1}^2P\left[\inf_{\widetilde\vartheta\in\Theta_k^n(L)}U_{n,0}(\widetilde\vartheta) \leq U_{\vartheta_0}(\vartheta_0)+\acute\iota\right] +3P\left[\left|U_{n,0}(\widetilde\vartheta_n^{(1)}) - U_{\vartheta_0}(\vartheta_0)\right| \geq\acute\iota\right] \label{Consistency.FourTerms}
	\end{align}
	for any $\acute\iota>0$. Then the first term of (\ref{Consistency.FourTerms}) is negligible as $n\to\infty$ from Proposition~\ref{Uniform.Convergence.Contrast} and Step~\ref{Step1}. 
	Therefore, it suffices to prove that the other terms are negligible as $n\to\infty$ if we take sufficiently large $L>0$ and small $\xi\in(0,1)$ respectively. We divide the proof into the following three lemmas.
	\begin{lem}\label{Main.Lemma.Theta2}
		For any $\acute\iota, L>0$, there exists a constant $\xi_0\equiv\xi_0(L,\acute\iota,\vartheta_0)\in(0,1)$ such that for any $\xi\in(0,\xi_0)$, 
		\begin{equation*}
		\lim_{n\to\infty} P\left[\inf_{\widetilde\vartheta\in\Theta_2^n(\xi,L)}U_{n,0}(\widetilde\vartheta)\leq U_{\vartheta_0}(\vartheta_0) + \acute\iota \right]=0.
		\end{equation*}
	\end{lem}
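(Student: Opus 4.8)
The plan is to exploit that on $\Theta_2^n(\xi,L)=\Theta_{H,2}(\xi)\times\Theta_{\widetilde\nu,0}^n(L)$ the Hurst exponent is so much smaller than $H_0$ (precisely $\alpha(H)-\alpha(H_0)<-1+\xi$) that the weight $1/h_{\widetilde\vartheta}^n$ is essentially non-integrable against the spectral mass of the data, forcing the quadratic-form part of $U_{n,0}$ to be arbitrarily large. Writing $U_{n,0}(\widetilde\vartheta)=\frac{1}{4\pi}\int_{-\pi}^\pi\log h_{\widetilde\vartheta}^n\,d\lambda+\frac{1}{4\pi}Q_n(\widetilde{\mathbf Y}_n,1/h_{\widetilde\vartheta}^n)$, I would first observe that the logarithmic term is bounded below uniformly on $\Theta_2^n(\xi,L)$: since $h_{\widetilde\vartheta}^n\geq\widetilde\nu^2 f_H\geq (c_1/L^2)|\lambda|^{-\alpha(H)}$ by Lemma~\ref{Lemma.SPD}(\ref{SPD.Asymptotic}) and $\widetilde\nu\geq 1/L$, and since $H$ stays in the compact set $\Theta_H\subset(0,1]$, there is a finite constant $B$ (depending only on $L,\vartheta_0$) with $\frac{1}{4\pi}\int_{-\pi}^\pi\log h_{\widetilde\vartheta}^n\,d\lambda\geq -B$ for all $n$ and all $\widetilde\vartheta\in\Theta_2^n(\xi,L)$. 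Thus it remains to bound the nonnegative quadratic form from below by a large quantity.

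The device for the quadratic form is a smooth, bounded truncation of the weight. For $M>0$ put $k_{M,\widetilde\vartheta}^n:=1/(h_{\widetilde\vartheta}^n+1/M)$, so that $0\leq k_{M,\widetilde\vartheta}^n\leq 1/h_{\widetilde\vartheta}^n$ and, by monotonicity of $Q_n$ in its weight (Lemma~\ref{Basic.BilinearForm}(\ref{NonDecreasing.QuadraticForm})), $Q_n(\widetilde{\mathbf Y}_n,1/h_{\widetilde\vartheta}^n)\geq Q_n(\widetilde{\mathbf Y}_n,k_{M,\widetilde\vartheta}^n)$. The point is that $k_{M,\widetilde\vartheta}^n$ is bounded by $M$ with bounded $\widetilde\vartheta$-derivatives, hence satisfies Assumption~\ref{Assumption.SPD} with the trivial exponents $\beta_0\equiv\beta_1\equiv 0$; because $\alpha(H_0)<1$, the associated set $\Theta_0(\xi')$ equals all of $\Theta_H\times[1/L,L]$ for small $\xi'$ and in particular contains $\Theta_2^n(\xi,L)$. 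Proposition~\ref{Uniform.Convergence.QuadraticForm.DeltaY} then yields, for each fixed $M$, the uniform convergence $\sup_{\widetilde\vartheta\in\Theta_2^n(\xi,L)}|Q_n(\widetilde{\mathbf Y}_n,k_{M,\widetilde\vartheta}^n)-Q_{\vartheta_0}(k_{M,\widetilde\vartheta})|=o_P(1)$, where $k_{M,\widetilde\vartheta}(\lambda)=1/(\widetilde\nu^2 f_H(\lambda)+1/M)$ (the noise part of $h_{\widetilde\vartheta}^n$ vanishing uniformly under $(H.\ref{Assumption.HighFrequency})-(H.\ref{Assumption.mn})$, as in the proof of Lemma~\ref{PointwiseConvergence.QuadraticForm.Gaussian}) and $Q_{\vartheta_0}(k)=\int_{-\pi}^\pi\eta_0^2 f_{H_0}(\lambda)k(\lambda)\,d\lambda$ as in (\ref{Def.Q.Limit}).

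The remaining, and main, step is to show that this deterministic limit can be made as large as desired uniformly over the region by letting $M\to\infty$ and $\xi\downarrow 0$. Using $f_{H_0}\geq c_1|\lambda|^{-\alpha(H_0)}$, $f_H\leq c_2|\lambda|^{-\alpha(H)}$ and $\widetilde\nu\leq L$, monotone convergence gives $\lim_{M\to\infty}Q_{\vartheta_0}(k_{M,\widetilde\vartheta})\geq\frac{\eta_0^2 c_1}{L^2 c_2}\int_{-\pi}^\pi|\lambda|^{\alpha(H)-\alpha(H_0)}\,d\lambda$, and on $\Theta_{H,2}(\xi)$ the exponent satisfies $\alpha(H)-\alpha(H_0)<-1+\xi$, so this integral is bounded below by a quantity of order $\xi^{-1}$ uniformly in $H\in\Theta_{H,2}(\xi)$; call the resulting uniform lower bound $S(\xi)$, so that $S(\xi)\to\infty$ as $\xi\downarrow 0$. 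Hence, given $\acute\iota,L$, I would first choose $\xi_0$ so small that $S(\xi)>4\pi\big(U_{\vartheta_0}(\vartheta_0)+\acute\iota+B\big)$ for all $\xi<\xi_0$, and then for each such $\xi$ choose $M=M(\xi)$ large enough that $\inf_{\widetilde\vartheta\in\Theta_2^n(\xi,L)}Q_{\vartheta_0}(k_{M,\widetilde\vartheta})>4\pi\big(U_{\vartheta_0}(\vartheta_0)+\acute\iota+B\big)$. Combining the three pieces, on the event of the uniform convergence one gets $\inf_{\Theta_2^n(\xi,L)}U_{n,0}\geq -B+\frac{1}{4\pi}\inf_{\Theta_2^n(\xi,L)} Q_n(\widetilde{\mathbf Y}_n,k_{M,\widetilde\vartheta}^n)>U_{\vartheta_0}(\vartheta_0)+\acute\iota$ with probability tending to one, which is the claim. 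The main obstacle is precisely this step: the pointwise and uniform convergence theorems fail on $\Theta_2^n(\xi,L)$ because $1/h_{\widetilde\vartheta}^n$ violates the integrability exponent required by Assumption~\ref{Assumption.SPD}, and the whole difficulty is resolved by passing to the bounded surrogate $k_{M,\widetilde\vartheta}^n$, carefully ordering the quantifiers between $M$ and $\xi$, and quantifying the $\xi^{-1}$ blow-up of the limiting integral through the two-sided spectral bounds of Lemma~\ref{Lemma.SPD}.
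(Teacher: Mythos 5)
Your argument is sound and reaches the right conclusion, but it takes a genuinely different route from the paper's. The paper never truncates the weight: it uses the two-sided power bounds $c_1|\lambda|^{-\alpha(H)}<h_{H,\widetilde\nu}^n(\lambda)<c_2|\lambda|^{-\alpha(H)}$ on $\Theta_H\times[1/L,L]$ (from Lemma~\ref{Lemma.SPD} and (H.\ref{Assumption.mn})) to dominate, for every $\widetilde\vartheta_2=(H_2,\widetilde\nu_2)\in\Theta_2^n(\xi,L)$, the $\widetilde\vartheta_2$-dependent weight $1/h^n_{\widetilde\vartheta_2}$ from below by a multiple of one \emph{fixed} power weight: since $\alpha(H_2)\leq\alpha(H_1^\dagger)$ for $H_1^\dagger:=H_0+(\xi-1)/2$, i.e. $\alpha(H_1^\dagger)=\alpha(H_0)-1+\xi$, one has $|\lambda|^{\alpha(H_2)}\geq\pi^{-2H_+}|\lambda|^{\alpha(H_1^\dagger)}$, so the infimum of the quadratic form over the whole region is bounded below by a single quadratic form with weight $|\lambda|^{\alpha(H_0)-1+\xi}$; one application of the pointwise convergence result, Corollary~\ref{Pointwise.Convergence.DeltaY}, identifies its limit, which is of order $\pi^\xi/\xi$. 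You instead keep the $\widetilde\vartheta$-dependent weights, restore admissibility by the truncation $k_{M,\widetilde\vartheta}^n=1/(h_{\widetilde\vartheta}^n+1/M)$, invoke the uniform convergence result, Proposition~\ref{Uniform.Convergence.QuadraticForm.DeltaY}, for each fixed $M$, and run a double limit in $\xi$ and $M$ against the same $1/\xi$ blow-up of $\int_{-\pi}^\pi|\lambda|^{\alpha(H)-\alpha(H_0)}\,\mathrm{d}\lambda$ on $\Theta_{H,2}(\xi)$. The paper's single-weight domination is shorter and needs only pointwise convergence; your truncation avoids that domination trick but pays with the extra parameter $M$ and the heavier uniform theorem.

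Two steps in your write-up need patching, though neither is fatal. First, the truncated weight does not satisfy (C.\ref{Assumption.Derivative.SPD}) with $\beta_1\equiv 0$: by Lemma~\ref{Lemma.SPD} the $H$-derivative of $f_H$ carries a logarithmic singularity, $|\partial_H f_H(\lambda)|\leq c_{3,\iota}|\lambda|^{-\alpha(H)-\iota}$, so $|\partial_H k_{M,\widetilde\vartheta}^n(\lambda)|$ is only $O(|\lambda|^{-\iota})$ near $\lambda=0$, not bounded; take $\beta_1\equiv\iota$ with $\iota>0$ small, which still yields $\Theta_{H,0}(\xi')=\Theta_H$ for small $\xi',\iota$ because $\alpha(H_0)<1$ ($\vartheta_0$ being interior). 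Second, the existence of a single $M(\xi)$ with $\inf_{\widetilde\vartheta}Q_{\vartheta_0}(k_{M,\widetilde\vartheta})$ above the threshold does not follow from pointwise monotone convergence alone, which is all you cite; it needs a Dini-type compactness argument: for $c<\inf_{\widetilde\vartheta}Q_{\vartheta_0}\bigl(1/(\widetilde\nu^2 f_H)\bigr)$, the sets $\bigl\{\widetilde\vartheta\in\overline{\Theta_{H,2}(\xi)}\times[1/L,L]:Q_{\vartheta_0}(k_{M,\widetilde\vartheta})\leq c\bigr\}$ are closed (each $Q_{\vartheta_0}(k_{M,\cdot})$ is continuous by dominated convergence), decrease in $M$, and have empty intersection, so one of them is empty. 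With these two repairs your proof is complete.
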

	\begin{proof}
		Fix $\acute\iota, L>0$. Note that Lemma~\ref{Lemma.SPD} and the assumption (H.\ref{Assumption.mn}) yield that there exist constants $c_1, c_2>0$, which are independent of $(H,\widetilde\nu,\lambda)\in\Theta_H\times[1/L,L]\times[-\pi,\pi]\backslash\{0\}$ and $n\in\mathbb{N}$, such that
		\begin{equation*}
		c_1|\lambda|^{-\alpha(H)}<h_{H,\widetilde\nu}^n(\lambda)<c_2|\lambda|^{-\alpha(H)}
		\end{equation*}
		holds for any $(H,\widetilde\nu,\lambda)\in\Theta_H\times[1/L,L]\times[-\pi,\pi]\backslash\{0\}$ and $n\in\mathbb{N}$, where $h_{H,\widetilde\nu}^n$ is given in (\ref{Rescaled.SPD}). 
		Then for any $\widetilde\vartheta_1=(H_1,\widetilde\nu_1)$ and $\widetilde\vartheta_2=(H_2,\widetilde\nu_2)$ satisfying $\alpha(H_1)\geq\alpha(H_2)$, we obtain
		\begin{align}
		U_{n,0}(\widetilde\vartheta_2)\geq&\frac{1}{4\pi}\int_{-\pi}^\pi\log\left(c_1|\lambda|^{-\alpha(H_2)}\right)\, \mathrm{d}\lambda + \frac{1}{4\pi c_2}\int_{-\pi}^\pi I_n\left(\lambda,\widetilde{\mathbf{Y}}_n\right)|\lambda|^{\alpha(H_2)}\, \mathrm{d}\lambda \nonumber \\
		\geq& \frac{\log{c_1}}{2} -\frac{\alpha(H_1)}{2}\left(\log\pi - 1\right) + \frac{1}{4\pi^{1+2H_+} c_2}\int_{-\pi}^\pi I_n\left(\lambda,\widetilde{\mathbf{Y}}_n\right)|\lambda|^{\alpha(H_1)}\, \mathrm{d}\lambda, \label{Lower.Estimate1}
		\end{align}
		where we use the elementary inequality $|\lambda|^{\alpha(H_1)-\alpha(H_2)}\leq\pi^{2H_+}$ for any $\lambda\in[-\pi,\pi]$ in the second inequality. 
		Let $H_1^\dagger\equiv H_1^\dagger(\xi):=H_0+(\xi-1)/2\in\partial\Theta_1^n(\xi,L)$, where $\partial\Theta_1^n(\xi,L)$ denotes the boundary of the set $\Theta_1^n(\xi,L)$. Since the relation $\alpha(H_1^\dagger)\geq\alpha(H_2)$ holds for any $\widetilde\vartheta_2=(H_2,\widetilde\nu_2)\in\Theta_2^n(\xi,L)$, we can obtain the following inequality from (\ref{Lower.Estimate1}):
		\begin{align}\label{Lemma.Theta2.temp2}
		\inf_{\widetilde\vartheta_2\in\Theta_2^n(\xi,L)}U_{n,0}(\widetilde\vartheta_2) \geq\frac{\log{c_1}}{2} -\frac{\alpha(H_0)-1+\xi}{2}\left(\log\pi - 1\right) + \frac{1}{4\pi^{1+2H_+} c_2}\int_{-\pi}^\pi I_n\left(\lambda,\widetilde{\mathbf{Y}}_n\right)|\lambda|^{\alpha(H_0)-1+\xi}\, \mathrm{d}\lambda.
		\end{align}
		Moreover, Corollary~\ref{Pointwise.Convergence.DeltaY} in the case of $k_\vartheta(\lambda):=|\lambda|^{\alpha(H)-1+\xi}$ and $\beta_0(H):=-\alpha(H)+1-\xi$ yields that the third term of the rhs of (\ref{Lemma.Theta2.temp2}) converges to 
		\begin{equation*}
		\frac{1}{4\pi^{1+2H_+} c_2}\int_{-\pi}^\pi \eta_0^2f_{H_0}(\lambda)|\lambda|^{\alpha(H_0)-1+\xi}\, \mathrm{d}\lambda
		\end{equation*}
		in probability as $n\to\infty$ and we obtain the following inequality:
		\begin{equation}\label{Lemma.Theta2.temp3}
		\frac{1}{4\pi^{1+2H_+} c_2}\int_{-\pi}^\pi \eta_0^2f_{H_0}(\lambda)|\lambda|^{\alpha(H_0)-1+\xi}\, \mathrm{d}\lambda\geq \frac{c_1}{4\pi^{1+2H_+} c_2}\int_{-\pi}^\pi |\lambda|^{-1+\xi}\, \mathrm{d}\lambda \geq\frac{c_1}{2\pi^{1+2H_+} c_2}\cdot\frac{\pi^\xi}{\xi}.
		\end{equation}
		Note that we can make the rhs of (\ref{Lemma.Theta2.temp3}) arbitrarily large if we take $\xi>0$ sufficiently small. In particular, if we take $\xi\equiv\xi(\acute\iota)\in(0,1)$ such that
		\begin{equation*}
		\frac{c_1}{2\pi^{1+2H_+} c_2}\cdot\frac{\pi^\xi}{\xi} > U_{\vartheta_0}(\vartheta_0) + \acute\iota
		\end{equation*}
		holds, then we can obtain the following convergence from (\ref{Lemma.Theta2.temp2}), (\ref{Lemma.Theta2.temp3}) and Corollary~\ref{Pointwise.Convergence.DeltaY} again:
		\begin{equation}\label{Step2.1.Last}
		\lim_{n\to\infty} P\left[\inf_{\widetilde\vartheta\in\Theta_2^n(\xi,L)}U_{n,0}(\widetilde\vartheta)> U_{\vartheta_0}(\vartheta_0) + \acute\iota \right]=1.
		\end{equation}
		Therefore, the conclusion follows from (\ref{Step2.1.Last}).
	\end{proof}
	\begin{lem}
		For any $\acute\iota>0$, there exists a constant $L_0\equiv L_0(\acute\iota,\vartheta_0)>0$ such that for any $L\geq L_0$,
		\begin{equation*}
		\lim_{n\to\infty}P\left[\inf_{\widetilde\vartheta\in\Theta_2^n(L)}U_{n,0}(\widetilde\vartheta)\leq U_{\vartheta_0}(\vartheta_0)+\acute\iota \right]=0.
		\end{equation*}
	\end{lem}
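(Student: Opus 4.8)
The plan is to exploit the fact that on $\Theta_2^n(L)$ the reparametrized scale $\widetilde\nu$ exceeds $L$, so the logarithmic term in $U_{n,0}$ grows like $\log\widetilde\nu$ and dominates, yielding a purely \emph{deterministic} lower bound that makes the whole argument much softer than that of Lemma~\ref{Main.Lemma.Theta2}. Since the periodogram $I_n(\lambda,\widetilde{\mathbf{Y}}_n)$ is nonnegative and $h_{H,\widetilde\nu}^n>0$, the quotient term in (\ref{Shadow.Estimation.Function}) is nonnegative, so I would first simply discard it:
\begin{equation*}
U_{n,0}(H,\widetilde\nu)\geq \frac{1}{4\pi}\int_{-\pi}^\pi\log h_{H,\widetilde\nu}^n(\lambda)\, \mathrm{d}\lambda.
\end{equation*}
Using $h_{H,\widetilde\nu}^n(\lambda)=\widetilde\nu^2 f_H(\lambda)+\frac{2}{m_n\delta_n^{2H_0}}\ell(\lambda)\geq\widetilde\nu^2 f_H(\lambda)$, which holds because $\ell\geq 0$, the right-hand side is bounded below by $\log\widetilde\nu+\frac{1}{4\pi}\int_{-\pi}^\pi\log f_H(\lambda)\,\mathrm{d}\lambda$.

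Next I would bound the remaining integral below \emph{uniformly in} $H\in\Theta_H$ and independently of $n$. By Lemma~\ref{Lemma.SPD}~(\ref{SPD.Asymptotic}) there is a constant $c_1>0$ with $f_H(\lambda)\geq c_1|\lambda|^{-\alpha(H)}$, whence $\log f_H(\lambda)\geq\log c_1-\alpha(H)\log|\lambda|$. Combined with the elementary identity $\int_{-\pi}^\pi\log|\lambda|\,\mathrm{d}\lambda=2\pi(\log\pi-1)$ and the fact that $\alpha(H)=2H-1$ is bounded on the compact set $\Theta_H$, this gives $\frac{1}{4\pi}\int_{-\pi}^\pi\log f_H(\lambda)\,\mathrm{d}\lambda\geq C$ for a finite constant $C\equiv C(\vartheta_0)$ independent of $H$, $\widetilde\nu$ and $n$; since $\log\pi-1>0$, the worst case is attained at $\alpha(H_+)$. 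Putting the two steps together yields the deterministic inequality
\begin{equation*}
\inf_{\widetilde\vartheta\in\Theta_2^n(L)}U_{n,0}(\widetilde\vartheta)\geq \log L+C,
\end{equation*}
valid for every $n\in\mathbb{N}$, because $\widetilde\nu>L$ throughout $\Theta_{\widetilde\nu,2}^n(L)$.

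Finally, I would choose $L_0\equiv L_0(\acute\iota,\vartheta_0)$ so large that $\log L_0+C>U_{\vartheta_0}(\vartheta_0)+\acute\iota$; this is possible since $U_{\vartheta_0}(\vartheta_0)=\frac{1}{4\pi}\int_{-\pi}^\pi[\log(\eta_0^2 f_{H_0}(\lambda))+1]\,\mathrm{d}\lambda$ is a fixed finite number. Then for every $L\geq L_0$ the event $\{\inf_{\widetilde\vartheta\in\Theta_2^n(L)}U_{n,0}(\widetilde\vartheta)\leq U_{\vartheta_0}(\vartheta_0)+\acute\iota\}$ is empty for all $n$, so its probability is identically $0$ and a fortiori the limit vanishes. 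Unlike Lemma~\ref{Main.Lemma.Theta2}, no stochastic convergence result is required here. The only point that genuinely needs care is the uniformity in $H$ and in $n$ of the lower bound for $\int\log f_H$, which is precisely what the uniform constant $c_1$ of Lemma~\ref{Lemma.SPD} supplies; I expect this to be the main (and mild) obstacle. A further robustness feature worth noting is that discarding the noise contribution $\frac{2}{m_n\delta_n^{2H_0}}\ell(\lambda)$ from below costs nothing, so the argument goes through even though its coefficient need not vanish under (H.\ref{Assumption.mn}).
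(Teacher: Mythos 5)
Your proposal is correct and is essentially the paper's own argument: discard the nonnegative periodogram term and the nonnegative noise term $2(m_n\delta_n^{2H_0})^{-1}\ell$, reduce to the deterministic bound $\inf U_{n,0}\geq \log L+\min_{H\in\Theta_H}\frac{1}{4\pi}\int_{-\pi}^\pi\log f_H(\lambda)\,\mathrm{d}\lambda$, and take $L$ large. The only difference is cosmetic: you make explicit, via the bound $f_H(\lambda)\geq c_1|\lambda|^{-\alpha(H)}$ from Lemma~\ref{Lemma.SPD}, why that minimum over $H$ is finite, a point the paper leaves implicit.
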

	\begin{proof}
		Fix $\acute\iota>0$. Since the inequality
		\begin{equation}\label{Step2.2}
		U_{n,0}(\widetilde\vartheta)\geq \frac{1}{4\pi}\int_{-\pi}^\pi\log\left(\widetilde\nu^2f_H(\lambda)\right)\, \mathrm{d}\lambda 
		\geq \log L +\min_{H\in\Theta_H}\left\{\frac{1}{4\pi}\int_{-\pi}^\pi\log{f_H(\lambda)}\, \mathrm{d}\lambda\right\}.
		\end{equation}
		holds for each $\widetilde\vartheta=(H,\widetilde\nu)\in\Theta_H\times(L,\infty)$ and we can make the rhs of (\ref{Step2.2}) arbitrarily large if we take $L>0$ sufficiently large, the conclusion immediately follows.
	\end{proof}
	\begin{lem}
		For any $\acute\iota>0$, there exists a constant $L_0\equiv L_0(\acute\iota,\vartheta_0)>0$ such that for any $L\geq L_0$,
		\begin{equation*}
		\lim_{n\to\infty}P\left[\inf_{\widetilde\vartheta\in\Theta_1^n(L)}U_{n,0}(\widetilde\vartheta)\leq U_{\vartheta_0}(\vartheta_0)+\acute\iota \right]=0.
		\end{equation*}
	\end{lem}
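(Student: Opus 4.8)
The plan is to show that on the small-$\widetilde\nu$ region the quadratic part of $U_{n,0}$ blows up fast enough to overwhelm the at most logarithmic decay of its log-determinant part, so that $\inf_{\widetilde\vartheta\in\Theta_1^n(L)}U_{n,0}(\widetilde\vartheta)$ grows without bound as $L\to\infty$. Since $\vartheta_0$ is interior, for $n$ large one has $\Theta_{\widetilde\nu,1}^n(L)=[\eta_-\delta_n^{H_+-H_0},1/L)$, and I would decompose $U_{n,0}(H,\widetilde\nu)=A_n(H,\widetilde\nu)+B_n(H,\widetilde\nu)$ with $A_n:=\frac{1}{4\pi}\int_{-\pi}^\pi\log h_{H,\widetilde\nu}^n(\lambda)\,d\lambda$ and $B_n:=\frac{1}{4\pi}Q_n(\widetilde{\mathbf{Y}}_n,1/h_{H,\widetilde\nu}^n)$, where $h_{H,\widetilde\nu}^n$ is the rescaled spectral density (\ref{Rescaled.SPD}).

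First I would bound the log-determinant part from below. Since $h_{H,\widetilde\nu}^n\ge\widetilde\nu^2 f_H$ and $f_H(\lambda)\ge c_1|\lambda|^{-\alpha(H)}$ by Lemma~\ref{Lemma.SPD}~(\ref{SPD.Asymptotic}), integrating $\log(\widetilde\nu^2 c_1|\lambda|^{-\alpha(H)})$ and using the integrability of $\log|\lambda|$ together with the boundedness of $\alpha(H)=2H-1$ on $\Theta_H$ gives a uniform bound $A_n(H,\widetilde\nu)\ge\log\widetilde\nu+c_4$ with $c_4$ independent of $H,\widetilde\nu,n$; in particular $A_n$ is never more negative than $(H_+-H_0)\log\delta_n+O(1)$ on the region. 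The heart of the argument is a matching lower bound on $B_n$. Because $\ell(\lambda)=\pi^{-1}(1-\cos\lambda)$ vanishes quadratically at the origin while $f_H(\lambda)\asymp|\lambda|^{-\alpha(H)}$ vanishes (or blows up) only like $|\lambda|^{1-2H}$, the term $\widetilde\nu^2 f_H$ dominates the noise term $\tfrac{2}{m_n\delta_n^{2H_0}}\ell$ on a low-frequency band $\{|\lambda|\le\lambda_*\}$; there $h_{H,\widetilde\nu}^n\le2\widetilde\nu^2 f_H$, so that
\[
B_n(H,\widetilde\nu)\ \ge\ \frac{1}{8\pi\widetilde\nu^2}\,Q_n\!\left(\widetilde{\mathbf{Y}}_n,\ \Indi_{\{|\lambda|\le\lambda_*\}}/f_H\right).
\]
Replacing $\widetilde{\mathbf{Y}}_n$ by $\widetilde{\mathbf{G}}_n$ via Proposition~\ref{Convergence.QuadraticForm.Remainder} and evaluating the band-restricted quadratic form with Corollary~\ref{Pointwise.Convergence.DeltaY} (weight $\Indi_{\{|\lambda|\le\lambda_*\}}/f_H$, $\beta_0(H)=-\alpha(H)$), this form is bounded below in probability by a strictly positive quantity; matching exponents through $\lambda_*\asymp(\widetilde\nu^2 m_n\delta_n^{2H_0})^{1/(1+2H)}$ then yields $B_n(H,\widetilde\nu)\gtrsim\widetilde\nu^{-2\rho(H)}$ with $\rho(H)=2H_0/(1+2H)\ge\rho_0:=2H_0/(1+2H_+)>0$ uniformly in $H\in\Theta_H$.

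Combining the two estimates gives $U_{n,0}(H,\widetilde\nu)\gtrsim\log\widetilde\nu+c\,\widetilde\nu^{-2\rho_0}$ uniformly on the region, on an event of probability tending to $1$. The elementary function $\widetilde\nu\mapsto\log\widetilde\nu+c\,\widetilde\nu^{-2\rho_0}$ is strictly decreasing on $(0,1/L)$ once $L$ is large, so its infimum over $[\eta_-\delta_n^{H_+-H_0},1/L)$ is attained at the right endpoint, with value $-\log L+c\,L^{2\rho_0}$; choosing $L_0=L_0(\acute\iota,\vartheta_0)$ so that $-\log L+c\,L^{2\rho_0}>U_{\vartheta_0}(\vartheta_0)+\acute\iota$ for all $L\ge L_0$ forces the event in the statement to have probability tending to $0$, which is the claim.

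The main obstacle is the uniform lower bound on the band-restricted quadratic form. Corollary~\ref{Pointwise.Convergence.DeltaY} and the extended Fox--Taqqu estimates of Theorem~\ref{Extension.FT87} are meant for exactly such forms, but two difficulties remain. First, both the band $\{|\lambda|\le\lambda_*\}$ and the lower endpoint $\eta_-\delta_n^{H_+-H_0}$ move with $\widetilde\nu$ and $n$, so the pointwise convergence must be upgraded to a statement uniform in $\widetilde\nu$ down to the moving boundary. Second, for $H\in\Theta_{H,2}$ the weight $1/f_H\asymp|\lambda|^{\alpha(H)}$ concentrates so strongly at the origin that the quadratic form grows with $n$, a regime in which Theorem~\ref{Extension.FT87}~(\ref{FT87.Thm.Part2}) supplies only an upper $o(n^{\,\cdot})$ bound; a genuine lower bound there must be argued directly, in the spirit of the estimate (\ref{Lemma.Theta2.temp2}) used in Lemma~\ref{Main.Lemma.Theta2}.
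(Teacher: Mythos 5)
Your skeleton coincides with the paper's: bound the log-term below by $\log\widetilde\nu+O(1)$, show the quadratic term explodes like a positive power of $1/\widetilde\nu$ uniformly over $\Theta_1^n(L)$ on an event of probability tending to one, and then minimize the resulting deterministic function of $\widetilde\nu$ over $(0,1/L)$, whose infimum $-\log L + cL^{2\rho_0}$ diverges as $L\to\infty$. However, the heart of the lemma is precisely the uniform positive lower bound on the quadratic form, and that is the step you do not prove; the two ``remaining difficulties'' you list at the end are not loose ends but the entire content of the statement. Concretely, the step ``evaluating the band-restricted quadratic form with Corollary~\ref{Pointwise.Convergence.DeltaY}'' is not legitimate as written: that corollary (and Proposition~\ref{Uniform.Convergence.QuadraticForm.DeltaY} and Theorem~\ref{Extension.FT87} behind it) applies to weight families $k_\vartheta^n$ satisfying Assumption~\ref{Assumption.SPD}, indexed by a parameter ranging over a \emph{fixed} set and converging in sup-norm to a fixed limit weight, and it yields convergence for each fixed $\vartheta$. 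Your weight $\Indi_{\{|\lambda|\le\lambda_*\}}/f_H$ has a cutoff $\lambda_*=\lambda_*(n,H,\widetilde\nu)$ moving with $n$ and with the very variable $\widetilde\nu$ being infimized over the $n$-dependent interval $[\eta_-\delta_n^{H_+-H_0},1/L)$, so no cited result gives the bound simultaneously for all such $\widetilde\nu$ on one event. Moreover, on $\Theta_{H,2}(\xi)$ the Fox--Taqqu condition $\alpha(H_0)-\alpha(H)<1$ fails and $\int f_{H_0}(\lambda)|\lambda|^{\alpha(H)}\,\mathrm{d}\lambda=\infty$, so there is no limit statement to invoke there at all. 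This is a genuine gap.

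For comparison, the paper's proof dissolves both difficulties by never introducing a band. It factors the denominator as $h^n_{H,\widetilde\nu}=\widetilde\nu^2\{f_H+2(\widetilde\nu^2m_n\delta_n^{2H_0})^{-1}\ell\}$ and uses the parameter-space constraint $\widetilde\nu\ge\eta_-\delta_n^{H_+-H_0}$ together with (H.\ref{Assumption.mn}) to get $2(\widetilde\nu^2m_n\delta_n^{2H_0})^{-1}\ell(\lambda)\le 2(\eta_-^2m_n\delta_n^{2H_+})^{-1}\ell(\lambda)\le c\,|\lambda|^{-\alpha(H)}$ uniformly in $n$, $H$, $\widetilde\nu$; hence the whole denominator is at most $c'|\lambda|^{-\alpha(H)}$ and the quadratic part of $U_{n,0}$ is at least $\frac{1}{4\pi c'\widetilde\nu^2}\int_{-\pi}^{\pi}I_n(\lambda,\widetilde{\mathbf{Y}}_n)|\lambda|^{\alpha(H)}\,\mathrm{d}\lambda$, a form whose weight depends on $H$ alone. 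The infimum over all of $\Theta_H$ is then reduced to an infimum over $\Theta_{H,1}(\xi)$ by the comparison $|\lambda|^{\alpha(H_1)-\alpha(H_2)}\le\pi^{2H_+}$ (the trick of Lemma~\ref{Main.Lemma.Theta2}), where Proposition~\ref{Uniform.Convergence.QuadraticForm.DeltaY} with $k_\vartheta(\lambda)=|\lambda|^{\alpha(H)}$, $\beta_0(H)=-\alpha(H)$ supplies the uniform positive lower bound. Note finally that the same two ingredients would have repaired your band argument: since $\widetilde\nu^2m_n\delta_n^{2H_0}\ge\eta_-^2m_n\delta_n^{2H_+}$ is bounded away from zero by (H.\ref{Assumption.mn}), your cutoff $\lambda_*$ is bounded below by a positive constant, so the moving band can be replaced by a fixed one; this also shows your exponent-matching bound $\widetilde\nu^{-2\rho(H)}$ needlessly discards the factor $(m_n\delta_n^{2H_0})^{1-\rho(H)}$, which is bounded below, and the cleaner bound of order $\widetilde\nu^{-2}$ (as in the paper, where the prefactor is $L^2$) is available.
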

	\begin{proof}
		Fix $\acute\iota>0$. Without loss of generality, we can assume $\Theta_1^n(L)=\Theta_H\times[\eta_- \delta_n^{H_+-H_0},1/L)$. At first, we can show that for any $\widetilde\vartheta=(H,\widetilde\nu)\in\Theta_1^n(L)$,
		\begin{align}
		U_{n,0}(\widetilde\vartheta)\geq& \frac{1}{4\pi}\int_{-\pi}^\pi\log\left(\widetilde\nu^2f_H(\lambda)\right)\, \mathrm{d}\lambda + \frac{1}{4\pi}\int_{-\pi}^\pi\frac{I_n\left(\lambda,\widetilde{\mathbf{Y}}_n\right)}{\widetilde\nu^2f_H(\lambda)+2(m_n\delta_n^{2H_0})^{-1}\ell(\lambda)}\, \mathrm{d}\lambda \nonumber \\
		\geq& \frac{1}{\widetilde\nu^2}\left(\widetilde\nu^2\log\widetilde\nu +\frac{1}{4\pi}\int_{-\pi}^\pi\frac{I_n\left(\lambda,\widetilde{\mathbf{Y}}_n\right)}{f_H(\lambda)+2(\widetilde\nu^2 m_n\delta_n^{2H_0})^{-1}\ell(\lambda)}\, \mathrm{d}\lambda\right)  + C_1 \nonumber \\
		\geq& L^2\left(\widetilde\nu^2\log\widetilde\nu +\frac{1}{4\pi}\int_{-\pi}^\pi\frac{I_n\left(\lambda,\widetilde{\mathbf{Y}}_n\right)}{f_H(\lambda)+2(\eta_-^2 m_n\delta_n^{2H_+})^{-1}\ell(\lambda)}\, \mathrm{d}\lambda\right) +C_1, \label{Main.Lemma2.temp1}
		\end{align}
		where
		\begin{equation*}
		C_1:=\min_{H\in\Theta_H}\left\{\frac{1}{4\pi}\int_{-\pi}^\pi\log{f_H(\lambda)}\, \mathrm{d}\lambda\right\}.
		\end{equation*}
		Then it suffices to prove that there exist a constant $r>0$ such that
		\begin{equation}\label{Main.Lemma2.temp2}
		\lim_{n\to\infty}P\left[\inf_{H\in\Theta_H}\left\{\int_{-\pi}^\pi\frac{I_n\left(\lambda,\widetilde{\mathbf{Y}}_n\right)}{f_H(\lambda)+2(\eta_-^2 m_n\delta_n^{2H_+})^{-1}\ell(\lambda)}\, \mathrm{d}\lambda\right\}>r\right]=1.
		\end{equation}
		Indeed, if we take sufficiently large $L>0$, the inequality (\ref{Main.Lemma2.temp1}) and (\ref{Main.Lemma2.temp2}) imply 
		\begin{equation*}
		\lim_{n\to\infty}P\left[\inf_{\widetilde\vartheta\in\Theta_1^n(L)}U_{n,0}(\widetilde\vartheta)> U_{\vartheta_0}(\vartheta_0)+\acute\iota \right]=1
		\end{equation*}
		in the similar way to the proof of Lemma~\ref{Main.Lemma.Theta2}. Therefore, the conclusion immediately follows. 
		Since Lemma~\ref{Lemma.SPD} and the assumption (H.\ref{Assumption.mn}) yield that there exists a constant $c>0$ such that for any $n\in\mathbb{N}$, $\lambda\in[-\pi,\pi]\backslash\{0\}$ and $H\in\Theta_H$, 
		\begin{equation*}
		f_H(\lambda)+\frac{2}{\eta_-^2 m_n\delta_n^{2H_+}}\ell(\lambda)<c|\lambda|^{-\alpha(H)},
		\end{equation*}
		the similar argument in the proof of Lemma~\ref{Main.Lemma.Theta2} implies that it suffices to prove that there exist constants $r_0>0$ and $\xi\in(0,1)$ such that
		\begin{equation}\label{Main.Lemma2.temp3}
		\lim_{n\to\infty}P\left[\inf_{H\in\Theta_{H,1}(\xi)}\left\{\int_{-\pi}^\pi I_n\left(\lambda,\widetilde{\mathbf{Y}}_n\right)|\lambda|^{\alpha(H)}\, \mathrm{d}\lambda\right\}>r_0\right]=1
		\end{equation}
		instead of (\ref{Main.Lemma2.temp2}). In the rest of this proof, we will show (\ref{Main.Lemma2.temp3}). Since Proposition~\ref{Uniform.Convergence.QuadraticForm.DeltaY} in the case of the function $k_\vartheta(\lambda):=|\lambda|^{\alpha(H)}$ and $\beta_0(H):=-\alpha(H)$ yields that 
		\begin{equation*}
		\inf_{H\in\Theta_{H,1}(\xi)}\left\{\int_{-\pi}^\pi I_n\left(\lambda,\widetilde{\mathbf{Y}}_n\right)k_\vartheta(\lambda)\, \mathrm{d}\lambda\right\} =\inf_{H\in\Theta_{H,1}(\xi)}\left\{\int_{-\pi}^\pi\eta_0^2f_{H_0}(\lambda)k_\vartheta(\lambda)\, \mathrm{d}\lambda\right\} +o_P(1)
		\end{equation*}
		as $n\to\infty$ and we can take sufficiently small $r_0>0$ satisfying
		\begin{equation*}
		\inf_{H\in\Theta_{H,1}(\xi)}\left\{\int_{-\pi}^\pi\eta_0^2f_{H_0}(\lambda)k_\vartheta(\lambda)\, \mathrm{d}\lambda\right\}>r_0,
		\end{equation*}
		the convergence (\ref{Main.Lemma2.temp3}) follows. Therefore, we finish the proof.
	\end{proof}
	As a result, the conclusion of Proposition~\ref{Consistency.ThetaTilde} follows from (\ref{Consistency.FourTerms}) and the above three Lemmas.
\end{proof}
In the rest of this section, we prove the consistency of the estimator $\widehat\vartheta_n=(\widehat{H}_n,\widehat{\eta}_n)$ as $n\to\infty$.
\begin{proof}[Proof of Theorem~\ref{Consistency.Estimator}]
	Note that the following equality holds:
	\begin{equation*}
	\log\widehat\eta_n - \log\eta_0 = \log\widetilde\eta_n - \log\eta_0 -(\log\delta_n)(\widehat{H}_n-H_0).
	\end{equation*}
	Therefore, it suffices to prove that $\widehat{H}_n-H_0=o_{P}(|\log\delta_n|^{-1})$ as $n\to\infty$ from the above equality and the delta method. 
	In the rest of this proof, we attempt to prove the following convergence:
	\begin{equation}\label{Goal.Proof.Main.Theorem}
	(\log\delta_n)\left(\widetilde\vartheta_n-\vartheta_0\right)\stackrel{n\to\infty}{\longrightarrow}0
	\end{equation}
	in probability, where $\widetilde\vartheta_n:=(\widehat{H}_n,\delta_n^{-H_0}\widehat\nu_n)$. At first, Taylor's theorem yields that
	\begin{equation}\label{Taylor.Score.Estimation.Error}
	\nabla{U}_{n,0}(\widetilde\vartheta_n) - \nabla{U}_{n,0}(\vartheta_0) = \int_0^1\nabla^2{U}_{n,0}\left(\vartheta_0+u\left(\widetilde\vartheta_n-\vartheta_0\right)\right)\, \mathrm{d}u\cdot \left(\widetilde\vartheta_n-\vartheta_0\right).
	\end{equation}
	Here $\nabla{U}_{n,0}(\widetilde\vartheta_n)=o_{P}(\delta_n^{\psi_1})$ as $n\to\infty$ for any $\psi_1>0$ because $\widetilde\vartheta_n:=(\widehat{H}_n,\delta_n^{-H_0}\widehat\nu_n)$ is a minimizer of $U_{n,0}(\widetilde\vartheta)$ and Proposition~\ref{Consistency.ThetaTilde} yields that $\widetilde\vartheta_n$ converges to the interior point $\vartheta_0$ as $n\to\infty$. 
	Moreover, Proposition~\ref{Scaled.Score.Negligibility} yields that $\nabla{U}_{n,0}(\vartheta_0)=o_{P}(\delta_n^{\psi_2})$ as $n\to\infty$ for a certain constant $\psi_2>0$. 
	As a result, the following convergence holds from (\ref{Taylor.Score.Estimation.Error}):
	\begin{equation}\label{Proof.Theorem.Temp1}
	\int_0^1\nabla^2{U}_{n,0}\left(\vartheta_0+u\left(\widetilde\vartheta_n-\vartheta_0\right)\right)\, \mathrm{d}u\cdot (\log\delta_n)\left(\widetilde\vartheta_n-\vartheta_0\right) = o_{P}(1)\ \ \mbox{as}\ \ n\to\infty.
	\end{equation} 
	Since $\nabla^2{U}_{\vartheta_0}\left(\vartheta_0\right)$ is invertible, the convergence (\ref{Goal.Proof.Main.Theorem}) follows from (\ref{Proof.Theorem.Temp1}), Proposition~\ref{Uniform.Convergence.Contrast}, Proposition~\ref{Consistency.ThetaTilde} and Slutsky's theorem.
\end{proof}
\section{Proof of Theorem~2.1}\label{Section.LogRV.Stable.Conv}
In this appendix, we give a proof of Theorem~2.1. Actually, we will show the following limit theorem that is a stronger version of Theorem~2.1.
\begin{thm}\label{General.logRV.CLT}
	Under the same assumption in Theorem~2.1, a sequence of c\`adl\`ag processes
	\begin{equation*}
		\sqrt{m_n}\left(\log\hat\sigma_\cdot^2-\log\int_{\cdot\delta_n}^{(\cdot+1)\delta_n} \sigma_u^2\, \mathrm{d}u\right)
	\end{equation*}
	converges in law to a continuous Gaussian process $G=\{G_s\}_{s\in[0,\infty)}$ given by $G_s:=\sqrt{2}(\acute{B}_{s+1}-\acute{B}_{s})$, $s\in[0,\infty)$, as $n\to\infty$, where $\acute{B}$ is a standard Brownian motion independent of $\mathcal{F}$.
\end{thm}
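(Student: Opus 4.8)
The plan is to prove the functional statement as a stable central limit theorem, reducing the log-realized-variance error to a normalized quadratic-variation error and then invoking a stable martingale-type CLT in the spirit of Jacod and Protter~\cite{JP} and Fukasawa~\cite{F10-SPA,F11-AAP}. Write $V_s^n := \int_{s\delta_n}^{(s+1)\delta_n}\sigma_u^2\,\mathrm{d}u$ for the integrated variance over the window and $h_n := \delta_n/m_n$ for the intraday mesh, reading every window off a common intraday grid so that overlapping windows share their increments. The first reduction is to discard the drift: since $\psi$ is locally bounded and $A$ has finite variation, the contributions of $A$ and of the cross terms to $\hat\sigma_s^2$ are of smaller order than $\sqrt{h_n}$ after normalization, so $\sqrt{m_n}(\hat\sigma_s^2 - V_s^n)$ has the same limit whether we take $\log S = A + M$ or $\log S = M = \int\sigma\,\mathrm{d}B$.

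Next I would linearize the logarithm. With $R_s^n := \hat\sigma_s^2/V_s^n$, a second-order Taylor expansion of $\log$ about $1$ gives $\sqrt{m_n}(\log\hat\sigma_s^2 - \log V_s^n) = \sqrt{m_n}(R_s^n - 1) + o_P(1)$, uniformly in $s$ on compacts, once one checks $\sup_s|R_s^n - 1| = o_P(1)$ together with a uniform bound on $\sqrt{m_n}(R_s^n-1)$. Freezing the volatility over each subinterval — justified by the H\"older continuity of $\log\sigma^2$ used in the preceding appendices — yields the local approximation $\sqrt{m_n}(\hat\sigma_s^2 - V_s^n)/V_s^n \approx m_n^{-1/2}\sum_{j=0}^{m_n-1}\bigl((Z_j^s)^2 - 1\bigr)$, where, conditionally on $\mathcal F$, the $Z_j^s$ are the standardized Brownian increments over the window. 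Here the volatility cancels because $\delta_n\to0$ forces $\delta_n\int\sigma^4/(\int\sigma^2)^2\to1$, which is precisely the mechanism behind the $\sigma$-free limit noted after Theorem~\ref{LogRV.Stable.Conv}.

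With this representation the task becomes a functional CLT for the standardized chi-square sums $X_s^n := m_n^{-1/2}\sum_j((Z_j^s)^2-1)$, which I would establish in two parts. For the finite-dimensional distributions a multivariate stable martingale CLT applies to the martingale differences $(Z_j^s)^2 - 1$; since $\mathrm{Var}((Z)^2-1)=2$, each marginal variance tends to $2$, and for two windows the covariance of $X_s^n$ and $X_{s'}^n$ is governed by the shared grid increments in the overlap, converging to $2(1-|s-s'|)^+$. This matches the covariance of the moving average $\sqrt2(\acute B_{s+1}-\acute B_s)$, and stability together with the deterministic limit of the conditional variance identifies $\acute B$ as a Brownian motion independent of $\mathcal F$. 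For tightness in $\mathbb D([0,\infty))$ I would use a Kolmogorov-type moment criterion, bounding $E[|X_s^n - X_{s'}^n|^4]$ by $C|s-s'|^2$ on compacts via the fourth-moment structure of the $(Z_j)^2$; continuity of the limit then upgrades Skorokhod to uniform convergence on compacts. Restricting to integer times recovers Theorem~\ref{LogRV.Stable.Conv}, since $\sqrt2(\acute B_{t+1}-\acute B_t)$ over disjoint unit intervals is an i.i.d.\ $\sqrt2$-scaled standard Gaussian sequence.

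The main obstacle is that the two rates act simultaneously, $m_n\to\infty$ and $\delta_n\to0$: one cannot first send $m_n\to\infty$ to obtain the classical realized-variance error process and then let $\delta_n\to0$. Consequently the local freezing of $\sigma$ and the overlapping-window covariance computation must be carried out with error bounds uniform in the window index and compatible with both limits at once, and the stable CLT must be applied to a triangular array whose conditional variances tend to a constant only through the interplay of the two rates. Verifying these uniform estimates — and in particular controlling the covariance across overlapping windows that share the common intraday grid — is the technical heart of the argument.
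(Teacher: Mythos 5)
Your overall architecture is sound, and it is genuinely different from the paper's route. The paper never works with the windowed error directly: it forms the \emph{cumulative} error process $Z^n_s=\sqrt{m_n}\bigl(\sum_j(M^n_{\tau^n_{j+1}\wedge s}-M^n_{\tau^n_j\wedge s})^2-\langle M^n\rangle_s\bigr)$, which by It\^o's formula is an \emph{exact} continuous local martingale $2\sqrt{m_n}\sum_j\int(M^n_u-M^n_{\tau^n_j\wedge s})\,\mathrm{d}M^n_u$; it inserts the frozen weights $\sigma^{-2}_{\tau^n_j\delta_n}$ only as predictable integrands, verifies $\langle\widetilde Z^n\rangle_s=2s+o_P(1)$ via the conditional-moment estimates of Lemma~\ref{Lemma.Increment.Martingale}, concludes $\widetilde Z^n\Rightarrow\sqrt2\,\acute B$ from the martingale functional CLT (Theorem~\ref{Martingale-FCLT}, via Dambis--Dubins--Schwarz), and obtains the windowed statement from the continuous map $z\mapsto(s\mapsto z_{s+1}-z_s)$. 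That buys functional convergence, tightness, and the overlapping-window covariance in one stroke. Your route --- finite-dimensional stable CLT for the triangular array of centred chi-square-type summands with limit covariance $2(1-|s-s'|)^+$, plus moment-based tightness --- is more elementary and can be made to work, at the cost of doing the overlap covariance and tightness by hand. One minor repair: since the prelimit paths are piecewise constant, the plain bound $E[|X^n_s-X^n_{s'}|^4]\le C|s-s'|^2$ fails at scales $|s-s'|\lesssim 1/m_n$, so you need the two-factor Billingsley criterion (which works because two jump times cannot both lie in an interval shorter than the intraday mesh).

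The genuine soft spot is your justification of the freezing step. You reduce $\sqrt{m_n}(\hat\sigma^2_s-V^n_s)/V^n_s$ to $m_n^{-1/2}\sum_j((Z^s_j)^2-1)$ ``justified by the H\"older continuity of $\log\sigma^2$'', but pathwise H\"older bounds alone do not suffice under the theorem's hypotheses, which impose \emph{no} rate relation between $m_n$ and $\delta_n$. Writing $t_j$ for the intraday sampling times and $h_n=\delta_n/m_n$ for the mesh, the Riemann-sum error $\sum_j\int_{t_j}^{t_{j+1}}(\sigma^2_u-\sigma^2_{t_j})\,\mathrm{d}u$ is only $O(m_nh_n^{1+H-\epsilon})$ pathwise, which after the $\sqrt{m_n}/\delta_n$ normalisation is $O(m_n^{1/2-H+\epsilon}\delta_n^{H-\epsilon})$; for $H<1/4$ this already diverges with the natural choice $m_n\asymp\delta_n^{-1}$, and the rough regime $H<1/4$ is precisely what the paper cares about. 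The cross terms $2\sigma_{t_j}\Delta B_j\int_{t_j}^{t_{j+1}}(\sigma_u-\sigma_{t_j})\,\mathrm{d}B_u$ carry conditional biases of the same dangerous order. What saves the step is an exact cancellation: both $(\Delta M_j)^2-\int_{t_j}^{t_{j+1}}\sigma^2_u\,\mathrm{d}u$ and $\sigma^2_{t_j}\bigl((\Delta B_j)^2-h_n\bigr)$ are martingale differences with respect to $\{\mathcal{F}_{t_j}\}$, so their difference is conditionally centred with conditional variance $O(h_n^{2+2(H-\epsilon)})$, and the total freezing error after normalisation is $O_P\bigl((\delta_n/m_n)^{H-\epsilon}\bigr)\to0$ under $\delta_n\to0$, $m_n\to\infty$ alone. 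So you must carry out the freezing at the level of martingale differences --- equivalently, apply your stable CLT directly to the array $(\Delta M_j)^2-\int_{t_j}^{t_{j+1}}\sigma^2_u\,\mathrm{d}u$ with conditional moments supplied by estimates of the type in Lemma~\ref{Lemma.Increment.Martingale} --- rather than by pathwise approximation. This is exactly the cancellation that the paper's It\^o-formula representation builds in automatically.
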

We recall the martingale functional central limit theorem in Section~\ref{Summary.Stable.Conv}, a preliminary result used in the proof of Theorem~\ref{General.logRV.CLT} is summarized in Section~\ref{Sec.Preliminary.Stable.Conv} and we prove Theorem~\ref{General.logRV.CLT} in Section~\ref{Sec.Proof.Stable.Conv.LogRV}.
\subsection{Summary of Martingale Functional Central Limit Theorem}\label{Summary.Stable.Conv}
In this subsection, we recall the well-known martingale functional central limit theorem and give its concise proof in the case where local martingales are continuous.
\begin{thm}[Martingale Functional Central Limit Theorem]\label{Martingale-FCLT}
	Let $(\Omega,\mathcal{F},P)$ be a probability space, $\mathbb{F}^n=\{\mathcal{F}_s^n\}_{s\in[0,\infty)}$ be a sequence of filtrations on $(\Omega,\mathcal{F})$ satisfying the usual conditions and $\{Z^n\}_{n\in\mathbb{N}}$ be a sequence of continuous $\mathbb{F}^n$-local martingales. If there exists a continuous function $v:[0,\infty)\to[0,\infty)$ such that for any $s\in[0,\infty)$,
	\begin{equation}\label{Assump.Martingale.FCLT}
		\langle Z\rangle_s\stackrel{n\to\infty}{\longrightarrow}v_s\ \ \mbox{in probability},
	\end{equation}
	then a sequence of the $C_{[0,\infty)}$-valued random variables $\{Z^n\}_{n\in\mathbb{N}}$ converges in law to the time-changed Brownian motion $\acute{B}_v$, where $\acute{B}$ is a standard Brownian motion.
\end{thm}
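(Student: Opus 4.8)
The plan is to establish convergence in law in $C_{[0,\infty)}$ by verifying the two classical ingredients separately: tightness of $\{Z^n\}_{n\in\mathbb{N}}$ and convergence of its finite-dimensional distributions to those of the Gaussian process $\acute B_v$. Since every $Z^n$ is a \emph{continuous} local martingale, the jump-related (Lindeberg-type) hypotheses that appear in the general martingale central limit theorem are vacuous, which is exactly what makes a concise self-contained argument possible.

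For tightness, I would first note that each $\langle Z^n\rangle$ is nondecreasing in $s$, so that passing to the limit in probability in the inequalities $\langle Z^n\rangle_{s_1}\le\langle Z^n\rangle_{s_2}$ shows the deterministic limit $v$ is nondecreasing; being also continuous by assumption, a P\'olya-type argument (pointwise convergence of nondecreasing functions to a continuous nondecreasing limit is locally uniform) upgrades the pointwise convergence $\langle Z^n\rangle_s\to v_s$ to locally uniform convergence in probability. Consequently $\{\langle Z^n\rangle\}$ is C-tight with a continuous deterministic limit, and I would then invoke Rebolledo's criterion (see Jacod and Shiryaev~\cite{JS}): tightness of the predictable quadratic variations implies tightness of the continuous local martingales, the big-jump condition being empty here. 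Hence $\{Z^n\}$ is tight in $C_{[0,\infty)}$ and every limit point is continuous.

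For the finite-dimensional distributions I would use the complex exponential local martingale
\begin{equation*}
E^n_s:=\exp\left(\sqrt{-1}\theta Z^n_s+\frac{\theta^2}{2}\langle Z^n\rangle_s\right),\ \ \theta\in\mathbb{R},
\end{equation*}
which is a continuous local martingale by It\^o's formula. Localizing along the stopping times $\tau^n_K:=\inf\{s:\langle Z^n\rangle_s\geq K\}$ makes the stopped process a bounded martingale, so that for $s\le t$,
\begin{equation*}
E\left[\exp\left(\sqrt{-1}\theta\left(Z^n_{t\wedge\tau^n_K}-Z^n_{s\wedge\tau^n_K}\right)\right)\,\Big|\,\mathcal{F}^n_s\right]=E\left[\exp\left(-\frac{\theta^2}{2}\left(\langle Z^n\rangle_{t\wedge\tau^n_K}-\langle Z^n\rangle_{s\wedge\tau^n_K}\right)\right)\,\Big|\,\mathcal{F}^n_s\right].
\end{equation*}
Because $\langle Z^n\rangle_t\to v_t<\infty$ in probability, the events $\{\tau^n_K\le t\}$ become negligible as $K\to\infty$ uniformly in $n$, and the right-hand side converges to the deterministic constant $\exp(-\tfrac{\theta^2}{2}(v_t-v_s))$. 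Iterating this factorization across a finite mesh $0\le s_1<\cdots<s_k$ shows that the increments of any limit process are independent and centered Gaussian with variances $v_{s_{j+1}}-v_{s_j}$, identifying the limit uniquely as $\acute B_v$.

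The main obstacle I anticipate is the integrability bookkeeping in this last step: the modulus $|E^n_s|=\exp(\tfrac{\theta^2}{2}\langle Z^n\rangle_s)$ need not be uniformly integrable, so the passage from the local-martingale property to the clean conditional characteristic function identity must be routed through the stopping times $\tau^n_K$ and then justified in the double limit $n\to\infty$, $K\to\infty$ using the convergence $\langle Z^n\rangle\to v$ together with the finiteness of $v$. Once tightness and the Gaussian finite-dimensional limits are in hand, Prohorov's theorem and the fact that the finite-dimensional distributions determine a continuous limit law yield the asserted convergence in law of $\{Z^n\}$ to the time-changed Brownian motion $\acute B_v$.
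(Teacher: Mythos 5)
Your overall strategy (tightness via Rebolledo's criterion, plus identification of finite-dimensional distributions through exponential martingales) is a genuinely different route from the paper's proof, which is shorter: the paper applies the Dambis--Dubins--Schwarz theorem to write $Z^n=B^n_{\langle Z^n\rangle}$ for standard Brownian motions $B^n$, upgrades $\langle Z^n\rangle_s\to v_s$ to locally uniform convergence in probability (Theorem V\hspace{-.1em}I.2.15 of Jacod and Shiryaev~\cite{JS}), and concludes from Slutsky's theorem and the continuity of the composition map $(z,v)\mapsto z\circ v$. Your first half is sound: the P\'olya-type upgrade to locally uniform convergence, the resulting C-tightness of $\{\langle Z^n\rangle\}$, and the appeal to Rebolledo's criterion (vacuous jump condition for continuous local martingales) correctly give tightness of $\{Z^n\}$ in $C_{[0,\infty)}$.

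There is, however, a genuine error in the second half: your displayed conditional characteristic function identity is false. What optional sampling applied to the bounded stopped martingale $E^n_{\cdot\wedge\tau^n_K}$ actually gives is
\begin{equation*}
E\left[\exp\left(\sqrt{-1}\theta\,\Delta Z+\tfrac{\theta^2}{2}\,\Delta A\right)\Big|\,\mathcal{F}^n_s\right]=1,\qquad \Delta Z:=Z^n_{t\wedge\tau^n_K}-Z^n_{s\wedge\tau^n_K},\ \ \Delta A:=\langle Z^n\rangle_{t\wedge\tau^n_K}-\langle Z^n\rangle_{s\wedge\tau^n_K},
\end{equation*}
and this cannot be split into $E[e^{\sqrt{-1}\theta\Delta Z}|\mathcal{F}^n_s]=E[e^{-\frac{\theta^2}{2}\Delta A}|\mathcal{F}^n_s]$, because $\Delta Z$ and $\Delta A$ are in general dependent. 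A concrete counterexample: for $Z_t=\int_0^t B_u\,\mathrm{d}B_u=\frac{1}{2}(B_t^2-t)$ one has $E[e^{\sqrt{-1}\theta Z_t}]=e^{-\sqrt{-1}\theta t/2}\left(1-\sqrt{-1}\theta t\right)^{-1/2}$, which is not even real, whereas $E[e^{-\frac{\theta^2}{2}\langle Z\rangle_t}]=E[e^{-\frac{\theta^2}{2}\int_0^t B_u^2\,\mathrm{d}u}]=(\cosh\theta t)^{-1/2}$; the two sides differ already in modulus. The split you wrote is valid only when the bracket increment is (conditionally) deterministic, which is precisely what is \emph{not} assumed for finite $n$.

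The gap is repairable, and the repair is exactly where the hypothesis that $v$ is deterministic enters: keep the exponential martingale intact, write $e^{\sqrt{-1}\theta\Delta Z}=e^{\sqrt{-1}\theta\Delta Z+\frac{\theta^2}{2}\Delta A}\,e^{-\frac{\theta^2}{2}\Delta A}$, replace the second factor by the constant $e^{-\frac{\theta^2}{2}(v_t-v_s)}$, and bound the error in $L^1$ by
\begin{equation*}
e^{\frac{\theta^2}{2}K}\,E\left[\left|e^{-\frac{\theta^2}{2}\Delta A}-e^{-\frac{\theta^2}{2}(v_t-v_s)}\right|\right]\longrightarrow 0\ \ \mbox{as}\ \ n\to\infty,
\end{equation*}
using $\Delta A\le K$, bounded convergence, and $\Delta A\to v_t-v_s$ in probability; the stopping is harmless since for any fixed $K>v_t$ one has $P(\tau^n_K\le t)\to 0$, so no double limit in $K$ is needed. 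This yields $E[e^{\sqrt{-1}\theta\Delta Z}|\mathcal{F}^n_s]\to e^{-\frac{\theta^2}{2}(v_t-v_s)}$ in $L^1$, which is what the iteration over a finite mesh requires, and the rest of your argument (Prohorov plus identification of the Gaussian independent-increment limit) then goes through. As written, though, the pivotal identity your finite-dimensional argument rests on is wrong, not merely a bookkeeping issue.
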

\begin{proof}
	At first, Dambis-Dubins-Schwarz's theorem, see Karatzas and Shreve~\cite{KS}, Theorem 3.4.6, yields that there exists a sequence of standard Brownian motions $\{B^n\}_{n\in\mathbb{N}}$ such that for each $n\in\mathbb{N}$,
	\begin{equation*}
		Z^n=B^n_{\langle Z^n\rangle}\ \ \mbox{$P$-a.s.}.
	\end{equation*}
	Note that, since $\langle Z\rangle$ is non-negative and non-decreasing and $v$ is continuous, the assumption (\ref{Assump.Martingale.FCLT}) implies that for any $s\in[0,\infty)$,
	\begin{equation}\label{Assump2.Martingale.FCLT}
		\sup_{0\leq u\leq s}|\langle Z\rangle_u-v_u|=o_P(1)\ \ \mbox{as}\  \ n\to\infty
	\end{equation}
	by using Theorem V\hspace{-.1em}I.2.15 in Jacod and Shiryaev~\cite{JS}. Moreover, (\ref{Assump2.Martingale.FCLT}) and the Slutsky's theorem yield that $C_{[0,\infty)}^2$-valued process $(B^n,\langle Z^n\rangle)$ converges in law to $(\acute{B},v)$ as $n\to\infty$, where $\acute{B}$ is a standard Brownian motion. Therefore, the conclusion follows from the above convergence in law and the continuous mapping theorem since $\psi:C_{[0,\infty)}^2\to C_{[0,\infty)}$ defined by $\psi(z,v):=z\circ v$ is continuous in the similar argument to Billingsley~\cite{Billingsley}, p.145.
\end{proof}
\begin{rem}
	In Theorem~\ref{Martingale-FCLT}, it is always possible to take a standard Brownian motion $\acute{B}$ independent of $\mathcal{F}$.
\end{rem}
\subsection{Notation and Preliminaries}\label{Sec.Preliminary.Stable.Conv}
In this subsection, we summarize notation and a preliminary result used in the proof of Theorem~\ref{General.logRV.CLT}. In the rest of this section, we consider a sequence of filtrations $\mathbb{F}^n:=\{\mathcal{F}_{s\delta_n}\}_{s\in[0,\infty)}$ and sequences of $\mathbb{F}^n$-martingales $M^n=\{M_s^n\}_{s\in[0,\infty)]}$ and $B^n=\{B_s^n\}_{s\in[0,\infty)]}$ defined by
\begin{equation*}
	M^n_s:=\delta_n^{-\frac{1}{2}}M_{s\delta_n},\ \ B^n_s:=\delta_n^{-\frac{1}{2}}B_{s\delta_n}.
\end{equation*}
Moreover, we set $\tau_j^n:=j/m_n$ for $j\in\mathbb{N}\cup\{0\}$ and $N_s[\tau^n]:=\max\{j\in\mathbb{N}\cup\{0\}:\tau_j^n\leq s\}$ for $s\in[0,\infty)$.  
In the following lemma, we will show that the assumption of the asset price process $S$ introduced in Section~2.1 in the original article implies the similar conditions introduced in Fukasawa~\cite{F10-SPA}. Note that, by localization argument, we can also assume without loss of generality that $\kappa$ is bounded and so the volatility process $\sigma^2$ is the H\"older-continuous.
\begin{lem}\label{Lemma.Increment.Martingale}
	For any $k,n\in\mathbb{N}$ and $s\in[0,\infty)$, as $n\to\infty$,
	\begin{align*}
		&\sup_{j=0,1,\cdots,N_s[\tau^n]}\left|E[(M^n_{\tau_{j+1}^n}-M^n_{\tau_j^n})^{2k}|\mathcal{F}_{\tau_j^n}^n] -\sigma_{\tau_j^n\delta_n}^{2k}(2k-1)!!\left(\frac{1}{m_n}\right)^k\right| =o_P\left(\left(\frac{1}{m_n}\right)^k\right), \\
		&\sup_{j=0,1,\cdots,N_s[\tau^n]}\left|E[(M^n_{\tau_{j+1}^n}-M^n_{\tau_j^n})^{2k-1}|\mathcal{F}_{\tau_j^n}^n]\right| =o_P\left(\left(\frac{1}{m_n}\right)^{k-1/2}\right).
	\end{align*}
\end{lem}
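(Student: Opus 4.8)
The plan is to verify the claimed conditional-moment asymptotics by ``freezing'' the volatility at the left endpoint of each mesh interval and comparing the martingale increment with a genuine Gaussian increment. First I would invoke the localization indicated in the statement: since $\kappa$ is bounded, $\log\sigma^2=\log\sigma_0^2+\int_0^\cdot\kappa_s\,\mathrm{d}s+\eta W^H$ has locally H\"older continuous paths of any order $H-\psi$ with $\psi\in(0,H)$, hence so does $\sigma=\exp(\tfrac12\log\sigma^2)$. Because the two assertions are $o_P$ statements, it suffices to argue on events on which $\sup_{[0,T_0]}|\sigma|$ and the H\"older seminorm $C_\psi:=\sup_{s\neq u}|\sigma_s-\sigma_u|/|s-u|^{H-\psi}$ are bounded by a deterministic constant $K$, since such events have probability tending to one.

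Writing $\Delta M_j:=M_{\tau_{j+1}^n\delta_n}-M_{\tau_j^n\delta_n}=\int_{\tau_j^n\delta_n}^{\tau_{j+1}^n\delta_n}\sigma_u\,\mathrm{d}B_u$, I would decompose $\Delta M_j=\sigma_{\tau_j^n\delta_n}\,\Delta B_j+R_j$, where $\Delta B_j:=B_{\tau_{j+1}^n\delta_n}-B_{\tau_j^n\delta_n}$ and $R_j:=\int_{\tau_j^n\delta_n}^{\tau_{j+1}^n\delta_n}(\sigma_u-\sigma_{\tau_j^n\delta_n})\,\mathrm{d}B_u$. The frozen part is the key: $\Delta B_j$ is independent of $\mathcal{F}_{\tau_j^n\delta_n}$ and distributed as $N(0,\delta_n/m_n)$, while $\sigma_{\tau_j^n\delta_n}$ is $\mathcal{F}_{\tau_j^n\delta_n}$-measurable, so the conditional moments of $\sigma_{\tau_j^n\delta_n}\Delta B_j$ are computed exactly from Gaussian moments, giving $\sigma_{\tau_j^n\delta_n}^{2k}(2k-1)!!(\delta_n/m_n)^k$ in the even case and $0$ in the odd case. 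Recalling the scaling $(M^n_{\tau_{j+1}^n}-M^n_{\tau_j^n})^{p}=\delta_n^{-p/2}(\Delta M_j)^p$, these are precisely the asserted leading terms after multiplication by $\delta_n^{-k}$.

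It then remains to show that the remainder $R_j$ is negligible. Using the conditional Burkholder--Davis--Gundy inequality together with the H\"older bound $|\sigma_u-\sigma_{\tau_j^n\delta_n}|\le K|u-\tau_j^n\delta_n|^{H-\psi}$, which holds pathwise on the localized event, I would obtain, for a constant $c$ independent of $j$ and $n$ and uniformly in $j$,
\begin{equation*}
E\big[|R_j|^{2i}\,\big|\,\mathcal{F}_{\tau_j^n\delta_n}\big]^{1/2}\le c\Big(\tfrac{\delta_n}{m_n}\Big)^{i(H-\psi+1/2)},\qquad E\big[|\sigma_{\tau_j^n\delta_n}\Delta B_j|^{2(p-i)}\,\big|\,\mathcal{F}_{\tau_j^n\delta_n}\big]^{1/2}\le c\Big(\tfrac{\delta_n}{m_n}\Big)^{(p-i)/2}.
\end{equation*}
Expanding $(\Delta M_j)^p=\sum_{i=0}^p\binom{p}{i}(\sigma_{\tau_j^n\delta_n}\Delta B_j)^{p-i}R_j^i$ and applying conditional Cauchy--Schwarz to each $i\ge 1$ term, I find that the $i$-th contribution is bounded by a multiple of $(\delta_n/m_n)^{p/2+i(H-\psi)}$, so the total correction is of order $(\delta_n/m_n)^{p/2+(H-\psi)}$, dominated by $i=1$. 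After multiplying by $\delta_n^{-p/2}$ this becomes $(1/m_n)^{p/2}\cdot(\delta_n/m_n)^{H-\psi}$, which is $o_P((1/m_n)^{p/2})$ for both $p=2k$ and $p=2k-1$, since $H>0$ allows the choice $\psi<H$ and $\delta_n/m_n\to0$; as the bound is deterministic on the localized event, it holds uniformly in $j\le N_s[\tau^n]$.

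The main obstacle I anticipate is the careful handling of the localization, so that the random H\"older seminorm $C_\psi$ may be treated as a deterministic constant inside the conditional expectations defining $E[|R_j|^{2i}\mid\mathcal{F}_{\tau_j^n\delta_n}]$; once the freezing decomposition and the conditional BDG estimate are in place, the remaining moment bookkeeping is routine. These are exactly the estimates needed to match the hypotheses imposed in Fukasawa~\cite{F10-SPA}.
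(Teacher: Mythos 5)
Your proposal is correct and follows essentially the same route as the paper's proof: freezing $\sigma$ at the left endpoint $\tau_j^n\delta_n$, computing the conditional moments of the frozen Gaussian increment exactly (yielding $(2k-1)!!(1/m_n)^k$ in the even case and $0$ in the odd case), and controlling the remainder $\delta_n^{-1/2}\int(\sigma_u-\sigma_{\tau_j^n\delta_n})\,\mathrm{d}B_u$ by the conditional Burkholder--Davis--Gundy inequality together with the H\"older continuity of $\sigma$ obtained by localization, then combining the cross terms via binomial expansion and conditional Cauchy--Schwarz. The only differences are cosmetic: you work with unscaled increments and rescale at the end, and you spell out the localization event more explicitly than the paper does.
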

\begin{proof}
	Since we have
	\begin{equation*}
		M^n_s-M^n_v=\sigma_{v\delta_n}(B^n_s-B^n_v)+\delta_n^{-\frac{1}{2}}\int_{v\delta_n}^{s\delta_n}(\sigma_u-\sigma_{v\delta_n})\, \mathrm{d}B_u,\ \ 0\leq v\leq s<\infty,
	\end{equation*}
	the binomial theorem yields that for any $k\in\mathbb{N}$,
	\begin{align}
		&E[(M^n_{\tau_{j+1}^n}-M^n_{\tau_j^n})^k|\mathcal{F}_{\tau_j^n}^n] -\sigma_{\tau_j^n\delta_n}^kE[(B^n_{\tau_{j+1}^n}-B^n_{\tau_j^n})^k|\mathcal{F}_{\tau_j^n}^n] \nonumber \\
		=&\sum_{r=1}^k {}_k\mathrm{C}_r \sigma_{\tau_j^n\delta_n}^{k-r}E\left[(B^n_{\tau_{j+1}^n}-B^n_{\tau_j^n})^{k-r}\left(\delta_n^{-\frac{1}{2}}\int_{\tau_j^n\delta_n}^{\tau_{j+1}^n\delta_n}(\sigma_u-\sigma_{\tau_j^n\delta_n})\, \mathrm{d}B_u\right)^r\biggl|\mathcal{F}_{\tau_j^n}^n\right]. \label{Martingale.Conditional.Increments.Decomp}
	\end{align}
	Since the Brownian motion $B$ enjoys stationary and independent increments properties, we have
	\begin{equation}\label{Bm.Conditional.Increments}
		E[(B^n_{\tau_{j+1}^n}-B^n_{\tau_j^n})^{2k}|\mathcal{F}_{\tau_j^n}^n]=(2k-1)!!\left(\frac{1}{m_n}\right)^k,\ \ E[(B^n_{\tau_{j+1}^n}-B^n_{\tau_j^n})^{2k-1}|\mathcal{F}_{\tau_j^n}^n]=0
	\end{equation}
	for any $k\in\mathbb{N}$. Moreover, the Burkholder-Davis-Gundy inequality and the H\"older-continuity of $\sigma$ yield that for any $k\in(0,\infty)$, there exists a constant $C_k>0$ such that for each $s\in[0,\infty)$,
	\begin{align} \label{BDG.Lem1}
		&\sup_{j=1,\cdots,N_s[\tau^n]}E\left[\left|\delta_n^{-\frac{1}{2}}\int_{\tau_j^n\delta_n}^{\tau_{j+1}^n\delta_n}(\sigma_u-\sigma_{\tau_j^n\delta_n})\, \mathrm{d}B_u\right|^k\biggl|\mathcal{F}_{\tau_j^n}^n\right]  \\
		&\leq C_k\sup_{j=1,\cdots,N_s[\tau^n]}E\left[\left|\delta_n^{-1}\int_{\tau_j^n\delta_n}^{\tau_{j+1}^n\delta_n}(\sigma_u-\sigma_{\tau_j^n\delta_n})^2\, \mathrm{d}u\right|^{k/2}\biggl|\mathcal{F}_{\tau_j^n}^n\right] =o_P\left(\left(\frac{1}{m_n}\right)^{k/2}\right)  \nonumber
	\end{align}
	as $n\to\infty$. Then the conclusion follows from (\ref{Bm.Conditional.Increments}) and (\ref{BDG.Lem1}) by using Cauchy-Schwarz's inequality to the rhs of (\ref{Martingale.Conditional.Increments.Decomp}).
\end{proof}
\subsection{Proof of Theorem~\ref{General.logRV.CLT}}\label{Sec.Proof.Stable.Conv.LogRV}
Before proving Theorem~\ref{General.logRV.CLT}, we will show the following theorem.
\begin{thm}\label{Stable.Conv.Thm2}
	Consider sequences of continuous $\mathbb{F}^n$-local martingales $Z^n=\{Z_s^n\}_{s\in[0,\infty)}$ and continuous stochastic processes $\Sigma^n=\{\Sigma_s^n\}_{s\in[0,\infty)}$ respectively given by
	\begin{equation*}
		Z_s^n:=\sqrt{m_n}\left(\sum_{j=0}^\infty\left(M^n_{\tau_{j+1}^n\wedge{s}}-M^n_{\tau_{j}^n\wedge{s}}\right)^2 - 
		\frac{1}{\delta_n}\int_0^{s\delta_n} \sigma_u^2\, \mathrm{d}u
		\right),\ \ \Sigma_s^n:=\frac{1}{\delta_n}\int_{s\delta_n}^{(s+1)\delta_n}\sigma_u^2\, \mathrm{d}u.
	\end{equation*}
	Then a sequence of the $C_{[0,\infty)}$-valued random variables $Y^n=\{Y_s^n\}_{s\in[0,\infty)}$ given by $Y_s^n:=(Z_{s+1}^n-Z_s^n)/\Sigma_s^n$, $s\in[0,\infty)$, converges in law to the continuous Gaussian process $G=\{G_s\}_{s\in[0,\infty)}$ defined in Theorem~\ref{General.logRV.CLT}. 
\end{thm}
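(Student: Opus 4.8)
The plan is to recognize $Z^n$ as an explicit stochastic integral, to identify the limit of its quadratic variation, and then to invoke the martingale functional central limit theorem (Theorem~\ref{Martingale-FCLT}) together with the normalization by $\Sigma^n$, the crucial point being that the normalization cancels exactly the random factor appearing in the limiting time change.

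First I would rewrite $Z^n$. Writing $r_n(u):=\tau^n_{N_u[\tau^n]}$ for the last sampling time preceding $u$, an application of It\^o's formula to $t\mapsto(M^n_t-M^n_{\tau^n_j})^2$ on each cell, followed by summation over the cells, telescopes the bracket terms into $\langle M^n\rangle_s=\delta_n^{-1}\int_0^{s\delta_n}\sigma_u^2\,\mathrm{d}u$, giving
\[
Z^n_s=2\sqrt{m_n}\int_0^s\left(M^n_u-M^n_{r_n(u)}\right)\mathrm{d}M^n_u,
\]
which is manifestly a continuous $\mathbb{F}^n$-local martingale with
\[
\langle Z^n\rangle_s=4m_n\int_0^s\left(M^n_u-M^n_{r_n(u)}\right)^2\sigma_{u\delta_n}^2\,\mathrm{d}u.
\]
The key step is then to show $\langle Z^n\rangle_s\to 2\sigma_0^4\,s$ in probability for each $s$. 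By the localization noted before Lemma~\ref{Lemma.Increment.Martingale} I may assume $\sigma$ is H\"older continuous and bounded. On the cell $[\tau^n_j,\tau^n_{j+1})$ I would replace $M^n_u-M^n_{\tau^n_j}$ by $\sigma_{\tau^n_j\delta_n}(B^n_u-B^n_{\tau^n_j})$ and $\sigma_{u\delta_n}$ by $\sigma_{\tau^n_j\delta_n}$, the errors being controlled by the H\"older modulus and the conditional moment bounds of Lemma~\ref{Lemma.Increment.Martingale}. Each cell then contributes $4m_n\sigma^4_{\tau^n_j\delta_n}\int_{\tau^n_j}^{\tau^n_{j+1}}(B^n_u-B^n_{\tau^n_j})^2\,\mathrm{d}u$, whose conditional mean is $2\sigma^4_{\tau^n_j\delta_n}/m_n$; summing over the $\approx m_n s$ cells produces the Riemann sum $2\,m_n^{-1}\sum_j\sigma^4_{\tau^n_j\delta_n}\to 2\int_0^s\sigma_0^4\,\mathrm{d}u$, since $u\delta_n\to0$ freezes $\sigma_{u\delta_n}$ at $\sigma_0$ uniformly on $[0,s]$, and the fluctuation around this mean is annihilated by a second-moment estimate, again via Lemma~\ref{Lemma.Increment.Martingale}.

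With the quadratic variation identified, I would apply Theorem~\ref{Martingale-FCLT}. Because the limiting clock $v_s=2\sigma_0^4 s$ is random (it is $\mathcal{F}_0$-measurable), I use the stable form of the statement, taking the limiting Brownian motion $\acute{B}$ independent of $\mathcal{F}$ as permitted by the remark following that theorem. This yields $Z^n\Rightarrow\{\acute{B}_{2\sigma_0^4 s}\}$ in $C_{[0,\infty)}$, jointly with $\Sigma^n_s\to\sigma_0^2$ uniformly on compacts (continuity of $\sigma$ and $s\delta_n\to0$), the joint convergence being exactly the content of stable convergence of $Z^n$ against the in-probability limit $\sigma_0^2$. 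Since $\sigma_0^2>0$, the continuous map $(z,\rho)\mapsto\{(z_{s+1}-z_s)/\rho_s\}$ sends this to
\[
Y^n\Rightarrow\left\{\frac{\acute{B}_{2\sigma_0^4(s+1)}-\acute{B}_{2\sigma_0^4 s}}{\sigma_0^2}\right\}_{s\ge0},
\]
and Brownian scaling shows that this process, conditionally on $\mathcal{F}$, has the same law as $\sqrt{2}(\acute{B}_{s+1}-\acute{B}_s)$ for a standard Brownian motion independent of $\mathcal{F}$; that is precisely the process $G$ of Theorem~\ref{General.logRV.CLT}.

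The hard part will be twofold. One difficulty is the in-probability convergence of $\langle Z^n\rangle$: the cell-by-cell replacement of the partial increments and of $\sigma_{u\delta_n}$, and the law-of-large-numbers control of the fluctuations of the sum, must be carried out carefully with the moment bounds at hand. The second, more conceptual, difficulty is that the limiting time change is the \emph{random} quantity $2\sigma_0^4 s$, which forces the stable rather than the ordinary functional central limit theorem, so that one and the same $\sigma_0^2$ appears in the limit of the numerator $Z^n$ and of the denominator $\Sigma^n$; it is exactly this cancellation that erases all dependence on $\sigma$ and delivers the universal Gaussian limit $G$.
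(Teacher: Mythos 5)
Your rewriting of $Z^n$ as the stochastic integral $2\sqrt{m_n}\int_0^s(M^n_u-M^n_{r_n(u)})\,\mathrm{d}M^n_u$ and your identification $\langle Z^n\rangle_s\to 2\sigma_0^4\,s$ in probability are both correct (the paper records essentially the same computation as (\ref{Jacod.Condition.QV2}), though it uses it only for the bound (\ref{Sup.Z.Conv})). The genuine gap is the next step: you apply Theorem~\ref{Martingale-FCLT} with the \emph{random} clock $v_s=2\sigma_0^4 s$. That theorem, as stated and as proved, requires $v$ to be a deterministic continuous function: its proof writes $Z^n=B^n_{\langle Z^n\rangle}$ via Dambis--Dubins--Schwarz and then obtains the joint convergence $(B^n,\langle Z^n\rangle)\Rightarrow(\acute{B},v)$ from Slutsky's theorem, which is legitimate precisely because the second component has a deterministic limit. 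When $v$ is random, the DDS Brownian motion $B^n$ is constructed from $Z^n$ itself and is a priori strongly dependent on $\langle Z^n\rangle$; the assertion that in the limit $\acute{B}$ can be taken \emph{independent} of $\sigma_0$ (equivalently, of $\mathcal{F}$) is exactly the content of a \emph{stable} limit theorem, which requires additional hypotheses and a genuinely different proof (e.g.\ asymptotic orthogonality $\langle Z^n,N\rangle\to 0$ against reference martingales, as in Jacod-type stable CLTs) and is established nowhere in the paper. The remark after Theorem~\ref{Martingale-FCLT} does not license this: it only says that in the deterministic-clock setting the limiting Brownian motion may be realized independently of $\mathcal{F}$; it is not a stable-convergence statement and does not extend the theorem to random clocks. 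The same unproved stable convergence is then invoked a second time when you pass from $Z^n\Rightarrow\acute{B}_{2\sigma_0^4\cdot}$ and $\Sigma^n_s\to\sigma_0^2$ to the \emph{joint} convergence of $(Z^n,\Sigma^n)$: convergence in law of one sequence plus convergence in probability of another to a \emph{random} limit does not determine the joint limit law, as you yourself acknowledge by calling this ``exactly the content of stable convergence.''

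This is not a peripheral technicality; avoiding it is the entire design of the paper's proof. The paper never applies the FCLT to $Z^n$. Instead it moves the normalization \emph{inside} the stochastic integral, defining $\widetilde{Z}^n_s:=2\sqrt{m_n}\sum_{j}\int_{\tau^n_j\wedge s}^{\tau^n_{j+1}\wedge s}\sigma_{\tau^n_j\delta_n}^{-2}\,(M^n_u-M^n_{\tau^n_j\wedge s})\,\mathrm{d}M^n_u$, so that the local volatility cancels cell by cell and $\langle\widetilde{Z}^n\rangle_s=2s+o_P(1)$ with a \emph{deterministic} clock; Theorem~\ref{Martingale-FCLT} then applies exactly as stated, and the discrepancy between $Y^n$ and $\widetilde{Z}^n_{s+1}-\widetilde{Z}^n_s$ is absorbed into the remainder terms of the decomposition (\ref{Decomp.Thm2}), which are killed by (\ref{Holder.Vol.Lem1})--(\ref{ucp.Conv.Reminder}). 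To repair your argument you would have to either import a stable martingale FCLT from the literature and verify its hypotheses for the arrays at hand, or condition on $\mathcal{F}_0$ to freeze $\sigma_0$ and argue under regular conditional probabilities; both amount to supplying precisely the machinery that the paper's inside-normalization was crafted to avoid.
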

\begin{proof}
	Since we have
	\begin{equation*}
		\frac{1}{\delta_n}\int_0^{s\delta_n} \sigma_u^2\, \mathrm{d}u=\langle M^n\rangle_s,\ \ s\in[0,\infty),
	\end{equation*}
	It\^o's formula yields that
	\begin{equation*}
		Z_s^n=2\sqrt{m_n}\sum_{j=0}^\infty\int_{\tau_{j}^n\wedge{s}}^{\tau_{j+1}^n\wedge{s}}(M^n_u-M^n_{\tau_{j}^n\wedge{s}})\, \mathrm{d}M^n_u.
	\end{equation*}
	Since Taylor's theorem yields that 
	\begin{align*}
		\frac{1}{\Sigma_s^n} =&\frac{1}{\sigma_{s\delta_n}^2} -\int_0^1\frac{(\Sigma_s^n-\sigma_{s\delta_n}^2)}{(\sigma_{s\delta_n}^2+z(\Sigma_s^n-\sigma_{s\delta_n}^2))^2}\, \mathrm{d}z \\
		=&\frac{1}{\sigma_{\tau_j^n\delta_n}^2} -\int_0^1\frac{(\sigma_{s\delta_n}^2-\sigma_{\tau_j^n\delta_n}^2)}{(\sigma_{\tau_j^n\delta_n}^2+z(\sigma_{s\delta_n}^2-\sigma_{\tau_j^n\delta_n}^2))^2}\, \mathrm{d}z -\int_0^1\frac{(\Sigma_s^n-\sigma_{s\delta_n}^2)}{(\sigma_{s\delta_n}^2+z(\Sigma_s^n-\sigma_{s\delta_n}^2))^2}\, \mathrm{d}z,
	\end{align*}
	we can decompose $Y^n$ into the following three parts:
	\begin{align}
		Y_s^n=&2\sqrt{m_n}\sum_{j=0}^\infty\frac{1}{\Sigma_s^n}\int_{(\tau_j^n\vee{s})\wedge(s+1)}^{(\tau_{j+1}^n\vee{s})\wedge(s+1)}\left(M^n_u-M^n_{\tau_{j}^n\wedge{s}}\right)\, \mathrm{d}M^n_u \nonumber \\
		=&(\widetilde{Z}_{s+1}^n-\widetilde{Z}_s^n)-R_s^n-(Z_{s+1}^n-Z_s^n)\int_0^1\frac{(\Sigma_s^n-\sigma_{s\delta_n}^2)}{(\sigma_{s\delta_n}^2+z(\Sigma_s^n-\sigma_{s\delta_n}^2))^2}\, \mathrm{d}z \label{Decomp.Thm2}
	\end{align}
	for each $s\in[0,\infty)$, where a sequence of continuous $\mathbb{F}^n$-local martingales $\widetilde{Z}^n=\{\widetilde{Z}_s^n\}_{s\in[0,\infty)}$ and continuous process $R^n=\{R_s^n\}_{s\in[0,\infty)}$ are given by
	\begin{align*}
		&\hspace{-1cm}\widetilde{Z}_s^n:=2\sqrt{m_n}\sum_{j=0}^\infty\int_{\tau_{j}^n\wedge{s}}^{\tau_{j+1}^n\wedge{s}}\left(\frac{M^n_u-M^n_{\tau_{j}^n\wedge{s}}}{\sigma_{\tau_j^n\delta_n}^2}\right)\, \mathrm{d}M^n_u,\\
		&\hspace{-1cm}R_s^n:=2\sqrt{m_n}\sum_{j=0}^\infty\int_{(\tau_j^n\vee{s})\wedge(s+1)}^{(\tau_{j+1}^n\vee{s})\wedge(s+1)}(M^n_u-M^n_{\tau_{j}^n\wedge{s}})\, \mathrm{d}M^n_u \cdot\int_0^1\frac{(\sigma_{s\delta_n}^2-\sigma_{\tau_j^n\delta_n}^2)}{(\sigma_{\tau_j^n\delta_n}^2+z(\sigma_{s\delta_n}^2-\sigma_{\tau_j^n\delta_n}^2))^2}\, \mathrm{d}z.
	\end{align*}
	First of all, we will show that
	\begin{equation}
		\widetilde{Z}^n\stackrel{n\to\infty}{\rightarrow}\sqrt{2}\acute{B}\ \ \mbox{in law}.\label{Stable.Conv.Main}
	\end{equation} 
	Then Theorem~\ref{Martingale-FCLT} yields that, in order to prove (\ref{Stable.Conv.Main}), it suffices to prove that for each $s\in[0,\infty)$,
	\begin{align}
		&\langle \widetilde{Z}^n\rangle_s=2s+o_P(1)\ \ \mbox{as}\ \ n\to\infty. \label{Jacod.Condition.QV}
	\end{align}
	By It\^o's formula, we have
	\begin{equation*}
		\langle \widetilde{Z}^n\rangle_s =4m_n\sum_{j=0}^\infty\int_{\tau_{j}^n\wedge{s}}^{\tau_{j+1}^n\wedge{s}}\left(\frac{M^n_u-M^n_{\tau_{j}^n\wedge{s}}}{\sigma_{\tau_j^n\delta_n}^2}\right)^2\, \mathrm{d}\langle M^n\rangle_u =\sum_{j=0}^{N_s[\tau^n]}\mathcal{B}_j^n+o_P(1)\ \ \mbox{as}\ \ n\to\infty,
	\end{equation*}
	where
	\begin{equation*}
		\mathcal{B}_j^n :=\frac{2}{3}m_n\frac{(M^n_{\tau_{j+1}^n}-M^n_{\tau_{j}^n})^4}{\sigma_{\tau_j^n\delta_n}^4} -\frac{8}{3}m_n\int_{\tau_{j}^n}^{\tau_{j+1}^n}\frac{(M^n_u-M^n_{\tau_{j}^n})^3}{\sigma_{\tau_j^n\delta_n}^4}\, \mathrm{d}M^n_u.
	\end{equation*}
	Since Lemma~\ref{Lemma.Increment.Martingale} and the Burkholder-Davis-Gundy inequality yield that as $n\to\infty$,
	\begin{align*}
		&\sum_{j=0}^{N_s[\tau^n]}E[\mathcal{B}_j^n|\mathcal{F}_{\tau_j^n}^n]=\frac{2}{3}m_n\sum_{j=0}^{N_s[\tau^n]}\frac{1}{\sigma_{\tau_j^n\delta_n}^4}E[(M^n_{\tau_{j+1}^n}-M^n_{\tau_j^n})^4|\mathcal{F}_{\tau_j^n}^n] =2s+o_P(1),\\
		&\sum_{j=0}^{N_s[\tau^n]}E[|\mathcal{B}_j^n|^2|\mathcal{F}_{\tau_j^n}^n]=o_P(1)
	\end{align*}
	hold, the convergence (\ref{Jacod.Condition.QV}) follows from Lemma 2.3. in~\cite{F10-SPA} and the above two convergences. Therefore, the convergence (\ref{Stable.Conv.Main}) follows.
	
	In the rest of this proof, we would like to show that the second and third terms of (\ref{Decomp.Thm2}) are negligible as $n\to\infty$. Namely, we will prove the following three convergences: for any $s\in[0,\infty)$ and $\iota>0$,
	\begin{align}
		&\sup_{0\leq u\leq s}\left|\int_0^1\frac{(\Sigma_u^n-\sigma_{u\delta_n}^2)}{(\sigma_{u\delta_n}^2+z(\Sigma_u^n-\sigma_{u\delta_n}^2))^2}\, \mathrm{d}z\right| =o_P\left(\delta_n^{H_0-\iota}\right)\ \ \mbox{as}\ \ n\to\infty, \label{Holder.Vol.Lem1} \\
		&\sup_{0\leq s\leq u}\left|Z_s^n\right| =O_P\left(1\right)\ \ \mbox{as}\ \ n\to\infty, \label{Sup.Z.Conv} \\
		&\sup_{0\leq s\leq u}\left|R_s^n\right| =o_P\left(1\right)\ \ \mbox{as}\ \ n\to\infty. \label{ucp.Conv.Reminder}
	\end{align}
	Indeed, if (\ref{Holder.Vol.Lem1}), (\ref{Sup.Z.Conv}) and (\ref{ucp.Conv.Reminder}) hold, then the continuous processes appeared in the second and third terms of (\ref{Decomp.Thm2}) converge in probability to the function that is identically zero as $n\to\infty$ so that the convergence of $Y^n$ follows from (\ref{Stable.Conv.Main}) and the continuous mapping theorem. 
	
	At first, (\ref{Holder.Vol.Lem1}) immediately follows from the H\"older-continuity of the volatility process $\sigma^2$. 
	Next, we will prove (\ref{Sup.Z.Conv}). In the similar argument to the first term of (\ref{Decomp.Thm2}), we can show that
	\begin{equation}
		\langle Z^n\rangle_s=2\int_0^s\sigma_u^4\, \mathrm{du}+o_P(1)\ \ \mbox{as}\ \ n\to\infty. \label{Jacod.Condition.QV2}
	\end{equation}
	Then (\ref{Sup.Z.Conv}) follows from (\ref{Jacod.Condition.QV2}) and Doob's inequality. Finally, we will prove (\ref{ucp.Conv.Reminder}). By It\^o's formula, we have
	\begin{equation*}
		R_s^n=\sum_{j=N_{s}[\tau^n]+1}^{N_{s+1}[\tau^n]}\mathcal{C}_{j,s}^n+o_P(1)\ \ \mbox{as}\ \ n\to\infty,
	\end{equation*}
	where
	\begin{equation*}
		\mathcal{C}_{j,s}^n:=2\sqrt{m_n}\int_{\tau_{j}^n}^{\tau_{j+1}^n}(M^n_u-M^n_{\tau_{j}^n})\, \mathrm{d}M^n_u \cdot\int_0^1\frac{(\sigma_{s\delta_n}^2-\sigma_{\tau_j^n\delta_n}^2)}{(\sigma_{\tau_j^n\delta_n}^2+z(\sigma_{s\delta_n}^2-\sigma_{\tau_j^n\delta_n}^2))^2}\, \mathrm{d}z.
	\end{equation*}
	Since Lemma~\ref{Lemma.Increment.Martingale} and the Burkholder-Davis-Gundy inequality yield
	\begin{align*}
		&\sum_{j=N_{s}[\tau^n]+1}^{N_{s+1}[\tau^n]}E[\mathcal{C}_{j,s}^n|\mathcal{F}_{\tau_j^n}^n]=2\sqrt{m_n}\sum_{j=N_{s}[\tau^n]+1}^{N_{s+1}[\tau^n]}\int_0^1\frac{(\sigma_{s\delta_n}^2-\sigma_{\tau_j^n\delta_n}^2)}{(\sigma_{\tau_j^n\delta_n}^2+z(\sigma_{s\delta_n}^2-\sigma_{\tau_j^n\delta_n}^2))^2}\, \mathrm{d}z E\left[\int_{\tau_{j}^n}^{\tau_{j+1}^n}(M^n_u-M^n_{\tau_{j}^n})\, \mathrm{d}M^n_u \biggl|\mathcal{F}_{\tau_j^n}^n\right]=0,\\
		&\sum_{j=N_{s}[\tau^n]+1}^{N_{s+1}[\tau^n]}E[|\mathcal{C}_{j,s}^n|^2|\mathcal{F}_{\tau_j^n}^n] \\ &=4m_n\sum_{j=N_{s}[\tau^n]+1}^{N_{s+1}[\tau^n]}\left(\int_0^1\frac{(\sigma_{s\delta_n}^2-\sigma_{\tau_j^n\delta_n}^2)}{(\sigma_{\tau_j^n\delta_n}^2+z(\sigma_{s\delta_n}^2-\sigma_{\tau_j^n\delta_n}^2))^2}\right)^2\, \mathrm{d}z E\left[\left(\int_{\tau_{j}^n}^{\tau_{j+1}^n}(M^n_u-M^n_{\tau_{j}^n})\, \mathrm{d}M^n_u\right)^2\biggl|\mathcal{F}_{\tau_j^n}^n\right] =o_P(1)\ \ \mbox{as}\ \ n\to\infty,
	\end{align*}
	the convergence (\ref{ucp.Conv.Reminder}) follows from an easy modification of Lemma 2.3. in~\cite{F10-SPA} and the above two convergences. Therefore, we finish the proof.
\end{proof}
Let us embed the realized variance $\hat\sigma^2$ into a continuous-time stochastic process
\begin{equation*}
	\hat\sigma^2_s :=\sum_{j=0}^{m_n-1} \left|\log S_{\delta_n\tau_{\lfloor m_ns\rfloor+j+1}^n} - \log S_{\delta_n\tau_{\lfloor m_ns\rfloor+j}^n}\right|^2,\ \ s\in[0,\infty).
\end{equation*}
Then we can obtain the following limit theorem.
\begin{thm}\label{Stable.Conv.Thm3}
	A sequence of c\`adl\`ag processes $\widetilde{Y}^n=\{\widetilde{Y}_s^n\}_{s\in[0,\infty)}$ given by
	\begin{equation*}
		\widetilde{Y}_s^n:=\sqrt{m_n}\left(\frac{\hat\sigma_{s}^2-\int_{s\delta_n}^{(s+1)\delta_n} \sigma_u^2\, \mathrm{d}u}{\int_{s\delta_n}^{(s+1)\delta_n} \sigma_u^2\, \mathrm{d}u}\right),\ \ s\in[0,\infty),
	\end{equation*}
	converges in law to the continuous Gaussian process $G=\{G_s\}_{s\in[0,\infty)}$ defined in Theorem~\ref{General.logRV.CLT}.
\end{thm}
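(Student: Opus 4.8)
The plan is to deduce Theorem~\ref{Stable.Conv.Thm3} from Theorem~\ref{Stable.Conv.Thm2} by showing that $\widetilde{Y}^n$ and the process $Y^n$ there are asymptotically indistinguishable on compact time intervals, and then invoking a converging-together argument; throughout I use the localization already noted before Lemma~\ref{Lemma.Increment.Martingale}, so that $\kappa$ is bounded and $\sigma^2$ is H\"older continuous and bounded away from $0$ on $[0,T]$. The starting observation is that at a grid point $s=\tau_\ell^n=\ell/m_n$ both $s$ and $s+1=\tau_{\ell+m_n}^n$ are grid points, so the truncations $\wedge s$ and $\wedge(s+1)$ in $Z^n$ split no sub-interval and
\[
Y^n_{\tau_\ell^n}=\sqrt{m_n}\,\frac{\delta_n Q_\ell-\int_{\tau_\ell^n\delta_n}^{(\tau_\ell^n+1)\delta_n}\sigma_u^2\,\mathrm{d}u}{\int_{\tau_\ell^n\delta_n}^{(\tau_\ell^n+1)\delta_n}\sigma_u^2\,\mathrm{d}u},\qquad Q_\ell:=\sum_{j=\ell}^{\ell+m_n-1}\bigl(M^n_{\tau_{j+1}^n}-M^n_{\tau_j^n}\bigr)^2.
\]
Since $\delta_nQ_\ell=\sum_{k=0}^{m_n-1}|M_{\delta_n\tau_{\ell+k+1}^n}-M_{\delta_n\tau_{\ell+k}^n}|^2$ is exactly the martingale part of $\hat\sigma^2_s$ for every $s\in[\tau_\ell^n,\tau_{\ell+1}^n)$ (where $\ell=\lfloor m_ns\rfloor$), the two quantities $\widetilde{Y}^n_s$ and $Y^n_{\tau_\ell^n}$ differ only through (i) the drift contribution entering $\hat\sigma^2_s$ via $\log S=A+M$, and (ii) the mismatch between the realized-variance window $[\tau_\ell^n\delta_n,(\tau_\ell^n+1)\delta_n]$, anchored at the grid, and the integration window $[s\delta_n,(s+1)\delta_n]$ appearing in $\widetilde{Y}^n_s$.

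First I would bound these two errors uniformly in $\ell\le m_nT$. For the drift, I write each sub-interval increment of $\log S$ as a sum of increments of $M$ and of $A$, expand the square, and use local boundedness of $\psi$ (equivalently $\kappa$), so that each increment of $A$ over a length-$\delta_n/m_n$ sub-interval is $O(\delta_n/m_n)$; Cauchy--Schwarz together with $\sum_{k=0}^{m_n-1}|M_{\delta_n\tau_{\ell+k+1}^n}-M_{\delta_n\tau_{\ell+k}^n}|^2=\delta_nQ_\ell=O_P(\delta_n)$ then bounds the total drift-and-cross contribution $D_\ell$ to $\hat\sigma^2_s$ by $O_P(\delta_n^{3/2}/\sqrt{m_n})$. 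After dividing by $\int_{s\delta_n}^{(s+1)\delta_n}\sigma_u^2\,\mathrm{d}u$, which is of order $\delta_n$ and bounded below, and multiplying by $\sqrt{m_n}$, this is $O_P(\delta_n^{1/2})=o_P(1)$. For the window mismatch, the symmetric difference of the two windows has length at most $2\delta_n/m_n$, so boundedness of $\sigma^2$ makes the numerator error $O(\delta_n/m_n)$, contributing $O(1/\sqrt{m_n})$ after scaling, while the relative denominator error is $O(1/m_n)$ and perturbs the $O_P(1)$ ratio by $O_P(1/m_n)$. The only genuine subtlety is uniformity over the $O(m_nT)$ indices $\ell$: this is handled by a maximal-inequality argument over the grid, using the conditional-moment estimates of Lemma~\ref{Lemma.Increment.Martingale} (and a uniform law of large numbers) to obtain $\sup_{\ell\le m_nT}Q_\ell=O_P(1)$. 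The outcome is $\sup_{0\le s\le T}\bigl|\widetilde{Y}^n_s-Y^n_{\tau_{\lfloor m_ns\rfloor}^n}\bigr|=o_P(1)$ for every $T>0$.

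It then remains to pass from the grid-sampled process $\hat{Y}^n_s:=Y^n_{\tau_{\lfloor m_ns\rfloor}^n}$ back to $Y^n$. Because Theorem~\ref{Stable.Conv.Thm2} gives $Y^n\to G$ in law with $G$ continuous, the family $\{Y^n\}$ is tight in $C_{[0,\infty)}$, so its modulus of continuity at scale $1/m_n$ tends to $0$ in probability and $\sup_{0\le s\le T}|\hat{Y}^n_s-Y^n_s|\le w(Y^n,1/m_n)=o_P(1)$. Combining the two uniform estimates yields $\sup_{0\le s\le T}|\widetilde{Y}^n_s-Y^n_s|=o_P(1)$; since the uniform-on-compacts distance dominates the Skorokhod distance, the converging-together lemma (e.g. Billingsley~\cite{Billingsley}) together with Theorem~\ref{Stable.Conv.Thm2} gives $\widetilde{Y}^n\to G$ in law in $D_{[0,\infty)}$, and the continuity of $G$ makes this the asserted convergence. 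The hard part will be the uniform-in-$\ell$ drift and window estimates of the middle paragraph, where the amplification factor $\sqrt{m_n}$ must be beaten by the small increment sizes simultaneously over all $O(m_nT)$ windows.
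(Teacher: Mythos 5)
Your proposal is correct, and its skeleton coincides with the paper's: both deduce Theorem~\ref{Stable.Conv.Thm3} from Theorem~\ref{Stable.Conv.Thm2} by showing that $\widetilde{Y}^n$ and $Y^n$ are uniformly asymptotically equivalent on compact time intervals (drift contributions and window mismatches being killed by the $\sqrt{m_n}$ scaling), and then concluding by a converging-together/continuous-mapping argument. The execution differs in the middle. The paper never samples at the grid: it telescopes the $s$- and $(s+1)$-truncated sums of squared log-price increments, writes them as $Z^n_{s+1}-Z^n_s$ plus cross and squared-drift terms, proves those negligible uniformly by arguments modelled on Lemma 3.9 and Theorem 3.10 of~\cite{F10-SPA}, and absorbs the mismatch between the grid-anchored $\hat\sigma^2_s$ and the $s$-truncated sums into an $o_P(1)$. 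You instead compare $\widetilde{Y}^n_s$ with the grid-sampled $Y^n_{\tau^n_{\lfloor m_n s\rfloor}}$ --- a good move, because the martingale parts then match exactly (no sub-interval is split at a non-grid time), so only the elementary Cauchy--Schwarz drift bound and the $O(\delta_n/m_n)$ integral-window mismatch remain --- and you pay for it with the extra step of passing from the grid-sampled process back to $Y^n$ via $C$-tightness of $\{Y^n\}$ (legitimate: weak convergence in $C_{[0,\infty)}$ plus Prokhorov's theorem gives $w(Y^n,1/m_n)=o_P(1)$). Your route is more self-contained, since the drift terms are handled by direct Cauchy--Schwarz rather than by citation; the paper's is shorter because it reuses the machinery of~\cite{F10-SPA} and needs no tightness detour. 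One comment on the step you flag as the hard part: the uniformity over $\ell\le m_nT$ is easier than you suggest --- no maximal inequality or uniform law of large numbers is needed, because every $Q_\ell$ is dominated by the total realized quadratic variation of $M^n$ over $[0,T+1]$, whose expectation equals $E[\langle M^n\rangle_{T+1}]=O(1)$ after the localization making $\sigma^2$ bounded, so $\sup_{\ell\le m_nT}Q_\ell=O_P(1)$ follows from Markov's inequality alone.
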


\begin{proof}
	Note that we have
	\begin{align*}
		&\hspace{-1cm}\frac{\sqrt{m_n}}{\delta_n}\left(\sum_{j=0}^\infty(\log{S}_{\tau_{j+1}^n\wedge(s\delta_n)}-\log{S}_{\tau_j^n\wedge(s\delta_n)})^2-\int_0^{s\delta_n} \sigma_u^2\, \mathrm{d}u\right) \\
		&\hspace{-1cm}=Z_s^n +2\sqrt{m_n}\sum_{j=0}^\infty(M^n_{\tau_{j+1}^n\wedge{s}}-M^n_{\tau_j^n\wedge{s}})(A_{\tau_{j+1}^n\wedge{s}}^n-A_{\tau_j^n\wedge{s}}^n) +\sqrt{m_n}\sum_{j=0}^\infty(A_{\tau_{j+1}^n\wedge{s}}^n-A_{\tau_j^n\wedge{s}}^n)^2,
	\end{align*}
	where $A_s^n:=\delta_n^{-1/2}A_{s\delta_n}$, $s\in[0,\infty)$.
	By using Lemma~\ref{Lemma.Increment.Martingale}, we can show that
	\begin{align*}
		&\sqrt{m_n}\sum_{j=0}^\infty(A_{\tau_{j+1}^n\wedge{s}}^n-A_{\tau_j^n\wedge{s}}^n)^2 =o_P(1)\ \ \mbox{as}\ \ n\to\infty, \\
		&2\sqrt{m_n}\sum_{j=0}^\infty(M^n_{\tau_{j+1}^n\wedge{s}}-M^n_{\tau_j^n\wedge{s}})(A_{\tau_{j+1}^n\wedge{s}}^n-A_{\tau_j^n\wedge{s}}^n) =o_P(1)\ \ \mbox{as}\ \ n\to\infty
	\end{align*}
	uniformly in $u\in[0,s]$ for any $s>0$ in the similar way to the proof of Lemma 3.9. and Theorem 3.10. in~\cite{F10-SPA} respectively. Then we obtain
	\begin{align*}
		\frac{\sqrt{m_n}}{\delta_n}\left(\hat\sigma_{s}^2-\int_{s\delta_n}^{(s+1)\delta_n} \sigma_u^2\, \mathrm{d}u\right) &=\frac{\sqrt{m_n}}{\delta_n}\left(\sum_{j=0}^\infty(\log{S}_{\tau_{j+1}^n\wedge\{(s+1)\delta_n\}}-\log{S}_{\tau_j^n\wedge\{(s+1)\delta_n\}})^2-\int_0^{(s+1)\delta_n} \sigma_u^2\, \mathrm{d}u\right) \\
		&\quad-\frac{\sqrt{m_n}}{\delta_n}\left(\sum_{j=0}^\infty(\log{S}_{\tau_{j+1}^n\wedge(s\delta_n)}-\log{S}_{\tau_j^n\wedge(s\delta_n)})^2-\int_0^{s\delta_n} \sigma_u^2\, \mathrm{d}u\right) +o_P(1)\\
		&=(Z_{s+1}^n-Z_s^n) +o_P(1)
	\end{align*}
	as $n\to\infty$ uniformly in $s\in[0,u]$ for any $u>0$. Therefore, the conclusion follows from Theorem~\ref{Stable.Conv.Thm2} and the continuous mapping theorem since $1/\Sigma_s^n=O_P(1)$ as $n\to\infty$ uniformly in $u\in[0,s]$ for any $s>0$.
\end{proof}
In the end of this appendix, we prove Theorem~\ref{General.logRV.CLT} by using Theorem~\ref{Stable.Conv.Thm3}.
\begin{proof}[Proof of Theorem~\ref{General.logRV.CLT}]
	By Taylor's theorem, we obtain
	\begin{align*}
		&\sqrt{m_n}\left(\log\hat\sigma_s^2-\log\int_{s\delta_n}^{(s+1)\delta_n} \sigma_u^2\, \mathrm{d}u\right) =\sqrt{m_n}\log\left(1+\frac{\hat\sigma_{s}^2-\int_{s\delta_n}^{(s+1)\delta_n} \sigma_u^2\, \mathrm{d}u}{\int_{s\delta_n}^{(s+1)\delta_n} \sigma_u^2\, \mathrm{d}u}\right) \\
		=&\sqrt{m_n}\left(\frac{\hat\sigma_{s}^2-\int_{s\delta_n}^{(s+1)\delta_n} \sigma_u^2\, \mathrm{d}u}{\int_{s\delta_n}^{(s+1)\delta_n} \sigma_u^2\, \mathrm{d}u}\right) +\sqrt{m_n}\left(\frac{\hat\sigma_{s}^2-\int_{s\delta_n}^{(s+1)\delta_n} \sigma_u^2\, \mathrm{d}u}{\int_{s\delta_n}^{(s+1)\delta_n} \sigma_u^2\, \mathrm{d}u}\right)^2\int_0^1(1-z)\left\{1+z\left(\frac{\hat\sigma_{s}^2-\int_{s\delta_n}^{(s+1)\delta_n} \sigma_u^2\, \mathrm{d}u}{\int_{s\delta_n}^{(s+1)\delta_n} \sigma_u^2\, \mathrm{d}u}\right) \right\}^{-2}\, \mathrm{d}z
	\end{align*}
	for each $s\in[0,\infty)$. Since we have
	\begin{equation*}
		\sup_{0\leq s\leq s_0}\left|\int_0^1(1-z)\left\{1+z\left(\frac{\hat\sigma_{s}^2-\int_{s\delta_n}^{(s+1)\delta_n} \sigma_u^2\, \mathrm{d}u}{\int_{s\delta_n}^{(s+1)\delta_n} \sigma_u^2\, \mathrm{d}u}\right) \right\}^{-2}\, \mathrm{d}z\right|=O_P(1)\ \ \mbox{as}\ \ n\to\infty
	\end{equation*}
	for each $s_0\in[0,\infty)$, the conclusion follows from Theorem~\ref{Stable.Conv.Thm3} and the continuous mapping theorem.
\end{proof}
\section{Approximate Formula of Estimation Function $U_n(H,\nu)$}\label{Appendix.Numerical.Experiments}
In this appendix, we derive the approximate formula of the estimation function (15) in the original article. Since the spectral density $g_{H,\nu}(\lambda)$ and the periodogram $I_n(\lambda)$ are symmetric with respect to $\lambda\in[-\pi,\pi]$, we have
\begin{align*}
	U_n(H,\nu)=&\frac{1}{2\pi}\int_0^\pi\left(\log{g_{H,\nu}(\lambda)} +\frac{I_n(\lambda,\mathbf{Y}_n)}{g_{H,\nu}(\lambda)}\right)\, \mathrm{d}\lambda \\
	=&\frac{1}{2\pi}\int_\psi^\pi\left(\log{g_{H,\nu}(\lambda)} +\frac{I_n(\lambda,\mathbf{Y}_n)}{g_{H,\nu}(\lambda)}\right)\, \mathrm{d}\lambda +B_{H,\nu}^1\left(\psi\right) +B_{H,\nu}^2\left(\psi\right)
\end{align*}
for any $\psi\in(0,\pi]$, where
\begin{equation*}
	B_{H,\nu}^1(\psi):=\frac{1}{2\pi}\int_0^\psi\log{g_{H,\nu}(\lambda)}\, \mathrm{d}\lambda,
	\hspace{0.2cm}B_{H,\nu}^2(\psi):=\frac{1}{2\pi}\int_0^\psi\frac{I_n(\lambda,\mathbf{Y}_n)}{g_{H,\nu}(\lambda)}\, \mathrm{d}\lambda.
\end{equation*}
In the rest of this subsection, we will show $B_{H,\nu}^1(\psi)\approx A_{H,\nu}^1(\psi)$ and $B_{H,\nu}^2(\psi)\approx A_{H,\nu}^2(\psi)$ as $\psi\downarrow 0$. 
At first, we consider the first approximation. Note that the Taylor expansion yields that
\begin{equation}\label{Approximation.Formula.SPD}
	g_{H,\nu}(\lambda)=\nu^2C_H|\lambda|^{1-2H} +\frac{|\lambda|^2}{m\pi} +O(|\lambda|^{3+2H})\hspace{0.2cm}\mbox{as $|\lambda|\to 0$}.
\end{equation}
Then we obtain the first approximation from the Taylor expansion as $\psi\downarrow 0$ as follows:
\begin{align*}
	B_{H,\nu}^1(\psi)\approx&\frac{1}{2\pi}\int_0^\psi\log\left(\nu^2C_H\lambda^{1-2H} +\frac{\lambda^2}{m\pi}\right)\, \mathrm{d}\lambda \\
	=&\frac{1}{2\pi}\left\{\psi\log(\nu^2C_H) +\psi(\log\psi-1)(1-2H) +\int_0^\psi\log\left(1+\frac{1}{\nu^2C_Hm\pi}\lambda^{1+2H}\right)\, \mathrm{d}\lambda\right\} \\
	\approx&\frac{1}{2\pi}\left\{\psi\log(\nu^2C_H) +\psi(\log\psi-1)(1-2H) +\frac{\psi^{2+2H}}{\nu^2C_Hm\pi(2+2H)}\right\}.
\end{align*}
Next we consider the second approximation. Since $g_{H,\nu}$ is an even function, $B_{H,\nu}^2(\psi)$ is represented by
\begin{equation*}
	B_{H,\nu}^2(\psi)=\frac{1}{2\pi}\left(b_{H,\nu}(0,\psi)\widehat\gamma_n(0)+2\sum_{\tau=1}^{n-1}b_{H,\nu}(\tau,\psi)\widehat\gamma_n(\tau)\right),
\end{equation*}
where 
\begin{equation*}
	b_{H,\nu}(\tau,\psi):=\frac{1}{2\pi}\int_0^\psi\frac{\cos(\tau\lambda)}{g_{H,\nu}(\lambda)}\, \mathrm{d}\lambda.
\end{equation*}
Since the Taylor expansion as $\psi\downarrow 0$ yields that
\begin{align}
	b_{H,\nu}(\tau,\psi)=&\frac{1}{2\pi}\sum_{j=0}^\infty\frac{(-1)^j\tau^{2j}}{(2j)!}\int_0^\psi\frac{\lambda^{2j}}{g_{H,\nu}(\lambda)}\, \mathrm{d}\lambda \label{b.cos.expansion} \\
	\approx&\frac{1}{2\pi}\sum_{j=0}^\infty\frac{(-1)^j\tau^{2j}}{(2j)!}\int_0^\psi\frac{\lambda^{2j}}{\nu^2C_H|\lambda|^{1-2H} +\frac{|\lambda|^2}{m\pi}}\, \mathrm{d}\lambda \nonumber \\
	\approx&\frac{1}{2\pi}\sum_{j=0}^\infty\frac{(-1)^j\tau^{2j}}{(2j)!}\int_0^\psi\frac{\lambda^{-1+2j+2H}}{\nu^2C_H}\left(1-\frac{1}{\nu^2C_Hm\pi}\lambda^{1+2H}\right)\, \mathrm{d}\lambda \nonumber \\
	=&\frac{1}{2\pi}\sum_{j=0}^\infty\frac{(-1)^j\tau^{2j}}{(2j)!}\frac{1}{\nu^2C_H}\left(\frac{\psi^{2j+2H}}{2j+2H} -\frac{\psi^{1+2j+4H}}{\nu^2C_Hm\pi(1+2j+4H)}\right),\label{B2.Series}
\end{align}
we obtain the second approximation when the series in (\ref{B2.Series}) is truncated after finite terms. Note that the truncation error of the Taylor expansion in (\ref{b.cos.expansion}) is dominated as follows:
\begin{equation*}
	\sup_{\tau\in\{0,1,\cdots,n-1\}}\left|b_{H,\nu}(\tau,\psi)-\frac{1}{2\pi}\sum_{j=0}^J\frac{(-1)^j}{(2j)!}\int_0^\psi\frac{(\tau\lambda)^{2j}}{g_{H,\nu}(\lambda)}\, \mathrm{d}\lambda\right|\leq \frac{(n\psi)^{2J+1-1}}{(2J+1)!}\cdot\frac{1}{2}\int_0^\psi\frac{1}{g_{H,\nu}(\lambda)}\, \mathrm{d}\lambda
\end{equation*}
for any $J\in\mathbb{N}$ and $\psi>0$. As a result, for fixed $n\in\mathbb{N}$, we can make the truncation error arbitrary small uniformly with respect to $\tau\in\{0,1,\cdots,n-1\}$ as $J\in\mathbb{N}$ is taken sufficiently large even in the case of the finite sample.
\end{document}